\documentclass[10pt]{article}
\usepackage{latexsym}
\usepackage{indentfirst}
\usepackage{amssymb,amsfonts,amsmath,amsthm}
\usepackage{graphicx}
\usepackage{mathrsfs}
\usepackage{array}
\usepackage{color}
\usepackage{epstopdf}
\usepackage[all]{xy}
\usepackage{hyperref}
\usepackage{url}

\usepackage{geometry}
\geometry{a4paper,scale=0.8}



\newtheorem{thm}{Theorem}[section]
\newtheorem{prop}[thm]{Proposition}
\newtheorem{lem}[thm]{Lemma}
\newtheorem{conj}[thm]{Conjecture}
\newtheorem{cor}[thm]{Corollary}

\newtheorem{exam}[thm]{Example}

\newtheorem{rmk}[thm]{Remark}
\newtheorem{dfn}[thm]{Definition}

\numberwithin{equation}{section}

\newcommand{\frakB}{{\mathfrak B}}

\newcommand{\frakM}{{\mathfrak M}}

\newcommand{\frakS}{{\mathfrak S}}

\newcommand{\bC}{{\mathbb C}}

\newcommand{\bM}{{\mathbb M}}
\newcommand{\bN}{{\mathbb N}}

\newcommand{\bQ}{{\mathbb Q}}

\newcommand{\bZ}{{\mathbb Z}}

\newcommand{\calI}{{\mathcal I}}

\newcommand{\calO}{{\mathcal O}}

\newcommand{\rA}{{\mathrm A}}

\newcommand{\rC}{{\mathrm C}}

\newcommand{\rH}{{\mathrm H}}

\newcommand{\rL}{{\mathrm L}}
\newcommand{\rM}{{\mathrm M}}

\newcommand{\rR}{{\mathrm R}}

\newcommand{\rW}{{\mathrm W}}

\newcommand{\Zp}{{\bZ_p}}

\newcommand{\Cp}{{\bC_p}}

\newcommand{\Ainf}{{\mathrm{A_{inf}}}}



\newcommand{\End}{{\mathrm{End}}}           
\newcommand{\Gal}{{\mathrm{Gal}}}           
\newcommand{\id}{{\mathrm{id}}}             
\newcommand{\Ima}{{\mathrm{Im}}}            
\newcommand{\Ker}{{\mathrm{Ker}}}           
\newcommand{\Mod}{{\mathrm{Mod}}}           
\newcommand{\Rep}{{\mathrm{Rep}}}           
\newcommand{\RGamma}{{\mathrm{R\Gamma}}}    
\newcommand{\Rlim}{{\mathrm{R}\underleftarrow{\lim}}} 
\newcommand{\Spa}{{\mathrm{Spa}}}           
\newcommand{\Spf}{{\mathrm{Spf}}}           
\newcommand{\Vect}{{\mathrm{Vect}}}          


\newcommand{\GL}{{\mathrm{GL}}}             


\newcommand{\cris}{{\mathrm{cris}}}         
\newcommand{\cyc}{{\mathrm{cyc}}}           
\newcommand{\pd} {{\mathrm{pd}}}                           
\newcommand{\perf}{\mathrm{perf}}           

\usepackage{relsize}
\usepackage[bbgreekl]{mathbbol}
\DeclareSymbolFontAlphabet{\mathbb}{AMSb} 
\DeclareSymbolFontAlphabet{\mathbbl}{bbold}
\newcommand{\Prism}{{\mathlarger{\mathbbl{\Delta}}}} 


\newcommand{\ya}{{\rangle}}
\newcommand{\za}{{\langle}}

\begin{document}
\title{On the Hodge--Tate crystals over $\calO_K$}

\author{Yu Min\footnote{Y. M.: Department of Mathematics, Imperial College London, London SW7 2RH, UK. {\bf Email:} y.min@imperial.ac.uk} and Yupeng Wang\footnote{Y. W.: Morningside Center of Mathematics No.505, Chinese Academy of Sciences, ZhongGuancun East Road 55, Beijing, 100190, China. {\bf Email:} wangyupeng@amss.ac.cn}}

\maketitle

\begin{abstract}
Let $K$ be a complete discretely valued field of mixed characteristic $(0,p)$ with perfect residue field. We prove that the category $\Vect((\calO_K),\overline\calO_{\Prism})$ of Hodge--Tate crystals on $(\calO_K)_{\Prism}$ is equivalent to the category of pairs $(M,\phi_M)$ consisting of a finite free $\calO_K$-module $M$ and an $\calO_K$-linear endomorphism $\phi_M$ of $M$ satisfying some nilpotency condition. We then show that the cohomology of a Hodge--Tate crystal can be computed in terms of its associated linear operator $\phi_M$. After inverting $p$, we construct a fully faithful functor from the category of rational Hodge--Tate crystals to the category of semi-linear $\bC_p$-representations of $G_K$ and conjecture that the linear operator $\phi_M$ is essentially the classical Sen operator.

 As applications, under some mild assumption (e.g. $p\geq 3$), we show the crystalline Breuil--Kisin modules admit ``nilpotent connections'' and give an explicit description of prismatic crystals. The ``connection'' is conjectured predicting Hodge--Tate weights of associated crystalline representations.
\end{abstract}

\tableofcontents

\section{Introduction}\label{Sec-Introduction}
  \addtocontents{toc}{\protect\setcounter{tocdepth}{0}}

\subsection{Overview}
In their groundbreaking work \cite{BS-a}, Bhatt and Scholze constructed the prismatic cohomology, which is of ``motivic nature" in the sense that it can specialize to most existing $p$-adic cohomology theories such as \'etale, de Rham and crystalline cohomology. Similar to the theory of crystalline cohomology, one can also consider crystals on the relative/absolute prismatic site. It turns out that these coefficient objects contain important geometric and arithmetic information. For example, in \cite{BS-b}, Bhatt and Scholze proved an equivalence between the category of Laurent $F$-crystals over the absolute prismatic site of a $p$-adic formal scheme and the category of \'etale $\bZ_p$-local systems on its generic fiber (see also \cite{Wu} and \cite{MW} for related results). They also got an equivalence between the category $\Vect^{\varphi}((\calO_K)_{\Prism},\calO_{\Prism})$ of prismatic $F$-crystals over the absolute prismatic site $(\calO_K)_{\Prism}$ and the category of crystalline $\bZ_p$-representations of the absolute Galois group $G_K$ of $K$, where $K$ is a complete discretely valued field of mixed characteristic
$(0, p)$ with perfect residue field. This result was later reproved and generalized to the semi-stable case by Du and Liu in \cite{DL} by using the theory of $(\varphi,\tau)$-modules and the logarithmic prismatic site of Koshikawa \cite{Ko}.

Note that one of the basic tools of studying prismatic theory with constant coefficient is the Hodge--Tate specialization. The same spirit applies to the case of non-constant coefficients. The main goal of this paper is to study the category $\Vect((\calO_K)_{\Prism},\overline\calO_{\Prism})$ of the Hodge--Tate crystals on the absolute prismatic site $(\calO_K)_{\Prism}$. These objects are interesting in their own right and indeed provide important information about prismatic crystals. We will give an explicit description of the categories $\Vect((\calO_K)_{\Prism},\overline\calO_{\Prism})$ and $\Vect((\calO_K)_{\Prism},\overline\calO_{\Prism}[\frac{1}{p}])$ and show their close relation with semi-linear $\bC_p$-representations of $G_K$, especially the theory of Sen operator.


On the other hand, crystals on the relative prismatic site are closely related to Higgs bundles as shown in \cite{MT}, \cite{Tian} at least in the local case. This suggests the possibility of using prismatic theory to study $p$-adic non-abelian Hodge theory. This paper is actually our first step towards this purpose and is devoted to the case of $\Spf(\calO_K)$. Some results have been generalised to the higher dimensional case in our subsequent paper \cite{MW-b}.
We also want to mention that independent of our work, Bhatt and Lurie have also studied some aspects of the Hodge--Tate crystals on the absolute prismatic site. For more information, we refer to our Remark \ref{BL's work}.

\subsection{Main results}
Now we state our main results. The readers are referred to Subsection \ref{notation} for notations. 
\subsubsection{Fundamental results on Hodge--Tate crystals}
As we have mentioned in the overview, the purpose of this paper is to study Hodge--Tate crystals over $\calO_K$, which are defined as follows:
 \begin{dfn}[Definition \ref{Dfn-Hodge--Tate crystal}]
    By a {\bf Hodge--Tate crystal} on $(\calO_K)_{\Prism}$, we mean a sheaf $\bM$ of $\overline \calO_{\Prism}$-modules such that for any $(A,I)\in(\calO_K)_{\Prism}$, $\bM((A,I))$ is a finite projective $A/I$-module and that for any morphism $(A,I)\to (B,J)$, the following natural map is an isomorphism
    \[\bM((A,I))\otimes_AB\xrightarrow{\cong}\bM((B,I)).\]
    Let $\Vect((\calO_K)_{\Prism},\overline \calO_{\Prism})$ denote the category of Hodge--Tate crystals on $(\calO_K)_{\Prism}$. One can define {\bf rational Hodge--Tate crystals} by using $\overline \calO_{\Prism}[\frac{1}{p}]$ instead of $\overline \calO_{\Prism}$ and denote the corresponding category by $\Vect((\calO_K)_{\Prism},\overline \calO_{\Prism}[\frac{1}{p}])$.
 \end{dfn}
 Our first main result is an explicit description of the categories $\Vect((\calO_K)_{\Prism},\overline \calO_{\Prism})$, $\Vect((\calO_K)_{\Prism},\overline \calO_{\Prism}[\frac{1}{p}])$:
 \begin{thm}[Theorem \ref{Thm-HTC}]\label{Intro-equivalence}
   The evaluation at the Breuil--Kisin prism $(\frakS,(E))$ induces an equivalence from the category $\Vect((\calO_K)_{\Prism},\overline \calO_{\Prism})$ (resp. $\Vect((\calO_K)_{\Prism},\overline \calO_{\Prism}[\frac{1}{p}])$) of Hodge--Tate crystals (resp. rational Hodge--Tate crystals) to the category of pairs $(M,\phi_M)$, where $M$ is a finite free $\calO_K$-module (resp. finite free $K$-module) and $\phi_M$ is an $\calO_K$-linear (resp. $K$-linear) endomorphism of $M$ satisfying 
   \[\lim_{n\to+\infty}\prod_{i=0}^{n}(\phi_M+iE'(\pi)) = 0.\]
 \end{thm}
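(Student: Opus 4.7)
The plan is to prove the equivalence by flat descent along the Breuil--Kisin prism $(\frakS,(E))$. The first step is to invoke the standard fact from prismatic theory that $(\frakS,(E))$ is a cover of the final object of the topos $\Shv((\calO_K)_{\Prism})$; combined with the crystal condition, this reduces giving a Hodge--Tate crystal $\bM$ to giving the finite projective $\calO_K=\frakS/E$-module $M:=\bM((\frakS,(E)))$ together with a stratification on the Čech nerve $(\frakS^{(\bullet)},(E))$ reduced modulo $I$. Since $\calO_K$ is a complete DVR, the module $M$ is automatically free, which is where the ``finite free'' part of the conclusion comes from; the rational version will follow formally by inverting $p$.

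The second step is an explicit computation of $\frakS^{(n)}/E$ for $n=1,2$. Writing $u_0,u_1$ for the two images of $u$ in the $(p,E)$-completed tensor product $\frakS\widehat\otimes_{\bZ_p}\frakS$, the self-coproduct in the prismatic site is the prismatic envelope with respect to the element $u_1-u_0$. I expect $\frakS^{(1)}/E$ to come out as a divided-power-style $\calO_K$-algebra $\calO_K\{X\}^{\wedge}_{\pd}$, topologically generated by the class $X$ of $u_1-u_0$, so that the two face maps $p_0,p_1:\calO_K\to\frakS^{(1)}/E$ differ precisely by $X$. A parallel but slightly larger analysis for $\frakS^{(2)}/E$ will be needed to phrase the cocycle condition.

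The third step is the translation of descent data into the endomorphism $\phi_M$. A stratification is an isomorphism $\epsilon: M\otimes_{p_0}\frakS^{(1)}/E\xrightarrow{\cong}M\otimes_{p_1}\frakS^{(1)}/E$ restricting to the identity along the diagonal and satisfying the usual cocycle. Because $\frakS^{(1)}/E$ is generated (in the PD sense) by the single element $X$, such an $\epsilon$ is determined by an expansion $\epsilon(m\otimes 1)=\sum_n \phi_n(m)\cdot \gamma_n(X)$ with $\calO_K$-linear operators $\phi_n$ on $M$. The cocycle over $\frakS^{(2)}/E$ should force the recursion $\phi_{n+1}=(\phi_M+nE'(\pi))\phi_n$ for a single operator $\phi_M:=\phi_1$, i.e.\ the closed form $\phi_n=\prod_{i=0}^{n-1}(\phi_M+iE'(\pi))$; the shift by $E'(\pi)$ at each step reflects the congruence $E(u_1)-E(u_0)\equiv E'(\pi)(u_1-u_0)$ modulo higher powers of $u_1-u_0$, which is how the Eisenstein polynomial enters the PD envelope after Hodge--Tate reduction. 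Convergence of the formal expansion of $\epsilon$ in the $(p,E)$-adic topology is then visibly equivalent to the stated nilpotency condition $\lim_n\prod_{i=0}^n(\phi_M+iE'(\pi))=0$. A quasi-inverse is constructed by reversing this procedure: given $(M,\phi_M)$ subject to the nilpotency condition, the formula for the $\phi_n$ yields a well-defined stratification, the cocycle is checked on generators, and the crystal is recovered by descent.

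The main obstacle I anticipate is the explicit computation of $\frakS^{(1)}/E$ and $\frakS^{(2)}/E$ and the derivation of the closed-form recursion for the $\phi_n$. The appearance of $E'(\pi)$ is delicate and encodes how the two $\calO_K$-structures on $\frakS^{(1)}/E$ interact with the divided power of $u_1-u_0$; getting this correct, and verifying compatibility of the recursion with the cocycle on $\frakS^{(2)}/E$, is where the real work sits. The rational version follows by base change once the integral equivalence is in place, noting that the nilpotency condition is preserved and the module-theoretic conclusions become simply finite-dimensional $K$-vector spaces.
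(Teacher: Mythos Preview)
Your plan is correct and follows the paper's approach closely: reduce to stratifications with respect to the \v{C}ech nerve of the Breuil--Kisin cover, identify $\frakS^{(n)}/(E)$ with the $p$-complete free PD-polynomial ring $\calO_K\{X_1,\dots,X_n\}^{\wedge}_{\pd}$, and show the cocycle condition is equivalent to the recursion $\phi_{n+1}=(\phi_M+nE'(\pi))\phi_n$.

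Two technical points to correct before you execute. First, the generator $X$ is the image of $\frac{u_0-u_1}{E(u_0)}$, not of $u_1-u_0$; the latter is congruent to $0$ modulo $E$, so cannot generate anything, whereas the former is exactly the element adjoined by the prismatic envelope and carries the PD structure. Second, the two face maps $\calO_K\rightrightarrows\frakS^{(1)}/(E)$ coincide (both $u_i$ reduce to $\pi$), so $E'(\pi)$ does not enter there; it enters through the face map $p_{12}:\frakS^{(1)}/(E)\to\frakS^{(2)}/(E)$, which sends $X\mapsto (X_2-X_1)(1-E'(\pi)X_1)^{-1}$ because $\frac{E(u_0)}{E(u_1)}\equiv (1-E'(\pi)X_1)^{-1}\pmod{E}$. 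This twist is precisely what produces your recursion when you compare $p_{02}^*(\varepsilon)$ with $p_{12}^*(\varepsilon)\circ p_{01}^*(\varepsilon)$.
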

 
In fact, the Breuil--Kisin prism $(\frakS,(E))$ is a cover of the final object of ${\rm Shv}((\calO_K)_{\Prism})$. So we can interpret Hodge--Tate crystals as stratifications with respect to the \v Cech nerve
$(\frakS^{\bullet},(E))$ associated with $(\frakS,(E))$. Although $\frakS^{\bullet}$ is extremely complicated, the structure of $\frakS^{\bullet}/E$ turns out to be easier (see Lemma \ref{Equ-structure morphism on variables}).
 
Thanks to Theorem \ref{Intro-equivalence}, one can use some simple linear-algebraic objects to study Hodge--Tate crystals. In particular, we hope to read the cohomology of Hodge--Tate crystals from their associated pairs. This is our second main result.
\begin{thm}[Theorem \ref{Thm-prismatic cohomology}]\label{Intro-prism coho}
  Let $\bM\in \Vect((\calO_K)_{\Prism},\overline{\calO}_{\Prism})$ or $\Vect((\calO_K)_{\Prism},\overline \calO_{\Prism}[\frac{1}{p}])$  with associated pair $(M,\phi_M)$. Then there exists a quasi-isomorphism
  \[\rR\Gamma_{\Prism}(\bM)\simeq [M\xrightarrow{\phi_M}M].\]
\end{thm}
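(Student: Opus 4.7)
The plan is to compute $\rR\Gamma_{\Prism}(\bM)$ via the \v{C}ech--Alexander complex of the Breuil--Kisin cover. Since $(\frakS,(E))$ is weakly final in $(\calO_K)_{\Prism}$, one has
\[
\rR\Gamma_{\Prism}(\bM) \simeq \Tot\bigl(\bM(\frakS^{\bullet+1})\bigr),
\]
where $\frakS^{\bullet+1}$ denotes the \v{C}ech nerve of $\frakS$ in the prismatic site. The crystal condition gives $\bM(\frakS^{n+1})\cong M\otimes_{\calO_K}(\frakS^{n+1}/E)$, so the task reduces to understanding the face maps of $\frakS^{\bullet+1}/E$ and how they interact with the stratification data extracted from $(M,\phi_M)$.

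First I would invoke Lemma \ref{Equ-structure morphism on variables} to describe each $\frakS^{n+1}/E$ explicitly as a $p$-adically completed polynomial-type algebra $\calO_K\langle y_1,\ldots,y_n\rangle$ over $\calO_K$, the generators $y_i$ encoding differences between the natural lifts of $\pi$, with face/degeneracy maps given by simple combinatorial formulas. Next, unwinding the proof of Theorem \ref{Intro-equivalence}, the stratification $\epsilon\colon p_0^*M\xrightarrow{\sim}p_1^*M$ on $\frakS^2/E$ must take the Taylor-series form
\[
\epsilon(m) = \sum_{n\geq 0}\frac{1}{n!}\prod_{i=0}^{n-1}\bigl(\phi_M+iE'(\pi)\bigr)(m)\otimes y^n,
\]
whose convergence is precisely the nilpotency hypothesis on $\phi_M$. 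In particular, the linear term in $y$ of $\epsilon-\id$ is essentially $\phi_M$, giving a natural candidate map $[M\xrightarrow{\phi_M}M]\to \Tot(\bM(\frakS^{\bullet+1}))$ sending the degree-$0$ copy of $M$ to the constants and the degree-$1$ copy to $M\cdot y\subset \bM(\frakS^2)$.

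Finally, I would verify that this candidate is a quasi-isomorphism by a Poincar\'e-lemma argument in the variables $y_1,\ldots,y_n$: formal integration in $y$ produces a contracting homotopy on the full \v{C}ech--Alexander complex away from its $y$-constant and $y$-linear pieces, leaving exactly the two-term complex $[M\xrightarrow{\phi_M}M]$. The main obstacle will be convergence. Because $\phi_M$ is only topologically nilpotent in the weak sense of Theorem \ref{Intro-equivalence}, the coefficients appearing in both the stratification and the contracting homotopy carry products $\prod_{i=0}^{n-1}(\phi_M+iE'(\pi))$ together with factorial denominators arising from $y$-integration; one must track these $p$-adic estimates carefully to ensure that every series is well-defined on the $p$-adically completed complex and that the homotopy actually contracts it. Once these estimates are in place, the quasi-isomorphism follows.
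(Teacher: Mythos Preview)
Your setup through the \v{C}ech--Alexander complex and the explicit form of the stratification is essentially the same as the paper's. Two points, however, deserve attention.

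First, a correction on the ring structure. By Lemma~\ref{pd polynomial}, $\frakS^{n+1}/E$ is not a Tate algebra $\calO_K\langle y_1,\dots,y_n\rangle$ but the $p$-complete free \emph{divided-power} algebra $\calO_K\{X_1,\dots,X_n\}^{\wedge}_{\pd}$. This matters: the stratification is $\sum_{n\geq 0}A_nX^{[n]}$ with $A_n=\prod_{i=0}^{n-1}(\phi_M+iE'(\pi))$, and convergence holds precisely because $A_n\to 0$ and $X^{[n]}$ is already an element of the ring. Your formulation with $\frac{1}{n!}y^n$ is correct only after one realises that $\frac{y^n}{n!}=y^{[n]}$ is integral; otherwise the factorial denominators are not a separate convergence issue but are absorbed into the PD structure.

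Second, and more seriously, your proposed final step diverges from the paper and contains a genuine gap. You suggest a Poincar\'e-lemma style contracting homotopy via ``formal integration in $y$''. The paper does \emph{not} proceed this way. For $\rH^0$ and $\rH^1$ it computes the kernel of $d^1$ directly (showing it equals $F_A(X_1)M$), and for the vanishing of $\rH^s$ with $s\geq 2$ it carries out a long, explicit combinatorial argument (relegated to the Appendix) that pins down a determining subset $\Lambda_s\subset\bN^s$ of coefficient indices and then constructs, by hand, a preimage $\vec g$ of each cocycle $\vec f$. In fact, the paper explicitly flags your approach as something the authors could not complete: Remark~\ref{Rmk-Homotopy} says that they \emph{believe} $\rho_M\circ\rho_M'$ is homotopic to the identity on $\check{\rC}\rA(M)$ but ``do not know how to prove it''. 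The obstruction is that the \v{C}ech differentials (\ref{Equ-differential in explicit way-III}) are alternating sums of face maps twisted by $(1-E'(\pi)X_1)^{-A/E'(\pi)}$, not derivations in the $X_i$; naive integration in one variable does not intertwine with these twists, and producing a genuine contracting homotopy requires controlling all the cross-terms simultaneously. Unless you can write down such a homotopy explicitly and verify it against the twisted face maps, your argument is incomplete at exactly the step where the paper invests the most effort.
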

\begin{rmk}\label{BL's work}
  The explicit description of Hodge--Tate crystals was also obtained by Bhatt and Lurie. See \cite[Theorem 3.5.8]{BL-a} and \cite[Example 9.6]{BL-b} for relevant results. In particular, when $\calO_K = \rW(k)$ is unramified, they also computed the cohomology of Hodge--Tate crystals in \cite[Proposition 3.5.11]{BL-a}. Our approach is different from theirs and provides a uniform way to handle both unramified and ramified cases.
\end{rmk}

As an application, we get the cohomological dimension of prismatic crystals in $\Vect((\calO_K),\calO_{\Prism})$. This can be viewed as an evidence of a conjecture of Bhatt--Lurie \cite[Conjecture 10.1]{BL-b}.
\begin{thm}[Theorem \ref{non-reduced coh}]\label{Intro-coho dim}
 Let $\bM\in \Vect((\calO_K)_{\Prism},\calO_{\Prism})$ be a prismatic crystal. Then we have $\rH^i((\calO_K)_{\Prism},\bM)=0$ for all $i>1$.
\end{thm}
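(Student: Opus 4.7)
The strategy is to reduce to Theorem \ref{Intro-prism coho} via the $\IPrism$-adic filtration of $\bM$. The key observation is that each graded piece $\IPrism^n \bM / \IPrism^{n+1} \bM$ is canonically isomorphic to $(\bM/\IPrism\bM) \otimes_{\overline\calO_{\Prism}} (\IPrism/\IPrism^2)^{\otimes n}$, and hence is itself a Hodge--Tate crystal: it is the tensor product of the Hodge--Tate specialization $\bM/\IPrism\bM$ with a tensor power of the invertible Breuil--Kisin twist $\IPrism/\IPrism^2$. By Theorem \ref{Intro-prism coho} each such Hodge--Tate crystal has cohomology computed by a two-term complex, and therefore has cohomological dimension at most one. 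Feeding this into the long exact sequences associated to
\[
0 \to \IPrism^n \bM / \IPrism^{n+1} \bM \to \bM / \IPrism^{n+1} \bM \to \bM / \IPrism^n \bM \to 0
\]
and inducting on $n$, I obtain $\rH^i((\calO_K)_{\Prism}, \bM/\IPrism^n \bM) = 0$ for every $i \geq 2$ and every $n \geq 1$.

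Next I pass to the limit. Since $M := \bM(\frakS)$ is a finite free $\frakS$-module, it is $\IPrism$-adically complete, and $\bM$ identifies with $\Rlim_n \bM/\IPrism^n \bM$ as a sheaf on $(\calO_K)_{\Prism}$. The Milnor short exact sequence
\[
0 \to {\lim}^1_n \rH^{i-1}((\calO_K)_{\Prism}, \bM/\IPrism^n \bM) \to \rH^i((\calO_K)_{\Prism}, \bM) \to \lim_n \rH^i((\calO_K)_{\Prism}, \bM/\IPrism^n \bM) \to 0
\]
kills the right term for $i \geq 2$ and the left term for $i \geq 3$ by the previous paragraph. It remains to handle $i = 2$, where the sequence gives $\rH^2((\calO_K)_{\Prism}, \bM) \cong {\lim}^1_n \rH^1((\calO_K)_{\Prism}, \bM/\IPrism^n \bM)$. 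Unwinding Theorem \ref{Intro-prism coho}, this $\rH^1$ is the cokernel of an endomorphism on the finite $\frakS/E^n$-module $M/E^n M$, and the transition maps in the tower are induced by the canonical surjections $M/E^{n+1} M \twoheadrightarrow M/E^n M$; the resulting system of cokernels is surjective and hence Mittag--Leffler, so the ${\lim}^1$ vanishes.

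The two technical points that need careful verification are (i) the identification $\bM \simeq \Rlim_n \bM/\IPrism^n \bM$ at the sheaf level, which is handled by derived $(p,\IPrism)$-completeness of $\calO_{\Prism}$ together with the finite freeness of $\bM(\frakS)$, and (ii) the compatibility of the two-term complexes from Theorem \ref{Intro-prism coho} across the tower, which is traced through the functoriality of the construction underlying that theorem with respect to the cover $(\frakS,(E))$. I expect (i) to be the main obstacle, since propagating the $\IPrism$-adic completeness from an evaluation on a cover to a derived completeness statement for the sheaf requires some care; the main conceptual content of the proof, however, is the reduction to Hodge--Tate crystals via the $\IPrism$-adic filtration, after which Theorem \ref{Intro-prism coho} does essentially all of the work.
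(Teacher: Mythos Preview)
Your approach is essentially the paper's: reduce $\bM$ to its truncations $\bM/\IPrism^n\bM$ via a derived limit, handle each truncation by d\'evissage to Hodge--Tate crystals using Theorem \ref{Intro-prism coho}, and control the remaining ${\lim}^1$ via Mittag--Leffler. The paper organises the d\'evissage with the sequence $\bM/E^n \xrightarrow{\times E}\bM/E^{n+1}\to \bM/E$ rather than your filtration by $\IPrism^n\bM/\IPrism^{n+1}\bM$, and passes to a restricted site so that ``$\bM/E^n$'' makes literal sense, but these differences are cosmetic.

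There is one small gap. Your Mittag--Leffler argument asserts that $\rH^1((\calO_K)_{\Prism},\bM/\IPrism^n\bM)$ is the cokernel of an endomorphism of $M/E^nM$, citing Theorem \ref{Intro-prism coho}; but that theorem only gives a two-term complex for $n=1$, and nothing in your argument produces such a description for $n>1$. The conclusion you want (surjectivity of the transition maps on $\rH^1$) is nonetheless immediate from the long exact sequence you already wrote down: the connecting map lands in $\rH^2(\IPrism^n\bM/\IPrism^{n+1}\bM)$, which vanishes by the previous paragraph, so
\[
\rH^1((\calO_K)_{\Prism},\bM/\IPrism^{n+1}\bM)\longrightarrow \rH^1((\calO_K)_{\Prism},\bM/\IPrism^n\bM)
\]
is surjective without any explicit model for either side. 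This is exactly how the paper argues. For your point (i), the paper obtains $\bM\simeq\Rlim_n\bM/\IPrism^n\bM$ from repleteness of the prismatic topos (via \cite[Proposition 3.1.10]{BS}), which resolves the obstacle you anticipated.
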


\subsubsection{Semi-linear $\bC_p$-representations}
 As an important motivation of our study, we show that (rational) Hodge--Tate crystals can be understood as semi-linear $\Cp$-representations of $G_K$. The latter is also known as the $v$-bundles on $\Spa(K,\calO_K)_v$. It is worthwhile to mention that rationally, the perfect absolute prismatic site of a smooth $p$-adic formal scheme is roughly the same as the $v$-site of its generic fiber. So we can use absolute perfect prismatic site as a bridge to connect Hodge--Tate crystals to $v$-bundles. 
\begin{thm}[Theorem \ref{rational crystal as representation}]\label{Intro-crystal vs rep}  
  The evaluation at the Fontaine prism $(A_{\inf}=W(\calO_{\hat K_{\cyc,\infty}^{\flat}}),(\xi))$ induces an equivalence from the category $\Vect((\calO_K)^{\perf}_{\Prism},\overline \calO_{\Prism}[\frac{1}{p}])$ to the category $\Rep_{\hat G}(\hat K_{\cyc,\infty})$ of semi-linear $\hat K_{\cyc,\infty}$-representations of $\hat G$.
\end{thm}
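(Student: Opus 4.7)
The plan is to realise the evaluation functor as a descent equivalence along the Fontaine prism. The functor itself is straightforward: since $\hat G = \Gal(\hat K_{\cyc,\infty}/K)$ acts $\calO_K$-linearly on $A_{\inf} = W(\calO_{\hat K_{\cyc,\infty}^{\flat}})$ and preserves the ideal $(\xi)$, evaluating any rational Hodge--Tate crystal $\bM$ at $(A_{\inf},(\xi))$ produces a finite projective $A_{\inf}/\xi[\frac{1}{p}] = \hat K_{\cyc,\infty}$-module equipped with a continuous semi-linear $\hat G$-action, and hence an object of $\Rep_{\hat G}(\hat K_{\cyc,\infty})$. What needs proof is that this functor is an equivalence.

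First I would verify that $(A_{\inf},(\xi))$, together with its $\hat G$-action, covers the final object of ${\rm Shv}((\calO_K)^{\perf}_{\Prism})$. Via the Bhatt--Scholze correspondence between perfect prisms and integral perfectoid rings, this reduces to the statement that every perfectoid $\calO_K$-algebra admits a $v$-cover by $\calO_{\hat K_{\cyc,\infty}}$, which follows from $\hat K_{\cyc,\infty}$ containing compatible systems of $p$-power roots of unity and of the uniformiser $\pi$. Next I would compute the \v Cech nerve: writing $A_{\inf}^{\otimes (n+1)}$ for the $(n+1)$-fold self-coproduct in $(\calO_K)^{\perf}_{\Prism}$, the key claim is a natural $\hat G^{n+1}$-equivariant isomorphism
\[
A_{\inf}^{\otimes (n+1)}/\xi \bigl[\tfrac{1}{p}\bigr] \;\xrightarrow{\;\sim\;}\; C^{0}\bigl(\hat G^{\,n},\,\hat K_{\cyc,\infty}\bigr),
\]
under which the two structure maps $A_{\inf} \rightrightarrows A_{\inf}^{\otimes 2}$ correspond respectively to the inclusion of constants and to the twisted diagonal $f \mapsto (g \mapsto g(f))$. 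Granting this, prismatic descent along the above cover presents a rational Hodge--Tate crystal as a finite projective $\hat K_{\cyc,\infty}$-module together with a stratification on the \v Cech nerve, and the cocycle condition translates precisely into continuous semi-linearity of a $\hat G$-action; conversely, every such representation descends to a crystal by unfolding the same datum.

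The main obstacle will be the \v Cech-nerve calculation. One must verify that, after reducing modulo $\xi$ and inverting $p$, the bounded $p^{\infty}$-torsion in the self-products of $A_{\inf}$ vanishes and the resulting ring is exactly continuous functions on the corresponding power of $\hat G$, with the expected Galois-equivariance. This amounts to a rational Faltings-type almost-purity statement combined with the Tate--Sen decompletion machinery for the perfectoid field $\hat K_{\cyc,\infty}$ over $K$. Assembling these ingredients so that faithful flatness and preservation of finite projectivity survive the passage to $[\frac{1}{p}]$ is the technical heart of the argument.
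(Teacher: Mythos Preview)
Your proposal is correct and follows essentially the same route as the paper: cover the final object by $(A_{\inf},(\xi))$, identify the reduced rationalized \v Cech nerve with $\rC(\hat G^{\bullet},\hat K_{\cyc,\infty})$, and then read off the equivalence between stratifications and continuous semi-linear $\hat G$-actions by Galois descent.

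One point worth sharpening: for the \v Cech-nerve computation you invoke ``Tate--Sen decompletion machinery,'' but that is not what is actually needed (and would be awkward to make precise here). The paper's argument is cleaner: by the Bhatt--Scholze equivalence between perfect prisms and integral perfectoid rings, the $(n+1)$-fold self-coproduct of $(A_{\inf},(\xi))$ in $(\calO_K)^{\perf}_{\Prism}$ reduces modulo $\xi$ to the perfectoidization of the $(n+1)$-fold $p$-complete tensor product of $\calO_{\hat K_{\cyc,\infty}}$ over $\calO_K$. After inverting $p$, this is the ring of functions on the $(n+1)$-fold fibre product of $\Spa(\hat K_{\cyc,\infty},\calO_{\hat K_{\cyc,\infty}})$ over $\Spa(K,\calO_K)$ in perfectoid spaces; since $\Spa(\hat K_{\cyc,\infty})\to\Spa(K)$ is a pro-\'etale $\hat G$-torsor, this fibre product is $\hat G^{n}\times\Spa(\hat K_{\cyc,\infty})$, giving $\rC(\hat G^{n},\hat K_{\cyc,\infty})$ directly. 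The descent step for finite projective modules is then supplied by $v$-descent of vector bundles on perfectoid spaces (as in \cite{SW} or \cite{KL}), not by any flatness argument on the integral level.
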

  Recall that by almost purity theorem due to Faltings, there is a canonical equivalence of categories
  \[\Rep_{\hat G}(\hat K_{\cyc,\infty})\simeq \Rep_{G_K}(\Cp).\]
  Combining this with Theorem \ref{Intro-crystal vs rep}, the restriction functor
  \[\Vect((\calO_K)_{\Prism},\overline \calO_{\Prism}[\frac{1}{p}])\to \Vect((\calO_K)^{\perf}_{\Prism},\overline \calO_{\Prism}[\frac{1}{p}])\]
  gives rise to a natural functor
  \[V: \Vect((\calO_K)_{\Prism},\overline \calO_{\Prism}[\frac{1}{p}])\to\Rep_{G_K}(\Cp).\]
  An intereting result is that this functor $V$ turns out to be fully faithful. Indeed, we have
  \begin{thm}[Theorem \ref{representation comes from crystal}, Proposition \ref{Prop-fully faithful}]\label{Intro-crystal as rep}
    Let $\bM\in \Vect((\calO_K)_{\Prism},\overline \calO_{\Prism}[\frac{1}{p}])$ be a rational Hodge--Tate crystal with the associated pair $(M,\phi_M)$. Then the semi-linear $\Cp$-representation $V(\bM)$ is determined by the cocycle
    \[g\mapsto (1-c(g)\lambda \pi (1-\zeta_p)E'(\pi))^{-\frac{\phi_M}{E'(\pi)}},\]
    where $\lambda\in\calO_{\hat K_{\cyc,\infty}}^{\times}$ is the image of $\frac{\xi}{E([\pi^{\flat}])}\in A_{\inf}$ modulo $\xi$ and $c(g)\in \Zp$ is determined by $g(\pi^{\flat}) = \epsilon^{c(g)}\pi^{\flat}$ for any $g\in G_K$. Moreover, the functor
    \[V: \Vect((\calO_K)_{\Prism},\overline \calO_{\Prism}[\frac{1}{p}])\to\Rep_{G_K}(\Cp)\]
    is fully faithful.
  \end{thm}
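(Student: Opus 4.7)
The plan is to compute $V(\bM)$ by transporting the Breuil--Kisin description of Theorem \ref{Intro-equivalence} along the canonical morphism of prisms $(\frakS,(E))\to(\AAinf,(\xi))$ sending $u\mapsto[\pi^{\flat}]$, and then to extract the Galois cocycle from the crystal's stratification data on the self-product $\frakS^{(2)}$.

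First I would reinterpret a rational Hodge--Tate crystal $\bM$ as a stratification on $M=\bM((\frakS,(E)))$ along the \v Cech nerve $\frakS^{(\bullet+1)}/E$; by Lemma \ref{Equ-structure morphism on variables} this nerve is explicit in a single variable $y$, and Theorem \ref{Intro-equivalence} identifies the stratification with the formal power series
\[\varepsilon \;=\; \sum_{n\geq 0}\frac{1}{n!\,E'(\pi)^n}\,\Bigl(\prod_{i=0}^{n-1}(\phi_M+iE'(\pi))\Bigr)\,y^n,\]
which converges because of the nilpotency condition. Evaluation at the Fontaine prism $(\AAinf,(\xi))$ produces the underlying $\Cp$-module $V(\bM)=M\otimes_K\Cp$, so the content is entirely in the $G_K$-action.

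For $g\in G_K$, the pair of morphisms $\iota,\,g\circ\iota\colon(\frakS,(E))\to(\AAinf,(\xi))$ factors through $\frakS^{(2)}$, and the cocycle $\rho(g)$ is obtained by specializing $\varepsilon$ along the induced map $\frakS^{(2)}/E\to\AAinf/\xi=\calO_{\Cp}$. The main technical step is to identify this specialization: using $g([\pi^{\flat}])=[\epsilon]^{c(g)}[\pi^{\flat}]$, the identity $\xi=\lambda^{-1}E([\pi^{\flat}])$ (up to a unit) in $\AAinf$, and the standard relation between $[\epsilon]-1$ and $\xi$, one shows that $y$ is sent to $c(g)\lambda\pi(1-\zeta_p)E'(\pi)\in\calO_{\Cp}$. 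Plugging this into $\varepsilon$ collapses the series to the binomial expression
\[\rho(g) \;=\; \bigl(1-c(g)\lambda\pi(1-\zeta_p)E'(\pi)\bigr)^{-\phi_M/E'(\pi)}\]
asserted in the statement, and the cocycle identity for $\rho$ follows from the cocycle condition already satisfied by $\varepsilon$.

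For the second assertion, by the internal tensor--Hom formalism it suffices to show $V(\bM)^{G_K}=\ker(\phi_M)$ for every rational Hodge--Tate crystal $\bM$. The inclusion ``$\supseteq$'' is immediate, since every term of $\varepsilon$ with $n\geq 1$ factors through $\phi_M$ and hence annihilates $\ker\phi_M$. For ``$\subseteq$'' I would first reduce via Faltings' almost purity and Ax--Sen--Tate to $\hat G$-invariants of $M\otimes_K\hat K_{\cyc,\infty}$, and then differentiate the cocycle along a topological generator of the pro-cyclic part of $\hat G$: in the limit, the cocycle genuinely exponentiates the operator $\phi_M$, so invariant sections must lie in $\ker\phi_M$. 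The main obstacle throughout is the first half, namely the careful identification of $y$ modulo $\xi$ inside $\AAinf$ and the combinatorial collapse of $\varepsilon$ into a clean binomial series; once the explicit cocycle is in hand, full faithfulness is essentially a Sen-theoretic calculation.
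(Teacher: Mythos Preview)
Your approach is essentially the paper's: specialize the stratification $\varepsilon=(1-E'(\pi)X)^{-\phi_M/E'(\pi)}$ (Remark~\ref{Rmk-stratification and Sen}) along $\frakS^1/E\to\rC(\hat G,\hat K_{\cyc,\infty})$, where one computes $X(g)=c(g)\lambda\pi(1-\zeta_p)$ (Proposition~\ref{image of X}); for full faithfulness, differentiate the cocycle to obtain $V(\bM)^{\hat G}\subset M^{\phi_M=0}\otimes_K\hat K_{\cyc,\infty}$ and then descend (the paper finishes with a dimension count rather than Ax--Sen--Tate, but either works). One minor slip to flag: your series for $\varepsilon$ carries a spurious $E'(\pi)^{-n}$ and your specialization of $y$ a spurious factor of $E'(\pi)$---these cancel, so the final cocycle is correct, but individually the two intermediate formulas are off.
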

  
  By classical Sen theory, a semi-linear $\Cp$-representation $V$ is uniquely determined by a linear operator, which is known as the {\bf Sen operator} associated to $V$. We may then ask whether the Sen operator of a semi-linear $\Cp$-representation coming from a rational Hodge--Tate crystal can be read off directly from the associated linear-algebraic data provided by Theorem \ref{Intro-equivalence}. Indeed, we have the following conjecture.
  
\begin{conj}[Conjecture \ref{Conj-Sen operator}]\label{Intro-conj-Sen}
  Let $\bM$ be a rational Hodge--Tate crystal in $\Vect((\calO_K)_{\Prism},\overline \calO_{\Prism}[\frac{1}{p}])$ with the associated pair $(M,\phi_M)$ and corresponding semi-linear $\Cp$-representation $V(\bM)$ of $G_K$. Then the Sen operator of $V(\bM)$ is 
  \[\Theta_{V(\bM)} = \frac{-\phi_M}{E'(\pi)}.\]
\end{conj}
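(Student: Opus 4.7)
I would compute the classical Sen operator of $V(\bM)$ directly from the cocycle formula in Theorem~\ref{Intro-crystal as rep}. By Sen theory, for any continuous $\Cp$-representation $V$ of $G_K$ the descent $V_\infty := V^{H_K}$, where $H_K = \Gal(\overline{K}/\hat K_{\cyc})$, is a finite-dimensional $\hat K_{\cyc}$-semilinear representation of $\Gamma_K = \Gal(\hat K_{\cyc}/K)$, on which the Sen operator $\Theta_V$ is defined as the derivative of the $\Gamma_K$-action. Accordingly, the problem splits into (i) descending the $\hat K_{\cyc,\infty}$-representation attached to the cocycle $\rho(g) = (1 - c(g)\mu)^{-\phi_M/E'(\pi)}$ along $H = \Gal(\hat K_{\cyc,\infty}/\hat K_{\cyc})$, and (ii) reading off the infinitesimal $\Gamma_K$-action on the resulting descent.

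\textbf{Step (i): descent.} The key analytic input is the identity $(g-1)\log([\pi^\flat]/\pi) = c(g)\cdot t$ in $\BdRp$, where $t = \log[\epsilon]$; equivalently, the Kummer cocycle $c$ becomes a coboundary after inverting $t$. A preliminary computation modulo $\xi^2$ in $\BdRp$, using $E([\pi^\flat]) = \lambda^{-1}\xi \equiv E'(\pi)([\pi^\flat]-\pi)\pmod{\xi^2}$ and $t \equiv [\epsilon] - 1 \pmod{\xi^2}$, exhibits $\mu = \lambda\pi(1-\zeta_p)E'(\pi)$ as the reduction mod $\xi$ of a natural expression in $t$ and $\xi$. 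With this matching in hand, the descent matrix is expected to arise from applying the functional calculus for $\phi_M$ to $\log([\pi^\flat]/\pi)$, schematically $U = \exp(-\phi_M \log([\pi^\flat]/\pi)/E'(\pi))$. The nilpotency condition $\lim_{n\to\infty}\prod_{i=0}^n(\phi_M + iE'(\pi)) = 0$ from Theorem~\ref{Intro-equivalence} is precisely what is needed to make the relevant series converge, so that $U$ is a well-defined endomorphism satisfying $\sigma_h(U) = \rho(h)^{-1}U$ for all $h \in H$.

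\textbf{Step (ii): differentiation.} After the change of basis by $U$, the action of $\gamma \in \Gamma_K$ is given by the conjugated cocycle $\rho'(\gamma) = U^{-1}\rho(\gamma)\sigma_\gamma(U)$, which by construction lands in $\GL(V_\infty)$. Using $\sigma_\gamma(t) = \chi(\gamma) t$ together with the key identity above, one expands
$$\log \rho'(\gamma) = -\frac{\phi_M}{E'(\pi)}\log\chi(\gamma) + O\bigl(\log\chi(\gamma)^2\bigr),$$
from which $\Theta_{V(\bM)} = \lim_{\gamma \to 1} \log\rho'(\gamma)/\log\chi(\gamma) = -\phi_M/E'(\pi)$ follows.

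\textbf{Main obstacle.} The principal difficulty is step (i): constructing $U$ explicitly and showing that the descended representation is actually defined over $\hat K_{\cyc}$ rather than merely over $\BdRp$, i.e.\ that the higher-order $\xi$-contributions to $U$ collapse after taking $H$-invariants. A secondary technical issue is defining $([\pi^\flat]/\pi)^{\phi_M/E'(\pi)}$ when $E'(\pi)$ is not a unit in $\calO_K$; this forces one to work throughout on the rational side $\Vect((\calO_K)_{\Prism},\overline{\calO}_{\Prism}[\frac{1}{p}])$, and to exploit the nilpotency estimate from Theorem~\ref{Intro-equivalence} to control the denominators appearing in the iterated functional calculus.
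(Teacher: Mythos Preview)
The first thing to note is that the paper does \emph{not} prove this statement: it is formulated as a conjecture (Conjecture~\ref{Conj-Sen operator}), and the paper explicitly records (see the remark after Conjecture~\ref{Conj-Sen operator} and Remark~\ref{Gao Hui}) that it was subsequently established by Gao using the Sen theory via locally analytic vectors of Berger--Colmez. So there is no ``paper's own proof'' to compare against; what one can compare is your outline to Gao's method, and the two are genuinely different in spirit.

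Your overall plan---descend the $\hat K_{\cyc,\infty}$-representation along $\langle\tau\rangle$ to a $\hat K_{\cyc}$-representation, then differentiate the resulting $\Gamma$-cocycle---is the natural one, and the reduction is legitimate since the chain of equivalences $\Rep_{G_K}(\Cp)\simeq\Rep_{\hat G}(\hat K_{\cyc,\infty})\simeq\Rep_{\Gamma}(\hat K_{\cyc})$ (Remark~\ref{Rmk-rational crystal as representations}) identifies the Sen module with the $\tau$-descent. However, your candidate for the descent matrix is problematic. The expression $U=\exp\bigl(-\phi_M\log([\pi^{\flat}]/\pi)/E'(\pi)\bigr)$ lives in $\BdRp$, not in $\hat K_{\cyc,\infty}$; modulo $\xi$ one has $[\pi^{\flat}]=\pi$, so your $U$ reduces to the identity and trivialises nothing. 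Passing to $\BdRp$ does not help here, because a rational Hodge--Tate crystal need not lift to a $\BdRp$-lattice (it is \emph{not} assumed de Rham), so the ``higher-order $\xi$-contributions'' you worry about simply have no meaning in this setting.

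A more promising descent matrix comes directly from the paper's own formula (\ref{Equ-action on lambda}): since $\tau(\lambda)=\lambda\bigl(1-\lambda(1-\zeta_p)\pi E'(\pi)\bigr)^{-1}$, the element $\lambda^{-1}$ satisfies $\tau(\lambda^{-1})/\lambda^{-1}=1-\mu$ with $\mu=\lambda\pi(1-\zeta_p)E'(\pi)$, which matches the base of the cocycle $U(\tau)=(1-\mu)^{-\phi_M/E'(\pi)}$ in Theorem~\ref{representation comes from crystal}. So \emph{formally} $P=\lambda^{-\phi_M/E'(\pi)}$ trivialises the $\tau$-action, and the resulting $\Gamma$-cocycle is $\gamma\mapsto\bigl(\chi(\gamma)\tfrac{\zeta_p-1}{\zeta_p^{\chi(\gamma)}-1}\bigr)^{-\phi_M/E'(\pi)}$; differentiating and using $\log\zeta_p=0$ does give $-\phi_M/E'(\pi)$. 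The genuine difficulty---and the reason this remained a conjecture---is making sense of $\lambda^{-\phi_M/E'(\pi)}$ as an honest element of $\GL(M\otimes\hat K_{\cyc,\infty})$: the nilpotency condition in Theorem~\ref{Intro-equivalence} controls $(1-\mu X)^{-\phi_M/E'(\pi)}$ for $X$ topologically nilpotent, but $\lambda$ is a unit, not close to $1$, so neither $\log\lambda$ nor the functional calculus you invoke is available without further input. This is precisely where Gao's approach via locally analytic vectors enters: it supplies the analytic framework in which the required operator can be defined and the differentiation step carried out rigorously. Your sketch correctly identifies step (i) as the obstacle, but the tools you propose ($\BdRp$-lifts, the nilpotency estimate alone) are not adequate to overcome it.
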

 Assuming Conjecture \ref{Intro-conj-Sen} is true, we then can get quasi-isomorphisms
 \[\rR\Gamma_{\Prism}(\bM)\otimes_K\bC_p\simeq [V(\bM)\xrightarrow{\Theta_{V(\bM)}}V(\bM)]\simeq \rR\Gamma(G_K,V(\bM))\otimes_K\bC_p\]
 where the second quasi-isomorphism is due to classical Sen theory.
 
 \begin{rmk}
  Recently, Conjecture \ref{Intro-conj-Sen} was proved by Hui Gao in \cite{Gao-b} by using the Sen theory formulated via locally analytic vectors due to Berger--Colmez \cite{BC}. When $\calO_K=W(k)$, Bhatt and Lurie have proved a similar result by using ${\rm WCart}^{\rm HT}_{\bZ_p}$ in \cite{BL-a}. 
 \end{rmk}

\subsubsection{Further applications}
 Assuming $K = K_{\cyc}\cap K_{\infty}$ (e.g. $p\geq 3$), we now give two further applications of our previous results.

The first one is to the theory of crystalline Breuil--Kisin modules, which are known to be equivalent to crystalline $\bZ_p$-representations by \cite{Gao}, \cite{EG}. Let $\frakM$ be a crystalline Breuil--Kisin module (see Definition \ref{Dfn-BK module}). One can define a ``$\tau$-connection'' \[\nabla_{\frakM}:\frakM \to M_{\inf} = \frakM\otimes_{\frakS}\Ainf\] satisfying several conditions (see Lemma \ref{tau connection on M}). Then this ``$\tau$-connection'' is topologically nilpotent in the following way.
 \begin{thm}[Theorem \ref{poly-nilpotency}]
   Let $\lambda:= \frac{\xi}{E([\pi^{\flat}])}$. Let $\frakM$ be a crystalline Breuil-Kisin module with a fixed $\frakS$-basis $e_1,\dots,e_l$. Denote by $\tilde A$ the matrix of $\nabla_{\frakM}=\frac{\tau-1}{\varphi^{-1}(\mu)u}:\frakM\to M_{\inf}$, then $\prod_{i=0}^{p-1}(\lambda^{-1}\tilde A+id_q(E))$ is $(p,[\pi^{\flat}])$-adic topologically nilpotent and $\lambda^{-1}\tilde A$ belongs to $\rM_l(k)$ modulo $(\xi,[\pi^{\flat}])$.
 \end{thm}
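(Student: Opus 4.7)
The plan is to reduce the statement to the Hodge--Tate setting and invoke Theorem \ref{Intro-equivalence}. The crystalline Breuil--Kisin module $\frakM$, together with its Frobenius structure, gives rise to a prismatic crystal $\bM\in \Vect((\calO_K)_{\Prism},\OPrism)$ with $\bM(\frakS, (E)) = \frakM$ (by \cite{BS-b}). Its Hodge--Tate specialization $\overline\bM := \bM\otimes_{\OPrism}\overline\calO_{\Prism}$ is a Hodge--Tate crystal with $\overline\bM(\frakS,(E)) = \frakM/E\frakM$. By Theorem \ref{Intro-equivalence}, $\overline\bM$ corresponds to a pair $(M,\phi_M)$ where $M = \frakM/E\frakM$ and $\phi_M$ is an $\calO_K$-linear endomorphism of $M$ satisfying
\[\lim_{n\to\infty}\prod_{i=0}^{n}(\phi_M+iE'(\pi)) = 0\]
in the $\pi$-adic topology of $\End_{\calO_K}(M)$.

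The heart of the proof is the explicit identification $\phi_M \equiv \lambda^{-1}\tilde A \pmod{\xi}$ together with $E'(\pi) \equiv d_q(E) \pmod{\xi}$. Both sides encode the same Hodge--Tate crystal through different prism covers: $\phi_M$ comes from the stratification along the \v Cech nerve of $(\frakS,(E))$, whose Hodge--Tate specialization is controlled by Lemma \ref{Equ-structure morphism on variables}, while $\tilde A$ comes from the $\tau$-action on $M_{\inf} = \frakM\otimes_{\frakS}\AAinf$ governing descent along the morphism of prisms $(\frakS,(E))\to(\AAinf,(\xi))$, $u\mapsto[\pi^{\flat}]$. The identity $\xi = \lambda\cdot E([\pi^{\flat}])$ forces the factor $\lambda^{-1}$, and $d_q(E)$ arises as the natural lift of $E'(\pi)$ adapted to the $q$-prism $\AAinf$. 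With this identification in hand, the finite topological nilpotency is obtained by a regrouping argument: since $\phi_M$ commutes with the scalar $E'(\pi)$, the infinite product splits into blocks of $p$ consecutive factors, each of which is congruent modulo $p$ to $\prod_{r=0}^{p-1}(\phi_M+rE'(\pi))$; convergence of the full product to zero then forces this length-$p$ block to be $(p,\pi)$-adically topologically nilpotent in $\End_{\calO_K}(M)$, and lifting back to $\rM_l(\AAinf)$ yields the first assertion. The second assertion, that $\lambda^{-1}\tilde A \in \rM_l(k)$ modulo $(\xi, [\pi^{\flat}])$, is then immediate from the $\calO_K$-linearity of $\phi_M$: in any basis of $M$ (for instance, the reduction of $e_1,\dots,e_l$), the matrix of $\phi_M$ lies in $\rM_l(\calO_K)$, whose further reduction modulo $\pi$ lies in $\rM_l(k)$, matching the image of $\lambda^{-1}\tilde A$ under $\AAinf \to \AAinf/(\xi,[\pi^{\flat}])$ via the embedding $\calO_K \hookrightarrow \AAinf/\xi = \calO_{\bC_p}$.

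The hardest part is carrying out the explicit identification in the second paragraph. It requires carefully tracking the stratification on $\frakS^{\bullet}/E$ through the change of prism $(\frakS,(E))\to(\AAinf,(\xi))$ and matching the resulting linear-algebraic data with the $\tau$-descent data built into $\nabla_{\frakM}$. The normalization constants, including the precise form of $\lambda$ and the appearance of the $q$-derivative $d_q(E)$ in place of the ordinary derivative $E'(\pi)$, must be handled with care, as they reflect the distinction between the Breuil--Kisin and Fontaine prisms; once these normalizations are settled, the remaining steps are formal.
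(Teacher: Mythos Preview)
Your approach is essentially the paper's: pass to the Hodge--Tate crystal via Theorem~\ref{DL}, relate $\lambda^{-1}\tilde A$ to the matrix $A$ of $\phi_M$, and then deduce nilpotency of the length-$p$ block from the convergence $\lim_n\prod_{i=0}^n(A+iE'(\pi))=0$ by regrouping into blocks of $p$ consecutive factors (each congruent modulo $p$ to $\prod_{r=0}^{p-1}(A+rE'(\pi))$). This is exactly how the paper concludes, citing Proposition~\ref{matrix of stratification} for the last step.

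However, the identification you state is not quite right. You claim $\phi_M\equiv\lambda^{-1}\tilde A\pmod{\xi}$, but in fact the paper shows, via the explicit cocycle formula (\ref{Equ-cocycle for rep}) of Theorem~\ref{representation comes from crystal}, that the image $B$ of $\lambda^{-1}\tilde A$ in $\rM_l(\calO_{\hat K_{\cyc,\infty}})$ satisfies $B\equiv -A\pmod{\pi}$. Two things differ from your version: the sign is opposite, and the congruence only holds after further reduction modulo $[\pi^{\flat}]$ (there are higher-order corrections in $\pi\lambda(\zeta_p-1)$ that do not vanish modulo $\xi$ alone). Neither point is fatal, since the sign is absorbed by $\prod_{i=0}^{p-1}(-A+iE'(\pi))\equiv -\prod_{i=0}^{p-1}(A+iE'(\pi))\pmod{\pi}$ and the theorem only concerns conclusions modulo $(\xi,[\pi^{\flat}])=(p,[\pi^{\flat}])$; but you should be aware that the ``hardest part'' you flag does not produce the clean identity you assert, and the concrete route is through the cocycle $U(\tau)=(1-E'(\pi)\pi\lambda(1-\zeta_p))^{-\phi_M/E'(\pi)}$ rather than an abstract matching of stratifications.
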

 To the best of our knowledge, this result is new in the theory of Breuil--Kisin modules and seems difficult to see without using prismatic theory.
 
 We also formulate the following conjecture concerning the Hodge--Tate weights of the crystalline representations through associated crystalline Breuil--Kisin modules, which can be implied by Conjecture \ref{Intro-conj-Sen}.
\begin{conj}[Conjecture \ref{Conj-weight conjecture}]
  Let $\frakM$ be a crystalline Breuil-Kisin module of rank $d$ and denote $\tilde A$ as before. Denote the Hodge-Tate weights of the associated crystalline $\Zp$-representation by $r_1\leq r_2\leq \dots\leq r_d$. Then
  \[\prod_{i=1}^d(\lambda^{-1}\tilde A+r_id_q(E))\]
  is $(p,[\pi^{\flat}])$-adic topologically nilpotent.
\end{conj}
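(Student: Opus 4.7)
The approach is to deduce the weight conjecture directly from Conjecture~\ref{Intro-conj-Sen} (the Sen-operator formula), as hinted by the author. The key idea is: for a crystalline---hence Hodge--Tate---representation, the classical Sen operator is semisimple with integer eigenvalues equal to the Hodge--Tate weights $r_1,\dots,r_d$; combined with the conjectural identity $\Theta = -\phi_M/E'(\pi)$, this forces $\prod_{i=1}^d(\phi_M + r_iE'(\pi)) = 0$ on the Hodge--Tate specialisation $M$, and a simple ideal comparison in $\Ainf$ upgrades this vanishing to the required $(p,[\pi^{\flat}])$-adic topological nilpotency.

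First I would attach to $\frakM$ a rational Hodge--Tate crystal $\bM \in \Vect((\calO_K)_\Prism, \overline\calO_\Prism[\tfrac{1}{p}])$ by reducing modulo $\calI_{\Prism}$ the rational prismatic $F$-crystal associated to the crystalline $\Zp$-representation $T(\frakM)$ via \cite{BS-b}. By Theorem~\ref{Intro-equivalence}, $\bM$ corresponds to a pair $(M,\phi_M)$ with $M = (\frakM/E\frakM)[\tfrac{1}{p}]$. Comparing the stratification data of $\bM$ along $\frakS^\bullet/E$ with the formula for the $\tau$-connection $\nabla_\frakM$ of Lemma~\ref{tau connection on M}, one verifies that, after the base change $\frakS\to\Ainf$, $u\mapsto[\pi^\flat]$, the matrix of $\phi_M$ is the reduction of $\lambda^{-1}\tilde A$ modulo $\xi$, and similarly $E'(\pi)$ is the reduction of $d_q(E)$ modulo $\xi$. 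At the same time, chasing Theorem~\ref{Intro-crystal vs rep} together with Faltings' almost purity identifies $V(\bM)$ with $T(\frakM)\otimes_{\Zp}\Cp$ as semi-linear $\Cp$-representations of $G_K$.

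Next I apply Conjecture~\ref{Intro-conj-Sen} to $\bM$: the Sen operator of $V(\bM) = T(\frakM)\otimes\Cp$ equals $\Theta = -\phi_M/E'(\pi)$. Since $T(\frakM)$ is Hodge--Tate with weights $r_1,\dots,r_d$, classical Sen theory gives that $\Theta$ is semisimple with these eigenvalues, so $\prod_{i=1}^d(\Theta-r_i)=0$ on $M\otimes_K\hat K_{\cyc,\infty}$. Clearing denominators and descending back to $M$ by flatness yields $\prod_{i=1}^d(\phi_M + r_iE'(\pi))=0$ on $M$. Using the identifications of the previous step, this translates to
\[P:=\prod_{i=1}^d\bigl(\lambda^{-1}\tilde A + r_i\,d_q(E)\bigr)\in \xi\cdot\rM_d(\Ainf).\]
Since $\xi = (\xi/E([\pi^\flat]))\cdot E([\pi^\flat])$ with $\xi/E([\pi^\flat])\in\Ainf^\times$ lifting $\lambda$, and $E([\pi^\flat])\equiv E(0) \equiv \pm p\cdot(\text{unit})\pmod{[\pi^\flat]}$, we have $\xi\in(p,[\pi^\flat])$. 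Consequently $P^n\in\xi^n\rM_d(\Ainf)\subseteq(p,[\pi^\flat])^n\rM_d(\Ainf)$ for every $n\geq 1$, so $P$ is $(p,[\pi^\flat])$-adically topologically nilpotent, as required.

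The main obstacle is the identification in the first step: matching $\phi_M$ (arising from the stratification data on the \v Cech nerve $\frakS^\bullet$ modulo $E$, governed by Lemma~\ref{Equ-structure morphism on variables}) with the mod-$\xi$ reduction of $\lambda^{-1}\tilde A$ (coming from the Fontaine-prism computation of $\nabla_\frakM$). This dictionary between the Breuil--Kisin-side stratification and the Fontaine-side $\tau$-connection is the technical heart of the argument; once it is secured, everything else is a formal consequence of the semisimplicity of the Sen operator on a Hodge--Tate representation together with the elementary ideal equality $(\xi,[\pi^\flat]) = (p,[\pi^\flat])$ in $\Ainf$.
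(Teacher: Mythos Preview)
Your strategy---deduce the weight conjecture from Conjecture~\ref{Conj-Sen operator} by using that the Sen operator of a crystalline representation has the Hodge--Tate weights as eigenvalues---is exactly the paper's Proposition~\ref{Prop-Conj imply conj}.

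There is, however, a genuine gap in your dictionary step. You assert that ``the matrix of $\phi_M$ is the reduction of $\lambda^{-1}\tilde A$ modulo $\xi$,'' but this is too strong: the reduction $B:=\lambda^{-1}\tilde A\bmod\xi$ lives in $\rM_d(\calO_{\hat K_{\cyc,\infty}})$ and has no reason to land in $\rM_d(\calO_K)$. What the paper actually proves (by comparing the $\tau$-action on $M_{\inf}/\xi$ with the cocycle formula~(\ref{Equ-cocycle for rep}); see the proof of Theorem~\ref{poly-nilpotency}) is only the congruence $B\equiv -A\bmod\pi$, where $A$ is the matrix of $\phi_M$---note both the weakening to modulo $\pi$ and the sign. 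Relatedly, in the paper's conventions the eigenvalues of $\Theta=-A/E'(\pi)$ are the $-r_i$, not the $r_i$. Your two sign discrepancies happen to cancel in the final formula, which is why your conclusion looks right, but the claimed inclusion $P\in\xi\cdot\rM_d(\Ainf)$ is unjustified. The paper's fix is simply to work modulo $(\xi,[\pi^\flat])=(p,[\pi^\flat])$ from the start: there $\prod_i(\lambda^{-1}\tilde A+r_id_q(E))\equiv\prod_i(-A+r_iE'(\pi))$, and since the $-r_i$ are exactly the eigenvalues of $-A/E'(\pi)$, this product is nilpotent over $K$, which already yields the desired $(p,[\pi^\flat])$-adic topological nilpotency.
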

 
Our second application is to give a linear-algebraic description of the category $\Vect((\calO_K)_{\Prism},\calO_{\Prism})$ of prismatic crystals by using a similar but more technical method used in \cite{MT}. Indeed, we define a category of prismatic $\frakS$-modules (see Definition \ref{Dfn-prismatic S module}), which can be understood as a ``no Frobenius'' version of the category of crystalline Breuil--Kisin modules. Then we get the following theorem:
 \begin{thm}[Theorem \ref{crystal as S module}]\label{Intro-prism mod}
  The evaluation at $(\frakS,(E))\in(\calO_K)_{\Prism}$ induces an equivalence from the category $\Vect((\calO_K)_{\Prism},\calO_{\Prism})$ of prismatic crystals to the category $\Mod_{\Prism}(\frakS)$ of prismatic $\frakS$-modules.
\end{thm}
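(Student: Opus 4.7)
The plan is to mirror the proof strategy of Theorem \ref{Intro-equivalence}, replacing the Hodge--Tate structure sheaf $\overline\calO_{\Prism}$ by $\calO_{\Prism}$ throughout. Since $(\frakS,(E))$ is a cover of the final object of $\Sh((\calO_K)_{\Prism})$, a prismatic crystal is equivalent to the datum of a stratification on the \v{C}ech nerve $(\frakS^{\bullet},(E))$. The first, and most involved, step is therefore to work out the explicit structure of the self-coproducts $\frakS^1$ and $\frakS^2$ in the prismatic site as $\delta$-rings. Writing $u_0,u_1$ for the two coordinates on $\frakS \hat\otimes_{W(k)}\frakS$ and setting $t := u_1-u_0$, one identifies $\frakS^1$ with the $(p,E)$-completed $\delta$-prismatic envelope of $W(k)[[u_0,u_1]]$ along the ideal $(E(u_0),t)$; introducing the variable $y := t/E$ then realises $\frakS^1$ as an explicit $\frakS$-algebra in one extra variable, and a parallel but more elaborate computation handles $\frakS^2$.

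With such a coordinate description in hand, a prismatic crystal $\bM$ with $\bM(\frakS,(E)) = \frakM$ is the same as a finite projective $\frakS$-module $\frakM$ together with an $\frakS^1$-linear isomorphism
\[\varepsilon \colon \frakM\otimes_{\frakS,p_1}\frakS^1 \xrightarrow{\ \cong\ } \frakM\otimes_{\frakS,p_0}\frakS^1\]
satisfying the cocycle condition on $\frakS^2$. Expanding $\varepsilon$ as a power series in $y$ produces a family of $\frakS$-linear operators on $\frakM$ which one repackages as a single ``connection-like'' datum --- the ``no Frobenius'' analogue of the $\tau$-connection $\nabla_{\frakM}$ appearing in Lemma \ref{tau connection on M}. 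The $(p,E)$-adic convergence of $\varepsilon$ translates precisely into the topological nilpotency condition built into Definition \ref{Dfn-prismatic S module}, and the cocycle identity over $\frakS^2$ becomes the integrability relation appearing there; this defines the evaluation functor to $\Mod_{\Prism}(\frakS)$.

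For the quasi-inverse, given $(\frakM,\nabla)\in\Mod_{\Prism}(\frakS)$, I would reconstruct $\varepsilon$ by a Taylor-style formula in the variable $y$ built from $\nabla$: the nilpotency hypothesis guarantees $(p,E)$-adic convergence of this series in $\frakS^1$, while the integrability relation forces the cocycle identity over $\frakS^2$. Verifying that the two constructions are mutually inverse, together with functoriality, is then a direct unwinding.

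The main obstacle will be the concrete identification of the $\delta$-structure on $\frakS^1$ and $\frakS^2$. In the Hodge--Tate setting of Theorem \ref{Intro-equivalence}, one only needed the much simpler quotient $\frakS^{\bullet}/E$, where the structure morphisms collapse considerably (cf.\ Lemma \ref{Equ-structure morphism on variables}); here, by contrast, one must keep track of the full prismatic envelope, including how $\delta$ interacts with the new variable $y = t/E$ and how the composition map $\frakS^1\to\frakS^2$ behaves on $y$. Once this is secured, matching topological nilpotency on both sides of the equivalence is the most delicate remaining bookkeeping, essentially as in the parallel argument for the relative smooth situation carried out in \cite{MT}.
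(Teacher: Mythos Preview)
Your plan diverges from the paper's in a way that creates a real gap. The paper does \emph{not} attempt to describe $\frakS^1$ or $\frakS^2$ explicitly as $\delta$-rings; on the contrary, the introduction stresses that ``$\frakS^{\bullet}$ is extremely complicated'' and that only the reduction $\frakS^{\bullet}/E$ is tractable (Lemma~\ref{pd polynomial}). Instead the paper routes everything through $A_{\inf}$ and the Galois element $\tau\in\hat G$. For the functor $\Vect((\calO_K)_{\Prism},\calO_{\Prism})\to\Mod_{\Prism}(\frakS)$, one simply evaluates the crystal at $(A_{\inf},(\xi))$ to obtain the $\hat G$-action on $M_{\inf}$, and then checks Definition~\ref{Dfn-prismatic S module} (Proposition~\ref{Prop-Condition 2 and 3}); no expansion of $\varepsilon$ in your variable $y$ is needed or attempted. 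For the quasi-inverse, the paper does not write down a Taylor series for $\varepsilon$ either: it proves that the multiplication map $\Delta:(\frakM\otimes_{\frakS,p_1}\frakS^1)^{\tau=1}\to\frakM$ is an isomorphism (Proposition~\ref{Prop-Key isomorphism}) by a successive-approximation argument modulo powers of $E$, using only the pd-polynomial description of $\frakS^1/E$, and then \emph{defines} $\varepsilon$ as the inverse of this isomorphism. The cocycle condition is checked not by a computation over $\frakS^2$ but by an injectivity statement (Proposition~\ref{Prop-Key injection}) combined with $\tau$-equivariance (Lemma~\ref{Lem-Stratification is tau-equivariant}).

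Your proposal has two concrete problems beyond the acknowledged difficulty of $\frakS^1$. First, the target category $\Mod_{\Prism}(\frakS)$ is \emph{defined} via a $\hat G$-action on $M_{\inf}=\frakM\otimes_{\frakS}A_{\inf}$ (with $\frakM\subset M_{\inf}^{\Gamma}$, the crystalline condition, and pseudo-nilpotency of $\lambda^{-1}\tilde A$), not via an operator on $\frakM$ extracted from a $y$-expansion; you never explain how your ``connection-like datum'' produces this $\hat G$-action or why conditions (1)--(3) of Definition~\ref{Dfn-prismatic S module} would follow. Second, your Taylor reconstruction of $\varepsilon$ from $\nabla$ cannot be literal: $\nabla_{\frakM}$ maps $\frakM$ into $M_{\inf}$, so the coefficients of any such series live in $A_{\inf}$, and it is not clear why the resulting map would land in $\frakM\otimes_{\frakS}\frakS^1$ rather than $\frakM\otimes_{\frakS}\tilde A_{\inf}^1$. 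The paper's $\tau$-fixed-point trick is precisely what circumvents this.
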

 As a compensation of ignoring the Frobenius, we need to require the ``nilpotency" of the associated $\tau$-connections. A posteriori, the existence of the Frobenius ensures the ``nilpotency" of the $\tau$-connection after \cite{BS-b} and \cite{DL}. This is similar to the interesting phenomenon appearing in \cite{MT}. We hope our method of proving Theorem \ref{Intro-prism mod} may be helpful for studying the relative case.

\subsection{Notations}\label{notation} 
Let $K$ be a $p$-adic field, i.e. a complete discretely valued field of mixed characteristic $(0,p)$ with perfect residue field. Let $\bar K$ be a fixed algebraic closure of $K$ and $\bC_p=\widehat{\bar K}$. Throughout the paper, we fix a compatible system of $p$-power roots $\{\pi_n\}_{n\geq 0}$ of a uniformizer $\pi$ of $K$ in $\bar K$, i.e $\pi_0=\pi$ and $\pi_{n+1}^p=\pi_n$. We also fix a compatible system of primitive $p$-power roots $\{\zeta_{p^n}\}_{n\geq 0}$ of unity in $\bar K$. Let $\hat K_{\cyc}$ be the completion of $K_{\cyc}=\cup_n{K(\zeta_{p^n})}$ and $\hat K_{\infty}$ be the  completion of $K_{\infty}=\cup_n{K(\pi_n)}$. Let $\hat K_{\cyc,\infty}$ be the completion of $K_{\cyc,\infty}=K_{\infty}K_{\cyc}$. Denote $A_{\inf}=W(\calO_{\hat K_{\cyc,\infty}^{\flat}})$ and $\hat G={\rm Gal}(K_{\cyc,\infty}/K)$. In this paper, sometimes we need to assume $K_{\infty}\cap K_{\cyc}=K$. This condition always holds when $p\geq 3$ by \cite[Lemma 5.1.2]{Liu-a} and holds for a suitable choice of $\pi$ when $p=2$ by \cite[Lemma 2.1]{Wang}. Under this condition, we get an isomorphism $\Gamma =\Gal(K_{\cyc}/K)\cong \Gal(K_{\cyc,\infty}/K_{\infty})$. Moreover, we have an isomorphism
\[\hat G \cong \Zp\tau\rtimes\Gamma\]
where $\tau \in \Gal(K_{\cyc,\infty}/K_{\cyc})$ such that $\tau(\pi_n) = \zeta_{p^n}\pi_n$ for all $n$. For any $\gamma\in \Gamma$, $\gamma\tau\gamma^{-1} = \tau^{\chi(\gamma)}$, where $\chi:\Gamma\to\bZ_p^{\times}$ is the $p$-adic cyclotomic character. Denote by $\pi^{\flat}$ and $\epsilon$ the element $(\pi,\pi_1,\pi_2,\dots)$ and $(1,\zeta_p,\zeta_{p^2},\dots)$ in $\calO_{\hat K_{\cyc,\infty}^{\flat}}$, respectively. Then $\gamma(\epsilon) = \epsilon^{\chi(\gamma)}$ for any $\gamma\in\Gamma$ and $\tau(\pi^{\flat}) = \epsilon\pi^{\flat}$. Put $\mu = [\epsilon]-1\in A_{\inf}$. Note that there is a canonical surjection $\theta: A_{\inf}\to\calO_{\hat K_{\cyc,\infty}}$ whose kernel is principal and both $E([\pi^{\flat}])$ and $\xi = \frac{\mu}{\varphi^{-1}(\mu)}$ are its generators, where $E$ denotes the Eisenstein polynomial in
$\frakS = \rW(k)[[u]]$ of $\pi$. Then $\iota = \iota_{\pi^{\flat}}: (\frakS,(E))\to(A_{\inf},(\xi))$ induced by sending $u$ to $[\pi^{\flat}]$ is a morphism of prisms in $(\calO_K)_{\Prism}$, which turns out to be faithfully flat (for example, see \cite[Proposition 2.2.13]{EG}). When contexts are clear, we always regard $\frakS$ as a subring of $A_{\inf}$ via $\iota$. Also, when we write $(A,I)\in (\calO_K)_{\Prism}$, we always mean the object $(\Spf(\calO_K)\leftarrow \Spf(A/I)\to \Spf(A))\in (\Spf(\calO_K))_{\Prism}$. We denote by $(\calO_K)_{\Prism}^{\perf}$ the site of perfect prisms (that is, prisms whose associated Frobenius endomorphims are bijective) over $\calO_K$.

\subsection{Organizations}
 In Section 2, we collect some basic properties of the prism $(\frakS,(E))$, which will be used in next section. In Section 3, we define and study Hodge--Tate crystals. We show the desired equivalence as mentioned in Theorem \ref{Intro-equivalence} at first, compute prismatic cohomology of Hodge--Tate crystals and finally show Hodge--Tate crystals can be viewed as semi-linear $\Cp$-representations. In Section 4, we show the nilpotency of crystalline Breuil-Kisin modules. In Section 5, we give an explicit description of prismatic crystals. In Appendix, we complete the proof of Theorem \ref{Intro-prism coho} by some technical calculations.

\section*{Acknowledgments}
We would like to thank Ruochuan Liu, Matthew Morrow and Takeshi Tsuji for useful correspondences during the preparation of this work and its previous version. We want to thank Hui Gao for some explanations about the crystalline condition. We would also like to thank Bhargav Bhatt for informing us of his joint work with Jacob Lurie on absolute prismatic cohomology and suggesting us to use the name ``Hodge--Tate crystals" to replace our original one. Special thanks go to Shizhang Li for his careful reading of the early draft of this work and sharing many valuable comments. The first author is supported by China Postdoctoral Science Foundation E1900503.

 \addtocontents{toc}{\protect\setcounter{tocdepth}{2}}
\section{Preliminaries}

 In this section, we collect some useful results which are needed later. When we write $(-)_{\delta}$, we always mean the $(p,E)$-adic completion of the usual $\delta$-ideal $(-)_{\delta}$.

\begin{lem}[\emph{\cite[Lemma 2.1.7]{ALB}}]\label{ALB}
  Let $(A,I)$ be a prism and let $d\in I$ be distinguished. If $(p,d)$ is a regular sequence in $A$, then for all $r,s\geq 0, r\neq s$, the sequences $(p,\varphi^r(d))$ and $(\varphi^r(d),\varphi^s(d))$ are regular.
\end{lem}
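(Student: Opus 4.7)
The plan is to reduce everything to two basic congruences satisfied by any distinguished element $d$ of a $\delta$-ring, both immediate from the defining identity $\varphi(x)=x^p+p\,\delta(x)$:
\[\varphi^r(d)\equiv d^{p^r}\pmod{p}\qquad\text{and}\qquad \varphi^{t+1}(d)\equiv p\,\varphi^t(\delta(d))\pmod{\varphi^t(d)},\]
where in the second congruence $\varphi^t(\delta(d))$ is a unit because $\delta(d)$ is a unit and $\delta$ commutes with $\varphi$.

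For the sequence $(p,\varphi^r(d))$, the element $p$ is a non-zero-divisor by hypothesis. The first congruence gives $A/(p,\varphi^r(d))\cong A/(p,d^{p^r})$, and since $(p,d)$ is regular the image of $d$ in $A/p$ is a non-zero-divisor, hence so is $d^{p^r}$. This yields the regularity of $(p,\varphi^r(d))$ at once.

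For $(\varphi^r(d),\varphi^s(d))$ with $0\leq r<s$, I would proceed in two steps. First, $\varphi^r(d)$ is itself a non-zero-divisor on $A$: if $\varphi^r(d)\cdot a=0$, then reducing modulo $p$ and using the previous paragraph gives $a\in pA$; peeling off one power of $p$ and using that $p$ is a non-zero-divisor, we iterate to conclude $a\in\bigcap_n p^nA$, which equals $0$ since the $p$-torsion-free derived $(p,I)$-complete ring $A$ is in fact classically $p$-complete and hence $p$-adically separated. Second, iterating the second congruence shows that in $A/\varphi^r(d)$ every $\varphi^{r+k}(d)$ with $k\geq 1$ is equal to $p$ times a unit. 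Hence $\varphi^s(d)$ is a non-zero-divisor on $A/\varphi^r(d)$ if and only if $p$ is, which amounts to permuting the regular sequence $(p,\varphi^r(d))$ into $(\varphi^r(d),p)$.

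The one genuine obstacle is this permutation, because a general prism need not be Noetherian and the Bourbaki permutation lemma cannot be cited verbatim. The workaround is to exploit the $(p,I)$-adic completeness of $A$: both $p$ and $\varphi^r(d)$ lie in the $(p,I)$-adic Jacobson radical and each is individually a non-zero-divisor, so the classical Noetherian-local proof of the permutation lemma adapts (equivalently, the Koszul complex on the two elements is symmetric and we have just verified its two one-element regularity statements).
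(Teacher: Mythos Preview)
The paper does not supply its own proof of this lemma; it is simply quoted from \cite[Lemma~2.1.7]{ALB}. Your argument is correct and follows the standard route. Two small remarks are worth making. First, the ``iteration'' of the congruence $\varphi^{t+1}(d)\equiv p\cdot(\text{unit})\pmod{\varphi^t(d)}$ is not literal, since the modulus changes with $t$; rather, one checks inductively that $\varphi^{r+k}(d)\equiv p\cdot(\text{unit})\pmod{\varphi^r(d)}$ via the identity $\varphi^{r+k+1}(d)=\varphi^{r+k}(d)^p+p\,\varphi^{r+k}(\delta(d))$ together with the fact that $p$ lies in the Jacobson radical of $A$ (which you have, since $A$ is classically $p$-complete). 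Second, the permutation of $(p,\varphi^r(d))$ into $(\varphi^r(d),p)$ can be done by the very same direct trick you already used: if $pa=\varphi^r(d)b$, then reducing modulo $p$ gives $d^{p^r}\bar b=0$, hence $b=pb'$, hence $a=\varphi^r(d)b'$. This avoids any appeal to Koszul symmetry or an abstract permutation lemma. Finally, note that you only treat $r<s$ explicitly; the case $r>s$ then follows by applying the (now established) permutation to the regular sequence $(\varphi^s(d),\varphi^r(d))$.
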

\begin{lem}\label{BK Cover}
\begin{enumerate}
    \item The prism $(A_{\inf},\xi)$ is a cover of final objects of both topoi ${\rm Shv}((\calO_K)_{\Prism})$ and ${\rm Shv}((\calO_K)^{\perf}_{\Prism})$.
    
    \item The prism $(\frakS,(E))$ is a cover of the final object of the topos ${\rm Shv}((\calO_K)_{\Prism})$.
\end{enumerate}
\end{lem}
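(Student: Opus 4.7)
My plan is to handle (1) directly and then deduce (2) from it via the morphism $\iota: (\mathfrak{S}, (E)) \to (A_{\inf}, (\xi))$ of prisms in $(\calO_K)_{\Prism}$ recorded in Subsection \ref{notation}, so the heart of the argument lies in (1).

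For (1), the criterion to check is: for every $(B, J)$ in the relevant site, I must produce a faithfully flat cover $(B, J) \to (C, K)$ together with a morphism of prisms $(A_{\inf}, (\xi)) \to (C, K)$. I would attack the perfect subsite first, where $(B, J)$ corresponds to the integral perfectoid $\calO_K$-algebra $R = B/J$. Since $A_{\inf}/(\xi) = \calO_{\hat K_{\cyc,\infty}}$ is integral perfectoid over $\calO_K$, the strategy is to form the $p$-adically completed tensor product
\[R' = \bigl(R \,\hat\otimes_{\calO_K} \calO_{\hat K_{\cyc,\infty}}\bigr)\big/(p^\infty\text{-torsion}),\]
which one verifies is again integral perfectoid and faithfully flat over $R$. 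The associated perfect prism $(C, K)$ then provides the required cover, equipped with the evident map from $(A_{\inf}, (\xi))$ coming from the second tensor factor. For the full site, I would reduce to the perfect case using the standard fact (due to Bhatt--Scholze) that every prism $(B, J) \in (\calO_K)_{\Prism}$ admits a faithfully flat cover by a perfect prism, for instance by taking its $\delta$-ring perfection.

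For (2), once (1) is in hand, the argument is essentially formal: given $(B, J) \in (\calO_K)_{\Prism}$, apply (1) to get a cover $(B, J) \to (C, K)$ equipped with a morphism $(A_{\inf}, (\xi)) \to (C, K)$, and precompose with $\iota$ to obtain the required map $(\mathfrak{S}, (E)) \to (C, K)$, exhibiting $(\mathfrak{S}, (E))$ as a cover of the final object.

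The main obstacle is the perfectoid tensor product step in (1), namely verifying that $R' = R \,\hat\otimes_{\calO_K} \calO_{\hat K_{\cyc,\infty}}$ (with $p^\infty$-torsion killed) remains integral perfectoid and is faithfully flat over $R$. The subtle point is that the base $\calO_K$ is not itself perfectoid, so one cannot directly invoke the tensor-product stability of perfectoid rings. The usual workaround is to factor through a perfectoid intermediate: first tensor $R$ with $\calO_{\hat K_{\cyc}}$, using the almost-\'etale nature of the cyclotomic extension $\calO_K \to \calO_{\hat K_{\cyc}}$ modulo $p$, and then tensor over the perfectoid ring $\calO_{\hat K_{\cyc}}$ with $\calO_{\hat K_{\cyc,\infty}}$, where standard perfectoid results apply cleanly. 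Everything else in the argument is formal manipulation with prisms.
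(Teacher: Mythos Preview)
Your approach is essentially the same as the paper's. The paper deduces (2) from (1) via the morphism $\iota:(\frakS,(E))\to(A_{\inf},(\xi))$ exactly as you do, and for (1) it simply cites the argument of \cite[Lemma~3.5]{Wu} together with \cite[Proposition~7.11]{BS-a}, which is precisely the ``cover by a perfect prism, then take a perfectoid tensor product'' strategy you spell out; your identification of the non-perfectoid base $\calO_K$ as the only genuine technical point is also accurate, and your proposed workaround is in the right spirit. One small caution: the naive $\delta$-ring perfection of an arbitrary bounded prism is not known to be $(p,I)$-completely faithfully flat in general, so you should invoke \cite[Proposition~7.11]{BS-a} (André's flatness lemma in the prismatic setting) for the existence of a faithfully flat perfect cover rather than asserting that the colimit perfection itself works.
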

\begin{proof}
   Since there is a morphism of prisms $\iota:(\frakS,(E))\to(A_{\inf},(\xi))$, it is enough to show the first part of lemma. But this follows from a similar argument for the proof of \cite[Lemma 3.5]{Wu} by using \cite[Proposition 7.11]{BS-a}.
\end{proof}

 Let $(\frakS^{\bullet},(E))$ be the \v{C}ech nerve of the cover $(\frakS,(E))$ in $(\calO_K)_{\Prism}$. As argued in \cite[Example 2.6 (1)]{BS-b}, for any $n\geq 0$,
 \[\frakS^{n} = A^n\{\frac{u_0-u_1}{E(u_0)},\dots,\frac{u_0-u_n}{E(u_0)}\}_{\delta}^{\wedge_{(p,E(u_0))}},\]
 where $A^n = \rW(k)[[u_0,\dots,u_n]]$.
 By \cite[Lemma 2.24]{BS-a}, for any $0\leq i\leq n$, $E(u_i)$ and $E(u_0)$ differ by a unit in $\frakS^n$. In particular, they generate the same ideal.
 
 Before moving on, we discuss some properties of $\frakS^{\bullet}$.
\begin{lem}\label{Intersection}
  Keep notations as above. Then
  \[(\frac{u_0-u_1}{E(u_0)},\dots,\frac{u_0-u_n}{E(u_0)})_{\delta}\cap A^n = (u_0-u_1, \dots, u_0-u_n).\]
\end{lem}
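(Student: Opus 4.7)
The plan is to construct an auxiliary $\delta$-ring morphism $q\colon \frakS^n \to \frakS$ that annihilates the $\delta$-ideal in question and whose restriction to $A^n$ has kernel exactly $(u_0-u_1,\ldots,u_0-u_n)$. Write $x_i := (u_0-u_i)/E(u_0) \in \frakS^n$, and let $J:=(x_1,\ldots,x_n)_\delta$ denote the $(p,E(u_0))$-completed $\delta$-ideal appearing on the left-hand side. The containment $\supseteq$ is immediate: $u_0 - u_i = E(u_0)\,x_i$ lies in $J\cap A^n$ for each $i$.

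For the reverse inclusion, I would endow $\frakS = \rW(k)[[u_0]]$ with the $A^n$-algebra structure sending every $u_i \mapsto u_0$. This is a $\delta$-algebra map (since $\delta$ annihilates each $u_i$ on both sides), and under it the equation $u_0 - u_i = 0 = E(u_0)\cdot 0$ holds trivially in $\frakS$. Recalling that $\frakS^n$ is by construction the universal $(p,E(u_0))$-complete $\delta$-$A^n$-algebra equipped with elements $x_i$ satisfying $E(u_0)\, x_i = u_0 - u_i$ (the prismatic envelope, cf.\ \cite[Prop.~3.13]{BS-a}), the choice $s_i = 0 \in \frakS$ is admissible and yields a unique $\delta$-$A^n$-algebra map $q\colon \frakS^n \to \frakS$ with $q(x_i) = 0$.

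The rest is mechanical. Being a continuous $\delta$-ring homomorphism that annihilates each $x_i$, the map $q$ kills all of $J$. Meanwhile $q|_{A^n}$ is the diagonal quotient $\rW(k)[[u_0,\ldots,u_n]] \twoheadrightarrow \rW(k)[[u_0]]$, whose kernel is $(u_0 - u_1,\ldots,u_0 - u_n)$. Consequently, any $f \in A^n \cap J$ satisfies $q(f) = 0$ and thus lies in $(u_0-u_1,\ldots,u_0-u_n)$, giving the remaining inclusion.

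The only potentially delicate point is the invocation of the envelope's universal property, but this is routine: $\frakS$ is $(p,E(u_0))$-adically complete (the topologies defined by $(p,u_0)$ and $(p,E(u_0))$ agree on $\rW(k)[[u_0]]$ since $E$ is Eisenstein), the standard $\delta$-structure on $\frakS$ with $\delta(u_0)=0$ is compatible with the diagonal $A^n$-action, and the relation to be matched, $E(u_0)\cdot 0 = u_0 - u_0$, is trivial.
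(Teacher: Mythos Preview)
Your proof is correct and follows essentially the same idea as the paper's: both arguments hinge on producing the $\delta$-ring map $\frakS^n \to \frakS$ that collapses all $u_i$ to $u_0$ and kills the $x_i$. The paper obtains this map by assembling pushout squares of $(p,E)$-complete $\delta$-rings to identify $\frakS^n/I_\delta^n$ with $A^n/J_\delta^n$ (and then noting $J_\delta^n=(u_0-u_1,\dots,u_0-u_n)$ since this ideal is already a $\delta$-ideal), whereas you invoke the universal property of the envelope directly to write down $q$; the latter is a hair more economical since you only need the map, not the isomorphism.
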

\begin{proof}
  Denote the $\delta$-ideal $(\frac{u_0-u_1}{E(u_0)},\dots,\frac{u_0-u_n}{E(u_0)})_{\delta}\subset \frakS^n$ by $I^n_{\delta}$.
  Consider the following commutative diagram of morphisms of $(p,E)$-complete $\delta$-rings
  \begin{equation}\label{Diag-delta ring}
    \xymatrix@C=0.5cm{
    A^n\{X_1,\dots, X_n\}_{\delta}^{\wedge}\ar[rrrrr]^{X_j\mapsto E(u_0)X_j-(u_0-u_j),\forall j}\ar[d]^{X_j\mapsto 0,\forall j}&&&&& A^n\{X_1,\dots, X_n\}^{\wedge}_{\delta}\ar[d]\ar[rr]^{\quad X_j\mapsto 0,\forall j}&& A^n\ar[d]\\
    A^n\ar[rrrrr]&&&&& \frakS^n\ar[rr]&& \frakS^n/I^n_{\delta}.
  }
  \end{equation}
  By the construction of $\frakS^n$, both left and right squares above are push-out squares and hence so is their composition.
  In other words, we have the following push-out square
  \begin{equation*}
  \xymatrix@C=0.5cm{
    A^n\{X_1,\dots, X_n\}_{\delta}^{\wedge}\ar[rrrr]^{X_j\mapsto -(u_0-u_j),\forall j}\ar[d]^{X_j\mapsto 0,\forall j}&&&& A^n\ar[d]\\
    A^n\ar[rrrr]&&&& \frakS^n/I_{\delta}^n.
  }
  \end{equation*}
  Equivalently, we have an isomorphism $A^n/J_{\delta}^n\simeq\frakS^n/I_{\delta}^n$, where $J_{\delta}^n\subset A^n$ is the $\delta$-ideal generated by $\{u_0-u_j\}_{1\leq j\leq n}$.
  Thus we have $I_{\delta}^n\cap A^n = J_{\delta}^n.$
  Then the lemma follows from the fact that $J_{\delta}^n = (u_0-u_1, \dots, u_0-u_n)$.
\end{proof}

\begin{lem}\label{regular sequence}
  For any $n\geq j\geq 0$, the structure morphism $p_j:\frakS\rightarrow \frakS^n$ induced by
  \[
  \{0\} \to \{j\} \subset \{0,\dots,n\}
  \]
  is $(p,E(u_0))$-faithfully flat.
  In particular, $(p,E(u_0))$ is a regular sequence in $\frakS^n$.
\end{lem}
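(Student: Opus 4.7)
The map $p_j$ factors through the formal power series ring $A^n=W(k)[[u_0,\dots,u_n]]$ as the composition
\[
\frakS \xrightarrow{\alpha} A^n \xrightarrow{\beta} \frakS^n,
\]
where (for $j=0$) $\alpha$ sends $u\mapsto u_0$ and $\beta$ is the structure map to the prismatic envelope. The plan is to show that each of $\alpha$ and $\beta$ is $(p,E(u_0))$-completely faithfully flat, and to reduce to the case $j=0$ using the $S_{n+1}$-symmetry on $\frakS^n$. That symmetry exists because $\frakS^n$ is the $(n+1)$-fold self-coproduct of $(\frakS,(E))$ as prisms, so permutation of $u_0,\dots,u_n$ is well-defined, using the fact already noted in the excerpt that $E(u_0)$ and $E(u_j)$ generate the same ideal. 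The map $\alpha$ is the inclusion of a formal power series ring into a larger one, hence classically faithfully flat, and remains so after $(p,E(u_0))$-adic completion.

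For $\beta$, I first check that $(A^n,(E(u_0)))$ is a bounded prism: $A^n$ is a $\delta$-ring with $\delta(u_i)=0$, and $E(u_0)$ is distinguished there because $\delta(E(u_0))$ lives in the $\delta$-subring $W(k)[[u_0]]$, where it is a unit modulo $(p,E(u_0))$. I then invoke \cite[Proposition 3.13]{BS-a} on the flatness of prismatic envelopes. Its key hypothesis is that the generators $u_0-u_1,\dots,u_0-u_n$ we are dividing by $E(u_0)$ form a regular sequence modulo $E(u_0)$ in $A^n$. But $A^n/E(u_0)\simeq\calO_K[[u_1,\dots,u_n]]$ via $u_0\mapsto\pi$, and the change of variables $v_j:=u_j-\pi$ transforms the image sequence $(\pi-u_1,\dots,\pi-u_n)$ into the coordinate sequence $(v_1,\dots,v_n)$ on $\calO_K[[v_1,\dots,v_n]]$, which is manifestly regular. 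This yields $(p,E(u_0))$-complete flatness of $\beta$. Faithfulness follows from the pushout description worked out in the proof of Lemma \ref{Intersection}: reducing modulo $(p,E(u_0))$ there gives $\frakS^n/(p,E(u_0))\simeq (k[u_0]/(u_0^e))\{X_1,\dots,X_n\}^{\wedge}_{\delta}$, which is nonzero and in particular surjects onto $k$.

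The ``in particular'' claim is then immediate: $(p,E(u))$ is a regular sequence on $\frakS$, since $E(u)\equiv u^e\pmod p$ makes $(p,E)$ a parameter ideal in the two-dimensional regular local ring $\frakS$, and flatness of $p_j$ transports regular sequences. The main delicate point I foresee is the bookkeeping needed to reconcile the derived $(p,E(u_0))$-completion implicit in the prismatic envelope construction with the classical $(p,E(u_0))$-adic completion appearing in the statement of the lemma; this is precisely what the cited \cite[Proposition 3.13]{BS-a} is designed to handle, once the regularity hypothesis on $(u_0-u_1,\dots,u_0-u_n)$ mod $E(u_0)$ is verified as above.
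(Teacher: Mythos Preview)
Your argument is correct and follows the same approach as the paper: verify the regularity of $(E(u_0),u_0-u_1,\dots,u_0-u_n)$ in $A^n$ and invoke \cite[Proposition~3.13]{BS-a}. The paper's proof is just the two-line version of yours, omitting the explicit factorization through $A^n$, the symmetry reduction to $j=0$, and the separate faithfulness check (for the latter, your description of $\frakS^n/(p,E(u_0))$ as a free $\delta$-algebra is not accurate---it is the pd-polynomial ring, and proving that actually uses the present lemma---but all you need is the surjection onto $\frakS/(p,E)\neq 0$ via the diagonal, which is immediate from the pushout in Lemma~\ref{Intersection}).
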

\begin{proof}
  Let $A^n$ be as in the proof of Lemma \ref{Intersection}.
  Then the sequence
  \[(E(u_0),u_0-u_1, \dots, u_0-u_n)\]
  is regular in $A^n$.
  Now, the lemma follows from \cite[Proposition 3.13]{BS-a}.
\end{proof}
\begin{lem}\label{divisiblity}
  For any $j\geq 1$, $n\geq 0$ and $m\geq 1$, $E(u_0)$ divides $\varphi^m(\delta_n(\frac{u_0-u_j}{E(u_0)}))$.
\end{lem}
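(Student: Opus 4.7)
The plan is to induct on $n$, with the base case $n=0$ obtained by applying $\varphi^m$ to the defining relation for $y_j := (u_0-u_j)/E(u_0)$, and the inductive step handled by the $\delta$-Leibniz rule combined with $\varphi \circ \delta = \delta \circ \varphi$.

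For the base case, applying $\varphi^m$ to $E(u_0)y_j = u_0-u_j$ gives
\[\varphi^m(E(u_0))\cdot\varphi^m(y_j) \;=\; u_0^{p^m}-u_j^{p^m} \;=\; (u_0-u_j)\,T_m \;=\; E(u_0)\,y_j\,T_m \;\in\; (E(u_0)),\]
where $T_m := \sum_{i=0}^{p^m-1}u_0^i u_j^{p^m-1-i}$. To extract $\varphi^m(y_j)\in(E(u_0))$ from this, I would compute $\varphi^m(E(u_0))=E(u_0^{p^m})$ modulo $E(u_0)$: its image under $u_0\mapsto\pi$ is $E(\pi^{p^m})$, and because $E$ is Eisenstein with $a_0 = pb_0$ for some $b_0\in\rW(k)^\times$, one has $v_\pi(a_i(\pi^{p^m})^i)>e$ for $i\geq 1$, so $v_\pi(E(\pi^{p^m}))=e=v_\pi(p)$. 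Thus $\varphi^m(E(u_0))\equiv p\cdot u_m\pmod{E(u_0)}$ for some $u_m\in\calO_K^\times$, hence $p\cdot\varphi^m(y_j)\in (E(u_0))$; since $(p, E(u_0))$ is regular in $\frakS^n$ by Lemma \ref{regular sequence} (so in particular $\frakS^n/E(u_0)$ is $p$-torsion-free), this yields $\varphi^m(y_j)\in (E(u_0))$.

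For the inductive step, suppose $\delta^n(\varphi^{m'}(y_j))\in (E(u_0))$ for all $m'\geq 1$, and write $\delta^n(\varphi^m(y_j)) = E(u_0)\,h_m^{(n)}$ with $h_m^{(n)}\in \frakS^n$. The $\delta$-Leibniz rule yields
\[\delta^{n+1}(\varphi^m(y_j)) \;=\; E(u_0)^p\,\delta(h_m^{(n)}) + \delta(E(u_0))\,\varphi(h_m^{(n)}).\]
The first summand sits in $(E(u_0))$, so the claim reduces to $\varphi(h_m^{(n)})\in (E(u_0))$, using that $\delta(E(u_0))$ is a unit in $\frakS^n$ (its reduction modulo $(p,u_0)$ equals $b_0\bmod p\in k^\times$, so Nakayama in the complete local ring $\frakS$ gives the unit claim, and this persists in $\frakS^n$). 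Applying $\varphi$ to $\delta^n(\varphi^m(y_j)) = E(u_0)h_m^{(n)}$ and using $\varphi\delta=\delta\varphi$ again gives $\delta^n(\varphi^{m+1}(y_j)) = \varphi(E(u_0))\cdot\varphi(h_m^{(n)})$; by the inductive hypothesis at $m'=m+1$, the left-hand side lies in $(E(u_0))$. Combined with $\varphi(E(u_0))\equiv p\cdot\delta(E(u_0))\pmod{E(u_0)}$ and the unit character of $\delta(E(u_0))$, this gives $p\cdot\varphi(h_m^{(n)})\in (E(u_0))$ and hence $\varphi(h_m^{(n)})\in (E(u_0))$ by $p$-torsion-freeness, closing the induction.

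The main difficulty is that $\delta$ does not preserve the ideal $(E(u_0))$, so at every step one must decompose an application of $\delta$ via the Leibniz rule into an $E(u_0)^p$-term and a $\delta(E(u_0))\varphi(-)$-term and show that the latter also lands in $(E(u_0))$; this is exactly where the commutation $\varphi\delta=\delta\varphi$ together with the Eisenstein-derived congruence $\varphi(E(u_0))\equiv p\cdot(\text{unit})\pmod{E(u_0)}$ enters and allows the bookkeeping across levels of $n$ and $m$ to close.
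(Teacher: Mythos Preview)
Your proof is correct, but it takes a more hands-on route than the paper's. Two minor inaccuracies: first, $\varphi^m(E(u_0))$ is $E^{\sigma^m}(u_0^{p^m})$ (Frobenius on $\rW(k)$ acts on the coefficients), not $E(u_0^{p^m})$; second, the reduction of $\delta(E(u_0))$ modulo $(p,u_0)$ is $\sigma(b_0)\bmod p$ rather than $b_0\bmod p$. Neither affects the argument, since the valuation and unit conclusions are unchanged.

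The paper proceeds more directly. For the base case it invokes Lemma~\ref{ALB} (which gives that $(E(u_0),\varphi^m(E(u_0)))$ is a regular sequence) to cancel $\varphi^m(E(u_0))$ from $\varphi^m(E(u_0))\varphi^m(x)=u_0^{p^m}-u_1^{p^m}\in(E(u_0))$; you instead compute $\varphi^m(E(u_0))\equiv p\cdot(\text{unit})\pmod{E(u_0)}$ explicitly and then use regularity of $(p,E(u_0))$. Your computation is essentially a by-hand verification of the relevant special case of Lemma~\ref{ALB}. For the inductive step, the paper uses the defining identity
\[
p\,\varphi^m(\delta_{n+1}(x)) \;=\; \varphi^{m+1}(\delta_n(x)) - \varphi^m(\delta_n(x))^p,
\]
observes both right-hand terms lie in $(E(u_0))$ by the inductive hypothesis, and concludes by regularity of $(p,E(u_0))$. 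This is a one-line reduction. You instead write $\delta^n(\varphi^m(y_j))=E(u_0)h_m^{(n)}$, apply the $\delta$-Leibniz rule, and then run a secondary argument (via $\varphi(E(u_0))\equiv p\,\delta(E(u_0))\pmod{E(u_0)}$ and the inductive hypothesis at $m+1$) to show $\varphi(h_m^{(n)})\in(E(u_0))$. This works, but it is more circuitous: the paper's identity packages your Leibniz decomposition and your auxiliary $\varphi$-step into a single stroke. The upshot is that your approach is more self-contained (it avoids citing Lemma~\ref{ALB}), while the paper's is shorter and more transparent.
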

\begin{proof}
  Without loss of generality, we may assume $j = 1$ and put $x = \frac{u_0-u_1}{E(u_0)}$.

  For $n=0$, we have $\varphi^m(x) = \frac{u_0^{p^m}-u_1^{p^m}}{\varphi^m(E)}$.
  As $E$ divides $u_0-u_1$, combining Lemma \ref{ALB} and Lemma \ref{regular sequence}, it also divides $\varphi^m(x)$.

  For any $n\geq 0$ and $m\geq 1$, as $p\varphi^m(\delta_{n+1}(x)) = \varphi^{m+1}(\delta_n(x))-\varphi^m(\delta_n(x))^p$, it follows from induction hypothesis that $E$ divides $p\varphi^{m}(\delta_{n+1}(x))$.
  Since $(p,E)$ is a regular sequence, $E$ also divides $\varphi^m(\delta_{n+1}(x))$ as desired.
\end{proof}
 As observed in \cite{Tian}, although the underlying $\delta$-rings of some certain prisms $(A,I)$ are complicated, their reductions modulo $I$ are much easier to handle directly. More precisely, we are going to prove the following crucial lemma on the structure of $\frakS^n/(E)$.
\begin{lem}\label{pd polynomial}
  For any $1\leq i\leq n$, denote $X_i$ the image of $\frac{u_0-u_i}{E(u_0)}\in \frakS^{n}$ modulo $(E)$. Then $\frakS^n/(E) \cong \calO_K\{X_1,\dots, X_n\}^{\wedge}_{\rm pd}$ is the free pd-polynomial ring in the variables $X_1,\dots, X_n$. Moreover, for any $0\leq i\leq n+1$, let $p_i:\frakS^n\to\frakS^{n+1}$ be the structure morphism induced by the order-preserving map \[\{0,\dots,n\}\to\{0,\dots,i-1,i+1\dots,n+1\}.\]
  Then via above isomorphisms for all $n$, we have
  \begin{equation}\label{Equ-structure morphism on variables}
      p_i(X_j) = \left\{
      \begin{array}{rcl}
           (X_{j+1}-X_1)(1-E'(\pi)X_1)^{-1}, & i=0  \\
           X_j, & j<i \\
           X_{j+1}, & 0<i\leq j.
      \end{array}
      \right.
  \end{equation}
\end{lem}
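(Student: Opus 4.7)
The plan is to prove the isomorphism $\frakS^n/(E)\cong \calO_K\{X_1,\dots,X_n\}^{\wedge}_{\rm pd}$ by invoking the standard comparison between prismatic envelopes and pd-envelopes along the Hodge--Tate locus, and then compute the transition maps $p_i$ by a direct Taylor expansion of $E$ around $u_0$. For the first part, I would change variables in $A^n$ by setting $y_j := u_0 - u_j$ for $1 \le j \le n$, so that $A^n = W(k)[[u_0, y_1, \ldots, y_n]]$ and $\frakS^n$ is (by definition) the $(p, E(u_0))$-completed prismatic envelope of $(A^n, (E(u_0)))$ along the ideal $(y_1, \ldots, y_n)$, generated by $X_j = y_j/E(u_0)$. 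Since $(E(u_0), y_1, \ldots, y_n)$ is a regular sequence in $A^n$ (as already used in the proof of Lemma \ref{regular sequence}), I would then apply \cite[Corollary 2.39]{BS-a} to identify $\frakS^n/(E)$ with the $p$-completed pd-envelope of $A^n/(E(u_0)) = \calO_K[[y_1, \ldots, y_n]]$ along $(y_1, \ldots, y_n)$. This pd-envelope is precisely the free pd-polynomial ring $\calO_K\{X_1, \ldots, X_n\}^{\wedge}_{\rm pd}$, with $X_j$ corresponding to the class of $(u_0 - u_j)/E(u_0)$.

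For the formulas governing the $p_i$'s, I would treat $i > 0$ and $i = 0$ separately. When $i \geq 1$, the map $p_i$ fixes $u_0$ and sends $u_j \mapsto u_j$ for $j < i$, $u_j \mapsto u_{j+1}$ for $j \geq i$. Since $u_0$ is untouched, $p_i(X_j) = (u_0 - p_i(u_j))/E(u_0)$ yields $X_j$ or $X_{j+1}$ as claimed. The case $i = 0$ is the interesting one: $p_0$ shifts $u_j \mapsto u_{j+1}$ throughout, so
\[
p_0(X_j) = \frac{u_1 - u_{j+1}}{E(u_1)} = \frac{(u_0 - u_{j+1}) - (u_0 - u_1)}{E(u_1)} = (X_{j+1} - X_1) \cdot \frac{E(u_0)}{E(u_1)}.
\]
Using $u_1 = u_0 - E(u_0) X_1$ in $\frakS^{n+1}$, the Taylor expansion gives
\[
E(u_1) = E\bigl(u_0 - E(u_0) X_1\bigr) = E(u_0)\bigl[1 - E'(u_0) X_1 + E(u_0)\cdot(\text{higher-order terms in } X_1)\bigr],
\]
so modulo $E(u_0)$ the ratio $E(u_0)/E(u_1)$ equals $(1 - E'(u_0) X_1)^{-1}$. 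Finally, since $u_0 \equiv \pi$ in $\frakS^{n+1}/(E)$ (via the identification with $\calO_K[[y_1,\dots,y_{n+1}]]/(y_i)_{\rm pd\text{-}env}$ above), one has $E'(u_0) \equiv E'(\pi)$, yielding the stated formula.

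The main obstacle I anticipate is in the first part: cleanly invoking the prismatic-to-pd envelope comparison and making sure the identification sending the $\delta$-generator $X_j$ (the image of $(u_0-u_j)/E(u_0)$ mod $E$) to the pd-generator corresponding to $y_j$ is correctly tracked, along with a compatible treatment of the $(p,E)$-adic topology on one side versus $p$-adic topology on the pd-envelope side. Once this identification is in hand, the computation of the transition maps $p_i$ is essentially a one-line Taylor expansion, the only subtlety being that all higher-order terms acquire an extra factor of $E(u_0)$ and hence drop out modulo $E$.
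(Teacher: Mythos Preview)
Your approach is correct but takes a genuinely different route from the paper for the isomorphism $\frakS^n/(E)\cong\calO_K\{X_1,\dots,X_n\}^{\wedge}_{\pd}$. The paper proceeds by hand: after noting $p$-torsion-freeness, it uses Lemma~\ref{divisiblity} to compute that the reduction of $\delta_m\bigl(\tfrac{u_0-u_j}{E}\bigr)$ modulo $E$ is a unit multiple of $X_j^{[p^m]}$, which yields a surjective pd-morphism $\alpha\colon\calO_K\{Y_j\}^{\wedge}_{\pd}\to\frakS^n/(E)$; injectivity is then checked modulo $p$ in a separate Lemma~\ref{injectivity} by base-changing to a relative prismatic envelope over $(\frakS,(E))$ and invoking \cite[Proposition~4.3]{Tian}. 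Your route short-circuits this by citing a general prismatic-envelope/pd-envelope comparison, which is cleaner provided the black box is stated precisely. One caution on that point: the \emph{natural} $\calO_K[[y_j]]$-algebra map from the pd-envelope of $\calO_K[[y_j]]$ along $(y_j)$ into $\frakS^n/(E)$ sends $y_j\mapsto 0$ (since $y_j=E(u_0)X_j$) and hence annihilates all $y_j^{[m]}$, so it is not the isomorphism you want. The correct formulation is that $\frakS^n/(E)$ is the \emph{free} pd-polynomial ring over $\calO_K=A^n/(E,y_1,\dots,y_n)$ in the generators $X_j$ (the images of $y_j/E$); this coincides abstractly with the $p$-completed pd-envelope, but only as an $\calO_K$-algebra, with the pd-generator matched to $X_j$ rather than to $y_j$. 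You already flag this tracking issue as your main obstacle, so just make sure the final statement reflects it. For the ``moreover'' part your Taylor-expansion argument is essentially identical to the paper's: the paper expands $E(u_0)-E(u_1)$ binomially modulo $(u_0-u_1)^2$ to get $\tfrac{E(u_1)}{E(u_0)}\equiv 1-E'(\pi)X_1\bmod E$, and then computes $p_0(X_j)$ exactly as you do.
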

\begin{proof}
  Since $(p,E=E(u_0))$ is a regular sequence in $\frakS^n$, we see that $\frakS^n/(E)$ is $p$-torsion free.
  For any $j$ and any $m\geq 0$, we define $\overline \delta_{m,j}$ as the reduction of $\delta_m(\frac{u_0-u_j}{E})$ modulo $E$.
  Since for any $m\geq 1$,
  \[\varphi(\delta_{m-1}(\frac{u_0-u_j}{E})) = (\delta_{m-1}(\frac{u_0-u_j}{E}))^p+p\delta_m(\frac{u_0-u_j}{E}),\]
  By Lemma \ref{divisiblity}, we see that $p\overline \delta_{m+1,j} = -\overline{\delta_{m,j}}^p$ for any $m\geq 0$. By noting that $\overline \delta_{0,j} = X_j$, we conclude that for any $m\geq 0$,
  \[\overline \delta_{m,j} = (-1)^{1+p+\cdots+p^{m-1}}\frac{X_j^{p^m}}{p^{1+p+\cdots+p^{m-1}}}.\]
  Since for any $m\geq 0$, $X_j^{[p^m]}:=\frac{X_j^{p^m}}{p^m!}$\footnote{From now on, we denote by $X^{[n]}$ the $n$-th pd-power of $X$ (that is, $X^{[n]} = \frac{X^n}{n!}$). Similar remark applies to $X_1^{[n]}, X_2^{[n]}$, etc..} differs from $\overline \delta_{m,j}$ by a unit in $\Zp$, it is easy to check that $\frac{X_j^m}{m!}\in (\frakS^n/(E))[\frac{1}{p}]$ lies in $\frakS^n/(E)$.

  Let $\calO_K\{Y_1,\dots, Y_n\}^{\wedge}_{\pd}$ be the $p$-complete free pd-algebra in the variables $\{Y_j\}_{1\leq j\leq n}$ over $\calO_K$.
  Then there exists a well-defined pd-morphism
  \[\alpha: \calO_K\{Y_1,\dots, Y_n\}^{\wedge}_{\pd}\rightarrow \frakS^n/(E)\]
  sending $Y_j$ to $X_j$ for any $1\leq j\leq n$.
  The construction of $\frakS^n$ implies that $\alpha$ is surjective.
  Since both sides are $p$-complete, in order to see $\alpha$ is an isomorphism, we only need to see it is an injection after modulo $(p)$. This follows from Lemma \ref{injectivity}.

  It remains to prove the ``moreover'' part. We deal with the $i = 0$ case while the rest is obvious.
  Assume $E(u) = \sum_{i=0}^ea_iu^i$ and then
  \begin{equation*}
      \begin{split}
          E(u_0)-E(u_1) & = \sum_{i=0}^ea_i(u_0^i-u_1^i) \\
          & = \sum_{i=1}^ea_i(\sum_{j=0}^i\binom{i}{j}u_1^{i-j}(u_0-u_1)^j-u_1^i) \\
          & = \sum_{i=1}^ea_i(\sum_{j=1}^i\binom{i}{j}u_1^{i-j}(u_0-u_1)^j) \\
          & \equiv \sum_{i=1}^eia_iu_1^{i-1}(u_0-u_1) \mod (u_0-u_1)^2.
      \end{split}
  \end{equation*}
  So we get
  \begin{equation*}
      \frac{E(u_1)}{E(u_0)} = 1-\frac{E(u_0)-E(u_1)}{E(u_0)} \equiv 1 - E'(u_0)X_1 \mod E.
  \end{equation*}
  In other words, $\frac{E(u_0)}{E(u_1)}$ goes to $(1-E'(\pi)X_1)^{-1}$ modulo $E$. Now, for any $1\leq j\leq n$,
  \begin{equation*}
  \begin{split}
      p_0(X_j) & = p_0(\frac{u_0-u_j}{E(u_0)})\\
      & = \frac{u_1-u_{j+1}}{E(u_1)}\\
      & = (\frac{u_0-u_{j+1}}{E(u_0)}-\frac{u_0-u_1}{E(u_0)})\frac{E(u_0)}{E(u_1)}\\
      & \equiv (X_{j+1}-X_1)(1-E'(\pi)X_1)^{-1} \mod E.
  \end{split}
  \end{equation*}
  So we are done.
\end{proof}
\begin{lem}\label{injectivity}
   The morphism $\bar \alpha: \calO_K/p\{Y_1,\cdots,Y_n\}^{\wedge}_{\pd}\to \frakS^n/(E,p)$ is injective.
\end{lem}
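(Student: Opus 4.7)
My plan is to identify $\frakS^n/E$ explicitly with a classical $p$-completed divided power envelope, and then to deduce the injectivity (indeed, bijectivity) of $\bar\alpha$ from the standard freeness of such envelopes.

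First, by Lemma \ref{regular sequence} the sequence $(u_0-u_1,\ldots,u_0-u_n)$ is $(p,E)$-completely regular in $A^n$, and by construction $\frakS^n$ is the associated prismatic envelope $A^n\{(u_0-u_j)/E(u_0):j\}^\wedge_\delta$. Applying the prismatic envelope theorem \cite[Proposition 3.13]{BS-a} (already invoked to prove Lemma \ref{regular sequence}) to this data, one obtains a canonical identification of the Hodge--Tate reduction $\frakS^n/E$ with the $p$-completed divided power envelope of the regular ideal $\bar J=(\pi-u_1,\ldots,\pi-u_n)\subset A^n/E=\calO_K[[u_1,\ldots,u_n]]$, relative to the quotient $\calO_K=A^n/(E,\bar J)$. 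Under this identification, the recursive identity $p\overline\delta_{m+1,j}=-\overline\delta_{m,j}^p$ established in the proof of Lemma \ref{pd polynomial} ensures that each $\delta$-iterate $\delta^m(X_j)\in\frakS^n/E$ corresponds, up to a unit in $\bZ_p^\times$, to the formal divided power $X_j^{[p^m]}$ in the pd-envelope.

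Second, classical divided power envelope theory applied to the regular sequence $\bar J$ inside the $p$-torsion free ring $\calO_K[[u_1,\ldots,u_n]]$ (with quotient $\calO_K$) yields that this $p$-completed pd-envelope is free as a $\calO_K$-module on the pd-monomials $\{X^{[\alpha]}\}_{\alpha\in\bZ_{\geq 0}^n}$, and is therefore canonically isomorphic to the $p$-completed free pd-$\calO_K$-algebra $\calO_K\{Y_1,\ldots,Y_n\}^\wedge_{\pd}$ via $Y_j\mapsto X_j$; this identification recovers the map $\alpha$ from the proof of Lemma \ref{pd polynomial} and shows that $\alpha$ is itself an isomorphism. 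Reducing this isomorphism modulo $p$ immediately yields the desired injectivity of $\bar\alpha$.

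\textbf{Main obstacle.} The delicate step is the matching between the $\delta$-iterates $\delta^m(X_j)$ on the prismatic side and the formal pd-powers $X_j^{[p^m]}$ on the classical pd-envelope side; this is where Lemma \ref{divisiblity} and the recursion $p\overline\delta_{m+1,j}=-\overline\delta_{m,j}^p$ are essential. A more self-contained alternative, which avoids invoking \cite[Proposition 3.13]{BS-a} beyond what is already used for Lemma \ref{regular sequence}, would be to leverage the $(p,E)$-complete flatness of $\frakS^n$ over $A^n$ to perform a direct rank computation on both sides mod $p$, matching the pd-monomial basis $\{Y^{[\alpha]}\}$ of $\calO_K/p\{Y_j\}^\wedge_{\pd}$ against an explicit generating set of $\frakS^n/(E,p)$ built from the images of the $\delta^m(X_j)$.
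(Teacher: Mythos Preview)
Your proposal has a genuine gap at the first step. Proposition~3.13 of \cite{BS-a} does \emph{not} identify the Hodge--Tate reduction of a prismatic envelope with a $p$-completed divided power envelope; it only asserts existence and $(p,I)$-complete flatness of the prismatic envelope over the base prism $(\frakS,(E))$. The identification you want --- that $\frakS^n/E$ is free on pd-monomials in the $X_j$ --- is precisely the content of Lemma~\ref{pd polynomial}, whose proof invokes Lemma~\ref{injectivity}. So as written your argument is circular: you are assuming the very isomorphism that Lemma~\ref{injectivity} is designed to establish. The recursion $p\overline\delta_{m+1,j}=-\overline\delta_{m,j}^p$ that you cite is indeed what the paper uses to produce the surjective pd-morphism $\alpha$, but it gives no information about injectivity; all the work lies in ruling out hidden relations among the pd-monomials in $\frakS^n/(E,p)$. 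Your ``self-contained alternative'' is also based on a false premise: $\frakS^n$ is $(p,E)$-completely flat over $\frakS$ (via each face map), not over $A^n$, so a rank count over $A^n$ is not available.

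The paper takes a genuinely different route. It first computes $\frakS^1/(E,p)$ explicitly by showing $\delta_m(EX+(u_1-u_0))\equiv\delta_m(EX)\bmod(u_1-u_0)$, which collapses the presentation to $\frakS\{X\}^\wedge_\delta/(p,E,\delta_1(EX),\delta_2(EX),\ldots)$. The key observation is that this presentation, after base-change along the faithfully flat map $\calO_K/p\to\calO_K/p[T]$, coincides with the reduction of the self-product $\frakS\langle T\rangle^1$ in the \emph{relative} prismatic site $(\calO_K\langle T\rangle/(\frakS,(E)))_{\Prism}$. In that relative setting the injectivity is already established by Tian \cite[Proposition~4.3]{Tian}. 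Faithful flatness then pulls the injectivity back to the absolute case. So the paper bootstraps from the relative result rather than invoking any general pd-envelope comparison for prismatic envelopes over $(\frakS,(E))$; if you want to avoid Tian's input you would need to supply an independent argument for the absence of relations, and neither of your two sketches does this.
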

\begin{proof}
  We prove the case $n=1$. The general case follows from the same argument.

  Note that $\frakS^1=A^1\{X\}^{\wedge}_{\delta}/(EX+(u_1-u_0))_{\delta}$, where $(EX+(u_1-u_0))_{\delta}$ is the closure of the ideal generated by $\{\delta_n(EX+(u_1-u_0))\}_{n\in \bN}$.

  We claim that $\delta_n(EX+(u_1-u_0))\equiv \delta_n(EX) \mod{(u_1-u_0)}$. We prove this by induction. When $n=0$, this is trivial. Assume the claim is true when $n=m$. Let $Z=EX+(u_1-u_0)$. Then we have
  \[
 \delta_{m+1}(Z)=\delta(\delta_m(Z))=\frac{\varphi(\delta_m(Z))-(\delta_m(Z))^p}{p}.
  \]
  Write $\delta_m(Z)=\delta_m(EX)+(u_1-u_0)Z_1$ for some $Z_1\in A^1\{X\}^{\wedge}_{\delta}$. Then
  \begin{equation*}
      \begin{split}
  &\frac{\varphi(\delta_m(Z))-(\delta_m(Z))^p}{p}= \frac{\varphi(\delta_m(EX)+(u_1-u_0)Z_1)-(\delta_m(EX)+(u_1-u_0)Z_1)^p}{p}\\
  \equiv & \frac{\varphi(\delta_m(EX))-(\delta_m(EX))^p+\varphi((u_1-u_0)Z_1)-((u_1-u_0)Z_1)^p}{p} \mod{(u_1-u_0)}\\
  \equiv & \delta_{m+1}(EX)+\delta((u_1-u_0)Z_1) \mod{(u_1-u_0)}\\
  \equiv &\delta_{m+1}(EX) \mod{(u_1-u_0)}.
      \end{split}
  \end{equation*}
  The last congruence follows from $\delta(u_1-u_0)\equiv 0\mod{(u_1-u_0)}$.

  So we can see
  \begin{equation*}
      \begin{split}
           \frakS^1/(E,p)= & A^1\{X\}^{\wedge}_{\delta}/(p,E,(u_1-u_0),\delta_1(EX),\cdots,\delta_n(EX),\cdots)\\
           = & \frakS\{X\}^{\wedge}_{\delta}/(p,E,\delta_1(EX),\cdots,\delta_n(EX),\cdots).
      \end{split}
  \end{equation*}
 Let $(\frakS\langle T\rangle^1, E)$ be the self-product of $(\frakS\langle T\rangle, E)$ in the category $(\calO_K\langle T\rangle/(\frakS,E))_{\Prism}$. Then by similar arguments as above, we have
 \begin{equation*}
 \begin{split}
         \frakS\langle T\rangle^1/(E,p)=&\frakS\langle T_0,T_1\rangle\{X\}^{\wedge}_{\delta}/(p,E,(T_1-T_0),\delta_1(EX),\cdots,\delta_n(EX),\cdots)\\
         =& \frakS\langle T\rangle\{X\}^{\wedge}_{\delta}/(p,E,\delta_1(EX),\cdots,\delta_n(EX),\cdots).
 \end{split}
 \end{equation*}

 So in particular, we have $\frakS^1/(E,p)\otimes_{\calO_K/p}\calO_K/p[T]\cong \frakS\langle T\rangle^1/(E,p)$. Now we consider the following commutative diagram

 \begin{equation*}
       \xymatrix@C=0.5cm{
    \calO_K/p\{Y_1\}^{\wedge}_{\pd}\ar[rrrr]^{}\ar[d]^{\otimes_{\calO_K/p}\calO_K/p[T]}&&&& \frakS^1/(p,E)\ar[d]^{\otimes_{\calO_K/p}\calO_K/p[T]}\\
    \calO_K/p[T]\{Y_1\}^{\wedge}_{\pd}\ar[rrrr]&&&& \frakS\langle T\rangle^1/(p,E).
  }
  \end{equation*}
 The vertical maps are injective due to the faithful flatness of the map $\calO_K/p\to \calO_K/p[T]$. The bottom map is injective due to \cite[Proposition 4.3]{Tian}. So we are done.
\end{proof}
\begin{exam}\label{Exam-structure morphism}
  Let $\calO_K\{X\}^{\wedge}_{\rm pd} = \frakS^1/E$ and $\calO_K\{X_1,X_2\}^{\wedge}_{\rm pd} = \frakS^2/E$. Denote by $p_{ij}:\frakS^1/E\to \frakS^2/E$ the structure morphism induced by $\{0,1\}\to\{i,j\}\subset \{0,1,2\}$. Then we have
  \begin{equation*}
      \begin{split}
         & p_{01}(X) = X_1,\\
         & p_{02}(X) = X_2,\\
         & p_{12}(X) = (X_2-X_1)(1-E'(\pi)X_1)^{-1}.
      \end{split}
  \end{equation*}
\end{exam}
 
 Now, we define a ``$\tau$-connection'' on $\frakS$.
 For any $f(u) = \sum_{n\geq 0}a_nu^n\in \frakS$, define
  \[
  d_q(f): = \sum_{n\geq 0}a_n[n]_qu^{n-1}\in A_{\inf},
  \]
  where $[n]_q = \frac{[\epsilon]^n-1}{[\epsilon]-1}$.
  Then it is easy to check that for any $f\in\frakS$,
\begin{equation}\label{Equ-connection on coefficients}
  (\tau-1)(f) = \mu ud_q(f).
\end{equation}
  Clearly, for any $f,g\in \frakS$, we have that
  $d_q(fg) = d_q(f)\tau(g)+fd_q(g)$
  and that $d_q(f)$ goes to $f'(\pi)$ modulo $\xi$.
 \begin{exam}\label{Exam-differential of E}
   For any $h\geq 1$, we have
   \begin{equation*}
   \begin{split}
       d_q(E^h) & = \frac{\tau(E)^h-E^h}{\mu u}\\
       & = \frac{1}{\mu u}((E+\mu ud_q(E))^h-E^h)\\
       & = \sum_{i=0}^{h-1}\binom{h}{i}E^i(\mu u)^{h-1-i}d_q(E)^{h-i}\\
       & = hE^{h-1}d_q(E)+\varphi^{-1}(\mu)u\sum_{i=0}^{h-2}\binom{h}{i}E^i\xi^{h-1-i}(\varphi^{-1}(\mu)u)^{h-2-i}d_q(E)^{h-i}.
   \end{split}
   \end{equation*}
   In particular, $\frac{d_q(E^h)}{E^{h-1}}\in A_{\inf}$ and $\frac{d_q(E^h)}{E^{h-1}}\equiv hd_q(E)\mod \varphi^{-1}(\mu)uA_{\inf}$.
 \end{exam}
  For any $n\geq 1$, let $\tau\in\hat G$ act on the first factor of $\frakS^{n}$. Then for any $0\leq j\leq n$, we have
\begin{equation}\label{Equ-connection on X}
\begin{split}
  (\tau-1)(\frac{u_0-u_j}{E(u_0)}) & = \frac{[\epsilon]u_0-u_j}{\tau(E(u_0))} -\frac{u_0-u_j}{E(u_0)}\\
  & = \frac{\mu u_0}{\tau(E(u_0))}-\frac{u_0-u_j}{E(u_0)}\frac{(\tau-1)(E(u_0))}{\tau(E(u_0))}\\
  & = \frac{\mu u_0}{\tau(E(u_0))}(1-\frac{u_0-u_j}{E(u_0)}d_q(E)).
\end{split}
\end{equation}
\begin{lem}\label{tau connection}
  The $\rW(k)$-linear morphism $\nabla:=\frac{\tau-1}{\varphi^{-1}(\mu)u}:\frakS\rightarrow \Ainf$ is well-defined on $\frakS$ such that for any $f,g\in \frakS$, $\nabla(f) = \xi d_q(f)$ and that
  \[\nabla(fg) = \nabla(f)\tau(g)+f\nabla(g).\]
  If we regard $\frakS^1$ as an $\frakS$-algebra via the morphism $p_1:\frakS\to \frakS^1$ induced by the map
  \[\{0\}\to\{0\}\subset\{0,1\},\]
  then $\nabla$ extends to a $\rW(k)$-linear morphism
  $\nabla:\frakS^1\to \tilde A_{\inf}^1$
  such that for any $f\in\frakS$ and any $x\in \frakS^1$,
  \begin{equation}\label{Equ-tau connection}
      \nabla(fx) = \tau(f)\nabla(x)+\nabla(f)x = f\nabla(x)+x\nabla(f)+\mu ud_q(f)\nabla(x),
  \end{equation}
  where $\tilde A_{\inf}^1 = A_{\inf}[[u_1]]\{\frac{u_0-u_1}{E(u_0)}\}^{\wedge}_{\delta}$ is the coproduct of $(A_{\inf},(\xi))$ and $(\frakS,(E))$ in $(\calO_K)_{\Prism}$ via identifying $u_0$ with $[\pi^{\flat}]$. The $\nabla$ is compatible with $\varphi$ in the following sense
  \begin{equation}\label{Equ-nabla commutes with phi}
      \nabla\varphi = \xi u_0^{p-1}\varphi\nabla.
  \end{equation}
\end{lem}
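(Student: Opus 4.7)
The plan rests on the algebraic identity $\mu = \varphi^{-1}(\mu)\xi$ (built into the definition of $\xi$) together with the formulas for $(\tau-1)$ already recorded in (\ref{Equ-connection on coefficients}) and (\ref{Equ-connection on X}).

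For the statement on $\frakS$: equation (\ref{Equ-connection on coefficients}) rewrites as $(\tau-1)(f) = \varphi^{-1}(\mu) u \cdot \xi d_q(f)$ for any $f \in \frakS$, so $\nabla(f) := \xi d_q(f) \in A_{\inf}$ is well-defined. The Leibniz identity then follows from the telescoping $\tau(f)\tau(g) - fg = ((\tau-1)f)\tau(g) + f((\tau-1)g)$ after dividing by $\varphi^{-1}(\mu) u$.

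For the extension to $\frakS^1 \to \tilde A_{\inf}^1$, I would interpret $\nabla$ as the formal quotient $(\tau-1)/(\varphi^{-1}(\mu) u_0)$ and prove it lands in $\tilde A_{\inf}^1$ by verifying divisibility on the $\delta$-generators of $\frakS^1$ over $\frakS$: trivially on $u_1$, since $\tau(u_1) = u_1$; and on $X_1 = (u_0-u_1)/E(u_0)$ directly from (\ref{Equ-connection on X}) with $j=1$, which gives $\nabla(X_1) = \xi \tau(E(u_0))^{-1}(1 - X_1 d_q(E))$, well-defined because $\tau(E(u_0))$ and $\xi$ are both distinguished generators of $\ker\theta$ in $A_{\inf}$ and hence differ by a unit. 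Divisibility propagates through ring operations by the same telescoping argument, and through the $\delta$-structure using $\tau\delta = \delta\tau$ together with the explicit expansions of $\delta(a+b)$ and $\delta(ab)$. The twisted Leibniz formula (\ref{Equ-tau connection}) then follows by substituting $\tau(f) = f + \mu u_0 d_q(f)$ into $\nabla(fx) = \tau(f)\nabla(x) + \nabla(f) x$.

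For the Frobenius compatibility, the key identities are $\varphi(\xi) = [p]_q$ and $[np]_q = \varphi([n]_q)[p]_q$, from which one reads off $d_q(\varphi(f)) = \varphi(\xi) u_0^{p-1}\varphi(d_q(f))$ for $f = \sum a_n u^n \in \frakS$, and hence $\nabla(\varphi(f)) = \xi u_0^{p-1}\varphi(\nabla(f))$ after multiplying by $\xi$. The main obstacle is the propagation step in the third paragraph: showing that $\varphi^{-1}(\mu) u_0 \mid (\tau-1)(x)$ implies $\varphi^{-1}(\mu) u_0 \mid (\tau-1)(\delta(x))$. This requires careful bookkeeping of the $p$-power correction terms appearing in $\delta(x + \varphi^{-1}(\mu) u_0 y) - \delta(x)$ and crucially uses the regularity of $(p, \varphi^{-1}(\xi))$ in $\tilde A_{\inf}^1$ (via Lemma \ref{ALB} applied to the distinguished element $\varphi^{-1}(\xi)$) together with Lemma \ref{regular sequence} to ensure no denominators are introduced.
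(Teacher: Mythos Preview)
Your proposal is essentially correct and follows the same skeleton as the paper: reduce to checking that $\varphi^{-1}(\mu)u_0$ divides $(\tau-1)(\delta_n(X))$ by induction on $n$, with the base case coming from (\ref{Equ-connection on X}), and then invoke a regularity statement to cancel the stray factor of $p$ that appears in the inductive step. The paper organizes the induction via the identity $p\delta_{n+1}(X)=\varphi(\delta_n(X))-\delta_n(X)^p$, which makes the divisibility by $\varphi^{-1}(\mu)u_0$ of both terms on the right transparent (the first via $\varphi(\varphi^{-1}(\mu)u_0)=\mu u_0^p$, the second via telescoping); your route through the addition formula for $\delta$ amounts to the same computation unpacked differently.

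Two small points. First, the regular sequence you actually need is $(p,\varphi^{-1}(\mu)u_0)$ in $\tilde A_{\inf}^1$, not $(p,\varphi^{-1}(\xi))$; the paper obtains this by mimicking the proof of Lemma~\ref{regular sequence} rather than by a direct appeal to Lemma~\ref{ALB}, since $\varphi^{-1}(\xi)$ is not the distinguished element for the prism $\tilde A_{\inf}^1$. Second, the Frobenius compatibility (\ref{Equ-nabla commutes with phi}) is in fact automatic once $\nabla=(\tau-1)/(\varphi^{-1}(\mu)u_0)$ is known to be well-defined: since $\tau$ and $\varphi$ commute, one has $\nabla\varphi=\varphi(\tau-1)/(\varphi^{-1}(\mu)u_0)=\frac{\mu u_0^{p}}{\varphi^{-1}(\mu)u_0}\,\varphi\nabla=\xi u_0^{p-1}\varphi\nabla$, so your separate verification on $\frakS$ via $q$-integer identities, while correct, is not needed.
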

\begin{proof}
  The only non-trivial part is that $\nabla$ is well-defined on $\frakS^1$.
  Denote $\frac{u_0-u_1}{E(u_0)}$ by $X$. It suffices to check $(\tau-1)(\delta_n(X))$ is divided by $\varphi^{-1}(\mu)u_0$.

  The $n = 0$ case follows from (\ref{Equ-connection on X}) directly since $\xi$ and $E(u_0)$ generate the same ideal in $A_{\inf}$. Now, assume we have shown that$\varphi^{-1}(\mu)u_0$ divides $(\tau-1)(\delta_n(X))$ for some $n\geq 0$. Then for $n+1$, since $\varphi$ commutes with $\tau$, we have
  \begin{equation}\label{Equ-nabla commute with phi}
  \begin{split}
      (\tau-1)(p\delta_{n+1}(X)) & = (\tau-1)\varphi(\delta_n(X)) - (\tau-1)(\delta_n(X)^p)\\
      & = \varphi((\tau-1)\delta_n(X)) - (((\tau-1)(\delta_n(X))+\delta_n(X))^p-\delta_n(X)^p) \\
      & = \varphi(\varphi^{-1}(\mu)u_0\nabla(\delta_n(X)))-\sum_{i=0}^{p-1}\binom{p}{i}(\varphi^{-1}(\mu)u_0)^{p-i}\nabla(\delta_n(X))^{p-i}\delta_n(X)^i\\
      & = \mu u_0^p\varphi(\nabla(\delta_n(X)))-\sum_{i=0}^{p-1}\binom{p}{i}(\varphi^{-1}(\mu)u_0)^{p-i}\nabla(\delta_n(X))^{p-i}\delta_n(X)^i \\
      & \equiv \mu u_0^p\varphi(\nabla(\delta_n(X))) \mod \varphi^{-1}(\mu)u_0.
  \end{split}
  \end{equation}
  So $\varphi^{-1}(\mu)u_0$ divides $p(\tau-1)(\delta_{n+1}(X))$. By a similar argument used in the proof of Lemma \ref{regular sequence}, we see $(p,\varphi^{-1}(\mu)u_0)$ is a regular sequence in $\tilde A_{\inf}^1$. This implies $\varphi^{-1}(\mu)u_0$ divides $(\tau-1)(\delta_{n+1}(X))$ as desired.
\end{proof}
\begin{cor}\label{reduced tau connection}
  The $\nabla$ induces an $\calO_K$-linear ``$\tau$-connection''
  \[\nabla:\calO_K\{X\}^{\wedge}_{\pd}\to\calO_{\hat K_{\cyc,\infty}}\{X\}^{\wedge}_{\pd},\]
  such that
  \begin{equation}\label{Equ-tau on X}
      \nabla(X) =  \lambda(1+(\zeta_p-1)\pi\lambda E'(\pi))^{-1}(1-XE'(\pi))
  \end{equation}
  and that for any $f,g\in \calO_K\{X\}^{\wedge}_{\pd}$,
  $\nabla(fg) = \nabla(f)\tau(g)+f\nabla(g)$, where $\lambda$ denotes the image of $\frac{\xi}{E([\pi^{\flat}])}$ modulo $\xi$.
\end{cor}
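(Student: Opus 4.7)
The plan is to descend the $\tau$-connection $\nabla\colon\mathfrak{S}^1\to\tilde{A}_{\inf}^1$ of Lemma~\ref{tau connection} modulo $(E)=(\xi)$ (these generate the same ideal of $A_{\inf}$ since $\tilde\lambda:=\xi/E([\pi^{\flat}])\in A_{\inf}^{\times}$) and then read off the formula for $\nabla(X)$ directly from~(\ref{Equ-connection on X}). First I verify that $\nabla(E\mathfrak{S}^1)\subset\xi\tilde{A}_{\inf}^1$: applying the Leibniz rule~(\ref{Equ-tau connection}) with $f=E\in\mathfrak{S}$ gives
\[
\nabla(Ex)=\tau(E)\nabla(x)+\nabla(E)\,x=\tau(E)\nabla(x)+\xi d_q(E)\,x,
\]
and because $\tau(E(u_0))=E(u_0)+\mu u_0d_q(E)=\tilde\lambda^{-1}\xi+\varphi^{-1}(\mu)\xi u_0d_q(E)$ lies in $\xi\tilde{A}_{\inf}^1$, both summands do as well.

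Combined with the identifications $\mathfrak{S}^1/E\cong\calO_K\{X\}^{\wedge}_{\pd}$ from Lemma~\ref{pd polynomial} with $n=1$ and $\tilde{A}_{\inf}^1/\xi\cong\calO_{\hat K_{\cyc,\infty}}\{X\}^{\wedge}_{\pd}$ (obtained by running the same divided-power argument with $A_{\inf}$ in place of $\rW(k)$ and $\xi$ in place of $E$), this descent yields the desired $\calO_K$-linear map. To pin down $\nabla(X)$, I specialize~(\ref{Equ-connection on X}) at $j=1$ and divide by $\varphi^{-1}(\mu)u_0$, obtaining $\nabla(X)=\frac{\xi}{\tau(E(u_0))}(1-Xd_q(E))$ in $\tilde{A}_{\inf}^1$. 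Rewriting
\[
\frac{\xi}{\tau(E(u_0))}=\frac{\tilde\lambda}{1+\varphi^{-1}(\mu)\tilde\lambda u_0 d_q(E)}
\]
and reducing via $\theta$ (so $\tilde\lambda\mapsto\lambda$, $\varphi^{-1}(\mu)\mapsto\zeta_p-1$, $u_0\mapsto\pi$, and $d_q(E)\mapsto E'(\pi)$) recovers exactly $\nabla(X)=\lambda(1+(\zeta_p-1)\pi\lambda E'(\pi))^{-1}(1-XE'(\pi))$.

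Finally, the Leibniz rule $\nabla(fg)=\nabla(f)\tau(g)+f\nabla(g)$ on the quotient is inherited from the elementary identity $\tau(f)\tau(g)-fg=(\tau(f)-f)\tau(g)+f(\tau(g)-g)$ for arbitrary $f,g\in\mathfrak{S}^1$, divided by $\varphi^{-1}(\mu)u_0$; since $\tau$ fixes $\mu$ it preserves $(\xi)$ and descends compatibly with $\nabla$ to a map $\mathfrak{S}^1/E\to\tilde{A}_{\inf}^1/\xi$. The only step requiring real work is the pd-polynomial description of $\tilde{A}_{\inf}^1/\xi$, but this proceeds by a direct parallel to Lemma~\ref{pd polynomial} (using Lemma~\ref{divisiblity} and the regularity of $(p,\xi)$ in $\tilde{A}_{\inf}^1$), so I do not anticipate any substantive obstacle beyond careful bookkeeping.
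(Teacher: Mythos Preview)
Your proposal is correct and follows essentially the same route as the paper: descend $\nabla$ modulo $(E)=(\xi)$ via the Leibniz rule~(\ref{Equ-tau connection}), identify the quotients as pd-polynomial rings by the argument of Lemma~\ref{pd polynomial}, and read off $\nabla(X)$ from~(\ref{Equ-connection on X}) together with the computation of $\tau(E)/E$. The one step you leave implicit is the $\calO_K$-linearity of the descended map; the paper obtains it by the same Leibniz trick you used for $f=E$, taking instead $f=u$ (so that $\nabla(u)=\xi d_q(u)$ and $\tau(u)-u=\mu u_0$ both lie in $(\xi)$, whence $\nabla(ux)\equiv u\nabla(x)\bmod\xi$).
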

\begin{proof}
  A similar argument used in the proof of Lemma \ref{pd polynomial} shows $\tilde A^1_{\inf}/\xi \cong \calO_{\hat K_{\cyc,\infty}}\{X\}^{\wedge}_{\pd}$.
  By Lemma \ref{tau connection}, especially (\ref{Equ-tau connection}), we see that for any $f\in\frakS$ and any $x\in \frakS^1$,
  \[\nabla(fx)\equiv f\nabla(x) \mod \xi.\]
  This implies $\nabla$ is well-defined (by letting $f = E$) and $\calO_K$-linear (by letting $f = u$). It remains to check (\ref{Equ-tau on X}). By (\ref{Equ-connection on X}), we see
  \[\nabla(X) = \frac{\xi}{E(u_0)}\frac{E(u_0)}{\tau(E(u_0))}(1-\frac{u_0-u_1}{E(u_0)}d_q(E)).\]
  Note that
  \begin{equation}\label{Equ-tau E over E}
      \begin{split}
          \frac{\tau(E(u_0))}{E(u_0)}& = 1+\frac{(\tau-1)(E(u_0))}{E(u_0)}\\
          & = 1+\frac{\mu ud_q(E)}{E(u_0)}\\
          & = 1+\varphi^{-1}(\mu)u_0\lambda d_q(E).
      \end{split}
  \end{equation}
  We get $\nabla(X) = \lambda(1+(\zeta_p-1)\pi\lambda E'(\pi))^{-1}(1-XE'(\pi))$ as desired.
\end{proof}

\begin{exam}\label{Exam-nabla on pd algebra}

    For the further use, we give an explicit description of $\nabla(X^{[n]})$ for any $n\geq 0$. Note that $\calO_K\{X\}^{\wedge}_{\pd}$ is $p$-torsion free. So for any $f\in \calO_K\{X\}^{\wedge}_{\pd}$, $\pi(\zeta_p-1)\nabla(f) = (\tau-1)(f)$. Now, fix an $n\geq 1$ and then we have
    \begin{equation*}
        \begin{split}
            \tau(X^{[n]}) & = (X+(\zeta_p-1)\nabla(X))^{[n]}\\
            & = \sum_{i=0}^n\pi^{n-i}\frac{(\zeta_p-1)^{n-i}}{(n-i)!}\nabla(X)^{n-i}X^{[i]}\\
            & = X^{[n]}+\pi(\zeta_p-1)\sum_{i=0}^{n-1}\pi^{n-i-1}\frac{(\zeta_p-1)^{n-i-1}}{(n-i)!}\nabla(X)^{n-i}X^{[i]}.
        \end{split}
    \end{equation*}
    So we deduce that for any $n\geq 1$,
    \begin{equation}\label{Equ-Nabla on X^n}
        \nabla(X^{[n]}) = \sum_{i=0}^{n-1}\pi^{n-i-1}\frac{(\zeta_p-1)^{n-i-1}}{(n-i)!}\nabla(X)^{n-i}X^{[i]}.
    \end{equation}
    
    Recall that for any $m\geq 1$, $\nu_p(m!)\leq \frac{m-1}{p-1} = \nu_p((\zeta_p-1)^{m-1})$. So we deduce from (\ref{Equ-Nabla on X^n}) that 
    \begin{equation}\label{Equ-Nabla on X^n-mod pi}
        \nabla(X^{[n]}) \equiv X^{[n-1]}\nabla(X) \mod \pi.
    \end{equation}
    In other words, $\nabla$ behaves like a connection modulo $\pi$.
\end{exam}

\section{Hodge--Tate crystals}

 In this section, we study the category $\Vect((\calO_K)_{\Prism},\overline{\calO}_{\Prism})$ of Hodge--Tate crystals and investigate the cohomology of both Hodge--Tate crystals and prismatic crystals. We will give an explicit complex computing the former. For the latter, there will be a higher vanishing theorem. We remark that all the results on the Hodge--Tate crystals in this section hold true by the same arguments for crystals over $\overline \calO_{\Prism}[\frac{1}{p}]$, which we shall call {\bf rational Hodge--Tate crystals}.
\subsection{Hodge--Tate crystals}
  \begin{dfn}[Hodge--Tate crystals]\label{Dfn-Hodge--Tate crystal}
    By a {\bf Hodge--Tate crystal} on $(\calO_K)_{\Prism}$, we mean a sheaf $\bM$ of $\overline \calO_{\Prism}$-modules such that for any $(A,I)\in(\calO_K)_{\Prism}$, $\bM((A,I))$ is a finite projective $A/I$-module and that for any morphism $(A,I)\to (B,J)$, the following natural map is an isomorphism
    \[\bM((A,I))\otimes_AB\xrightarrow{\cong}\bM((B,I)).\]
    Denote by $\Vect((\calO_K)_{\Prism},\overline \calO_{\Prism})$ the category of Hodge--Tate crystals on $(\calO_K)_{\Prism}$.
  \end{dfn}

  \begin{dfn}[Stratification]\label{Dfn-stratification}
    Let $B^{\bullet}$ be a cosimplicial ring with $B:=B^0$. Let $M$ be a $B$-module. By a {\bf stratification} on $M$ with respect to $B^{\bullet}$, we mean a $B^1$-linear isomorphism \[\varepsilon: M\otimes_{B,p_0}B^1\to M\otimes_{B,p_1}B^1.\]
    Here, for $i=0,1$, $p_i$ is induced by the map
    $\{0\} \to\{1 - i\}\subset\{0,1\}$.
   
    We say a stratification $\varepsilon$ satisfies the {\bf cocycle condition} if 
    \begin{enumerate}
    \item $p^*_{12}(\varepsilon)p^*_{01}(\varepsilon) = p_{02}^*(\varepsilon)$, where for $ij\in \{01,02,12\}$, $p_{ij}$ is defined in Example \ref{Exam-structure morphism}.

    \item $\Delta^*(\varepsilon)=\id_M$, where $\delta:B^1\to B^0=B$ denotes the degeneracy morphism.
    \end{enumerate}
  \end{dfn}
  \begin{rmk}\label{Rmk-Tian}
     In Definition \ref{Dfn-stratification}, assume $B^{\bullet}$ is the underlying cosimplicial ring of the \v Cech nerve $(B^{\bullet},IB^{\bullet})$ induced by a cover $(B,I)$ of the final object in ${\rm Shv}((\calO_K)_{\Prism})$. 
     Then by \cite[Remark 3.5]{Tian}, for any stratification $\varepsilon$ on $M$ satisfying Item 1 of Definition \ref{Dfn-stratification}, Item 2 of Definition \ref{Dfn-stratification} is satisfied automatically.
  \end{rmk}
  The following Lemma is well-known.
  \begin{lem}[\emph{\cite[Corollary 3.9]{MT}},\emph{\cite[Proposition 2.7]{BS-b}}]\label{MT}
    Let $(B,I)$ be a cover of the final object of the topos $Shv((\calO_K)_{\Prism})$ with the associated cosimplicial prism $(B^{\bullet},IB^{\bullet})$. Then the category of Hodge--Tate crystals on $(\calO_K)_{\Prism}$ is equivalent to the category of finite projective $B/I$-modules $M$ on which there is a stratification satisfying the cocycle condition.
  \end{lem}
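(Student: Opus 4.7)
The plan is to recognize this as a standard fpqc-style descent statement along the cover $(B,I)$ of the final object of $\mathrm{Shv}((\calO_K)_{\Prism})$: modulo $I$, the cosimplicial ring $B^{\bullet}/IB^{\bullet}$ is a Cech resolution along which finite projective modules descend, and the two sides of the claimed equivalence are precisely the two standard incarnations of descent data.

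First I would construct the functor from Hodge--Tate crystals to stratified modules. Given $\bM$, set $M := \bM((B,I))$, which is finite projective over $B/I$ by definition. The two coface maps $p_0,p_1:(B,I)\to (B^1,IB^1)$ are morphisms in $(\calO_K)_{\Prism}$, so the crystal condition produces canonical $B^1/IB^1$-linear isomorphisms $M\otimes_{B,p_i}B^1/IB^1\xrightarrow{\sim}\bM((B^1,IB^1))$ for $i=0,1$; composing one with the inverse of the other gives the stratification $\varepsilon$. Evaluating at $(B^2,IB^2)$ and using the crystal isomorphisms attached to the three coface maps $p_{01},p_{02},p_{12}:(B^1,IB^1)\to(B^2,IB^2)$ yields the cocycle identity $p_{12}^{\ast}(\varepsilon)p_{01}^{\ast}(\varepsilon)=p_{02}^{\ast}(\varepsilon)$, while the normalisation $\Delta^{\ast}(\varepsilon)=\id_M$ is automatic (cf.\ Remark \ref{Rmk-Tian}).

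Next I would construct the inverse functor. Given $(M,\varepsilon)$ satisfying the cocycle condition, for any $(A,J)\in(\calO_K)_{\Prism}$ form the coproduct $(B,I)\otimes(A,J)=:(C^0,JC^0)$ in the prismatic site, and more generally the Cech nerve $(C^{\bullet},JC^{\bullet})$ of $(A,J)\to(C^0,JC^0)$; then set
\[
\bM((A,J)) := \mathrm{eq}\Bigl(M\otimes_{B/I}C^0/JC^0 \rightrightarrows M\otimes_{B/I}C^1/JC^1\Bigr),
\]
where the two arrows are the canonical inclusion and its $\varepsilon$-twist. Functoriality of $\bM$ in $(A,J)$ is automatic from the universal property of coproducts in $(\calO_K)_{\Prism}$.

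The main obstacle is to verify that this construction lands in $\Vect((\calO_K)_{\Prism},\overline\calO_{\Prism})$ and is inverse to the first functor. Both amount to faithfully flat descent of finite projective modules along $A/J\to C^0/JC^0$. The requisite faithful flatness holds because $(B,I)$ is a cover of the final object of $\mathrm{Shv}((\calO_K)_{\Prism})$, so base-changing to $(A,J)$ still yields a cover; the cocycle hypothesis on $\varepsilon$ is precisely what is needed to upgrade $M\otimes_{B/I}C^0/JC^0$ to genuine descent data along the cosimplicial ring $C^{\bullet}/JC^{\bullet}$. Granting this, finite projectivity of $\bM((A,J))$, the crystal condition for morphisms, and the fact that the composite $\bM\mapsto M\mapsto \bM$ recovers the original crystal are all immediate from flat descent. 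The opposite composite recovers $(M,\varepsilon)$ by evaluating the equalizer construction at $(A,J)=(B,I)$ and using that $(C^{\bullet},JC^{\bullet})$ then specializes to $(B^{\bullet},IB^{\bullet})$.
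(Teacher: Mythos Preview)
The paper does not supply its own proof of this lemma; it is stated as a well-known result with citations to \cite[Corollary 3.9]{MT} and \cite[Proposition 2.7]{BS-b}. Your argument is precisely the standard faithfully-flat descent proof that underlies those references, so in that sense it matches the intended approach.

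One small point of care: your inverse construction begins by forming the coproduct $(B,I)\sqcup(A,J)$ in $(\calO_K)_{\Prism}$. This does exist, but it is not entirely formal---it relies on the existence of prismatic envelopes (as in \cite{BS-a}), the same input used in the paper to build the \v Cech nerve $(\frakS^{\bullet},(E))$. Once that is granted, the fact that $(A,J)\to(C^0,JC^0)$ is a flat cover follows from the stability of covers under base change in the prismatic topology, and the rest of your argument (fpqc descent of finite projective modules, identification of descent data with the stratification $(M,\varepsilon)$) goes through as written. An equivalent and slightly cleaner packaging, closer to how \cite{BS-b} phrases it, is to observe directly that the category of crystals is the $2$-limit of the categories of finite projective modules over the cosimplicial ring $B^{\bullet}/IB^{\bullet}$, which is by definition the category of stratified modules; this avoids ever naming $(A,J)$ explicitly.
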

  Applying the lemma to $(B,I)=(\frakS,(E))$, we get the following corollary.
  \begin{cor}\label{crystal is stratification}
    The category of Hodge--Tate crystals on $(\calO_K)_{\Prism}$ is equivalent to the category of finite free $\calO_K$-modules $M$ on which there is a stratification with respect to $\frakS^{\bullet}/(E)$ satisfying the cocycle condition.
  \end{cor}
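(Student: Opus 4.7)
The plan is to derive this directly from Lemma \ref{MT} by specializing to the Breuil--Kisin prism $(\frakS,(E))$. By Lemma \ref{BK Cover}(2), this prism is a cover of the final object of $\mathrm{Shv}((\calO_K)_{\Prism})$, so the hypothesis of Lemma \ref{MT} is satisfied and we may take $(B,I) = (\frakS,(E))$. Applying that lemma, Hodge--Tate crystals are equivalent to finite projective modules over $\frakS/(E)$ equipped with a stratification with respect to the \v Cech nerve $\frakS^{\bullet}/(E)$ satisfying the cocycle condition.

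Next I would identify $\frakS/(E)$ explicitly. Since $\frakS = \rW(k)[[u]]$ and $E$ is the Eisenstein polynomial of the uniformizer $\pi \in \calO_K$, the map $u \mapsto \pi$ induces a canonical isomorphism $\frakS/(E) \cong \calO_K$. Hence finite projective $\frakS/(E)$-modules are precisely finite projective $\calO_K$-modules.

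Finally, to upgrade ``finite projective'' to ``finite free'', I would use that $\calO_K$ is a complete discrete valuation ring, hence in particular a principal ideal domain. Over such a ring every finitely generated projective module is free (and necessarily of finite rank). Substituting this into the statement obtained from Lemma \ref{MT} yields exactly the stated equivalence with the category of finite free $\calO_K$-modules equipped with a stratification on $\frakS^{\bullet}/(E)$ satisfying the cocycle condition.

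There is essentially no obstacle here; the corollary is purely a specialization plus the elementary observation that projective equals free over a DVR. All the substantial work has already been done in Lemma \ref{BK Cover} (which ensures $(\frakS,(E))$ is a cover) and in Lemma \ref{MT} (the general descent statement for Hodge--Tate crystals along such covers).
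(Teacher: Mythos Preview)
Your proof is correct and follows the same approach as the paper, which simply states ``Applying the lemma to $(B,I)=(\frakS,(E))$, we get the following corollary.'' You have spelled out the details the paper leaves implicit: the identification $\frakS/(E)\cong\calO_K$ and the fact that finite projective modules over the DVR $\calO_K$ are free.
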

  
  Let $M$ be a finite free $\calO_K$-module endowed with a stratification $\varepsilon$ with respect to $(\frakS^{\bullet},(E))$. We want to give an explicit description on the cocycle condition of $\varepsilon$. Tor this purpose, we fix an $\calO_K$-basis $e_1,\dots,e_l$ of $M$ and write
  \[\varepsilon(\underline e) = \underline e\cdot \sum_{n\geq 0}A_nX_1^{[n]},\]
  where $A_n\in \rM_l(\calO_K)$ for any $n\geq 0$.
  Then we have that
  \begin{equation*}
     \begin{split}
        p_{12}^*(\varepsilon)\circ p_{01}^*(\varepsilon)(\underline e) & =  p_{12}^*(\varepsilon)(\underline e\sum_{n\geq 0}A_nX_1^{[n]})\\
        & = \sum_{n,m\geq 0}\underline e A_mA_n(X_2-X_1)^{[m]}(1-E'(\pi)X_1)^{-m}X_1^{[n]}\\
        & = \sum_{n\geq 0}\underline eX_2^{[n]}\sum_{i,j\geq 0}A_{n+i}A_j(-1)^i(1-E'(\pi)X_1)^{-n-i}X_1^{[i]}X_1^{[j]},
     \end{split}
  \end{equation*}
  that
  \begin{equation*}
      p_{02}^*(\varepsilon)(\underline e) = \sum_{n\geq 0}\underline eA_nX_2^{[n]},
  \end{equation*}
  and that 
  \begin{equation*}
      \Delta^*(\varepsilon)(\underline e) = \underline eA_0.
  \end{equation*}
  So the stratification satisfies the cocycle condition if and only if $A_0=I$ and for any $n\geq 0$, 
  \begin{equation}\label{Equ-determine coefficients-I}
      \sum_{i,j\geq 0}A_{n+i}A_j(-1)^i(1-E'(\pi)X_1)^{-n-i}X_1^{[i]}X_1^{[j]} = A_n.
  \end{equation}
\begin{lem}\label{determine coefficients}
  Let $(A_n)_{n\geq 0}$ be a sequence in $\rM_l(\calO_K)$ and $\alpha\in\calO_K$. If $A_0=I$, then the following are equivalence:
  \begin{enumerate}
      \item For any $n\geq 0$, $\sum_{i,j\geq 0}A_{n+i}A_j(-1)^i(1-\alpha X)^{-n-i}X^{[i]}X^{[j]} = A_n.$
      \item For any $n\geq 0$, $A_{n+1} = \prod_{i=0}^n(i\alpha+A_1)$.
  \end{enumerate}
\end{lem}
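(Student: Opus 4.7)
The plan is to view both conditions through the matrix-valued generating function $F(X) := \sum_{n \geq 0} A_n X^{[n]}$. As the discussion preceding (3.1) shows, gathering condition (1) over all $n \geq 0$ is equivalent to the cocycle identity
\[
F(Y_2)\,F(Y_1) = F(X_2), \qquad Y_1 = X_1,\ Y_2 = (X_2-X_1)(1-\alpha X_1)^{-1},
\]
obtained by matching the coefficient of $X_2^{[n]}$ on both sides. So I will prove that (2) is equivalent to this cocycle identity on $F$.

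For (1) $\Rightarrow$ (2), I extract the coefficient of $X^{[1]}=X$ in (1) for fixed $n$. Only the index pairs $(i,j) \in \{(0,0),(1,0),(0,1)\}$ can contribute, and using $(1-\alpha X)^{-m} \equiv 1 + m\alpha X \pmod{X^2}$ these give the single relation $n\alpha A_n - A_{n+1} + A_n A_1 = 0$, i.e.\ $A_{n+1} = A_n(A_1 + n\alpha)$. Since $\alpha$ is a scalar, the factors $A_1 + i\alpha$ commute pairwise; an induction starting from $A_0=I$ then yields $A_{n+1} = \prod_{i=0}^n (A_1+i\alpha)$.

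For (2) $\Rightarrow$ (1), each $A_n$ is a polynomial in $A_1$, so the $A_n$ commute with $A_1$ and with each other. A short calculation using the recursion $A_{n+1} = A_n(A_1+n\alpha)$ together with $X\cdot X^{[n]} = (n+1)X^{[n+1]}$ yields the formal differential equation
\[
(1-\alpha X)\,F'(X) = F(X)\,A_1, \qquad F(0)=I.
\]
Set $G(X_2) := F(Y_2)\,F(Y_1)$, viewed as a formal series in $X_2$ with coefficients in the appropriate completed PD-ring in $X_1$. Differentiating in $X_2$ via the chain rule, and using the identity $1-\alpha Y_2 = (1-\alpha X_2)(1-\alpha X_1)^{-1}$ together with the commutation of $A_1$ past $F(X_1)$ and past the scalar $(1-\alpha X_2)^{-1}$, one checks that $G$ satisfies the \emph{same} ODE $(1-\alpha X_2)G'(X_2) = G(X_2)\,A_1$. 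Moreover $G(X_1) = F(0)F(X_1) = F(X_1)$ since $Y_2$ vanishes at $X_2=X_1$. Uniqueness of the formal power series solution to this linear ODE (which is precisely the recursion of (2) expressing each successive coefficient in terms of the previous one) then forces $G(X_2) = F(X_2)$, i.e.\ the cocycle identity.

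The main obstacle is essentially notational: organising the chain-rule step cleanly, and confirming that the required inverses such as $(1-\alpha Y_2)^{-1}$ and the infinite sums make sense in the completed PD-algebra (cf.\ the setup in Lemma \ref{pd polynomial} and Example \ref{Exam-structure morphism}). Once these are in place, the real content is the observation that the recursion $A_{n+1} = A_n(A_1 + n\alpha)$ determines $F$ uniquely from $F(0)=I$, which is exactly what makes both directions work.
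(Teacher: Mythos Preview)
Your argument for $(1)\Rightarrow(2)$ is the same as the paper's: extract the coefficient of $X^{[1]}$ to obtain the recursion $A_{n+1}=A_n(A_1+n\alpha)$.

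For $(2)\Rightarrow(1)$ you take a genuinely different route. The paper replaces the commuting matrix $A_1$ by a scalar variable $Y$, writes down the explicit closed form $F(X)=(1-\alpha X)^{-Y/\alpha}=\sum_iA_iX^{[i]}$, and then verifies the identity directly by applying $\partial_X^n$ and substituting $X\mapsto -X/(1-\alpha X)$. Your ODE-uniqueness argument is more conceptual and avoids guessing the closed form; on the other hand the paper's explicit expression $(1-\alpha X)^{-A_1/\alpha}$ is used repeatedly afterwards (Remark~\ref{Rmk-stratification and Sen}, Theorem~\ref{representation comes from crystal}), so that computation is not wasted.

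One point in your argument needs more care. Your matching condition is $G(X_1)=F(X_1)$, i.e.\ the ``initial value'' is imposed at $X_2=X_1$, not at $X_2=0$; so the recursion of $(2)$, which determines Taylor coefficients at $0$, does not apply as stated. To make the uniqueness step rigorous you should either (i) change variables to $Z=X_2-X_1$, after which the ODE becomes $(1-\alpha X_1-\alpha Z)H'(Z)=H(Z)A_1$ and the recursion at $Z=0$ reads $H_{m+1}=(1-\alpha X_1)^{-1}H_m(A_1+m\alpha)$, using that $1-\alpha X_1$ is a unit in the $p$-complete PD-algebra (since $X_1^n=n!\,X_1^{[n]}\to 0$); or (ii) check separately that $G(0)=I$, i.e.\ that $F\bigl(-X_1(1-\alpha X_1)^{-1}\bigr)F(X_1)=I$. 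Either fix is routine, but the sentence ``which is precisely the recursion of $(2)$'' glosses over this.
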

\begin{proof}
  Note that $X$ is topologically nilpotent in $\calO_K\{X\}^{\wedge}_{\rm pd}$, so we have
  \begin{equation*}
  \begin{split}
  & \sum_{i,j\geq 0}A_{n+i}A_j(-1)^i(1-\alpha X)^{-n-i}X^{[i]}X^{[j]}\\
  = & \sum_{i,j\geq 0}A_{n+i}A_j(-1)^i\sum_{l\geq 0}(-\alpha)^l\binom{-n-i}{l}X^lX^{[i]}X^{[j]} \\
  = & \sum_{i,j,l\geq 0}A_{n+i}A_j(-1)^i\alpha^l\binom{n+i-1+l}{l}X^lX^{[i]}X^{[j]} \\
  = & \sum_{N\geq 0}X^{[N]}\sum_{i+j+l=N}A_{n+i}A_j(-1)^i\alpha^l\binom{n+i-1+l}{l}\frac{N!}{i!j!}
  \end{split}
  \end{equation*}
  \begin{enumerate}
      \item Assume Item 1 is true. Then the coefficient of $X^{[1]}$ must be zero. So for any $n\geq 1$, we get
      \[-A_{n+1}+A_nA_1+A_nn\alpha = 0\]
      Equivalently, we have $A_{n+1} = A_n(n\alpha+A_1)$, so Item 2 follows by induction.

      \item Assume Item 2 is true. We only need to show that the equation (\ref{Equ-determine coefficients-I}) holds for any $n\geq 0$.
      Since all $A_i$'s commute and for any $n$, there are only finitely many $A_i$'s appearing in the coefficient of $X^n$, so we may assume $A_{n+1} = \prod_{i=0}^n(i\alpha+Y)$ and check the desired equality (\ref{Equ-determine coefficients-I}) in the ring $K[[X,Y]]$ of formal series.

      Put $F(X,Y) = (1-\alpha X)^{-\frac{Y}{\alpha}}$. It is well-defined in $K[[X,Y]]$ and of the form
      \begin{equation}\label{Equ-determine coefficients-II}
      \begin{split}
      F(X,Y) & = (1-\alpha X)^{-\frac{Y}{\alpha}}\\
      & = \sum_{i\geq 0}\binom{-\frac{Y}{\alpha}}{i}(-\alpha X)^i\\
      & = \sum_{i\geq 0}(-1)^i\alpha^i(-\frac{Y}{\alpha})(-\frac{Y}{\alpha}-1)\cdots(-\frac{Y}{\alpha}-i+1)X^{[i]}\\
      & = \sum_{i\geq 0}Y(Y+\alpha)\cdots(Y+(i-1)\alpha)X^{[i]} \\
      & = \sum_{i\geq 0}A_iX^{[i]}
      \end{split}
      \end{equation}
      Applying $\partial_X^n:=\frac{\partial^n}{\partial X^n}$ to both sides of the above equality, we get
      \begin{equation}\label{Equ-determine coefficients-III}
      \begin{split}
          \sum_{i\geq 0}A_{n+i}X^{[i]} & = \partial^n_XF(X,Y)\\
          & = Y(Y+\alpha)\cdots(Y+(n-1)\alpha)(1-\alpha X)^{-\frac{Y}{\alpha}-n}\\
          & =A_n(1-\alpha X)^{-\frac{Y}{\alpha}-n}.
      \end{split}
      \end{equation}
  \end{enumerate}
  In particular, after replacing $X$ by $\frac{-X}{1-\alpha X}$ in (\ref{Equ-determine coefficients-III}), we get
  \begin{equation}\label{Equ-determine coefficients-IV}
      \sum_{i\geq 0}A_{n+i}(-1)^i(1-\alpha X)^{-i}X^{[i]} = A_n(1-\alpha X)^{\frac{Y}{\alpha}+n}.
  \end{equation}
  Now, the desired equation (\ref{Equ-determine coefficients-I}) follows from (\ref{Equ-determine coefficients-II}) and (\ref{Equ-determine coefficients-IV}) directly.
\end{proof}
 So we have the following proposition.
\begin{prop}\label{matrix of stratification}
  Keep notations as above. Then $(M,\varepsilon)$ induces a crystal $\bM\in\Vect((\calO_K)_{\Prism},\overline \calO_{\Prism})$ if and only if
  $\lim_{n\to+\infty}\prod_{i=0}^n(iE'(\pi)+A_1) = 0$. In this case, we have $A_0 = I$ and for any $n\geq 0$, $A_{n+1} = \prod_{i=0}^n(iE'(\pi)+A_1)$.
\end{prop}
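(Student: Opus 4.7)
The plan is to apply Corollary \ref{crystal is stratification} and read off the content of the cocycle condition together with the requirement that $\varepsilon$ be a well-defined element of the appropriate $p$-adically completed module.

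By Corollary \ref{crystal is stratification}, giving a Hodge--Tate crystal $\bM$ amounts to giving a finite free $\calO_K$-module $M$ equipped with a stratification $\varepsilon$ with respect to $\frakS^{\bullet}/(E)$ satisfying the cocycle condition. Fixing a basis $\underline e$ and writing $\varepsilon(\underline e) = \underline e \cdot \sum_{n \geq 0} A_n X_1^{[n]}$, the explicit computation just carried out, which uses Lemma \ref{pd polynomial} and the formulas (\ref{Equ-structure morphism on variables}), already shows that the cocycle condition is equivalent to $A_0 = I$ together with the identities (\ref{Equ-determine coefficients-I}) for every $n \geq 0$. So it remains to (i) repackage these identities as a closed-form recurrence in $A_1$, and (ii) pin down exactly when the formal series $\sum_n A_n X_1^{[n]}$ actually converges.

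For step (i), I plan to apply Lemma \ref{determine coefficients} with $\alpha = E'(\pi)$. Extracting the coefficient of $X^{[1]}$ in (\ref{Equ-determine coefficients-I}) gives $A_{n+1} = A_n(A_1 + nE'(\pi))$, and this elementary extraction does not require commutativity of the $A_i$'s. Inductively each $A_n$ is then a polynomial in $A_1$, so the $A_i$'s mutually commute, which reduces the remaining cocycle equations to the scalar setting of Lemma \ref{determine coefficients}. Thus the cocycle condition together with $A_0 = I$ is equivalent to the recurrence $A_{n+1} = \prod_{i=0}^n (iE'(\pi) + A_1)$ for all $n \geq 0$.

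For step (ii), I observe that since $\calO_K\{X\}^{\wedge}_{\pd}$ is by construction the $p$-adic completion of the free divided-power algebra and the divided powers $\{X^{[n]}\}_{n \geq 0}$ form a topological $\calO_K$-basis, the series $\sum_n A_n X_1^{[n]}$ converges in $\rM_l(\calO_K\{X\}^{\wedge}_{\pd})$ if and only if $A_n \to 0$ $p$-adically; under the recurrence this is precisely $\lim_{n \to \infty} \prod_{i=0}^n (iE'(\pi) + A_1) = 0$. Finally, since $A_0 = I$, the resulting $\varepsilon$ is the identity modulo the augmentation ideal generated by $X_1$ and is therefore automatically invertible, so a genuine stratification in the sense of Definition \ref{Dfn-stratification} is recovered. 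The main point requiring care is the non-commutativity issue flagged in step (i): one must first use the $X^{[1]}$-coefficient to force the $A_i$ into a commutative subring of $\rM_l(\calO_K)$ before invoking the scalar lemma, and the convergence analysis must be stated in the $p$-adic topology of the target divided-power algebra rather than in any naive power-series sense.
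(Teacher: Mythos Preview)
Your proposal is correct and follows essentially the same route as the paper: invoke Corollary \ref{crystal is stratification}, use the computation yielding (\ref{Equ-determine coefficients-I}), apply Lemma \ref{determine coefficients} with $\alpha = E'(\pi)$, and identify the convergence condition $A_n\to 0$ with the stated limit. Your extra remarks on commutativity (forcing the $A_i$ into the commutative subalgebra generated by $A_1$ before reducing to the scalar case) and on invertibility of $\varepsilon$ make explicit points that the paper leaves implicit, but the overall strategy is the same.
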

\begin{proof}
  By Corollary \ref{crystal is stratification} and the previous calculations, we see that $(M,\varepsilon)$ induces a crystal if and only if $A_0 = I$ and $\lim_{n\to+\infty}A_n = 0$ such that for any $n\geq 0$,
  \[\sum_{i,j\geq 0}A_{n+i}A_j(-1)^i(1-E'(\pi)X_1)^{-n-i}X_1^{[i]}X_1^{[j]} = A_n.\]
  Then the proposition follows from the Lemma \ref{determine coefficients}.
\end{proof}
 In summary, we have shown the following theorem, which gives an explicit description of Hodge--Tate crystals over $\calO_K$:
 \begin{thm}\label{Thm-HTC}
   The evaluation at $(\frakS,(E))$ induces an equivalence from the category $\Vect((\calO_K)_{\Prism},\overline \calO_{\Prism})$ of Hodge--Tate crystals to the category of pairs $(M,\phi_M)$, where $M$ is a finite free $\calO_K$-module and $\phi_M$ is an $\calO_K$-linear endomorphism of $M$ satisfying 
   \[\lim_{n\to+\infty}\prod_{i=0}^{n}(\phi_M+iE'(\pi)) = 0.\]
 \end{thm}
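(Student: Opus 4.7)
My plan is to extract the theorem directly from the two preceding results: Corollary \ref{crystal is stratification} (Hodge--Tate crystals = stratified modules over $\frakS^{\bullet}/(E)$) and Proposition \ref{matrix of stratification} (explicit description of stratifications satisfying the cocycle condition). The theorem is essentially a functorial repackaging of these, with $\phi_M$ being the linear coefficient $A_1$ of the matrix of the stratification.

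First I would note that for any Hodge--Tate crystal $\bM$, the value $M := \bM((\frakS,(E)))$ is a finite projective $\calO_K$-module, hence finite free since $\calO_K$ is local. By Corollary \ref{crystal is stratification}, the assignment $\bM \mapsto (M,\varepsilon_{\bM})$ identifies $\Vect((\calO_K)_{\Prism},\overline\calO_{\Prism})$ with the category of finite free $\calO_K$-modules equipped with a stratification with respect to $\frakS^{\bullet}/(E)$ satisfying the cocycle condition. Fixing an $\calO_K$-basis $\underline{e}=e_1,\ldots,e_l$ of $M$, Lemma \ref{pd polynomial} identifies $\frakS^1/(E)\cong\calO_K\{X_1\}^{\wedge}_{\pd}$, so $\varepsilon_{\bM}$ has a unique expansion $\varepsilon_{\bM}(\underline e) = \underline e\cdot\sum_{n\geq 0}A_n X_1^{[n]}$ with $A_n\in\rM_l(\calO_K)$ converging to $0$.

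Second, Proposition \ref{matrix of stratification} (applied to this expansion) shows that the cocycle condition on $\varepsilon_{\bM}$ is equivalent to the pair of conditions
\[
A_0 = I,\qquad A_{n+1} = \prod_{i=0}^{n}(iE'(\pi)+A_1),\qquad \lim_{n\to+\infty}\prod_{i=0}^{n}(iE'(\pi)+A_1)=0.
\]
Thus the entire stratification is determined by $A_1$. Define the functor by $\bM\mapsto(M,\phi_M)$, where $\phi_M$ is the $\calO_K$-linear endomorphism of $M$ given by $\phi_M(\underline e)=\underline e\cdot A_1$. The expression for $\phi_M$ is intrinsic because a change of basis conjugates $A_1$, which corresponds to conjugating $\phi_M$; equivalently one may define $\phi_M$ basis-free as the $\calO_K$-linear piece of $(\varepsilon_{\bM}-\id)$ modulo $(X_1^{[2]})$. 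Essential surjectivity is immediate: given $(M,\phi_M)$ satisfying the nilpotency condition, set $A_0=I$ and $A_{n+1}=\prod_{i=0}^{n}(iE'(\pi)+\phi_M)$ and build $\varepsilon$ from these matrices; Proposition \ref{matrix of stratification} guarantees that $\varepsilon$ satisfies the cocycle condition, yielding a Hodge--Tate crystal mapping to $(M,\phi_M)$.

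Finally I would check full faithfulness. A morphism $\bM\to \bM'$ of crystals restricts to an $\calO_K$-linear map $f:M\to M'$, and compatibility with the stratifications is the single matrix identity $f\cdot(\sum A_n X_1^{[n]}) = (\sum A_n' X_1^{[n]})\cdot f$ in $\rM_{l',l}(\calO_K\{X_1\}^{\wedge}_{\pd})$. Because each $A_n$ (resp. $A_n'$) is a polynomial in $A_1$ (resp. $A_1'$) with $\calO_K$-coefficients, reading off the coefficient of $X_1^{[1]}$ gives $f A_1 = A_1' f$, and conversely this one relation forces all higher relations; thus morphisms of crystals biject with $\calO_K$-linear maps intertwining $\phi_M$ and $\phi_{M'}$. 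The only real content of the argument has already been carried out in Lemma \ref{determine coefficients}, so the main obstacle — turning the cocycle condition into the closed-form recursion $A_{n+1}=\prod_{i=0}^n(iE'(\pi)+A_1)$ — is no longer an obstacle at this stage; here one simply repackages the result.
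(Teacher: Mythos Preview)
Your proposal is correct and follows the same approach as the paper: the paper presents Theorem \ref{Thm-HTC} with the preface ``In summary, we have shown the following theorem'', treating it as an immediate repackaging of Corollary \ref{crystal is stratification} and Proposition \ref{matrix of stratification}, which is precisely what you do. Your write-up is in fact more explicit than the paper's, spelling out the full faithfulness check and the basis-independence of $\phi_M$, but the underlying argument is the same.
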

 \begin{rmk}\label{Rmk-stratification and Sen}
  For a Hodge--Tate crystal $\bM$ with the associated pair $(M,\phi_M)$ and the induced stratification $\varepsilon$ on $M$, as suggested in Lemma \ref{determine coefficients}, we have
  \[\varepsilon = (1-E'(\pi)X)^{-\frac{\phi_M}{E'(\pi)}}.\]
  The right hand side is well-defined since $\lim_{n\to+\infty}\prod_{i=0}^n(\phi_M+iE'(\pi)) = 0$.
\end{rmk}

\subsection{Absolute prismatic cohomology for Hodge--Tate crystals}\label{SSec-compare cohomologies}
  In this section, we will compute the prismatic cohomology $\RGamma_{\Prism}(\bM)$ for a Hodge--Tate crystal $\bM\in \Vect((\calO_K)_{\Prism},\overline \calO_{\Prism})$.
  We first show that we can compute $\RGamma_{\Prism}(\bM)$ in terms of the \v Cech-Alexander complex. To see this, we begin with the following lemma.
 \begin{lem}\label{vansihing lemma}
   Let $\bM$ be a Hodge--Tate crystal in $\Vect((\calO_K)_{\Prism},\overline \calO_{\Prism})$. Then for any $\frakB = (B,J)\in(\calO_K)_{\Prism}$ and any $i\geq 1$, we have
   \[\rH^i(\frakB,\bM) = 0.\]
 \end{lem}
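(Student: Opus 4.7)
The plan is to compute $\rR\Gamma(\frakB,\bM)$ via \v Cech cohomology with respect to a cover of $\frakB$ in the localized site $(\calO_K)_{\Prism}/\frakB$, show that the resulting \v Cech complex is a resolution of $\bM(\frakB)$ by faithfully flat descent, and then upgrade the vanishing of \v Cech cohomology to the vanishing of sheaf cohomology via Cartan's criterion.

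Concretely, I would fix any cover $\frakC=(C,JC)\to\frakB$ in $(\calO_K)_{\Prism}/\frakB$ --- for instance, take $\frakC$ to be the coproduct of $\frakB$ with $(\frakS,(E))$ in the prismatic category over $\calO_K$. Let $\frakC^{\bullet/\frakB}=(C^{(\bullet)},JC^{(\bullet)})$ denote the associated \v Cech nerve, whose $n$-th term is the $(n+1)$-fold self-coproduct of $\frakC$ over $\frakB$ in the prismatic category. By the definition of a cover in the prismatic topology, $B\to C$ is $(p,J)$-completely faithfully flat, so modulo $J$ the map $B/J\to C/JC$ is classically faithfully flat, and the cosimplicial ring $C^{(\bullet)}/JC^{(\bullet)}$ agrees with the usual \v Cech nerve of $B/J\to C/JC$ in $p$-complete rings (since coproducts of prisms modulo their distinguished ideal compute as $p$-completed tensor products of the reductions).

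The crystal property of $\bM$ then identifies $\bM(\frakC^{n/\frakB})$ with $M\otimes_{B/J}(C^{(n)}/JC^{(n)})$, where $M=\bM(\frakB)$ is a finite projective $B/J$-module. By classical faithfully flat descent, the augmented \v Cech complex $M\to\bM(\frakC^{\bullet/\frakB})$ is a resolution of $M$, so the \v Cech cohomology of $\bM$ with respect to $\frakC\to\frakB$ vanishes in positive degrees. Since the same argument applies verbatim to every cover of every prism in $(\calO_K)_{\Prism}/\frakB$, Cartan's criterion --- applied to the basis of topology consisting of all prisms --- upgrades this vanishing of \v Cech cohomology to the vanishing of sheaf cohomology, giving $\rH^i(\frakB,\bM)=0$ for $i\geq 1$.

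The main obstacle will be the identification of the \v Cech nerve modulo $J$ with the classical \v Cech nerve of the faithfully flat map $B/J\to C/JC$: this requires unwinding the construction of coproducts in the prismatic category (given by appropriate $(p,J)$-completed $\delta$-ring coproducts with a distinguished-element localization) and checking that reduction modulo the ideal $J$ is compatible with these coproducts. Once this compatibility is in hand, the conclusion reduces to standard faithfully flat descent for finite projective modules together with a purely formal appeal to Cartan's criterion.
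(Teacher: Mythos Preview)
Your proposal is correct and is essentially the standard argument that the paper invokes by citing \cite[Lemma 3.11]{Tian}: vanishing of \v Cech cohomology for crystals via faithfully flat descent, upgraded to sheaf cohomology by the \v Cech-to-derived spectral sequence (equivalently, Cartan's criterion).

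One remark on your ``main obstacle'': in the slice site over $\frakB=(B,J)$ every object has ideal generated by $J$, so coproducts there really are $(p,J)$-completed tensor products of $\delta$-$B$-algebras, with no prismatic envelope needed---the complicated envelopes in the paper's computation of $\frakS^n$ arise only because the absolute coproduct over $\calO_K$ mixes prisms with different ideals. In any case you can bypass the explicit identification entirely: the cosimplicial object $\frakC^{\bullet/\frakB}$ acquires an extra degeneracy after pullback to $\frakC$, so the augmented complex $M\to \bM(\frakC^{\bullet/\frakB})$ becomes contractible after $-\otimes_{B/J}C/JC$, and $p$-complete faithful flatness of $B/J\to C/JC$ (which is what the cover condition gives, and which suffices since all modules in sight are finite projective over $p$-complete rings) then yields acyclicity before base change.
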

 \begin{proof}
   This follows from the same argument in the proof of \cite[Lemma 3.11]{Tian}.
 \end{proof}
 \begin{cor}[\v Cech-Alexander complex]
   Let $\bM\in \Vect((\calO_K)_{\Prism},\overline \calO_{\Prism})$ be a Hodge--Tate crystal. Then the prismatic cohomology $\RGamma_{\Prism}(\bM)$ can be computed by the \v Cech-Alexander complex
   \begin{equation}\label{Equ-Cech-Alexander complex}
       \bM(\frakS,(E))\xrightarrow{d^0}\bM(\frakS^1,(E))\xrightarrow{d^1} \bM(\frakS^2,(E))\to\cdots,
   \end{equation}
   where for any $n\geq 0$, $d^n=\sum_{i=0}^{n+1}(-1)^ip_i$ is induced by $p_i:\frakS^n\to\frakS^{n+1}$ defined in Lemma \ref{pd polynomial}.
 \end{cor}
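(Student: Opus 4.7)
The plan is to identify the prismatic cohomology $\RGamma_{\Prism}(\bM)$ with the totalization of $\bM$ evaluated on the \v{C}ech nerve of the cover $(\frakS,(E))$, using the standard \v{C}ech-to-derived spectral sequence. The essential inputs are already in place: Lemma \ref{BK Cover} tells us that $(\frakS,(E))$ covers the final object of $\mathrm{Shv}((\calO_K)_{\Prism})$, Lemma \ref{vansihing lemma} says that $\bM$ is acyclic on every prism, and the face maps $p_i:\frakS^n\to\frakS^{n+1}$ of the \v{C}ech nerve are the ones described in Lemma \ref{pd polynomial}.

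First I would spell out that, since $(\frakS,(E))$ is a cover of the final object, the \v{C}ech nerve $(\frakS^{\bullet},(E))$ gives a hypercover of the final object in the prismatic topos, and each term $(\frakS^n,(E))$ is an object of $(\calO_K)_{\Prism}$ (this is immediate from the explicit description $\frakS^n = A^n\{(u_0-u_j)/E(u_0)\}_{\delta}^{\wedge}$ recalled earlier, which presents it as a coproduct of prisms in the category of bounded prisms). Then the \v{C}ech-to-derived spectral sequence for the sheaf $\bM$ with respect to the cover $(\frakS,(E))$ takes the form
\[
E_2^{i,j} = \Cech^i\bigl((\frakS,(E)),\underline{\rH}^j(\bM)\bigr) \Longrightarrow \rH^{i+j}_{\Prism}(\bM).
\]
The rows for $j\geq 1$ vanish because the presheaf $\underline{\rH}^j(\bM)$ attached to $\bM$ sends each $(\frakS^n,(E))$ to $\rH^j(\frakS^n,\bM)$, and by Lemma \ref{vansihing lemma} this group is zero for $j\geq 1$. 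Consequently the spectral sequence collapses, and $\RGamma_{\Prism}(\bM)$ is computed by the \v{C}ech complex
\[
\bM(\frakS,(E)) \to \bM(\frakS^1,(E)) \to \bM(\frakS^2,(E)) \to \cdots
\]
with differentials $d^n = \sum_{i=0}^{n+1}(-1)^i p_i^*$, exactly the complex (\ref{Equ-Cech-Alexander complex}).

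The main obstacle is really bookkeeping rather than a genuine mathematical difficulty. One has to be a bit careful that the \v{C}ech nerve of $(\frakS,(E))$ taken in $(\calO_K)_{\Prism}$ does present a hypercover of the final object, i.e. that the self-products $\frakS^n$ computed in the prismatic site agree with the $(p,E)$-completed $\delta$-ring $A^n\{(u_0-u_j)/E(u_0)\}_{\delta}^{\wedge}$ already used in Section~2 (which is \cite[Example 2.6 (1)]{BS-b}). Once this identification is in place, the argument is formal, and indeed is the same mechanism as in \cite[Proposition 2.7]{BS-b} cited in Lemma \ref{MT}. No further calculation with $\phi_M$ or the matrices $A_n$ is needed at this stage; the content of Theorem \ref{Thm-HTC} and Proposition \ref{matrix of stratification} will only enter when, in the next step, one rewrites this \v{C}ech-Alexander complex as the two-term complex $[M\xrightarrow{\phi_M} M]$ promised by Theorem \ref{Intro-prism coho}.
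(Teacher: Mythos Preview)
Your proposal is correct and follows essentially the same argument as the paper's own proof: the paper simply notes that since $(\frakS,(E))$ covers the final object of ${\rm Shv}((\calO_K)_{\Prism})$, the result follows from Lemma~\ref{vansihing lemma} combined with the \v{C}ech-to-derived spectral sequence. Your additional remarks about the identification of the \v{C}ech nerve and the bookkeeping are accurate but more detailed than what the paper spells out.
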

 \begin{proof}
   Since $(\frakS,(E))$ is a cover of the final object of the topos ${\rm Shv}((\calO_K)_{\Prism})$, the result follows from Lemma \ref{vansihing lemma} combined with the \v Cech-to-derived spectral sequence.
 \end{proof}
 We want to express the \v Cech-Alexander complex (\ref{Equ-Cech-Alexander complex}) in an explicit way. Let $(M,\phi_M)$ be the pair associated to $\bM$ in the sense of Theorem \ref{Thm-HTC}. As in the previous subsection, we fix an $\calO_K$-basis $e_1,\dots, e_l$ of $M$ and denote by $A$ the matrix of $\phi_M$.

 From now on, we denote by $q_i:\frakS\to\frakS^n$
 the structure morphism induced by the map $\{0\}\to\{i\}\subset\{0,\dots,n\}$. Then one can identify $\bM(\frakS^n,(E))$ with $M\otimes_{\frakS,q_0}\frakS^n$ via the canonical isomorphism
 \[M\otimes_{\frakS,q_0}\frakS^n\xrightarrow{\cong}\bM(\frakS^n,(E)).\]
 By Lemma \ref{pd polynomial} and Proposition \ref{matrix of stratification}, for any $1\leq i\leq n$ and any 
 \[\vec f(X_1,\dots,X_n)\in(\calO_K\{X_1,\dots,X_n\}^{\wedge}_{\pd})^l\cong(\frakS^n/(E))^{l},\] 
 we have
 \begin{equation}\label{Equ-differential in explicit way-I}
 \begin{split}
     &p_0(\underline e\vec f(X_1,\dots,X_n))\\
     =&\underline e(1-E'(\pi) X_1)^{-\frac{A}{E'(\pi)}}\vec f((1-E'(\pi)  X_1)^{-1}(X_2-X_1),\dots,(1-E'(\pi)X_1)^{-1}(X_{n+1}-X_1))
 \end{split}
 \end{equation}
 and
 \begin{equation}\label{Equ-differential in explicit way-II}
     p_i(\underline e\vec f(X_1,\dots,X_n)) = \underline e\vec f(X_1,\dots,X_{i-1},X_{i+1},\dots,X_{n+1}).
 \end{equation}
 So the \v Cech-Alexander complex reduces to the following complex $\check \rC\rA(M):$
 \begin{equation}\label{Equ-Cech-Alexander complex-II}
     M\xrightarrow{d^0}M\otimes_{\calO_K}\calO_K\{X_1\}^{\wedge}_{\pd}\xrightarrow{d^1}M\otimes_{\calO_K}\calO_K\{X_1,X_2\}^{\wedge}_{\pd}\to \cdots
 \end{equation}
 with the differentials
 \begin{equation}\label{Equ-differential in explicit way-III}
 \begin{split}
     &d^n(\underline e\vec f(X_1,\dots,X_n))\\ = & \underline e(1-E'(\pi) X_1)^{-\frac{A}{E'(\pi)}}\vec f((1-E'(\pi)  X_1)^{-1}(X_2-X_1),\dots,(1-E'(\pi)X_1)^{-1}(X_{n+1}-X_1))\\
     & + \sum_{i=1}^{n+1}(-1)^i\underline e\vec f(X_1,\dots,X_{i-1},X_{i+1},\dots,X_{n+1}).
 \end{split}
 \end{equation}
 Define
 \begin{equation}\label{Equ-A(X)-I}
     F_A(X) = X+\sum_{n\geq 1}\prod_{i=0}^{n-1}(A+E'(\pi)+iE'(\pi))X^{[n+1]}.
 \end{equation}
 Then it is well-defined in $\rM_l(\calO_K\{X\}^{\wedge}_{\pd})$ such that
 \begin{equation}\label{Equ-A(X)-II}
   I+A\cdot F_A(X) = (1-E'(\pi)X)^{-\frac{A}{E'(\pi)}}
 \end{equation}
 and that
 \begin{equation}\label{Equ-A(X)-III}
     \frac{\partial F_A}{\partial X}(X) = (1-E'(\pi)X)^{-\frac{A}{E'(\pi)}-1}.
 \end{equation}
 In particular, the following diagram
 \begin{equation*}
     \xymatrix@C=0.45cm{
     M\ar[d]^{\id_M}\ar[rr]^A&& M\ar[d]^{F_A(X_1)}\\
     M\ar[rr]^{d^0}&&M\{X_1\}^{\wedge}_{\pd}
     }
 \end{equation*}
 commutes and induces a natural morphism of complexes
 \begin{equation}\label{Equ-morphism of complex}
     \rho_M:[M\xrightarrow{A}M]\to \check \rC\rA(M).
 \end{equation}
 
 Now, we state and prove the main theorem in this subsection.
 \begin{thm}\label{Thm-prismatic cohomology}
     Keep notations as above. Then the morphism $\rho_M$ in (\ref{Equ-morphism of complex}) is a quasi-isomorphism. In particular, there exists a natural quasi-isomorphism
     \[\rR\Gamma_{\Prism}(\bM)\simeq [M\xrightarrow{\phi_M}M].\]
 \end{thm}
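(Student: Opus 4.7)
The plan is to verify that $\rho_M$ induces an isomorphism on every cohomology group. Since the source $[M\xrightarrow{A}M]$ is concentrated in degrees $0$ and $1$, I split the argument into: (i) $H^0$, showing $\Ker(d^0)=\Ker(A)$; (ii) $H^1$, identifying $H^1(\check\rC\rA(M))$ with $\Coker(A)$ via $\rho_M^1(m)=F_A(X_1)m$; and (iii) the vanishing $H^n(\check\rC\rA(M))=0$ for all $n\geq 2$. Throughout I fix the chosen $\calO_K$-basis $e_1,\dots,e_l$ and write $A$ for the matrix of $\phi_M$ in that basis.

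Step (i) is immediate from the formulas: (\ref{Equ-A(X)-II}) combined with (\ref{Equ-differential in explicit way-III}) gives $d^0(m)=AF_A(X_1)m$, so $d^0(m)=0$ forces $Am=0$ by reading off the coefficient of $X_1$. Conversely, since $A$ commutes with each factor $A+(i+1)E'(\pi)$, every higher coefficient of $AF_A(X_1)m$ carries $Am$ as a factor, so $Am=0$ implies $d^0(m)=0$. Hence $\rho_M^0=\id_M$ induces the desired isomorphism on $H^0$.

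For steps (ii) and (iii) my strategy is to filter $\check\rC\rA^\bullet(M)$ and reduce to a Poincar\'e-type computation. A promising filtration is by total pd-degree, after the preliminary change of variables $Y_1=X_1$, $Y_j=(1-E'(\pi)X_1)^{-1}(X_j-X_1)$ for $j\geq 2$, chosen so that on the associated graded the twisted face $p_0$ decouples into a ``drop the first variable'' face followed by multiplication by the matrix $A$ on the leading term. The associated graded then becomes, up to this single $A$-twist, the acyclic Cech complex of a constant cosimplicial object on a free pd-algebra; its cohomology is $\Ker A$ in degree $0$, $\Coker A$ in degree $1$, and zero in higher degrees. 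Reassembling via the filtration, with convergence of every series controlled by the nilpotency hypothesis $\lim_n\prod_{i=0}^n(A+iE'(\pi))=0$, then yields (ii) and (iii). As a concrete alternative, one may try to write an explicit contracting homotopy $h^n:\check\rC\rA^n(M)\to \check\rC\rA^{n-1}(M)$ in the spirit of integration in the last variable, with the pd-coefficients shifted accordingly.

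The main obstacle is the higher vanishing in step (iii). The interference between the twist $(1-E'(\pi)X_1)^{-A/E'(\pi)}$, the divided power structure, and the remaining ``drop $X_i$'' faces means that no naive homotopy commutes with all the differentials, and the formula must be corrected by universal polynomial expressions in $A$ and $E'(\pi)$ acting on the pd-coefficients. Verifying that these corrections assemble into an honest chain homotopy, with convergence governed precisely by the nilpotency of $\phi_M$, is exactly the kind of extended bookkeeping one would naturally isolate in a technical appendix.
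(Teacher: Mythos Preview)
Your Step (i) is correct and coincides with the paper's argument for $H^0$.

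For Steps (ii) and (iii), however, your filtration proposal has a genuine gap. On the associated graded of the total pd-degree filtration, both the module twist $(1-E'(\pi)X_1)^{-A/E'(\pi)}$ and the scalar factor $(1-E'(\pi)X_1)^{-1}$ reduce to $1$ (they lie in $1+\Fil^1$), so the matrix $A$ disappears entirely from the differentials of $\gr^\bullet\check\rC\rA(M)$. The associated graded is therefore independent of $A$, and your claim that on $\gr$ the face $p_0$ becomes ``drop the first variable followed by multiplication by $A$'' is not correct: there is no $A$ left. The operator $A$ can only re-enter through higher differentials of the resulting spectral sequence, and you have not analyzed these. Moreover, your change of variables does not make the remaining faces $p_1,\dots,p_{n+1}$ standard (for instance $p_1(Y_1)=Y_1'+Y_2'-E'(\pi)Y_1'Y_2'$), so even on $\gr$ you do not obtain the usual contractible \v Cech nerve. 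Finally, the pd-degree filtration is infinite and not obviously complete on the $p$-completed pd-algebras, so convergence of the spectral sequence is an additional issue you would have to address.

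The paper's proof is completely different and fully explicit. For $H^1$ it shows directly, by comparing coefficients of $X_1^{[1]}$ in the cocycle equation, that $\Ker(d^1)=F_A(X_1)\cdot M$, whence $H^1\cong\Coker(A)$. For $H^s$ with $s\geq 2$, given a cocycle $\underline e\,\vec f=\underline e\sum_{I}a_I\underline X^{[I]}$, the paper extracts from $d^s(\underline e\,\vec f)=0$ a system of universal recursions (independent of $\vec f$) showing that all $a_I$ are determined by the $a_J$ with $J$ in an explicit index set $\Lambda_s\subset\bN^s$. It then constructs $\vec g\in M\{X_1,\dots,X_{s-1}\}^{\wedge}_{\pd}$, again by explicit recursive formulas whose convergence uses exactly $\lim_n\prod_{i=0}^n(A+iE'(\pi))=0$, such that the coefficients of $d^{s-1}(\underline e\,\vec g)$ agree with those of $\underline e\,\vec f$ on $\Lambda_s$; since $d^{s-1}(\underline e\,\vec g)$ is itself a cocycle, the recursion forces $d^{s-1}(\underline e\,\vec g)=\underline e\,\vec f$. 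The case $s=2$ is carried out in the body of the proof and the general case in the Appendix. If you want to pursue a conceptual route instead, the remark following the theorem shows how, in the ramified case, one can reduce modulo $\pi$ to Tian's relative Higgs computation over $\calO_K\langle T\rangle$ and then conclude by derived Nakayama; that argument is closer in spirit to what you were aiming for than the filtration you proposed.
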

 \begin{proof}
   It is enough to show that $\rho_M$ induces isomorphisms on cohomology groups.
   We complete the proof by showing that
   \[\tau^{\leq 1}\rR\Gamma_{\Prism}(\bM)\simeq [M\xrightarrow{A}M]\]
   is a quasi-isomorphism
   and that
   \[\tau^{\geq 2}\rR\Gamma_{\Prism}(\bM)=0.\]
   To simplify notations, we put $\alpha:=E'(\pi) $ throughout the proof.

   By (\ref{Equ-A(X)-II}), we see $d^0:M\to M\{X_1\}^{\wedge}_{\pd}$ is given by
   $d^0(\underline e) = \underline eAF_A(X_1).$
   So we have
   \[\rH^0_{\Prism}(\bM)\cong \Ker(d^0) = \Ker(A).\]
   We claim that
   \[\Ker(d^1:M\{X_1\}^{\wedge}_{\pd}\to M\{X_1,X_2\}^{\wedge}_{\pd}) = F_A(X_1)M,\]
   which implies that $\rho_M$ induces a quasi-isomorphism
   \[\tau^{\leq 1}\rR\Gamma_{\Prism}(\bM)\simeq [M\xrightarrow{A}M]\]
   as desired.
   Using the expression of $d^0$ in (\ref{Equ-differential in explicit way-III}), one can check directly that $F_A(X_1)M$ is killed by $d^1$. So we only need to show that if $\underline e\vec f(X_1)\in\Ker(d^1)$, then there exists an $a\in\calO_K^l$ such that $\vec f(X_1) = F_A(X_1) a$. Write
   \[\vec f(X_1) = \sum_{n\geq 0} a_nX_1^{[n]}.\]
   Since it is killed by $d^1$, by (\ref{Equ-differential in explicit way-III}), we have
\begin{equation}\label{Equ-d1}
  \vec f((1-\alpha X_1)^{-1}(X_2-X_1)) = (1-\alpha X_1)^{\frac{A}{\alpha}}(\vec f(X_2)-\vec f(X_1));
\end{equation}
that is,
\begin{equation*}
\begin{split}
    (1-\alpha X_1)^{\frac{A}{\alpha}}\sum_{n\geq 0}a_n(X_2^{[n]}-X_1^{[n]})&=(1-\alpha X_1)^{\frac{A}{\alpha}}(\vec f(X_2)-\vec f(X_1)) \\
    &=\vec f((1-\alpha X_1)^{-1}(X_2-X_1))\\
    &= \sum_{n\geq 0}a_n(1-\alpha X_1)^{-n}\sum_{0\leq m\leq n}(-1)^{n-m}X^{[m]}_2X_1^{[n-m]}\\
    & = \sum_{m\geq 0}X_2^{[m]}\sum_{k\geq 0}(-1)^ka_{m+k}(1-\alpha X_1)^{-m-k}X_1^{[k]}.
\end{split}
\end{equation*}
Comparing the coefficients (including $X_1$) of $X_2^{[m]}$ for any $m\geq 1$, we deduce that
\begin{equation*}
\begin{split}
    a_m\sum_{n\geq 0}\prod_{i=0}^{n-1}(-A+i\alpha)X_1^{[i]}& = (1-\alpha X_1)^{\frac{A}{\alpha}}a_m\\
    &=\sum_{i\geq 0}(-1)^ia_{m+i}(1-\alpha X_1)^{-m-i}X_1^{[i]}.
\end{split}
\end{equation*}
Comparing the coefficients of $X_1^{[1]}$ on both sides, we get
\begin{equation*}
    -Aa_m = ma_m-a_{m+1}.
\end{equation*}
In other words, for any $m\geq 1$, we have
\begin{equation*}
    a_m=(A+(m-1)\alpha)a_{m-1}=\cdots=\prod_{i=1}^{m-1}(A+i\alpha)a_1.
\end{equation*}
Now, by letting $X_1=X_2$ in equation (\ref{Equ-d1}), we see that
\[a_0 = \vec f(0) = 0.\]
So we deduce that
\[\vec f(X_1) = \sum_{n\geq 1}\prod_{i=1}^{n-1}(A+i\alpha)a_1X_1^{[n]}= F_A(X_1)a_1.\]
This proves the claim.

 It remains to prove $\tau^{\geq 2}\rR\Gamma_{\Prism}(\bM) = 0$. We state the idea as follows:

 For any $\underline e\vec f(\underline X)\in\Ker(d^s)$ with $s\geq 2$, write $\vec f(\underline X) = \sum_{I\in\bN^s}a_I\underline X^{[I]}$. We first find a certain subset $\Lambda\subset \bN^s$ such that the coefficients $\{a_J\}_{J\in\bN^s}$ of $\vec f(\underline X)$ are uniquely determined by $\{a_J\}_{J\in\Lambda}$ in a canonical way (by using $\underline e\vec f(\underline X)\in\Ker(d^s)$ merely). Then we construct a $\vec g(\underline X)\in\calO_K\{X_1,\dots,X_{s-1}\}^{\wedge}_{\pd}$ such that if we write
 \[d^{s-1}(\underline e\vec g(\underline X))=\underline e\sum_{J\in\bN^s}c_J\underline X^{[J]},\]
 then $c_J=a_J$ for all $J\in\Lambda$. Since $d^{s-1}(\underline e\vec g(\underline X))$ is automatically killed by $d^s$, this forces that $c_J=a_J$ for all $J\in\bN^s$ and hence that $d^{s-1}(\underline e\vec g(\underline X)) = \underline e\vec f(\underline X)$ as desired.

 We only prove $\rH^2_{\Prism}(\bM) = 0$ here. The general case can be deduced from a similar but much more complicated argument, which is included in Appendix \ref{Appendix}.
 Now let $\vec f(\underline X) = \sum_{i,j\geq 0}a_{i,j}X_1^{[i]}X_2^{[j]}\in\calO_K\{X_1,X_2\}^{\wedge}_{\pd}$ such that $\underline e\vec f(\underline X)$ is killed by $d^2$. Then by (\ref{Equ-differential in explicit way-III}), we have
 \begin{equation}\label{Equ-Kernel s = 2}
\begin{split}
    &\sum_{i_1,i_2\geq 0}a_{i_1,i_2}(X_2^{[i_1]}X_3^{[i_2]}-X_1^{[i_1]}X_3^{[i_2]}+X_1^{[i_1]}X_2^{[i_2]})\\
    =&\sum_{j_1,j_2,l_1,l_2\geq 0}(1-\alpha X_1)^{-j_1-j_2-l_1-l_2-\frac{A}{\alpha}}a_{j_1+l_1,j_2+l_2}(-1)^{j_1+j_2}X_1^{[j_1]}X_1^{[j_2]}X_2^{[l_1]}X_3^{[l_2]}
\end{split}
\end{equation}
 Let $X_1=0$ in (\ref{Equ-Kernel s = 2}). Then we see that
 \[\sum_{i_2\geq 1}a_{0,i_2}(X_2^{[i_2]}-X_3^{[i_2]}) = 0,\]
 which implies that
 \begin{equation}\label{Equ-s=2-I}
     a_{0,l} = 0
 \end{equation}
 for any $l\geq 1$.
 Comparing the coefficients (including $X_2$ and $X_3$) of $X_1^{[1]}$ in (\ref{Equ-Kernel s = 2}), we get
 \begin{equation}\label{Equ-s=2-X1}
 \begin{split}
     \sum_{i_2\geq 0}a_{1,i_2}(X_2^{[i_2]}-X_3^{[i_2]})
     =\sum_{l_1,l_2\geq 0}((A+(l_1+l_2)\alpha)a_{l_1,l_2}-(a_{1+l_1,l_2}+a_{l_1,1+l_2})) X_2^{[l_1]}X_3^{[l_2]}.
 \end{split}
 \end{equation}
 In (\ref{Equ-s=2-X1}), considering the coefficients of $X_2^{[l_1]}X_3^{[l_2]}$ for $l_1,l_2\geq 1$, we get
 \begin{equation}\label{Equ-s=2-II}
     a_{1+l_1,l_2} = (A+(l_1+l_2)\alpha)-a_{l_1,l_2+1}.
 \end{equation}
 In (\ref{Equ-s=2-X1}), considering the coefficients of $X_2^{[l_1]}$ for $l_1\geq 1$, we get
 \begin{equation}\label{Equ-s=2-III}
     a_{l_1+1,0} = (A+l_1\alpha)a_{l_1,0}-a_{l_1,1}-a_{1,l_1}.
 \end{equation}
 Putting $X_2=X_3=0$ in (\ref{Equ-s=2-X1}) and using (\ref{Equ-s=2-I}), we get
 \begin{equation}\label{Equ-s=2-IV}
     a_{1,0}=Aa_{0,0}-a_{0,1} = Aa_{0,0}.
 \end{equation}
 Putting (\ref{Equ-s=2-I}),(\ref{Equ-s=2-II}),(\ref{Equ-s=2-III}) and (\ref{Equ-s=2-IV}) together, we conclude that
 \begin{equation}\label{Equ-s=2-V}
    \left\{
    \begin{array}{rcl}
      a_{1+l_1,l_2} = (A+(l_1+l_2)\alpha)a_{l_1,l_2}-a_{l_1,1+l_2},&l_1,l_2\geq 1\\
      a_{1+l_1,0}=(A+l_1\alpha)a_{l_1,0}-a_{1,l_1}-a_{l_1,1}, &l_1\geq 1\\
      a_{0,l}=0,&l\geq 1\\
      a_{1,0}=Aa_{0,0}.&
    \end{array}
    \right.
 \end{equation}
 Note that all relations in (\ref{Equ-s=2-V}) are independent of the choice of $\vec f$.

 We claim that $\{a_{i,j}\}_{i,j\geq 0}$ is uniquely determined by $a_{0,0}$ and $\{a_{1,j}\}_{j\geq 1}$. Indeed, the claim can be checked by induction on $(i,j)\in \bN^2$, where we endow $\bN^2$ with the lexicographic order such that
 \[(0,0)<(0,1)<\cdots<(1,0)<(1,1)<\cdots.\]
 When $i\leq 1$, there is nothing to prove. Assume we have shown the claim for any $(i,j)\leq (n,m)$. Here, we may assume $n\geq 2$. When $m = 0$, the claim follows from
 \[a_{n,0} = (A+(n-1)\alpha)a_{n-1,0}-a_{1,n-1}-a_{n-1,1}.\]
 When $m\geq 1$, the claim follows from
 \[a_{n,m} = (A+(n+m-1)\alpha)a_{n-1,m}-a_{n-1,m+1}.\]

 Now, we define $\vec g(X_1) = b_0+\sum_{n\geq 2}b_nX_1^{[n]}$ such that
 \[b_n = -\sum_{m=1}^{n-1}\prod_{i=m+1}^{n-1}(A+i\alpha+\alpha)a_{1,m},\]
 for any $n\geq 2$ and $b_0=a_{0,0}$, where we denote $\prod_{i=n}^{n-1}(A+i\alpha+\alpha)$ by $I$.
 Since $\prod_{i=0}^{n-1}(A+i\alpha)$ tends to $0$, we see that $\lim_{n\to+\infty}b_n=0$. So $\vec g(X_1)$ is a well-defined element in $\calO_K\{X_1\}^{\wedge}_{\pd}$.

 Write $d^1(\underline e\vec g(X_1)) = \underline e\sum_{i,j\geq 0}c_{i,j}X_1^{[i]}X_2^{[j]}$. Then we have
 \begin{equation}\label{Equ-s=2-VI}
   \begin{split}
       &\sum_{i,j\geq 0}c_{i,j}X_1^{[i]}X_2^{[j]}
       \\=&(1-\alpha X_1)^{-\frac{A}{\alpha}}\vec g((1-\alpha X_1)^{-1}(X_2-X_1))+\vec g(X_1)-\vec g(X_2)\\
       =&(1-\alpha X_1)^{-\frac{A}{\alpha}}\sum_{m\geq 0}b_m(1-\alpha X_1)^{-m}(X_2-X_1)^{[m]}+\sum_{m\geq 0}b_m(X_1^{[m]}-X_2^{[m]})\\
       =&\sum_{i,j\geq 0}(1-\alpha X_1)^{-i-j-\frac{A}{\alpha}}b_{i+j}(-1)^iX_1^{[i]}X_2^{[j]}+\sum_{m\geq 0}b_m(X_1^{[m]}-X_2^{[m]})\\
       =&\sum_{i,j\geq 0}\sum_{k\geq 0}\prod_{l=0}^{k-1}(A+(i+j+k)\alpha)X_1^{[k]}b_{i+j}(-1)^iX_1^{[i]}X_2^{[j]}+\sum_{m\geq 0}b_m(X_1^{[m]}-X_2^{[m]}).
   \end{split}
   \end{equation}
 Putting $X_1=X_2 = 0$ in (\ref{Equ-s=2-VI}), we get
 \[c_{0,0} = b_0=a_{0,0}.\]
 Comparing the coefficients of $X_1^{[1]}X_2^{[n]}$ for $n\geq 1$ in (\ref{Equ-s=2-VI}), we see that
   \[c_{1,1} = (A+(n+1)\alpha)b_1 -b_2 =a_{1,1}\]
   and that
   \begin{equation*}
   \begin{split}
       c_{1,n} &= (A+(n+1)\alpha)b_n-b_{n+1}\\
       &=\sum_{m=1}^{n}\prod_{i=m+1}^{n}(A+i\alpha+\alpha)a_{1,m}-(A+(n+1)\alpha)\sum_{m=1}^{n-1}\prod_{i=m+1}^{n-1}(A+i\alpha+\alpha)a_{1,m}\\
       &= \sum_{m=1}^{n}\prod_{i=m+1}^{n}(A+i\alpha+\alpha)a_{1,m}-\sum_{m=1}^{n-1}\prod_{i=m+1}^{n}(A+i\alpha+\alpha)a_{1,m}\\
       &=a_{1,n}
   \end{split}
   \end{equation*}
    for amy $n\geq 2$. Since $d^1(\underline e\vec g(X_1))$ is also killed by $d^2$, all relations in (\ref{Equ-s=2-V}) are still true if one replaces $a_{i,j}$'s by $c_{i,j}$'s. This forces $d^1(\underline e\vec g(X_1)) = \underline e\vec f(X_1,X_2)$ and hence $\rH^2_{\Prism}(\bM) = 0$ as desired.
 \end{proof}
  \begin{rmk}
     In the ramified case, the proof of Theorem \ref{Thm-prismatic cohomology} can be deduced from the work of \cite{Tian}. We give a sketch of the argument as follows:

     Consider the $p$-adic completion $\calO_K\za T\ya$ of the polynomial ring $\calO_K[T]$ and let $\frakS\za T\ya$ be a smooth lifting of $\calO_K\za T\ya$ over $\frakS$ with compatible $\delta$-structure such that $\delta(T) = 0$. Then the prism $(\frakS\za T\ya,(E))$ covers the final object of ${\rm Shv}((\calO_K\langle T\rangle/(\frakS,(E)))_{\Prism})$ with the induced \v Cech nerve $(C^{\bullet},EC^{\bullet})$ such that for any $n\geq 0$, 
     \[C^n = \frakS\za T_0,T_1,\dots,T_n\ya\{\frac{T_0-T_1}{E},\dots,\frac{T_0-T_n}{E}\}^{\wedge}_{\delta},\]
     where $T_i$ denotes the image of $T$ via the face map $q_i:\frakS\za T\ya\to C^n$ induced by $\{0\}\to\{i\}\subset\{0,1,\dots,n\}$ for any $0\leq i\leq n$. By \cite[Corollary 4.5]{Tian}, one can check that for any $n\geq 0$, 
     \[C^n/EC^n = \calO_K\za T\ya\{X_1,\dots,X_n\}^{\wedge}_{\pd}\]
     is the $p$-complete free pd-algebra in variables $X_i$'s, the images of $\frac{T_0-T_i}{E}$'s modulo $E$\footnote{We remark that our notations is slightly different from those in \cite{Tian}. Namely, our $X_j$ is indeed $-\frac{\xi_{1,j}+\cdots+\xi_{1,j}}{d}$ in loc.cit..}, such that the face maps are $\calO_K\za T\ya$-linear and induced by $p_0(X_i) = X_{i+1}-X_1$ and for any $j\geq 1$,
     \[
       p_j(X_i) = \left\{\begin{array}{rcl}
           X_i &  0\leq i<j\\
           X_{i+1} & i\geq j>0.
       \end{array}
       \right.
     \]
     Note that the above formula on $p_i$'s are compatible with (\ref{Equ-structure morphism on variables}) by letting $E'(\pi) $ go to $0$.
     Then \cite[Theorem 4.10]{Tian} says that giving a Hodge--Tate crystal (i.e. reduced crystal in loc.cit.) $\bM'$ on $(\calO_K\za T\ya/(\frakS,(E)))$ amounts to giving a finite projective $\calO_K\za T\ya$-module $M'$ together with a stratification 
     \[\epsilon': M'\otimes_{\calO_K\za T\ya,p_0}\calO_K\za T\ya\{X_1\}^{\wedge}_{\pd}\to M'\otimes_{\calO_K\za T\ya,p_1}\calO_K\za T\ya\{X_1\}^{\wedge}_{\pd}\]
     with respect to $C^{\bullet}/EC^{\bullet}$ satisfying the cocycle condition and amounts to giving a finite projective $\calO_K\za T\ya$-module $M'$ together with a topologically nilpotent operator $A\in\End_{\calO_K\za T\ya}(M')$ (i.e a topologically nilpotent Higgs field in the one-dimensional case). More precisely, one can check the stratification $(M',\epsilon')$ corresponding to the nilpotent operator $A$ on $M'$ is indeed given by 
     \[\epsilon'=\exp(AX_1) = \sum_{i\geq 0}A^iX_1^{[i]}.\]
     Moreover, Tian also showed in \cite[\S 4]{Tian} that one can use the \v Cech-Alexander's method to compute the prismatic cohomology of a Hodge--Tate crystal $\bM'$. Namely, let $M'$ be the finite projective $\calO_K\za T\ya$-module together with the nilpotent operator $A$ corresponding to $\bM'$ in the above sense. Then there exist quasi-isomorphisms
     \[\rR\Gamma((\calO_K\za T\ya/(\frakS,(E)))_{\Prism},\bM')\simeq \bM'(C^{\bullet},EC^{\bullet})\simeq [M\xrightarrow{A}M],\]
     where $\bM'(C^{\bullet},EC^{\bullet})$ denotes total complex of the evaluation of $\bM'$ on the \v Cech nerve $(C^{\bullet},EC^{\bullet})$.

     Now, let $\bM$ be a Hodge--Tate crystal on $(\calO_K)_{\Prism}$ with the corresponding pair $(M,\phi_M)$ as in the Theorem \ref{Thm-HTC}. Note that in the ramified case, $E'(\pi)\equiv 0\mod \pi$. So $\phi_M$ is topologically nilpotent and thus induces a Hodge--Tate crystal $\bM'$ on $(\calO_K\za T\ya/(\frakS,(E)))_{\Prism}$ with associated finite projective $\calO_K\za T\ya$-module $M\otimes_{\calO_K}\calO_K\za T\ya$ and nilpotent operator $\phi_M\otimes\id_{\calO_K\za T\ya}$ via base-change along $\calO_K\to\calO_K\za T\ya$. By Remark \ref{Rmk-stratification and Sen}, the stratification induced by $\bM$ is given by $\varepsilon = (1-E'(\pi)X_1)^{-\frac{\phi_M}{E'(\pi)}}$, which coincides with the stratification $\epsilon'=\exp(\phi_MX_1)$ with respect to $C^{\bullet}/EC^{\bullet}$ induced by $\bM'$ modulo $\pi$. In particular, we can identify \v Cech-Alexander complexes
     \[\check \rC\rA(M)\otimes_{\calO_K}k[T] = \bM'(C^{\bullet},EC^{\bullet})\otimes_{\calO_K\za T\ya}k[T]\]
     and then get a quasi-isomorphism
     \[\check \rC\rA(M)\otimes^{\rL}_{\calO_K}k[T]\simeq [M\xrightarrow{\phi_M}M]\otimes_{\calO_K}^{\rL}k[T].\]
     Since $k\to k[T]$ is faithfully flat, we get a quasi-isomorphism 
     \[[M\xrightarrow{\phi_M}M]\otimes_{\calO_K}^{\rL}\calO_K/\pi\simeq \check \rC\rA(M)\otimes_{\calO_K}^{\rL}\calO_K/\pi\]
     and then obtain Theorem \ref{Thm-prismatic cohomology} by derived Nakayama's lemma.
 \end{rmk}

 \begin{rmk}\label{Rmk-Homotopy}
   Using (\ref{Equ-A(X)-II}) and (\ref{Equ-A(X)-III}), we see the following diagram
   \begin{equation*}
       \xymatrix@C=0.45cm{
         M\ar[rr]^{d^0}\ar[d]^{\id_M}&& M\{X_1\}^{\wedge}_{\pd}\ar[d]^{\frac{\partial}{\partial X_1}(0)}\\
         M\ar[rr]^A&& M
       }
   \end{equation*}
   commutes. So $\frac{\partial}{\partial X}(0)$ induces a morphism of complexes
   \[\rho_M':\check \rC\rA(M)\to[M\xrightarrow{A}M].\]
   One can check that
   \[\rho'_M\circ\rho_M = \id:[M\xrightarrow{A}M]\to[M\xrightarrow{A}M].\]
   So Theorem \ref{Thm-prismatic cohomology} implies $\rho_M'$ is also a quasi-isomorphism. We believe that the morphism
   \[\rho_M\circ\rho_M':\check \rC\rA(M)\to\check \rC\rA(M)\]
   is homotopic to the identity $\id_{\check \rC\rA(M)}$ but do not know how to prove it.
 \end{rmk}

\subsection{Absolute prismatic cohomology for prismatic crystals}
Similar to Hodge--Tate crystals, we define the category $\Vect((\calO_K)_{\Prism},\calO_{\Prism})$ of {\bf prismatic crystals}; that is, the category of sheaves $\bM$ of $\calO_{\Prism}$-modules such that for any prism $(A,I)\in (\calO_K)_{\Prism}$, the evaluation $\bM(A,I)$ is a finite projective $A$-module satisfying similar condition on base-changes as in Definition \ref{Dfn-Hodge--Tate crystal}.
In this subsection, we turn to studying the prismatic cohomology of a crystal $\bM\in \Vect((\calO_K)_{\Prism},\calO_{\Prism})$. Our goal in this section is to prove the following theorem.

\begin{thm}\label{non-reduced coh}
 Let $\bM\in \Vect((\calO_K)_{\Prism},\calO_{\Prism})$. We have $H^i((\calO_K)_{\Prism},\bM)=0$ for all $i>1$.
\end{thm}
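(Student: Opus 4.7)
My plan is to reduce the higher cohomology vanishing for a prismatic crystal to the Hodge--Tate case, where it is controlled by Theorem~\ref{Thm-prismatic cohomology}, via the $\calI_{\Prism}$-adic filtration. Recall that $\calI_{\Prism}$ is an invertible sheaf of $\calO_{\Prism}$-modules on $(\calO_K)_{\Prism}$ (each prism $(A,I)$ has $I$ invertible over $A$), so the quotient $\calI_{\Prism}^k\bM/\calI_{\Prism}^{k+1}\bM \cong (\calI_{\Prism}/\calI_{\Prism}^2)^{\otimes k}\otimes_{\overline{\calO}_{\Prism}}\overline{\bM}$ is a Hodge--Tate crystal (a finite projective $\overline{\calO}_{\Prism}$-module), where $\overline{\bM}:=\bM/\calI_{\Prism}\bM$. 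The key input from Theorem~\ref{Thm-prismatic cohomology} is that for any Hodge--Tate crystal $\bN$ one has $\rR\Gamma_{\Prism}(\bN)\simeq [N\xrightarrow{\phi_N}N]$, concentrated in degrees $0$ and $1$.

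The next step is an induction on $k$ showing $\rH^i((\calO_K)_{\Prism},\bM/\calI_{\Prism}^k\bM)=0$ for all $i\geq 2$ and $k\geq 1$. The base case $k=1$ is Theorem~\ref{Thm-prismatic cohomology} applied to $\overline{\bM}$. For the inductive step, the short exact sequence
\[0\to \calI_{\Prism}^k\bM/\calI_{\Prism}^{k+1}\bM \to \bM/\calI_{\Prism}^{k+1}\bM \to \bM/\calI_{\Prism}^k\bM \to 0\]
yields a long exact cohomology sequence in which the outer terms vanish in degrees $\geq 2$ (the leftmost by the Hodge--Tate case, the rightmost by the inductive hypothesis), so the middle term vanishes as well.

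Finally, I pass from $\bM/\calI_{\Prism}^k\bM$ back to $\bM$. Since $\bM(A,I)$ is a finite projective (hence $I$-adically complete) $A$-module for every prism $(A,I)$, the \v Cech--Alexander complex $\bM(\frakS^{\bullet})$ of Section~\ref{SSec-compare cohomologies} is termwise $E$-adically complete with surjective transition maps between the finite-level quotients, giving the identification $\rR\Gamma_{\Prism}(\bM)\simeq\Rlim_k\rR\Gamma_{\Prism}(\bM/\calI_{\Prism}^k\bM)$. The resulting Milnor exact sequence
\[0\to {\lim_k}^1\rH^{i-1}(\bM/\calI_{\Prism}^k\bM) \to \rH^i((\calO_K)_{\Prism},\bM) \to \lim_k\rH^i(\bM/\calI_{\Prism}^k\bM)\to 0\]
handles $i\geq 3$ trivially and reduces the case $i=2$ to $\lim^1\rH^1(\bM/\calI_{\Prism}^k\bM)=0$; this follows from Mittag--Leffler, since the vanishing of $\rH^2$ on each graded piece forces the transitions $\rH^1(\bM/\calI_{\Prism}^{k+1}\bM)\twoheadrightarrow\rH^1(\bM/\calI_{\Prism}^k\bM)$ to be surjective.

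The main point requiring care is the identification of the graded pieces $\calI_{\Prism}^k\bM/\calI_{\Prism}^{k+1}\bM$ as honest Hodge--Tate crystals to which Theorem~\ref{Thm-prismatic cohomology} applies: one must invoke that $\calI_{\Prism}$ is an invertible $\calO_{\Prism}$-module (so that $\calI_{\Prism}/\calI_{\Prism}^2$ is invertible over $\overline{\calO}_{\Prism}$, hence a rank one Hodge--Tate crystal), using that a distinguished generator of $I$ is a non-zero-divisor in each prism $(A,I)$. Once this is in place, the rest of the argument is a formal filtration/limit procedure.
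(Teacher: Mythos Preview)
Your proposal is correct and follows essentially the same strategy as the paper: reduce to the Hodge--Tate case via the $\calI_{\Prism}$-adic filtration, induct on $k$ to get vanishing for $\bM/\calI_{\Prism}^k\bM$, then pass to the limit using Mittag--Leffler for the surjective $\rH^1$ transitions. The only cosmetic difference is that the paper works on a restricted site where $\calI_{\Prism}=(E)$ is explicitly principal (and spends a lemma checking that the \v Cech--Alexander complex also computes $\rR\Gamma_{\Prism}(\bM/\calI_{\Prism}^k\bM)$), whereas you phrase everything intrinsically via invertibility of $\calI_{\Prism}$; you should make sure to justify that the \v Cech--Alexander description extends to the truncations $\bM/\calI_{\Prism}^k\bM$, as Section~\ref{SSec-compare cohomologies} only states this for Hodge--Tate crystals.
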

\begin{rmk}
  This can be viewed as a special case of \cite[Conjecture 10.1]{BL-b}.
\end{rmk}
  In order to prove this theorem, we consider a restricted site $(\calO_K)^{\prime}_{\Prism}$. Its underlying category is the full subcategory of $(\calO_K)_{\Prism}$ spanned by the prisms admitting maps from the prism $(\frakS,(E))$. The coverings in $(\calO_K)^{\prime}_{\Prism}$ are inherited from the site $(\calO_K)_{\Prism}$. Note that $(\calO_K)^{\prime}_{\Prism}$ is not the relative prismatic site $(\calO_K/(\frakS,(E)))_{\Prism}$.

  We still denote $\bM$ the induced crystal of $\bM$ on $(\calO_K)^{\prime}_{\Prism}$. Then we have the following lemma.

\begin{lem}\label{prime&non-prime}
For any crystal $\bM\in \Vect((\calO_K)_{\Prism})$, we have
\begin{enumerate}
\item $\rR\Gamma((\calO_K)^{\prime}_{\Prism},\bM)\simeq \rR\Gamma((\calO_K)_{\Prism},\bM)\simeq \check \rC\rA(\bM)$,
\item $\rR\Gamma((\calO_K)^{\prime}_{\Prism},\bM/E^n)\simeq \rR\Gamma((\calO_K)_{\Prism},\bM/\calI^n)\simeq \check \rC\rA(\bM/\calI^n)$ for any $n\geq 1$,
\end{enumerate}
where $\calI$ is the sheaf on $(\calO_K)_{\Prism}$ sending a prism $(A,I)$ to $I$ and $\check \rC\rA(\bM)$ (resp. $\check \rC\rA(\bM/\calI^n)$) is the \v Cech-Alexandre complex of $\bM$ (resp. $\bM/\calI^n$) associated with the prism $(\frakS,(E))$.
\end{lem}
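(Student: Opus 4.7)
The plan is to compute both derived cohomologies via the \v Cech-to-derived spectral sequence associated with the covering $(\frakS,(E))$, which lives in both sites: it covers the final object of ${\rm Shv}((\calO_K)_{\Prism})$ by Lemma \ref{BK Cover}, and every term of its \v Cech nerve $(\frakS^{\bullet},(E))$ receives the structure morphism $p_0:\frakS\to\frakS^n$, so each $\frakS^n$ lies in $(\calO_K)'_{\Prism}$. Thus the two sites share the same \v Cech nerve and the same resulting \v Cech-Alexander complex $\check\rC\rA(\bM)$ (resp.\ $\check\rC\rA(\bM/\calI^n)$, noting that on the restricted site $E$ and the distinguished generator of the ideal of any object differ by a unit, so $\bM/E^n = \bM/\calI^n$). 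Once the spectral sequence degenerates on each site, the chain of isomorphisms in the statement is immediate.

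The technical heart is therefore the vanishing
\[\rH^i\bigl((\frakS^n,(E)),\bM\bigr) = 0,\qquad \rH^i\bigl((\frakS^n,(E)),\bM/\calI^m\bigr) = 0,\qquad i\geq 1,\]
which I would establish by layered induction. The reduction modulo $\calI$ is a Hodge-Tate crystal in the sense of Definition \ref{Dfn-Hodge--Tate crystal}, so Lemma \ref{vansihing lemma} already gives the result for $\bM/\calI$. Using the short exact sequences
\[0\to (\calI^k/\calI^{k+1})\otimes(\bM/\calI)\to \bM/\calI^{k+1}\to \bM/\calI^k\to 0\]
and the invertibility of the Hodge-Tate sheaf $\calI/\calI^2$, induction on $k$ propagates the vanishing through all $\bM/\calI^m$. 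For the unreduced crystal $\bM$, derived $\calI$-adic completeness together with the Mittag-Leffler property for the transition system $\{\rH^0((\frakS^n,(E)),\bM/\calI^m)\}_m$ of finite projective quotients yields
\[\rR\Gamma\bigl((\frakS^n,(E)),\bM\bigr)\simeq \Rlim_m \rR\Gamma\bigl((\frakS^n,(E)),\bM/\calI^m\bigr),\]
which is then concentrated in degree $0$.

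With the acyclicity secured, the \v Cech-to-derived spectral sequence collapses on both sites and identifies each derived cohomology with the corresponding \v Cech-Alexander complex. Since the nerve is identical on the two sites, the leftmost and rightmost objects in each asserted chain agree. The main obstacle I anticipate is the inductive step together with the justification of derived completeness: one must verify that the crystal property of $\bM$ is stable under the reductions $\bM/\calI^m$ and that the transition maps in the $\rH^0$-system are indeed surjective (so that $\Rlim$ coincides with $\varprojlim$ in degree $0$ and vanishes in higher degree). Once these foundational points are established, the rest of the argument is formal.
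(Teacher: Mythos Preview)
Your argument is correct, but the paper proceeds differently on both points. For the unreduced crystal $\bM$, the paper does \emph{not} reduce to the torsion quotients: the vanishing $\rH^i((\frakS^n,(E)),\bM)=0$ for $i\geq 1$ follows directly from the vanishing of higher \v Cech cohomology of $\calO_{\Prism}$ (i.e.\ flat descent for the structure sheaf) combined with the crystal isomorphism $\bM(\frakS^n,(E))\otimes_{\frakS^n}B\cong\bM(B,(E))$, so no derived completeness or Mittag--Leffler argument is needed. For $\bM/\calI^n$, the paper likewise avoids Lemma~\ref{vansihing lemma}: it fixes a flat cover $(\frakS^i,(E))\to(A,(E))$, identifies the \v Cech nerve $C^n$ of $\calO_{\Prism}/\calI^n$ with $C\otimes^{\rL}_{\frakS^i}\frakS^i/E^n$ where $C$ is the nerve of $\calO_{\Prism}$, and inducts on $n$ through the exact triangle $C^{n-1}\xrightarrow{\times E}C^n\to C^1$ together with the known acyclicity of $C^1$. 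Your route --- reducing everything to the Hodge--Tate case via the filtration by powers of $\calI$ --- is more uniform conceptually, but it imports the derived $\calI$-completeness of $\bM$ as a sheaf, which in the paper is only established \emph{after} this lemma (using repleteness of the restricted topos). The paper's order of argument is thus self-contained at this point, at the cost of a more hands-on computation for $\bM/\calI^n$; your approach would work equally well provided you establish the completeness statement first.
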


\begin{proof}
Since the prism $(\frakS,(E))$ covers the final objects of both the topos ${\rm Shv}((\calO_K)_{\Prism})$ and the topos ${\rm Shv}((\calO_K)^{\prime}_{\Prism})$. So we have
 \[
 \rR\Gamma((\calO_K)_{\Prism},\bM)\simeq \Rlim_i \rR\Gamma((\frakS^i,(E)),\bM)\simeq \rR\Gamma((\calO_K)^{\prime}_{\Prism},\bM)
 \]
 and
 \[
 \rR\Gamma((\calO_K)_{\Prism},\bM/\calI^n)\simeq \Rlim_i \rR\Gamma((\frakS^i,(E)),\bM/\calI^n)\simeq \rR\Gamma((\calO_K)^{\prime}_{\Prism},\bM/E^n),
 \]
 where $(\frakS^{\bullet},(E))$ is the \v Cech nerve associated with the prism $(\frakS,(E))$.
 Then it remains to prove that $R\Gamma((\frakS^i,(E)),\bM)$ and $\rR\Gamma((\frakS^i,(E)),\bM/\calI^n)$ are discrete for any $i$. The former follows from the vanishing of the higher \v Cech cohomology of $\calO_{\Prism}$ and the \v Cech-to-derived spectral sequence.

 For the latter, let $((\frakS^i,(E))\to (A,(E))$ be a flat cover. Then the \v Cech nerve of $\bM/\calI^n$ associated with the cover $((\frakS^i,(E))\to (A,(E))$ is $\bM/\calI^n(\frakS^i,(E))\otimes_{\frakS^i/E^n}C^n$, where $C^n$ is the \v Cech nerve of $\calO_{\Prism}/\calI^n$ associated with the cover $((\frakS^i,(E))\to (A,(E))$. We also let $C$ denote the \v Cech nerve of $\calO_{\Prism}$ associated with the cover $((\frakS^i,(E))\to (A,(E))$. So we have
 \[
 C\otimes_{\frakS^i}^{\rL}\frakS^i/E^n\simeq C^n
 \]
 for any $n\geq 1$.

 This implies
 \[
 C^n\otimes_{\frakS^i/E^n}^{\rL}\frakS^i/E\simeq C^1.
 \]
 Note that we have the following exact triangle
 \[
 C^{n-1}\xrightarrow{\times E} C^n\to C^n\otimes_{\frakS^i/E^n}^{\rL}\frakS^i/E=C^1.
 \]
 By taking cohomology, we get a long exact sequence
 \[
 \cdots\to \rH^i(C^{n-1})\xrightarrow{\times E}\rH^i(C^n)\to \rH^i(C^1)\to \rH^{i+1}(C^{n-1})\to \cdots.
 \]
 As $\rH^k(C^1)=0$ for any $k\geq 1$, we see that $\rH^k(C^n)=0$ for any $k\geq 1$. For $k=1$, this is because $E\rH^1(C^{n-1})=\rH^1(C^n)$.

 Now by using the \v Cech-to-derived spectral sequence, we see \[\rR\Gamma((\frakS^i,(E)),\bM/I^n)=\Gamma((\frakS^i,(E)),\bM/I^n),\] 
 which implies that
 \[
 \rR\Gamma((\calO_K)_{\Prism},\bM/\calI^n)\simeq \Rlim_i \rR\Gamma((\frakS^i,(E)),\bM/\calI^n)\simeq \Rlim_i \Gamma((\frakS^i,(E)),\bM/\calI^n).
 \]
 So we are done.
\end{proof}

 \begin{lem}
 For any $j>1$ and $n\geq 1$, the cohomology group \[\rH^j((\calO_K)^{\prime}_{\Prism},\bM/E^n)=0.\]
 \end{lem}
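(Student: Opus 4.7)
The plan is to prove this by induction on $n$, with the base case $n=1$ handled by Theorem \ref{Thm-prismatic cohomology}. Using Lemma \ref{prime&non-prime}, the cohomology $\rH^j((\calO_K)^{\prime}_{\Prism},\bM/E^n)$ agrees with $\rH^j$ of the \v Cech--Alexander complex $\check \rC\rA(\bM/\calI^n)$, so the statement is really about crystals killed by $\calI^n$.

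For $n=1$, the reduction $\bM/\calI$ is a Hodge--Tate crystal in $\Vect((\calO_K)_{\Prism},\overline \calO_{\Prism})$: it is an $\overline \calO_{\Prism}$-module sheaf whose value at $(\frakS,(E))$ is the finite projective $\calO_K$-module $\bM(\frakS,(E))/E$, and the base-change condition for $\bM$ passes to the quotient. By Theorem \ref{Thm-prismatic cohomology} applied to the associated pair $(M,\phi_M)$, we have $\rR\Gamma_{\Prism}(\bM/\calI)\simeq [M\xrightarrow{\phi_M}M]$, which is concentrated in degrees $0,1$, so $\rH^j((\calO_K)_{\Prism}^{\prime},\bM/E)=0$ for $j>1$.

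For the inductive step, I would use the short exact sequence of sheaves on $(\calO_K)_{\Prism}$
\begin{equation*}
0 \to \calI^{n-1}\bM/\calI^n\bM \to \bM/\calI^n\bM \to \bM/\calI^{n-1}\bM \to 0.
\end{equation*}
Because $\bM$ is a prismatic crystal (locally finite free over $\calO_{\Prism}$) and $\calI$ is an invertible ideal sheaf with $\calI/\calI^2$ a line bundle over $\overline\calO_{\Prism}$, the left-hand term is canonically identified with the Hodge--Tate crystal $(\bM/\calI)\otimes_{\overline\calO_{\Prism}}(\calI/\calI^2)^{\otimes (n-1)}$. Applying Theorem \ref{Thm-prismatic cohomology} to this Hodge--Tate crystal (it too has the form $[N\xrightarrow{\phi_N}N]$ with $N$ a finite free $\calO_K$-module, namely $M$ twisted by the Breuil--Kisin twist evaluated at $(\frakS,(E))$), its cohomology vanishes in degrees $>1$ on $(\calO_K)^{\prime}_{\Prism}$. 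The inductive hypothesis gives vanishing in degrees $>1$ for $\bM/\calI^{n-1}\bM$. The long exact sequence in cohomology then yields the desired vanishing for $\bM/\calI^n\bM$.

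The main conceptual point — and the only real obstacle — is verifying that $\calI^{n-1}\bM/\calI^n\bM$ genuinely is a Hodge--Tate crystal in the sense of Definition \ref{Dfn-Hodge--Tate crystal}, so that Theorem \ref{Thm-prismatic cohomology} applies. This requires knowing that $\calI/\calI^2$ is an invertible $\overline\calO_{\Prism}$-module (so that the filtration quotients are locally free of the expected rank) and that the base-change property is preserved, both of which are standard from the formalism of prisms. Once this identification is in place, the argument is a purely formal induction using the long exact sequence and the cohomological dimension $\leq 1$ output of Theorem \ref{Thm-prismatic cohomology}.
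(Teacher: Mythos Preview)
Your proof is correct and follows the same inductive strategy as the paper: reduce to $n=1$ via Theorem \ref{Thm-prismatic cohomology}, then step up using a short exact sequence and the long exact sequence in cohomology. The only difference is in the choice of short exact sequence. The paper uses
\[
0 \to \bM/E^{n} \xrightarrow{\times E} \bM/E^{n+1} \to \bM/E \to 0
\]
on the restricted site $(\calO_K)^{\prime}_{\Prism}$ (where $E$ is a global generator of $\calI$), so the Hodge--Tate crystal appearing in the induction is always $\bM/E$ itself, with no twist. Your sequence
\[
0 \to \calI^{n-1}\bM/\calI^n\bM \to \bM/\calI^n \to \bM/\calI^{n-1} \to 0
\]
puts the Hodge--Tate piece on the left, at the cost of having to identify $\calI^{n-1}\bM/\calI^n\bM$ as a Breuil--Kisin twist of $\bM/\calI$ and check that this is again a Hodge--Tate crystal. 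That identification is correct (and you flag it properly), but the paper's slicing avoids the issue entirely: since multiplication by $E$ is injective on $\bM/E^n$ with cokernel literally equal to $\bM/E$, no twist bookkeeping is needed. Otherwise the two arguments are interchangeable.
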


 \begin{proof}
We consider the short exact sequences of abelian sheaves on $(\calO_K)^{\prime}_{\Prism}$
  \[
  0\to \bM/E^n\xrightarrow{\times E} \bM/E^{n+1}\to \bar M:=\bM/E\to 0
  \]
  for any $n\geq 1$. Then we get the exact triangles
  \[
  \rR\Gamma((\calO_K)^{\prime}_{\Prism},\bM/E^n)\to \rR\Gamma((\calO_K)^{\prime}_{\Prism},\bM/E^{n+1})\to \rR\Gamma((\calO_K)^{\prime}_{\Prism},\bM).
  \]

  By Theorem \ref{Thm-prismatic cohomology} and Lemma \ref{prime&non-prime}, we know $\tau^{\leq 1}\rR\Gamma((\calO_K)^{\prime}_{\Prism},\bar M)\simeq \rR\Gamma((\calO_K)^{\prime}_{\Prism},\bar M)$. Then by induction, we see that $\tau^{\leq 1}\rR\Gamma((\calO_K)^{\prime}_{\Prism},\bM/E^n)\simeq \rR\Gamma((\calO_K)^{\prime}_{\Prism},\bM/E^n)$ for any $n\geq 1$.

 \end{proof}

 \begin{lem}
 For any crystal $\bM\in \Vect((\calO_K)_{\Prism}^{\prime},\calO_{\Prism})$, we have $\bM\simeq\Rlim\bM/E^n$.
 \end{lem}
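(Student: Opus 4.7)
The strategy is to check the claimed isomorphism object-wise on the restricted site. For any prism $(A,(E)) \in (\calO_K)^{\prime}_{\Prism}$, the evaluation $M := \bM(A,(E))$ is a finite projective $A$-module; moreover $A$ is a bounded prism, hence classically $(p,E)$-adically complete. Consequently $M$ is classically $(p,E)$-complete, and in particular derived $E$-complete. Since the transition maps $M/E^{n+1}M \twoheadrightarrow M/E^nM$ are surjective, the Mittag--Leffler condition gives $R^1\lim_n M/E^nM = 0$, and therefore $M \xrightarrow{\simeq} \Rlim_n M/E^nM$ in the derived category of abelian groups.

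To promote this to an isomorphism of sheaves, I would evaluate the canonical map $\bM \to \Rlim_n \bM/E^n$ on the \v Cech nerve $(\frakS^\bullet,(E))$ of the cover $(\frakS,(E))$ of the final object of ${\rm Shv}((\calO_K)^{\prime}_{\Prism})$. By Lemma~\ref{prime&non-prime}(2) together with its proof, $\rR\Gamma((\frakS^i,(E)),\bM/E^n)$ is discrete for every $i$ and $n$, and equals $\bM(\frakS^i,(E))/E^n\bM(\frakS^i,(E))$. Using the formal property that $\rR\Gamma$ commutes with $\Rlim$, one gets
\[\rR\Gamma((\frakS^i,(E)), \Rlim_n \bM/E^n) \simeq \Rlim_n \rR\Gamma((\frakS^i,(E)),\bM/E^n) \simeq \Rlim_n \bM(\frakS^i,(E))/E^n\bM(\frakS^i,(E)),\]
which by the first paragraph is quasi-isomorphic to $\bM(\frakS^i,(E)) \simeq \rR\Gamma((\frakS^i,(E)),\bM)$. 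Hence the map $\bM \to \Rlim_n \bM/E^n$ becomes a quasi-isomorphism on each term of the \v Cech nerve of a cover of the final object, and therefore is a quasi-isomorphism of complexes of sheaves.

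The main subtlety will be the interchange of $\Rlim$ with $\rR\Gamma$ and its compatibility with the pro-system of sheaves $\{\bM/E^n\}_n$. This is handled cleanly by invoking the discreteness of $\rR\Gamma((\frakS^i,(E)),\bM/E^n)$ already established in Lemma~\ref{prime&non-prime}, which forces the pro-system of sections to satisfy Mittag--Leffler and so makes the derived limit degenerate to the ordinary limit concentrated in degree zero; otherwise one would need to worry about higher $R^i\lim$ terms appearing on the sheaf level that are not visible from the sectionwise computation.
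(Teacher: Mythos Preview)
Your first paragraph is correct and establishes the underived identity $\bM \cong \lim_n \bM/E^n$ as sheaves: sheaf limits are computed sectionwise, and for every $(A,(E))$ the module $\bM(A,(E))$ is finite projective over the $(p,E)$-complete ring $A$, hence $E$-adically complete. The content of the lemma is therefore the upgrade $\lim_n \bM/E^n \simeq \Rlim_n \bM/E^n$ in $D({\rm Shv})$, i.e.\ the vanishing of the higher derived limits of this tower \emph{of sheaves}.

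Your second paragraph does not establish this. The inference ``quasi-isomorphism after $\rR\Gamma((\frakS^i,(E)),-)$ for each $i$, therefore quasi-isomorphism of complexes of sheaves'' is not valid: the functors $\rR\Gamma((\frakS^i,(E)),-)$ are not jointly conservative on $D({\rm Shv}((\calO_K)'_{\Prism}))$. Totalizing over the \v Cech nerve recovers only $\rR\Gamma$ of the final object, so what your computation actually shows is $\rR\Gamma((\calO_K)'_{\Prism},\bM)\simeq\Rlim_n\rR\Gamma((\calO_K)'_{\Prism},\bM/E^n)$. That happens to suffice for the application in Theorem~\ref{non-reduced coh}, but it is strictly weaker than the sheaf-level statement of the lemma. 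A general object $(A,(E))$ of the restricted site receives a map from $(\frakS,(E))$, but that map need not be a cover, so the cone of $\bM\to\Rlim_n\bM/E^n$ is not controlled by its derived sections over the $(\frakS^i,(E))$ alone.

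The paper closes this gap by a different mechanism: it observes that ${\rm Shv}((\calO_K)'_{\Prism})$ is \emph{replete}, because a filtered colimit of faithfully flat maps of prisms remains faithfully flat \cite[Remark~2.4]{BS-b}, and then invokes \cite[Proposition~3.1.10]{BS}, which gives $\lim=\Rlim$ for towers with surjective transition maps in any replete topos. Your third paragraph correctly names the worry (``higher $R^i\lim$ terms appearing on the sheaf level that are not visible from the sectionwise computation''), but discreteness of $\rR\Gamma((\frakS^i,(E)),\bM/E^n)$ together with sectionwise Mittag--Leffler does not force the sheaf-level $R^i\lim$ to vanish; repleteness (or an equivalent input) is genuinely needed.
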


 \begin{proof}
 Since an inductive limit of faithfully flat maps of prisms is a faithfully flat map of prisms (see \cite[Remark 2.4]{BS-b}), the topos ${\rm Shv}((\calO_K)^{\prime}_{\Prism})$ is replete. Then we have $\lim_n\bM/E^n\simeq\Rlim_n\bM/E^n$ by \cite[Proposition 3.1.10]{BS}. Then this lemma follows from the fact that $\bM$ is $E$-adically complete.
 \end{proof}

 \begin{proof}[Proof of Theorem \ref{non-reduced coh}]
 Since $\rR\Gamma$ commutes with derived inverse limit, we have
 \[
 \rR\Gamma((\calO_K)^{\prime}_{\Prism},\bM)\simeq \Rlim_n \rR\Gamma((\calO_K)^{\prime}_{\Prism},\bM/E^n).
 \]
 Now we consider the exact triangle of abelian sheaves on $(\calO_K)^{\prime}_{\Prism}$
 \[
 \bM/E\xrightarrow{\times E^n}\bM/E^{n+1}\to \bM/E^n.
 \]
 By taking the cohomology, we see that the natural map \[\rH^1((\calO_K)^{\prime}_{\Prism},\bM/E^{n+1})\to \rH^1((\calO_K)_{\Prism}^{\prime},\bM/E^n)\]
 is surjective. Then by  \cite[\href{https://stacks.math.columbia.edu/tag/07KY}{Tag 07KY}]{SP}, the cohomology group 
 \[\rH^k((\calO_K)^{\prime}_{\Prism},\bM)=\rH^k((\calO_K)_{\Prism},\bM)=0\]
 for any $k\geq 2$. This finishes the proof of Theorem \ref{non-reduced coh}.
 \end{proof}

\subsection{Rational Hodge--Tate crystals and $\Cp$-representations}
In this subsection, we want to assign to each (rational) Hodge--Tate crystal a semi-linear $\Cp$-representation of $G_K$. To this end, let us study the rational Hodge--Tate crystals on the site of perfect prisms $(\calO_K)_{\Prism}^{\perf}$ at first.

\begin{dfn}[Rational Hodge--Tate crystals on perfect site]\label{Dfn-rational Hodge--Tate crystal}
    By a {\bf rational Hodge--Tate crystal on} $(\calO_K)^{\perf}_{\Prism}$, we mean a sheaf $\bM$ of $\overline \calO_{\Prism}[\frac{1}{p}]$-modules such that for any $(A,I)\in(\calO_K)^{\perf}_{\Prism}$, $\bM(A)$ is a finite projective $A/I[\frac{1}{p}]$-module and that for any morphism $(A,I)\to (B,J)$, there is a canonical isomorphism
    \[\bM(A)\otimes_AB\xrightarrow{\cong}\bM(B).\]
    Denote by $\Vect((\calO_K)^{\perf}_{\Prism},\overline \calO_{\Prism}[\frac{1}{p}])$ the category of rational Hodge--Tate crystals on $(\calO_K)^{\perf}_{\Prism}$.
\end{dfn}
 Let $(B,I)$ be a cover of the final object of the topos ${\rm Shv}((\calO_K)^{\perf}_{\Prism})$ and $V$ be a finite projective $B/I[\frac{1}{p}]$-module. Similar to Definition \ref{Dfn-stratification}, one can define stratification satisfying the cocycle condition on $V$ such that the analogue of Lemma \ref{crystal is stratification} is true. Note that, by Lemma \ref{BK Cover}, $(A_{\inf},(\xi))$ is a cover of initial object of $(\calO_K)_{\Prism}^{\perf}$. Denote by $A_{\perf}^{\bullet}$ the associated \v Cech nerve in $(\calO_K)^{\perf}_{\Prism}$.
\begin{cor}\label{crystal is stratification-II}
    The category of rational Hodge--Tate crystals on $(\calO_K)^{\perf}_{\Prism}$ is equivalent to the category of finite dimensional $\hat K_{\cyc,\infty}$-vector spaces $V$ on which there is a stratification satisfying the cocycle condition.
\end{cor}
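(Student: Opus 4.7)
The plan is to deduce the corollary by applying the perfect/rational analogue of Lemma~\ref{MT} to the cover $(A_{\inf},(\xi))$. By the first part of Lemma~\ref{BK Cover}, this prism covers the final object of ${\rm Shv}((\calO_K)^{\perf}_{\Prism})$, and the associated \v Cech nerve is $A_{\perf}^{\bullet}$.

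First I would record a perfect/rational variant of Lemma~\ref{MT}: for any cover $(B,I)$ of the final object of ${\rm Shv}((\calO_K)^{\perf}_{\Prism})$, the evaluation functor induces an equivalence between $\Vect((\calO_K)^{\perf}_{\Prism},\overline\calO_{\Prism}[\tfrac{1}{p}])$ and the category of finite projective $B/I[\tfrac{1}{p}]$-modules equipped with a stratification with respect to the cosimplicial ring $B^{\bullet}/I[\tfrac{1}{p}]$ satisfying the cocycle condition. The proof is a verbatim repetition of \cite[Corollary~3.9]{MT} and \cite[Proposition~2.7]{BS-b}: the only inputs are that the cover is faithfully flat on the Hodge--Tate layer (so that descent is effective) and that the localisation at $p$ is flat, both of which hold here. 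Remark~\ref{Rmk-Tian} furthermore ensures that condition (2) of Definition~\ref{Dfn-stratification} is automatic once (1) holds, so the cocycle condition reduces to the single equation $p^*_{12}(\varepsilon)\circ p^*_{01}(\varepsilon) = p_{02}^*(\varepsilon)$.

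Next, since $\xi\in A_{\inf}$ generates $\ker\theta$ by our normalisation, we obtain canonical isomorphisms $A_{\inf}/\xi\cong\calO_{\hat K_{\cyc,\infty}}$ and therefore $A_{\inf}/\xi[\tfrac{1}{p}]\cong\hat K_{\cyc,\infty}$. Since $\hat K_{\cyc,\infty}$ is a field, finite projective modules over it are just finite-dimensional $\hat K_{\cyc,\infty}$-vector spaces. Substituting this identification into the equivalence above yields exactly the statement of the corollary. The only real obstacle in the argument is spelling out the perfect/rational analogue of Lemma~\ref{MT}; once this formal descent statement is in place, the passage to finite-dimensional $\hat K_{\cyc,\infty}$-vector spaces is immediate from the fact that $A_{\inf}/\xi[\tfrac{1}{p}]$ is a field.
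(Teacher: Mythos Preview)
Your overall strategy coincides with the paper's: both deduce the corollary by applying a perfect/rational analogue of Lemma~\ref{MT} to the cover $(A_{\inf},(\xi))$. The gap lies in how you justify that analogue. You claim the proof is ``a verbatim repetition'' of \cite[Corollary~3.9]{MT} and \cite[Proposition~2.7]{BS-b}, with the only inputs being faithful flatness on the Hodge--Tate layer and flatness of localisation at $p$. But those references establish descent for finite projective modules over $(p,I)$-complete rings via $(p,I)$-completely faithfully flat descent; this is an integral statement, and flatness of the localisation map does not by itself transfer effective descent to the category $\Vect(B/I[\tfrac{1}{p}])$.

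The paper instead invokes $v$-descent of vector bundles on perfectoid spaces (\cite[Lemma~17.1.8]{SW}, \cite[Theorem~3.5.8]{KL}), and this is the genuinely needed input. For perfect prisms $(B,I)$ the ring $B/I[\tfrac{1}{p}]$ is a perfectoid Tate ring, and covers in $(\calO_K)_{\Prism}^{\perf}$ induce $v$-covers on these rational Hodge--Tate layers; the cited $v$-descent theorem then supplies exactly the effective descent required to reconstruct a crystal from a stratification. Your observation that $A_{\inf}/\xi[\tfrac{1}{p}]=\hat K_{\cyc,\infty}$ is a field (hence everything over it is faithfully flat) handles only the cover of the final object; to define the crystal on an arbitrary perfect prism $(C,J)$ one must still descend along covers of $(C,J)$, where the base $C/J[\tfrac{1}{p}]$ need not be a field and ordinary faithfully flat descent is not available. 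In short, the skeleton of your argument is correct, but the descent input you name is the wrong one; replacing it with $v$-descent recovers the paper's proof.
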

\begin{proof}
  This follows from the $v$-descent of vector bundles on perfectoid spaces over $\bQ_p$ (cf. \cite[Lemma 17.1.8]{SW} and \cite[Theorem 3.5.8]{KL}).
\end{proof}
  We want to give an explicit description on rational Hodge--Tate crystals on $(\calO_K)_{\Prism}^{\perf}$.
\begin{prop}\label{galois descent}
  There is a canonical isomorphism of cosimplicial rings
  \[\overline \calO_{\Prism}[\frac{1}{p}](A_{\perf}^{\bullet})\cong \rC(\hat G^{\bullet},\hat K_{\cyc,\infty}).\]
\end{prop}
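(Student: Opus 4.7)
The plan is to identify the cosimplicial ring $\overline\calO_{\Prism}[\tfrac{1}{p}](A_{\perf}^{\bullet})$ by first translating the perfect prismatic \v{C}ech nerve into a cosimplicial object in perfectoid algebras and then applying continuous Galois descent for the pro-finite extension $\hat K_{\cyc,\infty}/K$.

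First, I would invoke the Bhatt--Scholze equivalence between perfect prisms and integral perfectoid rings via $(A,I)\mapsto A/I$. Under this equivalence, the cover $(A_{\inf},(\xi))$ corresponds to $\calO_{\hat K_{\cyc,\infty}} = A_{\inf}/\xi$. Thus the \v{C}ech nerve $(A_{\perf}^{\bullet},(\xi))$ in $(\calO_K)_{\Prism}^{\perf}$, specialized via $\overline\calO_{\Prism}$, is the cosimplicial integral perfectoid ring whose $n$-th term is the $(n{+}1)$-fold self-product of $\calO_{\hat K_{\cyc,\infty}}$ over $\calO_K$ in integral perfectoid algebras (i.e.\ the perfectoidization of the $p$-completed tensor product). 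After inverting $p$, this becomes the $(n{+}1)$-fold self-product of $\hat K_{\cyc,\infty}$ in perfectoid $K$-algebras.

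Second, I would invoke the following standard consequence of Faltings' almost purity theorem: since $\hat K_{\cyc,\infty}/K$ is the completion of a pro-finite Galois extension with group $\hat G$, there is, for each $n\geq 0$, a canonical isomorphism of topological rings
\[
\underbrace{\hat K_{\cyc,\infty}\,\widehat\otimes_K\cdots\widehat\otimes_K\,\hat K_{\cyc,\infty}}_{n+1\text{ factors}} \xrightarrow{\ \cong\ } \rC(\hat G^n,\hat K_{\cyc,\infty}),
\]
sending $a_0\otimes a_1\otimes\cdots\otimes a_n$ to $(g_1,\dots,g_n)\mapsto a_0\, g_1(a_1)\,(g_1g_2)(a_2)\cdots(g_1\cdots g_n)(a_n)$. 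For $n=1$ this is the classical descent isomorphism, and the higher-$n$ case follows by iterating (or, equivalently, by viewing $\Spa(\hat K_{\cyc,\infty},\calO_{\hat K_{\cyc,\infty}})\to\Spa(K,\calO_K)$ as a pro-\'etale $\hat G$-torsor in the $v$-site and invoking $v$-descent for perfectoid rings).

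Third, I would check that the cosimplicial face and degeneracy maps coming from the prismatic \v{C}ech nerve correspond under this identification to the standard bar-resolution structure on $\rC(\hat G^{\bullet},\hat K_{\cyc,\infty})$. Under the formula above, the face map $p_0$ should correspond to $d^0 f(g_1,\dots,g_{n+1}) = g_1 f(g_2,\dots,g_{n+1})$, the inner face maps $p_i$ ($0<i\leq n$) to multiplication in the group entry, and the last face map $p_{n+1}$ to $f\mapsto f\circ\pr$; this is a direct computation on elementary tensors.

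The main obstacle is Step~2: making precise the claim that self-products in perfectoid $K$-algebras compute continuous cochain rings on $\hat G$. The extension $\calO_{\hat K_{\cyc,\infty}}/\calO_K$ is only \emph{almost} Galois with group $\hat G$, so integrally one gets only an almost isomorphism; the passage to $\overline\calO_{\Prism}[\tfrac{1}{p}]$ is what converts this into an honest isomorphism. Handling this carefully, and verifying that the cosimplicial structures match (rather than merely the individual levels), is where the technical work lies.
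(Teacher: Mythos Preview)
Your proposal is correct and follows essentially the same approach as the paper: the paper also identifies $A_{\perf}^{\bullet}/\xi$ with the perfectoidization of the $p$-complete tensor powers of $\calO_{\hat K_{\cyc,\infty}}$ over $\calO_K$ (citing \cite[Theorem 3.10]{BS-a}), inverts $p$, and then recognizes the resulting perfectoid spaces as the self-fibre-products of $\Spa(\hat K_{\cyc,\infty},\calO_{\hat K_{\cyc,\infty}})$ over $\Spa(K,\calO_K)$, which are $\Spa(\rC(\hat G^i,\hat K_{\cyc,\infty}),\rC(\hat G^i,\calO_{\hat K_{\cyc,\infty}}))$ via the pro-\'etale $\hat G$-torsor structure. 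Your explicit tensor-to-function formula and the face-map verification are precisely what the paper records in the example immediately following the proposition.
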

\begin{proof}
  This follows from similar arguments used in the proof of \cite[Lemma 2.13]{MW}. So we just give an outline here. For any $i\geq 0$, let $S^i$ be the perfectoidization of the $p$-complete tensor product of $(i+1)$-copies of $\calO_{\hat K_{\cyc,\infty}}$ over $\calO_K$. By \cite[Theorem 3.10]{BS-a}, $A_{\perf}^{\bullet}/\xi = S^{\bullet}$. It is enough to show $S^{\bullet}[\frac{1}{p}] = C(\hat G^{\bullet},\hat K_{\cyc,\infty})$. Let $R^{i}$ be the $p$-complete normalization of $S^i$ in $S^i[\frac{1}{p}]$. Then $X^i: = \Spa(S^i[\frac{1}{p}],R^i)$ is the initial object in the category of perfectoid spaces over $\Spa(K,\calO_K)$ to which there are $(i+1)$-arrows from $X = X^0 = \Spa(\hat K_{\cyc,\infty},\calO_{\hat K_{\cyc,\infty}})$. So we see that
  \[X^i \cong X\times_K\times\dots\times_KX\]
  is the fibre product of $(i+1)$-copies of $X$ over $K$, which turns out to be
  \[\Spa(C(\hat G^i,\hat K_{\cyc,\infty}),C(\hat G^i,\calO_{\hat K_{\cyc,\infty}})).\]
  This implies $S^{\bullet}[\frac{1}{p}] = C(\hat G^{\bullet},\hat K_{\cyc,\infty})$ as desired.
\end{proof}
\begin{exam}\label{Exam-galois descent}
	We identify $(A_{\perf}^i/\xi)[\frac{1}{p}]$ with $\rC(\hat G^i,\hat K_{\cyc,\infty})$ for $i\in \{0,1,2\}$ via the isomorphisms in Proposition \ref{galois descent}. Then
	\begin{equation*}p_j: \hat K_{\cyc,\infty} = \rC(\hat G^0,\hat K_{\cyc,\infty})\to \rC(\hat G^1,\hat K_{\cyc,\infty})
	\end{equation*}
	for $j\in \{0,1\}$ is given by
		\begin{equation*}
			p_j(x)(g) = \left\{
			\begin{array}{rcl}
				g(x), & {\rm if} ~j=0\\
				x, &{\rm if}~j=1
			\end{array}   .
			\right.
		\end{equation*}
		for any $x\in \hat K_{\cyc,\infty}$ and $g\in \hat G$.
		
		The degeneracy morphism
		\[\Delta:\rC(\hat G^1,\hat K_{\cyc,\infty})\to \hat K_{\cyc,\infty}\]
		is given by $\Delta(f) = f(1)$, for any continuous function $f:\hat G\to\hat K_{\cyc,\infty}$.
		
		For $ij\in \{01,02,12\}$, for any $f\in \rC(\hat G^1,\hat K_{\cyc,\infty})$ and $g_0,g_1\in \Gamma$,
		\begin{equation*}
			p_{ij}(f)(g_0,g_1) = \left\{
			\begin{array}{rcl}
				f(g_0), & {\rm if} ~ij=01\\
				f(g_0g_1), & {\rm if}~ij=02\\
				g_0(f(g_1)), & {\rm if} ~ij=12
			\end{array}.
			\right.
		\end{equation*}
\end{exam}
\begin{thm}\label{rational crystal as representation}
  The evaluation at $(A_{\inf},(\xi))$ induces an equivalence from the category $\Vect((\calO_K)^{\perf}_{\Prism},\overline \calO_{\Prism}[\frac{1}{p}])$ to the category $\Rep_{\hat G}(\hat K_{\cyc,\infty})$ of representations of $\hat G$ over $\hat K_{\cyc,\infty}$.
\end{thm}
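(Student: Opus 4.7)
\medskip

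\noindent\textbf{Proof plan.} The strategy is to reduce the statement to explicit cocycle data via the two previously established facts: Corollary \ref{crystal is stratification-II} (crystals are stratifications) and Proposition \ref{galois descent} combined with Example \ref{Exam-galois descent} (the \v Cech nerve modulo $\xi$ becomes the continuous cochain complex on $\hat G$).

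First, I would use Corollary \ref{crystal is stratification-II} to identify $\Vect((\calO_K)^{\perf}_{\Prism},\overline\calO_{\Prism}[\tfrac{1}{p}])$ with pairs $(V,\varepsilon)$ where $V$ is a finite dimensional $\hat K_{\cyc,\infty}$-vector space and $\varepsilon$ is an isomorphism
\[
\varepsilon:V\otimes_{\hat K_{\cyc,\infty},p_0}(A_{\perf}^1/\xi)[\tfrac1p]\xrightarrow{\cong} V\otimes_{\hat K_{\cyc,\infty},p_1}(A_{\perf}^1/\xi)[\tfrac1p]
\]
satisfying the cocycle condition over $(A_{\perf}^2/\xi)[\tfrac1p]$. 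Then, using Proposition \ref{galois descent}, I can rewrite this as a $C(\hat G,\hat K_{\cyc,\infty})$-linear isomorphism between the two base changes of $V$ along the face maps $p_0,p_1$ described in Example \ref{Exam-galois descent}.

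Next, fixing a basis $e_1,\dots,e_l$ of $V$, I would write $\varepsilon(e_j\otimes_{p_0}1)=\sum_i e_i\otimes_{p_1}U_{ij}$ for some matrix $U=(U_{ij})\in \GL_l(C(\hat G,\hat K_{\cyc,\infty}))$. Evaluating at $g\in\hat G$ gives a continuous map $\rho:\hat G\to \GL_l(\hat K_{\cyc,\infty})$ by $\rho(g)_{ij}=U_{ij}(g)$. Because the source uses the $p_0$-structure (which is $x\mapsto(g\mapsto g(x))$) and the target uses the $p_1$-structure (constant functions), the compatibility of $\varepsilon$ with scalars translates into the semi-linearity $\rho(g)(\lambda v)=g(\lambda)\rho(g)(v)$. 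The cocycle condition $p_{12}^*(\varepsilon)\circ p_{01}^*(\varepsilon)=p_{02}^*(\varepsilon)$, combined with the explicit formulae
\[
p_{01}(f)(g_0,g_1)=f(g_0),\qquad p_{12}(f)(g_0,g_1)=g_0(f(g_1)),\qquad p_{02}(f)(g_0,g_1)=f(g_0g_1),
\]
unwinds to the $1$-cocycle identity $U(g_0g_1)=U(g_0)\cdot g_0(U(g_1))$, which is exactly the multiplicativity condition for $\rho$ to define a semi-linear $\hat G$-action. The normalization from the degeneracy $\Delta(f)=f(1)$ together with Remark \ref{Rmk-Tian} forces $\rho(1)=\id_V$. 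This constructs the functor to $\Rep_{\hat G}(\hat K_{\cyc,\infty})$; going backwards, any continuous semi-linear representation $(V,\rho)$ produces a matrix $U$, hence a stratification $\varepsilon$, and the same calculation shows the cocycle condition holds.

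Finally, I would check functoriality and full faithfulness on morphisms: a map $(V,\varepsilon)\to(V',\varepsilon')$ of stratified modules is an $\hat K_{\cyc,\infty}$-linear $f:V\to V'$ such that $\varepsilon'\circ(f\otimes p_0)=(f\otimes p_1)\circ\varepsilon$; evaluating at each $g\in \hat G$, this is precisely the condition $f\circ\rho(g)=\rho'(g)\circ f$. Hence the evaluation functor is fully faithful and essentially surjective, giving the claimed equivalence. The main obstacle in the argument is purely bookkeeping: one must keep track of which side of the tensor product uses $p_0$ versus $p_1$, so that the semi-linearity of $\rho$ (rather than linearity) emerges on the correct side; once the conventions of Example \ref{Exam-galois descent} are applied carefully, the cocycle identity matches the multiplicativity of $\rho$ on the nose, and continuity is automatic from the identification in Proposition \ref{galois descent}.
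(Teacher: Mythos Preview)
Your proposal is correct and follows essentially the same approach as the paper: the paper's proof is the single sentence ``This follows from Corollary \ref{crystal is stratification-II}, Example \ref{Exam-galois descent} and the Galois descent,'' and what you have written is precisely an unpacking of that sentence---identifying a stratification over $C(\hat G^{\bullet},\hat K_{\cyc,\infty})$ with a continuous $1$-cocycle valued in $\GL_l(\hat K_{\cyc,\infty})$, i.e.\ a semi-linear representation. One tiny remark: the invocation of Remark \ref{Rmk-Tian} is unnecessary, since the degeneracy condition $\Delta^*(\varepsilon)=\id_V$ is already part of the cocycle condition in Definition \ref{Dfn-stratification} and, via $\Delta(f)=f(1)$, immediately gives $U(1)=I$.
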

\begin{proof}
  This follows from Lemma \ref{crystal is stratification-II}, Example \ref{Exam-galois descent} and the Galois descent.
\end{proof}
\begin{rmk}\label{Rmk-rational crystal as representations}
  Note that by Faltings' almost purity theorem, the following categories
  \[\Rep_{\Gamma}(\hat K_{\cyc})\xrightarrow{\simeq}\Rep_{\hat G}(\hat K_{\cyc,\infty})\xrightarrow{\simeq}\Rep_{G_K}(\Cp)\]
  are equivalent. So Theorem \ref{rational crystal as representation} gives an equivalence between $\Vect((\calO_K)^{\perf}_{\Prism},\overline \calO_{\Prism}[\frac{1}{p}])$ and $\Rep_{G_K}(\Cp)$.
\end{rmk}

Now one can regard a Hodge--Tate crystal $\bM\in\Vect((\calO_K)_{\Prism},\overline \calO_{\Prism})$ as a $\hat K_{\cyc,\infty}$-representation of $\hat G$ via the composition of functors
\[V:\Vect((\calO_K)_{\Prism},\overline \calO_{\Prism})\to\Vect((\calO_K)^{\perf}_{\Prism},\overline \calO_{\Prism}[\frac{1}{p}])\xrightarrow{\simeq}\Rep_{\hat G}(\hat K_{\cyc,\infty}).\]
We want to give an explicit description of the representation $V(\bM)$ associated to $\bM$. To do so, we want to compare stratifications on $\bM(\frakS)$ and $\bM(A_{\inf})[\frac{1}{p}]$ via the canonical morphism
\[\frakS^{\bullet}/(E)\to A_{\perf}^{\bullet}/(E)[\frac{1}{p}]\cong \rC(\hat G^{\bullet},\hat K_{\cyc,\infty})\]
of cosimplicial rings.
We want to determine the image of
\[X\in\calO_K\{X\}^{\wedge}_{\pd}\cong \frakS^1/(E)\]
in $\rC(\hat G^1,\hat K_{\cyc,\infty})$ at first.
\begin{prop}\label{image of X}
  Let $\lambda$ be the image of $\frac{\xi}{E([\pi^{\flat}])}$ in $\calO_{\hat K_{\cyc,\infty}}$. Then
\[X(g) = c(g)\pi\lambda(1-\zeta_p),\]
where $c(g)\in\bZ_p$ is determined by $g(\pi^{\flat}) = \epsilon^{c(g)}\pi^{\flat}$.
\end{prop}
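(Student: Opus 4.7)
The plan is to determine $X(g) \in \hat K_{\cyc,\infty}$ for each $g \in \hat G$ separately by lifting the pointwise evaluation $\rC(\hat G, \hat K_{\cyc,\infty}) \to \hat K_{\cyc,\infty}$ at $g$ to a ring map $e_g : A_{\perf}^1 \to A_{\inf}$, computing $e_g(X) = e_g(u_0 - u_1)/e_g(E(u_0))$ in $A_{\inf}$, and then reducing modulo $\xi$.

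First I will pin down $e_g$. Combining the face-map conventions of Definition \ref{Dfn-stratification} with Example \ref{Exam-galois descent} and Proposition \ref{galois descent}, the ``index $0$'' structure map $q_0 : A_{\inf} \to A_{\perf}^1$ corresponds, after reduction modulo $\xi$, to the untwisted inclusion of $\calO_{\hat K_{\cyc,\infty}}$ into $\rC(\hat G, \hat K_{\cyc,\infty})$ as a constant function, while $q_1$ corresponds to the Galois-twisted inclusion $a \mapsto (g \mapsto g(a))$. Consequently, pointwise evaluation at $g$ is realized by the unique $\rW(k)$-algebra map $e_g$ determined by $e_g \circ q_0 = \id$ and $e_g \circ q_1 = g$; this is well-defined on the prismatic coproduct since both $\id$ and $g$ restrict to the identity on $\calO_K$. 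Since $u_0 = q_0([\pi^{\flat}])$ and $u_1 = q_1([\pi^{\flat}])$ in $A_{\perf}^1$ via $\iota : \frakS \to A_{\inf}$, this yields $e_g(u_0) = [\pi^{\flat}]$ and $e_g(u_1) = g([\pi^{\flat}]) = [\epsilon]^{c(g)}[\pi^{\flat}]$.

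Next I will compute $e_g(X)$ modulo $\xi$. Writing $[\epsilon] = 1 + \mu$ and using the $\mu$-adically convergent binomial expansion for $c(g) \in \bZ_p$ gives $[\epsilon]^{c(g)} - 1 = c(g)\mu + O(\mu^2)$; substituting $\mu = \xi\varphi^{-1}(\mu)$ then produces
\[e_g(u_0 - u_1) = (1 - [\epsilon]^{c(g)})[\pi^{\flat}] = -c(g)\,\xi\,\varphi^{-1}(\mu)\,[\pi^{\flat}] + O(\xi^2).\]
Combined with $e_g(E(u_0)) = E([\pi^{\flat}]) = \xi/\lambda$, this yields $e_g(X) = -c(g)\lambda\,\varphi^{-1}(\mu)[\pi^{\flat}] + O(\xi)$. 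Applying $\theta$ sends $\varphi^{-1}(\mu) = [\epsilon^{1/p}] - 1 \mapsto \zeta_p - 1$, $[\pi^{\flat}] \mapsto \pi$, and $\lambda$ to its image modulo $\xi$ (which the paper also denotes $\lambda$), so $X(g) = -c(g)\lambda(\zeta_p-1)\pi = c(g)\pi\lambda(1-\zeta_p)$.

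The conceptual content lies entirely in the first step: since both $u_0$ and $u_1$ reduce to the constant function $\pi$ modulo $\xi$, the image of $X = (u_0 - u_1)/E(u_0)$ is a ``$0/0$''-type quantity that has to be unpacked at the $A_{\inf}$-level, and the identification of which of $q_0, q_1$ carries the Galois twist is the only nontrivial point to verify. Once the lift $e_g$ is set up correctly, the remainder is a first-order Taylor expansion that simultaneously cancels one factor of $\xi$ from the numerator and denominator.
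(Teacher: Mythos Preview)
Your proof is correct and follows essentially the same approach as the paper's: identify $u_0(g)=[\pi^{\flat}]$ and $u_1(g)=g([\pi^{\flat}])=[\epsilon]^{c(g)}[\pi^{\flat}]$ via the face maps, then compute $(u_0-u_1)/E(u_0)$ in $A_{\inf}$ and reduce modulo $\xi$. The only cosmetic difference is that the paper factors $1-[\epsilon]^{c(g)}$ exactly as $(1-[\epsilon])\cdot\frac{1-[\epsilon]^{c(g)}}{1-[\epsilon]}$ and observes that the second factor reduces to $c(g)$ modulo $\xi$, whereas you reach the same conclusion via the first-order binomial expansion $[\epsilon]^{c(g)}-1\equiv c(g)\mu\bmod \mu^2$; since $\mu\in(\xi)$, both arguments are equivalent.
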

\begin{proof}
  Recall that $X$ is the image of $\frac{u_0-u_1}{E(u_0)}$. By the proof of Proposition \ref{galois descent}, as functions in $C(\hat G,A_{\inf})$, $u_0(g) = [\pi^{\flat}]$ and $u_1(g) = g([\pi^{\flat}])$. Therefore, we have
  \[X(g) = \frac{[\pi^{\flat}]-[\epsilon]^{c(g)}[\pi^{\flat}]}{E([\pi^{\flat}])} = -\frac{\xi}{E([\pi^{\flat}])}\varphi^{-1}(\mu)\frac{1-[\epsilon]^{c(g)}}{1-[\epsilon]}[\pi^{\flat}].\]
  Modulo $\xi$, we see $X(g) = c(g)\lambda\pi(1-\zeta_p)$ as desired.
\end{proof}
\begin{thm}\label{representation comes from crystal}
  Let $\bM\in\Vect((\calO_K)_{\Prism},\overline \calO_{\Prism})$ with associated pair $(M,\phi_M)$ in the sense of Theorem \ref{Thm-HTC}. Let $V(\bM) = M\otimes_{\calO_K}\hat K_{\cyc,\infty}$ be the induced representation of $\hat G$ over $\hat K_{\cyc,\infty}$. Then $V(\bM)$ is determined by the cocycle $U: \hat G\to \GL_l(\hat K_{\cyc,\infty})$ satisfying
  \begin{equation*}
  \begin{split}
      U(g) & = I+\sum_{n\geq 1}\frac{(c(g)\pi\lambda(1-\zeta_p))^n}{n!}\prod_{k=0}^{n-1}(kE'(\pi)+\phi_M)\\
      & = (1-c(g)E'(\pi)\pi\lambda(1-\zeta_p))^{-\frac{\phi_M}{E'(\pi)}},
  \end{split}
  \end{equation*}
  where $c(g)\in\Zp$ is determined by $g(\pi^{\flat}) = \epsilon^{c(g)}\pi^{\flat}$. In particular, when $K_{\cyc}\cap K_{\infty} = K$, if we write $g = \tau^i\gamma^j$ for some $i\in\Zp$ and $j\in\bZ_p^{\times}$, then we have
  \begin{equation}\label{Equ-cocycle for rep}
  \begin{split}
      U(\tau^i\gamma^j) = (1-E'(\pi)i\pi\lambda(1-\zeta_p))^{-\frac{\phi_M}{E'(\pi)}}.
  \end{split}
  \end{equation}
\end{thm}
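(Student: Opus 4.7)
The plan is to combine the explicit stratification formula of Remark~\ref{Rmk-stratification and Sen} with base-change along the faithfully flat morphism $\iota:(\frakS,(E))\to(\AAinf,(\xi))$ and the Galois-theoretic identification of $A_{\perf}^{\bullet}/\xi[\frac{1}{p}]$ from Proposition~\ref{galois descent}. The key observation is that once the stratification $\varepsilon$ on $M$ is pushed forward into the cosimplicial ring $\rC(\hat G^{\bullet},\hat K_{\cyc,\infty})$, the cocycle describing $V(\bM)$ can be read off by simply evaluating $\varepsilon$ at the image of the pd-generator $X$.

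First, I would spell out the dictionary between stratifications and cocycles. Under the identifications of Example~\ref{Exam-galois descent}, a stratification on a finite-dimensional $\hat K_{\cyc,\infty}$-vector space $V$ with respect to the cosimplicial ring $\rC(\hat G^{\bullet},\hat K_{\cyc,\infty})$ is exactly the datum of a continuous $1$-cocycle $U:\hat G\to\GL(V)$; the cocycle condition (Definition~\ref{Dfn-stratification}, item~1) corresponds to the usual cocycle identity for a semi-linear action of $\hat G$. This identifies $V(\bM)$ with the representation coming from the stratification obtained on $M\otimes_{\calO_K}\hat K_{\cyc,\infty}$.

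Second, I would use that the restriction functor $\Vect((\calO_K)_{\Prism},\overline{\calO}_{\Prism})\to\Vect((\calO_K)^{\perf}_{\Prism},\overline{\calO}_{\Prism}[\frac{1}{p}])$ corresponds, on the stratification side, to pulling back along the cosimplicial morphism $\frakS^{\bullet}/(E)\to A_{\perf}^{\bullet}/\xi[\frac{1}{p}]\cong \rC(\hat G^{\bullet},\hat K_{\cyc,\infty})$ induced by $\iota$. By Remark~\ref{Rmk-stratification and Sen}, the stratification associated with $(M,\phi_M)$ is explicitly $\varepsilon=(1-E'(\pi)X)^{-\phi_M/E'(\pi)}$, so one only needs to compute the image of $X\in\frakS^1/(E)$ in $\rC(\hat G,\hat K_{\cyc,\infty})$. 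This is precisely what Proposition~\ref{image of X} supplies: $X$ is sent to the function $g\mapsto c(g)\pi\lambda(1-\zeta_p)$. Substituting and using the expansion (\ref{Equ-determine coefficients-II}) with $\alpha=E'(\pi)$ and $Y=\phi_M$, one obtains
\[U(g)=I+\sum_{n\geq 1}\frac{(c(g)\pi\lambda(1-\zeta_p))^n}{n!}\prod_{k=0}^{n-1}(kE'(\pi)+\phi_M)=(1-c(g)E'(\pi)\pi\lambda(1-\zeta_p))^{-\phi_M/E'(\pi)}.\]
For the ``in particular'' assertion, I would then invoke the decomposition $\hat G=\Zp\tau\rtimes\Gamma$ from Subsection~\ref{notation}: since $\tau(\pi^{\flat})=\epsilon\pi^{\flat}$ and $\Gamma$ fixes $\pi^{\flat}$, one has $c(\tau^i\gamma^j)=i$, which yields (\ref{Equ-cocycle for rep}).

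The main obstacle here is not conceptual but bookkeeping: one must carefully align the direction $\varepsilon:M\otimes_{\frakS,p_0}\frakS^1\to M\otimes_{\frakS,p_1}\frakS^1$ with the conventions of Example~\ref{Exam-galois descent} for the two maps $p_0,p_1:\hat K_{\cyc,\infty}\to\rC(\hat G,\hat K_{\cyc,\infty})$, so that the stratification--cocycle dictionary produces $U(g)=\varepsilon|_{X=X(g)}$ rather than its transpose or inverse, and so that the cocycle identity for $U$ comes out with the correct composition order. Once this is pinned down, the rest reduces to the tautological substitution above combined with Proposition~\ref{image of X}.
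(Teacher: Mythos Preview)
Your proposal is correct and follows essentially the same approach as the paper: both invoke Remark~\ref{Rmk-stratification and Sen} for the explicit stratification $\varepsilon=(1-E'(\pi)X)^{-\phi_M/E'(\pi)}$ and then substitute the value of $X(g)$ from Proposition~\ref{image of X}. The paper's proof is terser, but your additional unpacking of the stratification--cocycle dictionary via Example~\ref{Exam-galois descent} and the verification that $c(\tau^i\gamma^j)=i$ are exactly the details one would supply to make the paper's two-line argument complete.
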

\begin{proof}
  By Remark \ref{Rmk-stratification and Sen}, the corresponding stratification $\varepsilon$ on $M$ is given by
  \[\varepsilon = (1-E'(\pi)X)^{-\frac{\phi_M}{E'(\pi)}}.\]
  Then the theorem follows from Proposition \ref{image of X}.
\end{proof}
\begin{rmk}\label{Rmk-action on lambda}
  Note that $\gamma([\epsilon]) = [\epsilon]^{\chi(\gamma)}$. Combining this with (\ref{Equ-tau E over E}), we see that
  \[g(\lambda) = \chi(g)\frac{\zeta_p-1}{\zeta_p^{\chi(g)}-1}\lambda(1-c(g)\lambda(1-\zeta_p)\pi E'(\pi))^{-1}.\]
  In particular, when $K_{\cyc}\cap K_{\infty} =K$, we have
  \begin{equation}\label{Equ-action on lambda}
      \begin{split}
         & \tau(\lambda) = \lambda(1-\lambda(1-\zeta_p)\pi E'(\pi))^{-1};\\
         &  \gamma(\lambda) = \lambda\chi(\gamma)\frac{\zeta_p-1}{\zeta_p^{\chi(\gamma)}-1}.
      \end{split}
  \end{equation}
\end{rmk}

 Clearly, the functor $V: \Vect((\calO_K)_{\Prism},\overline \calO_{\Prism}) \to \Rep_{\hat G}(\hat K_{\cyc,\infty})$ is not fully faithful. However we can show the full faithfulness after inverting $p$. More precisely, the restriction to the perfect site also induces a functor (still denoted by $V$)
 \[V: \Vect((\calO_K)_{\Prism},\overline \calO_{\Prism}[\frac{1}{p}])\to \Rep_{\hat G}(\hat K_{\cyc,\infty})\cong \Rep_{G_K}(\Cp).\]
 One can show this functor is indeed fully faithful.
 
 \begin{prop}\label{Prop-fully faithful}
    Let $\bM\in\Vect((\calO_K)_{\Prism},\overline \calO_{\Prism}[\frac{1}{p}])$ with associated semi-linear $\Cp$-representation $V$ of $G_K$. Then we have a canonical isomorphism
    \[\rH^0_{\Prism}(\bM)\simeq V^{G_K}.\]
    In particular, the natural functor
   \[\Vect((\calO_K)_{\Prism},\overline \calO_{\Prism}[\frac{1}{p}])\to\Vect((\calO_K)^{\perf}_{\Prism},\overline \calO_{\Prism}[\frac{1}{p}])\simeq\Rep_{G_K}(\Cp)\]
   is fully faithful.
 \end{prop}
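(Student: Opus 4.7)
The plan is to first establish the $\rH^0$ identification $\rH^0_{\Prism}(\bM) \simeq V^{G_K}$, and then deduce full faithfulness by a standard internal-$\Hom$ argument. By the rational version of Theorem~\ref{Thm-prismatic cohomology}, $\rH^0_{\Prism}(\bM) = \Ker(\phi_M : M \to M)$, where $(M,\phi_M)$ is the pair associated to $\bM$ by Theorem~\ref{Thm-HTC}. By Remark~\ref{Rmk-rational crystal as representations}, one may identify $V^{G_K}$ with $V(\bM)^{\hat G}$, where $V(\bM) = M \otimes_K \hat K_{\cyc,\infty}$ carries the semi-linear $\hat G$-action explicitly described in Theorem~\ref{representation comes from crystal}. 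The natural candidate map sends $x \in \Ker(\phi_M) \subset M$ to $x \otimes 1 \in V(\bM)$, and is injective since $K \hookrightarrow \hat K_{\cyc,\infty}$. To check the image lies in $V(\bM)^{\hat G}$, expand
\[
U(g) - I = \sum_{n \geq 1}\frac{(c(g)\pi\lambda(1-\zeta_p))^n}{n!}\prod_{k=0}^{n-1}(kE'(\pi)+\phi_M);
\]
the factor at $k=0$ is $\phi_M$, so every summand annihilates $\Ker(\phi_M)$, and combined with the fact that elements of $M$ are already fixed by the coefficient-wise Galois action on $\hat K_{\cyc,\infty}$, this produces an injection $\Ker(\phi_M) \hookrightarrow V^{G_K}$.

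Surjectivity is the technical heart. Given $v \in V(\bM)^{\hat G}$ written as $v = \underline{e}\cdot \underline{x}$ with $\underline{x}\in \hat K_{\cyc,\infty}^l$, $\Gamma$-invariance together with $U(\gamma)=I$ and $\hat K_{\cyc,\infty}^{\Gamma} = \hat K_{\infty}$ forces $\underline{x}\in \hat K_{\infty}^l$, while $\tau^n$-invariance for all $n\in\Zp$ gives the analytic family of equations
\[
U(\tau^n)\,\tau^n(\underline{x}) = \underline{x},\qquad U(\tau^n) = (1 - nE'(\pi)\pi\lambda(1-\zeta_p))^{-\phi_M/E'(\pi)}.
\]
The plan is to differentiate in $n$ at $n=0$ and combine this with classical Sen theory applied to $V(\bM)\otimes_{\hat K_{\cyc,\infty}}\Cp$: Sen theory identifies the $G_K$-invariants of a semi-linear $\Cp$-representation with the kernel of the Sen operator $\Theta$ on the $\Gamma$-fixed part of the Sen lattice, and matching the infinitesimal $\tau$-action extracted from the explicit cocycle against this formula forces $\phi_M\underline{x} = 0$ and $\underline{x}\in K^l$, identifying $V(\bM)^{\hat G}$ with $\Ker(\phi_M)$.

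Once the $\rH^0$ identification is in hand, the full faithfulness claim follows formally. The category $\Vect((\calO_K)_{\Prism},\overline\calO_{\Prism}[\tfrac{1}{p}])$ is symmetric monoidal with internal $\Hom$ realized as $\bM^{\vee}\otimes\bM'$, and the functor $V$ preserves these structures since it is defined by evaluation followed by base change to a cover. Applying the $\rH^0$ identification to the internal $\Hom$ object yields
\[
\Hom(\bM,\bM') = \rH^0_{\Prism}(\bM^{\vee}\otimes\bM') \simeq V(\bM^{\vee}\otimes\bM')^{G_K} = \Hom_{\Cp[G_K]}(V(\bM),V(\bM')),
\]
as required. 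The main obstacle is the surjectivity step in the $\rH^0$ identification: ruling out extra $\hat G$-fixed vectors in $V(\bM)$ beyond those arising from $\Ker(\phi_M)$. The cleanest route goes through a Sen-operator calculation essentially of the strength of Conjecture~\ref{Intro-conj-Sen}, though one only needs enough to control $\Ker(\Theta)$, not $\Theta$ itself.
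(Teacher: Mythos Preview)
Your injection $\Ker(\phi_M)\hookrightarrow V(\bM)^{\hat G}$ and the reduction of full faithfulness to the $\rH^0$ statement via internal $\Hom$ are both fine and match the paper.

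The gap is in surjectivity. You propose to extract $\underline{x}\in K^l$ by matching the infinitesimal $\tau$-action against the Sen operator $\Theta$ of $V(\bM)$; but as you acknowledge, this is essentially Conjecture~\ref{Conj-Sen operator}, which the paper does \emph{not} assume here. Proposition~\ref{Prop-fully faithful} is proved unconditionally, so this route is unavailable. There is also a smaller issue with your differentiation step: differentiating $U(\tau^n)\tau^n(\underline{x})=\underline{x}$ at $n=0$ produces $\pi\lambda(1-\zeta_p)\phi_M\underline{x}+(\partial_\tau\underline{x})=0$, not $\phi_M\underline{x}=0$ directly, since $\underline{x}$ lies only in $\hat K_\infty^l$ and need not be $\tau$-smooth.

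The paper sidesteps Sen theory entirely with a dimension count. After the same limit argument yields $V(\bM)^{\hat G}\subset V(\bM)^{\phi_M=0}=M^{\phi_M=0}\otimes_K\hat K_{\cyc,\infty}$, one uses the standard fact that the multiplication map
\[
V(\bM)^{\hat G}\otimes_K\hat K_{\cyc,\infty}\longrightarrow V(\bM)
\]
is injective. Sandwiching gives
\[
M^{\phi_M=0}\otimes_K\hat K_{\cyc,\infty}\;\hookrightarrow\; V(\bM)^{\hat G}\otimes_K\hat K_{\cyc,\infty}\;\hookrightarrow\; M^{\phi_M=0}\otimes_K\hat K_{\cyc,\infty},
\]
and comparing $\hat K_{\cyc,\infty}$-dimensions forces equality, hence $M^{\phi_M=0}=V(\bM)^{\hat G}$. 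This elementary linear-algebra step is what replaces your appeal to Sen theory.
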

 \begin{proof}
   It is enough to show $\rH^0_{\Prism}(\bM) = V(\bM)^{\hat G}$ as the latter is exactly $V^{G_K}$ by construction.
   Let $(M,\phi_M)$ be the pair associated to $\bM$ in the sense of Theorem \ref{Thm-HTC}. By Theorem \ref{Thm-prismatic cohomology}, we get
   \[\rH^0_{\Prism}(\bM)\simeq M^{\phi_M=0}.\]
   Using Theorem \ref{representation comes from crystal}, it is clear that
   \[M^{\phi_M=0}\subset V(\bM)^{\hat G}.\]
   It remains to show the above inclusion is indeed an isomorphism.
   
   For any $v\in V(\bM)^{\hat G}$, by Theorem \ref{representation comes from crystal} again, we see that
   \[\phi_M(v)=\frac{1}{\lambda\pi(1-\zeta_p)}\lim_{g\to 1,c(g)\neq 0}\frac{U(g)-I}{c(g)}v=0.\]
   This forces that
   \[V(\bM)^{\hat G}\subset V(\bM)^{\phi_M=0}=M^{\phi_M=0}\otimes_K\hat K_{\cyc,\infty}.\]
   By some standard arguments, we see that the natural map
   \[V(\bM)^{\hat G}\otimes_K\hat K_{\cyc,\infty}\to V(\bM)\]
   is an injection. So we get inclusions
   \[M^{\phi_M=0}\otimes_K\hat K_{\cyc,\infty}\to V(\bM)^{\hat G}\otimes_K\hat K_{\cyc,\infty}\to V(\bM)^{\phi_M=0}=M^{\phi_M=0}\otimes_K\hat K_{\cyc,\infty}.\]
   Now the result follows by counting dimensions.
 \end{proof}

In \cite{Sen}, Sen showed that the following categories
\[\Rep_{\Gamma}(K_{\cyc})\to\Rep_{\Gamma}(\hat K_{\cyc})\to\Rep_{G_K}(\Cp)\]
are equivalent. Moreover, any semi-linear $\Cp$-representation $V$ of $G_K$ is uniquely determined by a linear operator $\Theta_{V}$ defined over $K$, which is known as the {\bf Sen operator}. More precisely, if we denote by $V_0$ the associated $K_{\cyc}$-representation of $\Gamma$, then for any $v\in V_0$, there is an open subgroup $\Gamma_v$ of $\Gamma$ such that for any $\gamma\in \Gamma_v$,
\[\gamma(v) = \exp(\log\chi(\gamma)\Theta_{V})(v).\]
For semi-linear $\Cp$-representations associated to Hodge--Tate crystals $\bM$, we give the following conjecture on their Sen operators.

\begin{conj}\label{Conj-Sen operator}
  Let $\bM$ be a rational Hodge--Tate crystal in $\Vect((\calO_K)_{\Prism},\overline \calO_{\Prism}[\frac{1}{p}])$ with associated pair $(M,\phi_M)$. Let $V$ be the associated $\Cp$-representation, then the Sen operator of $V$ is $\Theta_V=-\frac{\phi_M}{E'(\pi)}$.
\end{conj}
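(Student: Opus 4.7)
The plan is to exhibit an explicit $\tau$-invariant inside $V \otimes_{\hat K_{\cyc,\infty}} \bC_p$ for every $\phi_M$-eigenvector and then read the Sen operator off the resulting $\Gamma$-action; the key input is the explicit transformation law of the prismatic period $\lambda$ under $\hat G$ recorded in (\ref{Equ-action on lambda}).

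Assume first (after passing to $V \otimes \bC_p$ if necessary) that $m$ is an eigenvector of $\phi_M$ with eigenvalue $a$, and set $\mu = E'(\pi)\pi\lambda(1-\zeta_p)$. Then Theorem \ref{representation comes from crystal} gives $\tau(m) = (1-\mu)^{-a/E'(\pi)}m$, while (\ref{Equ-action on lambda}) reads $\tau(\lambda)=\lambda(1-\mu)^{-1}$. Working in a sufficiently large extension of $\bC_p$ in which $\lambda^{c}$ is defined by $p$-adic exponentiation for every $c\in\bC_p$, consider
\[v_m := \lambda^{-a/E'(\pi)}\, m.\]
The elementary identity $\tau(\lambda^{-a/E'(\pi)}) = \lambda^{-a/E'(\pi)}(1-\mu)^{a/E'(\pi)}$ combined with the $\tau$-action on $m$ yields $\tau(v_m)=v_m$, so $v_m$ lies in $V^{\bZ_p\tau}=V^{\ker\chi}$.

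For the $\Gamma$-action, restrict to $\gamma\in\Gamma$ with $\chi(\gamma)\in 1+p\bZ_p$; then $\zeta_p^{\chi(\gamma)}=\zeta_p$, so (\ref{Equ-action on lambda}) reduces to $\gamma(\lambda)=\chi(\gamma)\lambda$, while $U(\gamma)=I$ forces $\gamma(m)=m$. Therefore
\[\gamma(v_m) = \chi(\gamma)^{-a/E'(\pi)}\, v_m,\]
and the defining property of the Sen operator forces $\Theta_V(v_m) = -(a/E'(\pi))\,v_m$. Extending $\Theta_V$ by $\bC_p$-linearity and using $m = \lambda^{a/E'(\pi)}\,v_m$, we deduce $\Theta_V(m) = -\phi_M(m)/E'(\pi)$ as desired. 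The general (non-semisimple) case follows by applying the same construction to generalized eigenvectors, i.e.\ by interpreting $\lambda^{-\phi_M/E'(\pi)}$ via a convergent functional calculus.

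The main obstacle is the rigorous interpretation and control of $\lambda^{-\phi_M/E'(\pi)}$: one must choose compatible branches of the $p$-adic logarithm, verify convergence of the resulting series, and check Galois-equivariance with enough precision to isolate the Sen operator. This analytic bookkeeping is where the Berger--Colmez theory of locally analytic vectors, used in Gao's \cite{Gao-b}, becomes essential; the above sketch supplies the conceptual skeleton, but the genuinely delicate convergence and continuity arguments are best executed within that framework.
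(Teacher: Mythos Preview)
The paper does not prove this statement: it is recorded as Conjecture~\ref{Conj-Sen operator}, with a note (Remark~\ref{Gao Hui}) that it was subsequently established by Gao \cite{Gao-b} via the Berger--Colmez theory of locally analytic vectors. There is thus no proof in the paper to compare against.

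Your heuristic is the natural one and the formal computation is correct: the transformation laws $\tau(\lambda)=\lambda(1-\mu)^{-1}$ and $\gamma(\lambda)=\chi(\gamma)\lambda$ for $\chi(\gamma)\in 1+p\bZ_p$ mirror the cocycle $U(g)$ precisely, so a putative element $\lambda^{-\phi_M/E'(\pi)}$ would trivialize the $\tau$-action while exhibiting $-\phi_M/E'(\pi)$ as the Sen weight. But you have already named the real gap, and it is not a technicality. The unit $\lambda\in\calO_{\hat K_{\cyc,\infty}}^\times$ is not in general a $1$-unit, so $\lambda^c=\exp(c\log\lambda)$ is undefined for arbitrary $c\in\bC_p$; there is no ``sufficiently large extension of $\bC_p$'' to pass to, since $\bC_p$ is already algebraically closed and complete. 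The identity $\tau(\lambda^c)=\tau(\lambda)^c$ and the ``convergent functional calculus'' for generalized eigenvectors are therefore purely formal. A secondary issue is that, even granting the existence of the $v_m$, you must still check that these vectors assemble into a $\Gamma$-finite $\hat K_{\cyc}$-structure on which the classical Sen operator is actually computed. Gao's argument avoids both problems by working with locally analytic vectors for the $\hat G$-action, where one can differentiate along $\Gamma$ directly without ever writing down a period $\lambda^c$; your sketch captures the expected mechanism behind the conjecture but does not constitute a proof without that machinery.
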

 
 At the end of this section, let us give a corollary of Conjecture \ref{Conj-Sen operator}: 
 
 If Conjecture \ref{Conj-Sen operator} is true, then by Theorem \ref{Thm-prismatic cohomology}, for a rational Hodge--Tate crystal $\bM$ with associated pair $(M,\phi_M)$ and corresponding $\Cp$-representation $V$ of $G_K$, we have quasi-isomorphisms
 \begin{equation}\label{Equ-quasi-isomorphisms}
     \rR\Gamma_{\Prism}(\bM)\otimes_K\Cp\simeq [M\xrightarrow{\phi_M}M]\otimes_K\Cp\simeq [V\xrightarrow{\Theta_V}V].
 \end{equation}
 On the other hand, by classical Sen theory, there is a natural quasi-isomorphism
 \[[V\xrightarrow{\Theta_V}V]\simeq \rR\Gamma(G_K,V)\otimes_K\Cp.\]
 So, we get the following \'etale comparison result:
 \begin{prop}\label{Prop-etale comparison}
   If Conjecture \ref{Conj-Sen operator} is true, then for any rational Hodge--Tate crystal $\bM$ with associated $\Cp$-representation $V(\bM)$ of $G_K$, there is a quasi-isomorphism
   \begin{equation}\label{Equ-EtaleComparison}
       \rR\Gamma_{\Prism}(\bM)\otimes_K\Cp\simeq \rR\Gamma(G_K,V(\bM))\otimes_K\Cp.
   \end{equation}
 \end{prop}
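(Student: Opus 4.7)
The strategy is to chain three quasi-isomorphisms, as anticipated in the discussion preceding the statement. The key inputs are Theorem \ref{Thm-prismatic cohomology}, Conjecture \ref{Conj-Sen operator} (which we are assuming), and classical Sen theory.

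First, I would apply Theorem \ref{Thm-prismatic cohomology} to the rational Hodge--Tate crystal $\bM$, which gives a quasi-isomorphism
\[\rR\Gamma_{\Prism}(\bM)\simeq [M\xrightarrow{\phi_M}M]\]
of complexes of finite-dimensional $K$-vector spaces, where $(M,\phi_M)$ is the pair attached to $\bM$ by Theorem \ref{Thm-HTC} (in its rational form). Tensoring with $\Cp$ over $K$ (which is exact) yields
\[\rR\Gamma_{\Prism}(\bM)\otimes_K\Cp\simeq [M\otimes_K\Cp\xrightarrow{\phi_M\otimes 1}M\otimes_K\Cp].\]

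Next, I would identify the underlying $\Cp$-vector space of $V(\bM)$ with $M\otimes_K\Cp$. By Theorem \ref{rational crystal as representation} and Remark \ref{Rmk-rational crystal as representations}, the associated $\hat K_{\cyc,\infty}$-representation of $\hat G$ is $M\otimes_K\hat K_{\cyc,\infty}$, and Faltings' almost purity theorem turns this into a $\Cp$-representation $V(\bM)$ whose underlying $\Cp$-vector space is canonically $M\otimes_K\Cp$. Under this identification, Conjecture \ref{Conj-Sen operator} gives the relation
\[\phi_M\otimes 1 = -E'(\pi)\cdot \Theta_{V(\bM)}\]
as endomorphisms of $M\otimes_K\Cp$. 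Since $E$ is an Eisenstein polynomial for $\pi$, we have $E'(\pi)\in K^\times$, so multiplication by $-E'(\pi)^{-1}$ in degree $1$ together with the identity in degree $0$ defines an isomorphism of two-term complexes
\[[M\otimes_K\Cp\xrightarrow{\phi_M\otimes 1}M\otimes_K\Cp]\xrightarrow{\cong}[V(\bM)\xrightarrow{\Theta_{V(\bM)}}V(\bM)].\]

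Finally, I would invoke the consequence of classical Sen theory recalled in the text: for any $\Cp$-representation $W$ of $G_K$, one has a natural quasi-isomorphism
\[[W\xrightarrow{\Theta_W}W]\simeq \rR\Gamma(G_K,W)\otimes_K\Cp.\]
Applying this with $W=V(\bM)$ and composing with the previous two quasi-isomorphisms gives the desired (\ref{Equ-EtaleComparison}).

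There is no real obstacle here, since every step reduces to an input already established or assumed. The only point worth checking carefully is the compatibility in the second step: that the $\Cp$-linear endomorphism of $M\otimes_K\Cp$ obtained by extending $\phi_M$ coincides with the scalar multiple of $\Theta_{V(\bM)}$ under the canonical identification coming from Theorem \ref{representation comes from crystal}. This is precisely the content of Conjecture \ref{Conj-Sen operator}, and once granted the rest of the argument is formal.
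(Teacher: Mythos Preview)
Your proposal is correct and follows essentially the same approach as the paper: both chain together Theorem \ref{Thm-prismatic cohomology}, the assumed Conjecture \ref{Conj-Sen operator}, and the classical Sen-theoretic quasi-isomorphism $[V\xrightarrow{\Theta_V}V]\simeq \rR\Gamma(G_K,V)\otimes_K\Cp$. You have in fact been slightly more careful than the paper in making explicit the rescaling by $-E'(\pi)^{-1}$ that identifies the two-term complexes.
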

 \begin{rmk}\label{Gao Hui}  Conjecture \ref{Conj-Sen operator} was proved recently by Hui Gao \cite[Theorem 1.1.9]{Gao-b} by using the Sen theory formulated via locally analytic vectors developed in \cite{BC}.
 \end{rmk}

\section{The nilpotency of crystalline Breuil-Kisin modules}\label{4}
 Throughout this section, we always assume $K_{\cyc}\cap K_{\infty} = K$. We will attach a $\tau$-connection to a crystalline Breuil--Kisin module and show that it is ``topologically nilpotent'' in some certain sense. We also formulate a conjecture concerning the Hodge-Tate weights of the crystalline representation associated with a crystalline Breuil-Kisin module and the $\tau$-connection on it. As an evidence, we will give an example in the extended Fontaine-Laffaille case.
\begin{dfn}\label{Dfn-BK module}
\begin{enumerate}
  \item By a {\bf Breuil-Kisin module}, we mean a finite free $\frakS$-module $\frakM$ together with a semi-linear morphism $\varphi_{\frakM}:\frakM\to\frakM$ whose linearization is an isomorphism inverting $E$; that is, $\varphi_{\frakM}$ induces an isomorphism
  \[\varphi^*\frakM[\frac{1}{E}]\xrightarrow{\cong}\frakM[\frac{1}{E}].\]
  Denote the category of Breuil-Kisin modules by ${\rm BK}(\frakS)$.

  \item By a {\bf crystalline Breuil--Kisin module of height $h$}, we mean a Breuil--Kisin module $\frakM$ together with a continuous $\hat G$-action on $M_{\inf}:= \frakM\otimes_{\frakS}A_{\inf}$, which commutes with $\varphi$ such that the following conditions hold:
  \begin{enumerate}
      \item The induced morphism $\varphi^*\frakM\to\frakM$ is injective whose cokernel is killed by $E^h$.

      \item $\frakM\subset M_{\inf}^{\Gamma}$.

      \item {\bf crystalline condition:} $(\tau-1)(\frakM)\subset \varphi^{-1}(\mu)uM_{\inf}$.
  \end{enumerate}


  Denote by ${\rm BK}_h^{\cris}(\frakS)$ the category of crystalline Breuil-Kisin modules and define ${\rm BK}^{\cris}(\frakS) = \cup_{h\geq 0}{\rm BK}^{\cris}_h(\frakS)$.
\end{enumerate}
\end{dfn}
  A deep theorem in integral $p$-adic Hodge theory says that the crystalline Breuil-Kisin modules can be viewed as crystalline $\Zp$-representations of $G_K$. More precisely, we have the following theorem.
 \begin{thm}[\emph{\cite[Theorem 7.1.10]{Gao},\cite[Theorem F.11]{EG}}]\label{Gao-EG}
   There is an equivalence between the category ${\rm BK}^{\cris}(\frakS)$ and the category $\Rep_{G_K}^{\cris}(\Zp)$ of crystalline $\Zp$-representations of $G_K$ with non-negative Hodge-Tate weights. More precisely, the equivalence identifies ${\rm BK}^{\cris}_h(\frakS)$ with the category $\Rep_{G_K}^{\cris,h}(\Zp)$ of crystalline $\Zp$-representations whose Hodge-Tate weights lie in $[0,h]$.
\end{thm}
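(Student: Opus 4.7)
This is a cited result, so I only outline a strategy. The plan is to build the equivalence in two stages: first pass between the $\varphi$-module structure and a $G_{K_\infty}$-representation via Kisin's classical theorem, then use the supplementary datum of a compatible $\hat G$-action on $M_{\inf}$ (subject to (a), (b), (c)) to upgrade this to a $G_K$-representation and to pin down when it is crystalline with Hodge--Tate weights in $[0,h]$.

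First I would invoke Kisin's theorem: the functor $\frakM\mapsto T(\frakM):=(\frakM\otimes_{\frakS}W(\bC_p^{\flat}))^{\varphi=1}$ is fully faithful from Breuil--Kisin modules of height $h$ to $\Zp$-representations of $G_{K_\infty}$, with essential image the category of $G_{K_\infty}$-stable lattices in crystalline $G_K$-representations having Hodge--Tate weights in $[0,h]$. Because crystalline representations are determined by their restriction to $G_{K_\infty}$, this already identifies the essential image with the target category of crystalline $\Zp$-lattices on the $G_K$-side, and condition (a) corresponds under this to the Hodge--Tate weights lying in $[0,h]$.

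Next I would interpret the $\hat G$-datum. Given a crystalline $T$ and its Kisin module $\frakM=\frakM(T)$, the canonical embedding $M_{\inf}=\frakM\otimes_{\frakS}A_{\inf}\hookrightarrow T\otimes_{\Zp}W(\bC_p^{\flat})$ endows $M_{\inf}$ with a natural $\hat G$-action; condition (b) then amounts to the statement that $\frakS\subset A_{\inf}^{\Gamma}$, which is immediate. Conversely, starting from a crystalline Breuil--Kisin module, the $\hat G$-action on $M_{\inf}$ together with Kisin's $T(\frakM)$ glue to give a full $G_K$-action on $T(\frakM)$ by Galois descent inside $W(\bC_p^{\flat})$, using the decomposition $\hat G\cong\bZ_p\tau\rtimes\Gamma$ to encode the $\tau$-action as the extra datum beyond the $G_{K_\infty}$-action.

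The main obstacle is proving that condition (c), $(\tau-1)(\frakM)\subset\varphi^{-1}(\mu)uM_{\inf}$, is exactly the crystalline criterion (as opposed to the semistable one). In Liu's theory of $(\varphi,\hat G)$-modules, every semistable representation gives such a module satisfying the weaker bound $(\tau-1)(\frakM)\subset\varphi^{-1}(\mu)M_{\inf}$; the extra factor of $u$ in (c) is what forces the monodromy operator $N$ on $D_{\st}(V)$ to vanish, thereby cutting out the crystalline locus inside the semistable one. Concretely, after the base change $A_{\inf}\to\Bstp$ one expresses $N$ via the image of $\frac{\tau-1}{\varphi^{-1}(\mu)}$ modulo $u$, so divisibility by $u$ translates into $N=0$. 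Establishing this equivalence rigorously, together with functoriality and essential surjectivity on both sides, is the technical heart of \cite{Gao} and the Emerton--Gee appendix in \cite{EG}, and once in place the refinement identifying height $h$ with weights in $[0,h]$ reduces to the corresponding refinement in Kisin's theorem.
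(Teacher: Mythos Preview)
The paper does not prove this theorem at all; it is stated as a cited result from \cite{Gao} and \cite{EG}, and the only follow-up is a remark that the definition used here differs slightly from the one in \cite{Gao} but is equivalent. You correctly identified this and gave an outline rather than a proof, and your outline is a faithful sketch of the strategy in the cited sources: Kisin's theorem handles the $G_{K_\infty}$-side and the height-versus-weight correspondence, the $\hat G$-datum upgrades to a $G_K$-action, and the divisibility by $u$ in condition (c) is what distinguishes crystalline from semistable via the vanishing of the monodromy operator.

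One small caution: your assertion that ``crystalline representations are determined by their restriction to $G_{K_\infty}$'' is a bit imprecise as stated. What is true (and what Kisin proved) is that the restriction functor from crystalline $G_K$-representations to $G_{K_\infty}$-representations is fully faithful, so the $G_K$-action is uniquely determined once you know it exists; but existence of the extension is not automatic, which is exactly why the extra $\hat G$-datum is needed in the definition of a crystalline Breuil--Kisin module. Your later paragraphs implicitly correct this, but the phrasing in the first step overstates what Kisin's theorem gives you.
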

\begin{rmk}
	The definition above is slightly different from the original one in \cite{Gao}. But they are actually equivalent.
\end{rmk}
On the other hand, it was proved by Bhatt-Scholze \cite{BS-b} at first and then by Du-Liu \cite{DL} in a different way that the category $\Rep_{G_K}^{\cris}(\Zp)$ is equivalent to the category of prismatic $F$-crystals $\Vect^{\varphi}((\calO_K)_{\Prism},\calO_{\Prism})$ (c.f. \cite[Definition 4.1]{BS-b}). So we have the following theorem.
\begin{thm}[\emph{\cite[Theorem 5.6]{BS-b},\cite[Theorem 4.1.10]{DL}}]\label{DL}
    The evaluation at $(\frakS,(E))$ induces an equivalence from the category of prismatic $F$-crystals $\Vect^{\varphi}_h((\calO_K)_{\Prism},\calO_{\Prism})$ of height $h$ (cf. \cite[Definition 4.1.5 (2)]{DL}) to the category ${\rm BK}^{\cris}_h(\frakS)$ of crystalline Breuil-Kisin modules of height $h$.
\end{thm}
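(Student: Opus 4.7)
The plan is to produce a functor from prismatic $F$-crystals to crystalline Breuil--Kisin modules by evaluation at $(\frakS,(E))$ and $(A_{\inf},(\xi))$, and then to construct a quasi-inverse by extending a Breuil--Kisin module to a sheaf using its $\hat G$-action. The forward direction is largely formal; the real work is in the quasi-inverse.

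\textbf{Forward direction.} Given $\bM \in \Vect^{\varphi}_h((\calO_K)_{\Prism}, \calO_{\Prism})$, set $\frakM := \bM((\frakS,(E)))$, which is finite free over the local ring $\frakS$, with Frobenius $\varphi_{\frakM}$ induced from the $F$-crystal structure; the height $h$ hypothesis gives condition (a) of the crystalline BK axioms. The crystal property along $\iota:(\frakS,(E))\to(A_{\inf},(\xi))$ identifies $\bM((A_{\inf},(\xi)))$ with $M_{\inf} := \frakM \otimes_\frakS A_{\inf}$. Each $g \in \hat G$ is an automorphism of $(A_{\inf},(\xi))$ in $(\calO_K)_{\Prism}$, so the crystal yields a continuous $A_{\inf}$-semilinear $\hat G$-action on $M_{\inf}$ commuting with $\varphi$. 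Condition (b) holds because $\Gamma$ acts trivially on $K_\infty$, hence fixes $\pi^\flat$, hence fixes $\iota(\frakS) \subset A_{\inf}$ pointwise. For condition (c), I would compare the two embeddings $\iota$ and $\tau\circ\iota$ via the prismatic coproduct $\tilde A_{\inf}^1$ from Lemma \ref{tau connection}; the induced map $\tilde A_{\inf}^1 \to A_{\inf}$ sends $\frac{u_0-u_1}{E(u_0)}$ to $-\frac{[\pi^\flat]\mu}{E([\pi^\flat])}$, which modulo $\xi$ is a unit multiple of $-\varphi^{-1}(\mu) u$. Reading off the linear term of the resulting stratification, as in Remark \ref{Rmk-stratification and Sen}, forces $(\tau - 1)(\frakM) \subset \varphi^{-1}(\mu) u\, M_{\inf}$.

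\textbf{Quasi-inverse and the main obstacle.} Given $(\frakM,\varphi_{\frakM},\{g\}) \in {\rm BK}^{\cris}_h(\frakS)$, I would declare $\bM((A,I)) := \frakM \otimes_{\frakS,f} A$ for a choice of map $f : (\frakS,(E)) \to (A,I)$, using the $\hat G$-action to supply the descent data identifying different choices after base change to $A_{\inf}$-type prisms, and then extending to arbitrary prisms via the covering property of $(\frakS,(E))$ (Lemma \ref{BK Cover}) and faithful flatness of $\iota$. The principal difficulty is to verify the cocycle condition on $\frakS^2/(E)$, i.e.\ to construct a stratification $\varepsilon = \sum_n A_n X_1^{[n]}$ on $\frakM$ with respect to $\frakS^\bullet/(E)$ in the sense of Definition \ref{Dfn-stratification}. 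The natural candidate takes $A_0 = I$, $A_1$ to be the operator $(\tau-1)/(\varphi^{-1}(\mu) u)$ on $\frakM$ (well-defined modulo $E$ precisely by the crystalline condition), and $A_{n+1} = \prod_{i=0}^n (iE'(\pi) + A_1)$ as forced by Lemma \ref{determine coefficients}. The hard step is to extract $p$-adic convergence $A_n \to 0$ in $\End_{\calO_K}(\frakM/E)$ from the combination of the height condition and the crystalline condition (ultimately, because the eigenvalues of $A_1$ must be integers controlling the Hodge--Tate weights of the associated representation); once this is in hand, the cocycle identity reduces to the algebraic manipulation of Lemma \ref{determine coefficients}, and the Frobenius is induced directly from $\varphi_\frakM$, showing that the resulting crystal indeed lies in $\Vect^{\varphi}_h((\calO_K)_\Prism, \calO_\Prism)$.
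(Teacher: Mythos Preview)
The paper does not give its own proof of this theorem; it is quoted as a result of \cite{BS-b} and \cite{DL}. The closest the paper comes is Remark \ref{Rmk-Recover BS}, which sketches how the equivalence of Theorem \ref{crystal as S module} (prismatic crystals without Frobenius versus prismatic $\frakS$-modules) could be upgraded with a Frobenius to recover the statement, but this is not carried out.

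Your quasi-inverse has a genuine gap: the stratification you write down, with $A_0=I$ and $A_{n+1}=\prod_{i=0}^n(iE'(\pi)+A_1)$ via Lemma \ref{determine coefficients}, is a stratification on $\frakM/E$ with respect to $\frakS^\bullet/E$. By Corollary \ref{crystal is stratification} that only yields a Hodge--Tate crystal in $\Vect((\calO_K)_{\Prism},\overline\calO_{\Prism})$, not a prismatic crystal in $\Vect((\calO_K)_{\Prism},\calO_{\Prism})$. For the latter you need a stratification on $\frakM$ over the integral cosimplicial ring $\frakS^\bullet$, whose structure is not a pd-polynomial ring and is never made explicit in the paper. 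Lifting the descent datum from modulo $E$ to all of $\frakS^\bullet$ is precisely the content of Section \ref{4}'s sequel (Propositions \ref{Prop-Key isomorphism} and \ref{Prop-Key injection}), where one solves $\nabla=0$ successively modulo powers of $E(u_1)$; this does not reduce to the algebraic identity of Lemma \ref{determine coefficients}. A smaller issue: your argument for the crystalline condition in the forward direction appeals to Remark \ref{Rmk-stratification and Sen}, which is a mod-$E$ statement, while $(\tau-1)(\frakM)\subset\varphi^{-1}(\mu)u\,M_{\inf}$ is integral; one must check that the entire $\delta$-ideal $\ker(\Delta^*:\tilde A_{\inf}^1\to A_{\inf})$ lands in $\varphi^{-1}(\mu)u\,A_{\inf}$ under $u_1\mapsto[\epsilon][\pi^\flat]$, not just the generator $\frac{u_0-u_1}{E(u_0)}$.
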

Therefore, for a crystalline Breuil-Kisin module $\frakM$, we may regard it as a prismatic $F$-crystal. In particular, it induces a Hodge--Tate crystal $\bM$ via the natural functor
\[\Vect^{\varphi}((\calO_K)_{\Prism},\calO_{\Prism})\to \Vect((\calO_K)_{\Prism},\overline \calO_{\Prism}).\]
Let $\overline \frakM$ be the corresponding finite free $\calO_K$-module endowed with a stratification $\varepsilon$ satisfying the cocycle condition. Then we see that
\[\overline \frakM = \frakM/E\frakM\]
and the $\hat G$-action on $\overline \frakM$ is induced by the $\hat G$-action on $\frakM$.

  Suggested by crystalline condition, one can define a ``$\tau$-connection'' on $\frakM$ by
  \[\nabla_{\frakM} = \frac{\tau-1}{\varphi^{-1}(\mu)u}:\frakM\to M_{\inf}\]
  which is compatible with the one in Lemma \ref{tau connection} in the following sense.
\begin{lem}\label{tau connection on M}
Keep notations as in Lemma \ref{tau connection}.

  \begin{enumerate}
      \item $\nabla_{\frakM}$ is $\rW(k)$-linear and for any $f\in \frakS$ and any $x\in \frakM$,
      \[\nabla_{\frakM}(fx) = \nabla(f)\tau(x)+f\nabla_{\frakM}(x).\]

      \item $\nabla_{\frakM}$ extends to a $\rW(k)$-linear morphism $\nabla_{\frakM}:\frakM\otimes_{\frakS,p_1}\frakS^1\to M_{\inf}\otimes_{A_{\inf}}\tilde A_{\inf}^1$ such that for any $f\in \frakS^1$ and any $x\in \frakM$,
      \begin{equation}\label{Equ-tau connection on M}
          \nabla_{\frakM}(fx) = \nabla(f)\tau(x)+f\nabla_{\frakM}(x).
      \end{equation}

      \item $\nabla_{\frakM}$ is compatible with $\varphi_{\frakM}$ in the following sense
      \begin{equation}\label{Equ-commute with phi on M}
        \nabla_{\frakM}\varphi_{\frakM} = \xi u_0^{p-1}\varphi_{\frakM}\nabla_{\frakM}.
      \end{equation}
  \end{enumerate}
\end{lem}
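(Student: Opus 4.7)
All three assertions reduce to direct computations whose only non-formal ingredient is the crystalline condition $(\tau-1)(\frakM)\subset\varphi^{-1}(\mu)uM_{\inf}$: this is exactly what makes the formal quotient $\frac{\tau-1}{\varphi^{-1}(\mu)u}$ land in $M_{\inf}$ when applied to $\frakM$. The auxiliary facts I will use are that $\tau$ fixes $\rW(k)$ pointwise, that $\tau$ commutes with $\varphi_{\frakM}$ on $M_{\inf}$ (by definition of the $\hat G$-action on a crystalline Breuil--Kisin module), and the analogous ``$\tau$-connection'' $\nabla$ on $\frakS$ and $\frakS^1$ already constructed in Lemma~\ref{tau connection}.

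\textbf{Proof of (1).} The $\rW(k)$-linearity is immediate since $\tau$ acts trivially on $\rW(k)$ and $\varphi^{-1}(\mu)u$ is central. For the twisted Leibniz rule, for $f\in\frakS$ and $x\in\frakM$,
\[
\nabla_{\frakM}(fx)=\frac{\tau(f)\tau(x)-fx}{\varphi^{-1}(\mu)u}
=\frac{(\tau(f)-f)\tau(x)}{\varphi^{-1}(\mu)u}+\frac{f(\tau(x)-x)}{\varphi^{-1}(\mu)u}
=\nabla(f)\tau(x)+f\nabla_{\frakM}(x),
\]
so (1) is just telescoping.

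\textbf{Proof of (2).} Lemma~\ref{tau connection} has already extended $\nabla$ to a map $\nabla:\frakS^1\to\tilde A_{\inf}^1$ satisfying the Leibniz rule~(\ref{Equ-tau connection}). Combining this with (1), I would define the extension on a simple tensor by the formula
\[
\nabla_{\frakM}(x\otimes f):=\nabla(f)\tau(x)+f\nabla_{\frakM}(x)
\]
inside $M_{\inf}\otimes_{A_{\inf}}\tilde A_{\inf}^1$ (using $p_1$ to view $\frakM$ in the source). Well-definedness amounts to checking compatibility with the tensor relation $gx\otimes f=x\otimes gf$ for $g\in\frakS$; this is the one point that requires care, and it is a symmetric formal check which follows by expanding both sides and using (1) together with the Leibniz rule for $\nabla$ on $\frakS^1$. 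The Leibniz rule~(\ref{Equ-tau connection on M}) is then built into the definition.

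\textbf{Proof of (3).} Since $\tau$ and $\varphi_{\frakM}$ commute on $M_{\inf}$, for $x\in\frakM$ one computes
\[
\varphi^{-1}(\mu)u\cdot\nabla_{\frakM}(\varphi_{\frakM}(x))=(\tau-1)\varphi_{\frakM}(x)=\varphi_{\frakM}((\tau-1)(x))=\varphi_{\frakM}\bigl(\varphi^{-1}(\mu)u\,\nabla_{\frakM}(x)\bigr)=\mu u^p\,\varphi_{\frakM}(\nabla_{\frakM}(x)),
\]
and dividing by $\varphi^{-1}(\mu)u$ while using $\xi=\mu/\varphi^{-1}(\mu)$ gives (\ref{Equ-commute with phi on M}). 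Note that this is formally parallel to the computation~(\ref{Equ-nabla commute with phi}) used to extend $\nabla$ to $\frakS^1$, which is reassuring.

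\textbf{Main obstacle.} There is essentially none beyond bookkeeping: the entire proof is formal once the crystalline condition is in place. The only step where one must be attentive is the well-definedness of the extension in (2), because the map is defined by a formula on simple tensors in an $\frakS$-module and the Leibniz rule mixes the $\frakS$-actions on the two factors; this is handled by invoking Part~(1) together with the corresponding Leibniz rule for $\nabla$ on $\frakS^1$.
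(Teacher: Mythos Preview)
Your proof is correct and is essentially a spelled-out version of the paper's one-line proof (``This follows from the definition of $\nabla_{\frakM}$ combined with Lemma~\ref{tau connection}''). The only cosmetic difference is in (2): the paper implicitly keeps $\nabla_{\frakM}=\frac{\tau-1}{\varphi^{-1}(\mu)u_0}$ as the definition on $\frakM\otimes_{\frakS,p_1}\frakS^1$ and uses the crystalline condition together with Lemma~\ref{tau connection} to see the quotient lands in $M_{\inf}\otimes_{A_{\inf}}\tilde A_{\inf}^1$, whence the Leibniz rule is a consequence; you instead take the Leibniz formula as the definition on simple tensors and verify compatibility with the $\frakS$-balancing, which is an equivalent packaging.
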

\begin{proof}
    This follows from the definition of $\nabla_{\frakM}$ combined with Lemma \ref{tau connection}.
\end{proof}

\begin{thm}\label{poly-nilpotency}
    Let $\lambda:= \frac{\xi}{E([\pi^{\flat}])}$. Let $\frakM$ be a crystalline Breuil-Kisin module with a fixed $\frakS$-basis $e_1,\dots,e_l$. Denote by $\tilde A$ the matrix of $\nabla_{\frakM}:\frakM\to M_{\inf}$. Then $\prod_{i=0}^{p-1}(\lambda^{-1}\tilde A+id_q(E))$ is $(p,[\pi^{\flat}])$-adically topologically nilpotent and $\lambda^{-1}\tilde A$ belongs to $\rM_l(k)$ modulo $(\xi,[\pi^{\flat}])$.
\end{thm}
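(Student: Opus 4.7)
The plan is to reduce both assertions to statements about the matrix $A\in\rM_l(\calO_K)$ of $\phi_M$ on the Hodge--Tate crystal $\bM=\frakM/E\frakM$ associated with $\frakM$, and then to combine the nilpotency of Theorem \ref{Thm-HTC} with the characteristic-$p$ identity $\prod_{i=0}^{p-1}(X+ic)=X^p-c^{p-1}X$.

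First I would establish the ``moreover'' clause via the stronger congruence
\[
\lambda^{-1}\tilde A\equiv -A\pmod{(\xi,[\pi^{\flat}])}.
\]
Reducing $\nabla_\frakM$ modulo $\xi$ yields a semi-linear operator on $M_{\inf}/\xi=\overline\frakM\otimes_{\calO_K}\calO_{\hat K_{\cyc,\infty}}$, and the $\hat G$-action on $M_{\inf}$ descends compatibly with the $\hat G$-action produced by the associated Hodge--Tate crystal $\bM$. So for $\tau\in\hat G$ and $v=\bar e_j\in\overline\frakM$, the operator $\tau-1$ is computed by $U(\tau)-I$, where $U(\tau)$ is the cocycle of Theorem \ref{representation comes from crystal}. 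Expanding
\[
U(\tau)-I=\sum_{n\ge 1}\frac{\phi_M(\phi_M+E'(\pi))\cdots(\phi_M+(n-1)E'(\pi))}{n!}\bigl(\pi\lambda(1-\zeta_p)\bigr)^n
\]
and dividing by $\varphi^{-1}(\mu)u\equiv -(1-\zeta_p)\pi\pmod\xi$ gives $\lambda^{-1}\tilde A\equiv -A+O(\pi)$ modulo $\xi$, each $n\ge 2$ error term carrying an additional factor of $\pi^{n-1}$ and thus lying in $\pi\calO_{\hat K_{\cyc,\infty}}$. Since $A\in\rM_l(\calO_K)$, its image mod $(\xi,[\pi^{\flat}])$ lies in $\rM_l(k)$.

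For the first assertion, I would observe that $(p,[\pi^{\flat}])=(\xi,[\pi^{\flat}])$ as ideals of $A_{\inf}$: the inclusion $\xi\in(p,[\pi^{\flat}])$ comes from the tilt computation $v^{\flat}(\bar\xi)=1\ge 1/e=v^{\flat}(\pi^{\flat})$, and conversely $p=\pi^e\cdot\mathrm{unit}$ in $\calO_K$ gives $p\in(\xi,[\pi^{\flat}])$. Modulo this common ideal, $\lambda^{-1}\tilde A\equiv -\overline A$ and $d_q(E)\equiv \bar a_1$ in $\rM_l(k)$, where $\bar a_1\in k$ is the image of $E'(\pi)$, so using $\prod_{i=0}^{p-1}(X+ic)=X^p-c^{p-1}X$ in $k[X]$,
\[
\prod_{i=0}^{p-1}(\lambda^{-1}\tilde A+id_q(E))\equiv (-1)^p(\overline A^p-\bar a_1^{p-1}\overline A)\in \rM_l(k).
\]
Theorem \ref{Thm-HTC} applied to $\bM$ yields $\prod_{i=0}^n(A+iE'(\pi))\to 0$ in the $\pi$-adic topology; taking $n=Np-1$ and reducing mod $\pi$, since the values $i\bar a_1\in k$ depend only on $i\bmod p$ we obtain
\[
0\equiv\prod_{i=0}^{Np-1}(\overline A+i\bar a_1)=(\overline A^p-\bar a_1^{p-1}\overline A)^N\pmod{\pi}
\]
for $N$ sufficiently large. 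Hence $\prod_{i=0}^{p-1}(\lambda^{-1}\tilde A+id_q(E))$ is nilpotent modulo $(p,[\pi^{\flat}])$; if $M^{N_0}\in (p,[\pi^{\flat}])\rM_l(A_{\inf})$ then $M^{N_0 s}\in(p,[\pi^{\flat}])^s\rM_l(A_{\inf})$ for all $s$, establishing topological nilpotency.

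The main obstacle I anticipate is making precise the compatibility invoked in the second paragraph: the $\hat G$-action on $M_{\inf}/\xi$ arising from the Breuil--Kisin $\hat G$-action on $M_{\inf}$ must coincide with the $\hat G$-action on $\bM(A_{\inf},\xi)$ produced by the stratification of $\bM$ via Theorem \ref{representation comes from crystal}. Both actions come naturally from $\frakM$, but this identification should be spelled out, since $\tilde A$ is built directly on the Breuil--Kisin side while $U(\tau)$ is formulated entirely on the Hodge--Tate crystal side.
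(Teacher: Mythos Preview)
Your proposal is correct and follows essentially the same route as the paper: reduce $\lambda^{-1}\tilde A$ modulo $(\xi,[\pi^{\flat}])$ via the cocycle formula of Theorem \ref{representation comes from crystal} to get $\lambda^{-1}\tilde A\equiv -A\bmod\pi$, then use the nilpotency condition on $\prod_{i=0}^{n}(A+iE'(\pi))$ together with the periodicity of $i\mapsto iE'(\pi)$ in characteristic $p$. The paper's write-up is terser---it simply observes $\prod_{i=0}^{p-1}(-A+iE'(\pi))\equiv -\prod_{i=0}^{p-1}(A+iE'(\pi))\bmod\pi$ and invokes Proposition \ref{matrix of stratification}---whereas you spell out the identity $\prod_{i=0}^{p-1}(X+ic)=X^p-c^{p-1}X$ and the equality $(p,[\pi^{\flat}])=(\xi,[\pi^{\flat}])$, but the substance is identical. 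The compatibility concern you raise in your last paragraph is exactly the content of the paragraph preceding the theorem in the paper, where it is noted that the $\hat G$-action on $\overline\frakM\otimes\calO_{\hat K_{\cyc,\infty}}$ coming from the crystalline Breuil--Kisin datum agrees with the one produced by the Hodge--Tate crystal via Theorem \ref{DL}; this is what licenses the use of (\ref{Equ-cocycle for rep}).
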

\begin{proof}
  Let $B$ be the image of $\lambda^{-1}\tilde A$ modulo $\xi$. It is enough to show $B\in \rM_l(k)$ module $\pi$ and $\prod_{i=0}^{p-1}(B+iE'(\pi))$ is nilpotent modulo $\pi$. However, by (\ref{Equ-cocycle for rep}), we have
  \[B\equiv -A \mod \pi.\]
  In particular,
  \[\prod_{i=0}^{p-1}(B+iE'(\pi))\equiv \prod_{i=0}^{p-1}(-A+iE'(\pi)) \equiv -\prod_{i=0}^{p-1}(A+iE'(\pi)) \mod \pi.\]
  Now the theorem follows from Proposition \ref{matrix of stratification}.
\end{proof}
  Inspired by Theorem \ref{poly-nilpotency}, we make a more precise conjecture, which says somehow one can read the Hodge-Tate weights of a crystalline representation from the associated crystalline Breuil-Kisin module directly.
\begin{conj}\label{Conj-weight conjecture}
  Let $\frakM$ be a crystalline Breuil-Kisin module of rank $d$ and denote $\tilde A$ as before. Denote the Hodge-Tate weights of the associated crystalline $\Zp$-representation by $r_1\leq r_2\leq \dots\leq r_d$, then
  \[\prod_{i=1}^d(\lambda^{-1}\tilde A+r_id_q(E))\]
  is $(p,[\pi^{\flat}])$-adic topologically nilpotent.
\end{conj}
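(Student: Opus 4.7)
The plan is to derive Conjecture \ref{Conj-weight conjecture} from Conjecture \ref{Conj-Sen operator} by a direct Cayley--Hamilton argument. First I would unwind the setup: via Theorem \ref{DL}, the crystalline Breuil--Kisin module $\frakM$ corresponds to a prismatic $F$-crystal; forgetting the Frobenius and specializing to $\overline\calO_\Prism$ produces a Hodge--Tate crystal $\bM$ whose associated pair in the sense of Theorem \ref{Thm-HTC} is $(\overline\frakM, \phi_{\overline\frakM})$, where $\overline\frakM = \frakM/E\frakM$ and $A$ is the matrix of $\phi_{\overline\frakM}$ in the chosen basis. The cocycle computation from the proof of Theorem \ref{poly-nilpotency}---based on formula \eqref{Equ-cocycle for rep}---then gives the congruences
\[
\lambda^{-1}\tilde A \equiv -A \pmod{(\xi,[\pi^\flat])},\qquad d_q(E) \equiv E'(\pi) \pmod{\xi},
\]
so that
\[
\prod_{i=1}^d(\lambda^{-1}\tilde A + r_i d_q(E)) \equiv (-1)^d \prod_{i=1}^d(A - r_i E'(\pi)) \pmod{(\xi,[\pi^\flat])}.
\]

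Second, I would invoke Conjecture \ref{Conj-Sen operator}---now a theorem of Gao \cite[Theorem 1.1.9]{Gao-b} (see Remark \ref{Gao Hui})---to identify the Sen operator of $V(\bM)$ as $\Theta_{V(\bM)} = -A/E'(\pi)$. By compatibility of the functor $V$ with the étale realization of prismatic $F$-crystals, $V(\bM)$ is the Hodge--Tate realization of the crystalline $\Zp$-representation $T$ attached to $\frakM$ by Theorem \ref{Gao-EG}, whose Hodge--Tate weights are $r_1 \le \cdots \le r_d$. Classical Sen theory (under the convention that Hodge--Tate weight $r$ contributes Sen eigenvalue $-r$) then forces the eigenvalues of $\Theta_{V(\bM)}$, counted with multiplicity, to be $-r_1,\dots,-r_d$; equivalently, the characteristic polynomial of the $\calO_K$-linear operator $A$ on $\overline\frakM$ is $\chi_A(T) = \prod_{i=1}^d(T - r_i E'(\pi))$. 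Cayley--Hamilton then yields $\prod_{i=1}^d(A - r_i E'(\pi)) = 0$ in $\rM_d(\calO_K)$.

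Combining these inputs, $\prod_{i=1}^d(\lambda^{-1}\tilde A + r_i d_q(E))$ lies in the ideal $(\xi,[\pi^\flat])$, which is contained in $(p,[\pi^\flat])$ because $\xi$ and $E([\pi^\flat])$ generate the same ideal of $A_{\inf}$ and $E([\pi^\flat]) \equiv [\pi^\flat]^e \pmod p$. Any element of $(p,[\pi^\flat])$ is automatically topologically nilpotent in the $(p,[\pi^\flat])$-adic topology, since its $n$-th power already lies in $(p,[\pi^\flat])^n$. The decisive---indeed only substantial---obstacle is Conjecture \ref{Conj-Sen operator} itself, which is exactly why the weight statement is hedged as a conjecture here; with Gao's theorem available, the outline above becomes a complete proof, modulo routine verification of the Sen-theoretic sign conventions and of the compatibility between the functor $V$ on rational Hodge--Tate crystals and the Hodge--Tate realization of crystalline $\Zp$-representations.
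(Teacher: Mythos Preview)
Your argument is correct and matches the paper's own reduction (Proposition \ref{Prop-Conj imply conj}): reduce modulo $(\xi,[\pi^\flat])$ via the congruence $\lambda^{-1}\tilde A\equiv -A$ from the proof of Theorem \ref{poly-nilpotency}, then use Conjecture \ref{Conj-Sen operator} to identify the Sen eigenvalues as $-r_i$ and apply Cayley--Hamilton. You are slightly more explicit about the Cayley--Hamilton step and the ideal containment $(\xi,[\pi^\flat])\subset(p,[\pi^\flat])$, but the substance is identical to the paper's proof.
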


\begin{prop}\label{Prop-Conj imply conj}
    Conjecture \ref{Conj-weight conjecture} is a corollary of Conjecture \ref{Conj-Sen operator}.
\end{prop}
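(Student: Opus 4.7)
The plan is to run Cayley--Hamilton on the Sen operator provided by Conjecture \ref{Conj-Sen operator} and then transport the resulting polynomial identity to $\tilde A$ using the reduction already isolated in the proof of Theorem \ref{poly-nilpotency}.

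First I would fix the bridge between $\frakM$ and a rational Hodge--Tate crystal. Applying Theorem \ref{DL} to $\frakM$ produces a prismatic $F$-crystal, whose Hodge--Tate specialization, inverted at $p$, is a rational Hodge--Tate crystal $\bM\in \Vect((\calO_K)_{\Prism},\overline \calO_{\Prism}[\tfrac{1}{p}])$ whose associated pair under Theorem \ref{Thm-HTC} is $(M,\phi_M)$ with $M=(\frakM/E\frakM)[\tfrac{1}{p}]$. In the chosen $\frakS$-basis $e_1,\dots,e_d$ of $\frakM$, the matrix $A\in\rM_d(\calO_K)$ of $\phi_M$ satisfies the congruence $\lambda^{-1}\tilde A\equiv -A\pmod{(\xi,[\pi^\flat])}$ already extracted from (\ref{Equ-cocycle for rep}) in the proof of Theorem \ref{poly-nilpotency}. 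By compatibility of Theorems \ref{Gao-EG}, \ref{DL} and \ref{representation comes from crystal}, the associated semi-linear $\Cp$-representation $V(\bM)$ is the base change to $\Cp$ of the crystalline $\bZ_p$-representation of $G_K$ attached to $\frakM$, so its Hodge--Tate weights are precisely $r_1\le\dots\le r_d$.

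Second, Conjecture \ref{Conj-Sen operator} identifies the Sen operator $\Theta_{V(\bM)}$ with $-\phi_M/E'(\pi)$ acting $K$-linearly on the Sen form of $V(\bM)$, which is canonically identified with $M$. Since $V(\bM)$ is Hodge--Tate, classical Sen theory asserts that $\Theta_{V(\bM)}$ is diagonalizable over $\Cp$ with eigenvalues $-r_1,\dots,-r_d$, the sign being forced by comparing the defining relation $\gamma(v)=\exp(\log\chi(\gamma)\Theta_{V(\bM)})(v)$ with the explicit cocycle in Theorem \ref{representation comes from crystal}. Cayley--Hamilton then gives the operator identity $\prod_{i=1}^{d}(\Theta_{V(\bM)}+r_i)=0$ on $M$; substituting $\Theta_{V(\bM)}=-\phi_M/E'(\pi)$ and clearing $E'(\pi)^d$ turns this into
\[\prod_{i=1}^{d}\bigl(\phi_M-r_i E'(\pi)\bigr)=0\]
as an identity of $K$-linear operators on $M$, which, since $A$ and $r_iE'(\pi)$ are integral, already holds in $\rM_d(\calO_K)$.

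Finally, I would transport this identity to $\tilde A$ via reduction modulo $(p,[\pi^\flat])$. Using $E([\pi^\flat])=\lambda\xi$ with $\lambda\in A_{\inf}^\times$ together with the Eisenstein property of $E$, one checks the equality of ideals $(\xi,[\pi^\flat])=(p,[\pi^\flat])$ inside $A_{\inf}$. Combining the reductions $\lambda^{-1}\tilde A\equiv -A$ and $d_q(E)\equiv E'(\pi)$ modulo this ideal with the Cayley--Hamilton identity above yields
\[\prod_{i=1}^{d}\bigl(\lambda^{-1}\tilde A+r_i d_q(E)\bigr)\equiv(-1)^d\prod_{i=1}^{d}\bigl(A-r_i E'(\pi)\bigr)\equiv 0\pmod{(p,[\pi^\flat])}.\]
Hence this product has all entries in $(p,[\pi^\flat])$, so its $N$-th power has entries in $(p,[\pi^\flat])^N$, which is exactly the required $(p,[\pi^\flat])$-adic topological nilpotency. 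The main hurdle is bookkeeping the sign conventions: one must verify from the explicit cocycle in Theorem \ref{representation comes from crystal} that the Sen eigenvalues are $-r_i$ rather than $+r_i$, so that the Cayley--Hamilton polynomial produces $\prod_i(A-r_iE'(\pi))$ and not $\prod_i(A+r_iE'(\pi))$. Once that sign is correctly tracked, the deduction is a one-line application of Cayley--Hamilton combined with the mod-$(p,[\pi^\flat])$ reduction already worked out in the proof of Theorem \ref{poly-nilpotency}.
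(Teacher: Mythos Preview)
Your proposal is correct and follows essentially the same route as the paper: reduce via the congruence $\lambda^{-1}\tilde A\equiv -A\pmod{(\xi,[\pi^\flat])}$ from the proof of Theorem \ref{poly-nilpotency}, invoke Conjecture \ref{Conj-Sen operator} to identify $-A/E'(\pi)$ with the Sen operator, read off the eigenvalues from the Hodge--Tate weights, and apply Cayley--Hamilton. Your write-up is more detailed (explicit sign-tracking, the ideal equality $(\xi,[\pi^\flat])=(p,[\pi^\flat])$), and you invoke diagonalizability of the Sen operator, whereas the paper only uses that the $-r_i$ are the eigenvalues with multiplicity---either suffices for Cayley--Hamilton, so this is not a genuine divergence.
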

\begin{proof}
    Keep notations as in the proof of Theorem \ref{poly-nilpotency} and then we are reduced to showing that $X:=\prod_{i=1}^d(-A+r_iE'(\pi))$ is $\pi$-adically nilpotent. However, Conjecture \ref{Conj-Sen operator} suggests that $-\frac{A}{E'(\pi)}$ should be the Sen operator of the $\Cp$-representation associated to (the crystaline $\Zp$-representation induced by the given crystalline Breuil--Kisin module). Since $-r_i$'s are exactly eigenvalues of $-\frac{A}{E'(\pi)}$, we conclude that $X$ is actually nilpotent.
\end{proof}
 
  Since Conjecture \ref{Conj-Sen operator} was settled by Gao as mentioned in Remark \ref{Gao Hui}, Conjecture \ref{Conj-weight conjecture} should be a consequence of his work \cite{Gao}. However, it is worth giving a direct proof of Conjecture \ref{Conj-weight conjecture} in the Fontaine--Laffaille case, based on a structure theorem of Gee--Liu--Savitt \cite{GLS}.
\begin{exam}[Extended Fontaine-Laffaille case]\label{FL}
 Assume $K$ is unramified and $h\leq p$. In this case, $E(u) = u-\pi$ for some uniformizer $\pi\in K$. In particular, $d_q(E)=1$. We claim that $\prod_{i=1}^d(\lambda^{-1}\tilde A+[r_i]_q)$ is nilpotent modulo $(p,u)$, which means the Conjecture \ref{Conj-weight conjecture} holds in this case. Put $\lambda = \frac{\xi}{E}$.

  By \cite[Theorem 4.1]{GLS}, one may assume the matrix $\Phi$ of $\varphi_{\frakM}$ is of the form $XDY$ for $X,Y\in \GL_d(\frakS)$ with $Y\equiv I \mod p$ and $D = {\rm Diag}(E^{r_1}, \dots, E^{r_d})$.

  Since $\tilde A\tau(\Phi)+\nabla(\Phi) = \xi u^{p-1}\Phi\varphi(A)$, we get
\begin{equation*}
\begin{split}
  \tilde A\tau(X)\tau(D)+\xi d_q(X)\tau(D)+X\xi d_q(D) \equiv \xi u^{p-1}XD\varphi(\tilde A) \mod p.
\end{split}
\end{equation*}
  Recall that $\xi = \lambda E\equiv \lambda u\mod p$ and that
  \[d_q(E^r) = d_q((u-\pi)^r) = \sum_{i=1}^r[i]_q(-\pi)^{r-i}\binom{r}{i}u^{i-1}\equiv [r]_qu^{r-1}\mod p.\]
  we get
\begin{equation*}
  X^{-1}\frac{\tilde A}{\lambda}\tau(X)+u X^{-1}d_q(X)+U\equiv D\varphi(\tilde A)V \mod p,
\end{equation*}
  where $U = {\rm Diag}(\frac{[r_1]_qu^{r_1}}{\tau(u)^{r_1}}, \dots, \frac{[r_d]_qu^{r_d}}{\tau(u)^{r_d}})$ and $V = {\rm Diag}(\frac{u^p}{\tau(u)^{r_1}}, \dots, \frac{u^p}{\tau(u)^{r_d}})$.

  Since $0\leq r_1\leq \dots \leq r_d \leq p$, we deduce that $D\varphi(A)V$ is a strictly upper-triangular matrix modulo $(p,u)$ and that $\tau(X)\equiv X\mod u$. Since
  \[U\equiv L: = {\rm Diag}([r_1]_q, \dots, [r_d]_q)\mod u,\]
  we conclude that
  \[X^{-1}\frac{\tilde A}{\lambda}X+L\equiv D\varphi(\tilde A)V\mod (p,u).\]
  Put $T = -L+D\varphi(\tilde A)V$ and then we see that
  \[X^{-1}\prod_{i=1}^d(\frac{\tilde A}{\lambda}+[r_i]_q)X \equiv
  \prod_{i=1}^d([r_i]_q+T) \mod (p,u).\]
  Since $\prod_{i=1}^d([r_i]_q+T)$ is strict upper-triangular modulo $(p,u)$, we conclude that $\prod_{i=1}^d(\lambda^{-1}\tilde A+[r_i]_q)$ is nilpotent modulo $(p,u)$ as desired.
\end{exam}

 \section{Prismatic crystals and prismatic $\frakS$-modules}

Throughout this section, we assume $K_{\cyc}\cap K_{\infty} = K$ again. We will give an explicit description of prismatic crystals in $\Vect((\calO_K)_{\Prism},\calO_{\Prism})$. More precisely, by replacing the Frobenius in the definition of crystalline Breuil-Kisin modules with the $\tau$-connection, we can define the category of prismatic $\frakS$-modules. This category will be shown to be equivalent to the category $\Vect((\calO_K)_{\Prism},\calO_{\Prism})$. The idea of using the $\tau$-connection is inspired by \cite{MT}.
\begin{dfn}\label{Dfn-prismatic S module}
  By a {\bf prismatic $\frakS$-module of rank $r$}, we mean a finite free $\frakS$-module $\frakM$ of rank $r$ together with a continuous $\hat G$-action on $M_{\inf} = \frakM\otimes_{\frakS,\iota}A_{\inf}$ satisfying the following conditions:
  \begin{enumerate}
      \item $\frakM\subset M_{\inf}^{\Gamma}$.

      \item $(\tau-1)(\frakM)\subset\varphi^{-1}(\mu)uM_{\inf}$.

      \item Fix an $\frakS$-basis $e_1,\dots,e_r$ of $\frakM$, let $\tilde A$ be the matrix of the ``$\tau$-connection''
      \[\nabla_{\frakM}=\frac{\tau-1}{\varphi^{-1}(\mu)u}:\frakM\to M_{\inf}.\]
      Then the following conditions are satisfied:
      \begin{enumerate}
          \item $\lambda^{-1}\tilde A$ belongs to $\rM_r(k)$ modulo $(\xi,[\pi^{\flat}])$;
          \item {\bf Pseudo-nilpotency condition}: $\prod_{i=0}^{p-1}(\lambda^{-1}\tilde A+id_q(E))$ is $(\xi,[\pi^{\flat}])$-adically topologically nilpotent.
      \end{enumerate}
  \end{enumerate}
  We denote by $\Mod_{\Prism}(\frakS)$ the category of prismatic $\frakS$-modules.
\end{dfn}

\begin{thm}\label{crystal as S module}
  The evaluation at $(\frakS,(E))\in(\calO_K)_{\Prism}$ induces an equivalence from the category $\Vect((\calO_K)_{\Prism},\calO_{\Prism})$ of prismatic crystals to the category $\Mod_{\Prism}(\frakS)$ of prismatic $\frakS$-modules.
\end{thm}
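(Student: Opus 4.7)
The plan is to construct an inverse functor by building stratifications from $\tau$-connection data, paralleling Theorem \ref{Thm-HTC} but working over $\frakS^{\bullet}$ itself rather than over its reduction modulo $E$.

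\textbf{Forward direction.} For $\bM \in \Vect((\calO_K)_{\Prism}, \calO_{\Prism})$, set $\frakM = \bM(\frakS, (E))$. Applying the crystal property to $\iota \colon (\frakS, (E)) \to (A_{\inf}, (\xi))$ yields $M_{\inf} = \bM(A_{\inf}, (\xi)) \cong \frakM \otimes_{\frakS, \iota} A_{\inf}$; the canonical $\hat G$-action on $(A_{\inf}, (\xi))$ as an object of $(\calO_K)_{\Prism}$ then endows $M_{\inf}$ with a continuous $\hat G$-action. To verify the axioms of Definition \ref{Dfn-prismatic S module}: condition (1) follows from $\Gamma$-equivariance of $\iota$ (since $\pi^{\flat}$ is $\Gamma$-fixed under the standing assumption $K_{\cyc}\cap K_{\infty} = K$) combined with faithful flatness; condition (2) is obtained by specialising the stratification of $\bM$ over $\frakS^1$, which sends $X = (u_0-u_1)/E(u_0)$ to $-\varphi^{-1}(\mu)\lambda [\pi^{\flat}]$ under $u_1 \mapsto \tau(u_0)$, so $(\tau-1)m$ is divisible by $\varphi^{-1}(\mu)u$; conditions (3a)--(3b) reduce via Theorem \ref{Thm-HTC} to properties of the Hodge--Tate crystal $\bM/E$, using the identity $\lambda^{-1}\tilde A \equiv -A \pmod{(\xi,[\pi^{\flat}])}$ extracted from the proof of Theorem \ref{poly-nilpotency}.

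\textbf{Inverse direction.} Given $(\frakM, \hat G\text{-action}) \in \Mod_{\Prism}(\frakS)$, I construct a stratification $\varepsilon \colon \frakM\otimes_{\frakS,p_0}\frakS^1 \to \frakM\otimes_{\frakS,p_1}\frakS^1$ satisfying the cocycle condition; by the stratification--crystal dictionary (Lemma \ref{MT}) this produces the required prismatic crystal. The strategy is to first build a preliminary stratification $\tilde \varepsilon$ of $M_{\inf}$ over the larger ring $\tilde A_{\inf}^1$ by a Taylor-type expansion
\[
\tilde \varepsilon(e) \;=\; \sum_{n \geq 0} \nabla_{\frakM}^{(n)}(e)\cdot Y_n(X),
\]
where the operators $\nabla_{\frakM}^{(n)}$ are suitable iterates of $\nabla_{\frakM}$ obtained via Lemma \ref{tau connection on M}, and the pd-series $Y_n(X)$ in $X = (u_0-u_1)/E(u_0)$ are chosen so that the specialisation $u_1 \mapsto [\epsilon][\pi^{\flat}]$ recovers the given $\tau$-action. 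Convergence of this series in the $(p,E)$-adic topology of $\tilde A_{\inf}^1$ is precisely ensured by the pseudo-nilpotency condition (3b), paralleling the role of nilpotency in Remark \ref{Rmk-stratification and Sen} and Theorem \ref{representation comes from crystal}. A flat descent argument then exhibits $\tilde \varepsilon(\frakM) \subseteq \frakM\otimes_{\frakS,p_1}\frakS^1$ inside $M_{\inf}\otimes_{A_{\inf}}\tilde A_{\inf}^1$: here one uses condition (1), faithful flatness of $\iota^n \colon \frakS^n \to \tilde A_{\inf}^n$, and the $\Gamma$-equivariance of the construction. The cocycle condition for $\tilde \varepsilon$ over $\tilde A_{\inf}^{\bullet}$ translates to the associativity of the $\hat G$-action on $M_{\inf}$, so it holds automatically, and descends to $\frakS^{\bullet}$ by faithful flatness. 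The two functors are mutually inverse because each is rigidly determined by the pair $(\frakM, \nabla_{\frakM})$: evaluation extracts this datum from a crystal, and the construction above reconstructs the stratification from it.

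\textbf{Main obstacle.} The principal difficulty is showing that $\tilde \varepsilon$ descends from $\tilde A_{\inf}^1$ down to $\frakS^1$. A priori the coefficients $Y_n(X)$ involve $\lambda$ and $[\pi^{\flat}]$, neither of which lies in $\iota(\frakS)$; one must carefully reorganise the series (in the spirit of the combinatorial identities (\ref{Equ-A(X)-II}) and (\ref{Equ-determine coefficients-IV}), but now to all orders in $\xi$ rather than modulo $\xi$) so that the offending contributions collect into $\Gamma$-invariant combinations. This requires a somewhat delicate inductive analysis of $(p,E)$-adic convergence and of the interplay between the pd-structure on $\frakS^1/E$ and the $\delta$-structure of $\frakS^1$ itself, analogous in spirit to but substantially more involved than the manipulations in the proof of Theorem \ref{Thm-prismatic cohomology}.
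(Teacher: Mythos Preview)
Your forward direction is essentially right in outline and matches the paper's Section 5.2, though the paper organises the verification of condition (2) differently: rather than specialising the stratification, it reduces to the case $\frakM=\frakS$ via the isomorphism $(\frakM\otimes_{\frakS,p_0}\frakS^1)^{\tau_0=1}\cong\frakM$ (Proposition \ref{Prop-Key isomorphism-II}).

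For the inverse direction you take a genuinely different route from the paper, and your version has a real gap. You attempt an explicit Taylor-type formula over $\tilde A_{\inf}^1$ and then hope to descend to $\frakS^1$, but the descent is precisely the hard part, and you only name it as an obstacle without resolving it. Condition (1) of Definition \ref{Dfn-prismatic S module} gives only $\frakM\subseteq M_{\inf}^{\Gamma}$, not equality, so $\Gamma$-invariance alone cannot force your series to land in $\frakM\otimes_{\frakS,p_1}\frakS^1$. Your claim that the cocycle condition ``translates to the associativity of the $\hat G$-action'' is also too quick: the operator $\nabla_{\frakM}$ encodes only the single element $\tau$, and $\tilde A_{\inf}^{\bullet}$ is not the cosimplicial object governed by $\hat G$.

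The paper sidesteps both problems by never writing an explicit formula over $\tilde A_{\inf}^1$. Instead it works directly over $\frakS^1$ and proves (Proposition \ref{Prop-Key isomorphism}) that the degeneracy map $\Delta:(\frakM\otimes_{\frakS,p_1}\frakS^1)^{\tau=1}\to\frakM$ is an isomorphism; the stratification $\varepsilon$ is then \emph{defined} as the $\frakS^1$-linear extension of its inverse. The isomorphism is built by successive approximation modulo powers of $E$, reducing at each step to a solvability statement over $\calO_K\{X\}^{\wedge}_{\pd}$ (Lemmas \ref{Lem-Solution modulo u} and \ref{Lem-Solution modulo E}) where the pseudo-nilpotency condition is exactly what is needed. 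The cocycle condition is then verified not by appeal to group associativity but by an injectivity argument: one shows $\Delta:(\frakM\otimes_{\frakS,q_0}\frakS^2)^{\tau_0=\tau_1=1}\hookrightarrow\frakM$ (Proposition \ref{Prop-Key injection}), observes that both $p_{01}^*(\varepsilon)\circ p_{12}^*(\varepsilon)$ and $p_{02}^*(\varepsilon)$ land in this submodule by $\tau$-equivariance (Lemma \ref{Lem-Stratification is tau-equivariant}), and concludes they agree because they agree after $\Delta$. This approach, adapted from \cite{MT}, trades your descent problem for a much more tractable successive-approximation argument.
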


\begin{rmk}\label{Rmk-Recover BS}
  It seems that one can use Theorem \ref{crystal as S module} to recover Theorem \ref{DL}. We only provide some ideas on how to construct a prismatic $F$-crystals of height $h$ from a crystalline Breuil--Kisin module $\frakM$ of height $h$ under the assumption that Theorem \ref{poly-nilpotency} holds for $\nabla_{\frakM}$: Indeed, in this case, the crystalline Breuil--Kisin module $\frakM$ induces a prismatic $\frakS$-module after forgetting height $h$ and the $\varphi$-action $\varphi_{\frakM}$, and hence induces a prismatic crystal $\bM$ by Theorem \ref{crystal as S module}. Similarly, $\varphi^*\frakM$ will also induces a prismatic crystal $\varphi^*\bM$ (note that $\nabla_{\varphi^*\frakM} \equiv 0 \mod (E(u),u)$). Then the morphism $\varphi_{\frakM}:\varphi^*\frakM \to \frakM$ lifts to a morphism $\varphi_{\bM}:\varphi^*\bM\to \bM$. Since $(\frakS,(E))$ covers the final object of ${\rm Shv}((\calO_K)_{\Prism})$, by evaluating at $(\frakS,(E))$ and using the height condition on $\frakM$, one can check that $\varphi_{\bM}$ is always injective with cokernel being killed by $\calI^h$. So we get a prismatic $F$-crystal $\bM$ under the above assumption. 
  
  It is worth pointing out that the authors know from Hui Gao that the above assumption is always true by using general theory of $(\varphi,\tau)$-modules.
\end{rmk}

 The rest of this section is devoted to proving Theorem \ref{crystal as S module}. We first show how to construct a prismatic crystal out of a prismatic $\frakS$-module. This is more difficult than the other direction and we will adapt the idea used in \cite{MT}.
 
\subsection{Prismatic $\frakS$-modules induce Prismatic crystals}

 Fix a prismatic $\frakS$-module $\frakM$ of rank $r$. Let $\tau$ only act on the first component of $\frakS^1$. As in Lemma \ref{tau connection on M}, one can define a morphism
  \[\nabla: \frakM\otimes_{\frakS,p_1}\frakS^1\rightarrow \frakM\otimes_{\frakS,p_1}\Ainf\widehat \otimes_{\rW}\frakS\{\frac{u_0-u_1}{E(u_0)}\}_{\delta}^{\wedge}.\]
  Modulo $E(u_0)$, this reduces to a ``$\tau$-connection''
  \[\nabla: \frakM/E\otimes_{\calO_K}\calO_K\{X\}^{\wedge}_{\pd}\rightarrow \frakM/E\otimes_{\calO_K}\calO_{\widehat K_{\cyc,\infty}}\{X\}^{\wedge}_{\pd}.\]
  such that
\begin{equation}\label{Equ-reduced nabla}
  \nabla(f(X)m) = \nabla(f)m+f(\tau(X))\nabla(m).
\end{equation}

  Put $\lambda=\frac{\xi}{E}$ and $\lambda' = \frac{\xi}{\tau(E)}$, which belong to $\rA_{\inf}^{\times}$. We often confuse $\lambda$ and $\lambda'$ with their images in $A_{\inf}/\xi$ for simplicity. In particular, as elements in $\Ainf/\xi$, we have
  \begin{equation}\label{Equ-Lambdaprime}
      \lambda' = \lambda(1+(\zeta_p-1)\pi\lambda E'(\pi))^{-1}.
  \end{equation}
  By equation (\ref{Equ-tau on X}), we see that
  \[\nabla(X) = \lambda'-\lambda' E'(\pi) X\]
  and
  \[\tau(X) = \lambda' (\zeta_p-1)\pi+(1-\lambda' (\zeta_p-1)\pi E'(\pi))X.\]

  Fix a set of basis $\{e_1, \dots, e_r\}$ of $\frakM$ and denote the matrix of $\nabla$ by $\tilde A$. We will also confuse $\tilde A$ and its image in $M_r(A_{\inf}/\xi)$. Then for any $\vec f(X)\in (\calO_K\{X\}^{\wedge}_{\pd})^r$, we have
\begin{equation}\label{Equ-reduced nabla-II}
  \nabla(\underline e\vec f(X)) = \underline e\tilde A\vec f(\lambda' \pi(\zeta_p-1)+(1-\lambda' \pi(\zeta_p-1)E'(\pi))X)+\underline e\nabla \vec f(X).
\end{equation}
  If we write $\vec f(X) = \sum_{i=0}^{\infty}\vec a_iX^{[i]}$, then the equation (\ref{Equ-reduced nabla-II}) reduces to the following one
\begin{equation}\label{Equ-reduced nabla-III}
\begin{split}
  \nabla(\underline e\vec f(X)) =& \underline e \sum_{i\geq 0}((1-\lambda' \pi(\zeta_p-1)E'(\pi))^i\sum_{j\geq 0}\frac{(\lambda'\pi(\zeta_p-1))^j}{j!}\tilde A\vec a_{i+j}X^{[i]}+\vec a_i\nabla(X^{[i]})).
\end{split}
\end{equation}
 Recall that $\nabla$ behaves as a connection modulo $\pi$ (cf. Example \ref{Exam-nabla on pd algebra}). Therefore, we see that modulo $\pi$,
 \begin{equation}\label{Equ-reduced nabla-IV}
     \nabla(\underline e\vec f(X)) = \underline e \sum_{i\geq 0}(\tilde A\vec a_i-i\lambda' E'(\pi)+\lambda' \vec a_{i+1})X^{[i]}.
 \end{equation}
 


\begin{lem}\label{Lem-Solution modulo u}
  For any $\vec g(X)\in \calO_K\{X\}^{\wedge}_{\pd}$, there exists an $\vec f(X)$ such that $\nabla(\underline e\vec f(X))=\underline e\vec g(X)$ modulo $\pi$. Moreover, the morphism $\calO_K\{X\}^{\wedge}_{\pd}\rightarrow \calO_K$ sending $X$ to $0$ induces an isomorphism from $(\frakM\otimes_{\frakS,p_1}\calO_K\{X\}^{\wedge}_{\pd}/\pi)^{\nabla = 0}$ to $\frakM/(E(u_0),u_0)$.
\end{lem}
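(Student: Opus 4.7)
The plan is to reduce the equation modulo $\pi$ using the explicit formula \eqref{Equ-reduced nabla-IV}, converting it into a forward linear recursion on the coefficients $\vec a_i$ of $\vec f(X)=\sum_{i\geq 0} \vec a_i X^{[i]}$, and then to exploit the pseudo-nilpotency hypothesis of Definition~\ref{Dfn-prismatic S module}(3b) to show that the tail of the recursion vanishes modulo $\pi$.

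Writing $\vec g(X)=\sum_i \vec b_i X^{[i]}$ and comparing coefficients of $X^{[i]}$ in the identity $\nabla(\underline e\vec f)\equiv\underline e\vec g\pmod\pi$ via \eqref{Equ-reduced nabla-IV}, after using $\lambda'\equiv\lambda\pmod\pi$ (from \eqref{Equ-Lambdaprime}) together with condition (3a) of Definition~\ref{Dfn-prismatic S module}, the equation becomes the linear recursion
\[
\lambda\,\vec a_{i+1}\equiv \vec b_i-\bigl(\tilde A-i\lambda E'(\pi)\bigr)\vec a_i\pmod\pi,\qquad i\geq 0.
\]
I would take $\vec a_0=0$ and determine $\vec a_1,\vec a_2,\ldots$ successively; the main task is then to show that only finitely many of them are nonzero modulo $\pi$, so that $\vec f$ represents a genuine element of the completed PD-algebra.

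Since $\vec g\in\calO_K\{X\}^{\wedge}_{\pd}$, there is some $N$ with $\vec b_i\equiv 0\pmod\pi$ for $i\geq N$, and from that index onward the recursion is homogeneous. Iterating it over $p$ consecutive steps, and using that in the characteristic-$p$ quotient $\calO_{\widehat K_{\cyc,\infty}}/\pi$ the shifts $i,i+1,\ldots,i+p-1$ run through a full system of residues modulo $p$, a short calculation shows that $\vec a_{i+p}$ is equal modulo $\pi$ to (up to a sign) the operator $\prod_{j=0}^{p-1}\bigl(\lambda^{-1}\tilde A+jE'(\pi)\bigr)$ applied to $\vec a_i$. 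This matrix is precisely the mod-$\xi$ reduction of the operator appearing in the pseudo-nilpotency hypothesis, since $d_q(E)\equiv E'(\pi)\pmod\xi$ by Example~\ref{Exam-differential of E} and $[\pi^\flat]\equiv\pi$ in $\Ainf/\xi$; it is therefore nilpotent modulo $\pi$. After finitely many periods $\vec a_{i+kp}\equiv 0$, and the intermediate coefficients vanish by the same recursion, yielding the desired $\vec f$.

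For the ``moreover'' assertion, the evaluation $X\mapsto 0$ sends a $\nabla$-horizontal $\vec f$ to $\vec a_0\in\frakM/(E,u_0)$. Injectivity is immediate: the homogeneous version of the recursion shows $\vec a_0\equiv 0$ forces all $\vec a_i\equiv 0\pmod\pi$. For surjectivity, I would lift an arbitrary element of $\frakM/(E,u_0)$ to some $\vec a_0\in\frakM/E$, define $\vec a_i$ by the homogeneous recursion, and invoke the same period-$p$ termination argument to conclude that $\vec f$ lies in the PD-algebra modulo $\pi$; it is $\nabla$-horizontal modulo $\pi$ by construction. The principal obstacle throughout is the termination step: one must correctly identify the operator obtained by iterating the recursion $p$ times with the matrix controlled by Definition~\ref{Dfn-prismatic S module}(3b), and the argument must apply uniformly in both the unramified case (where $E'(\pi)\in\calO_K^\times$ and the full product is needed) and the ramified case (where $E'(\pi)\equiv 0\pmod\pi$ and the $p$-fold product collapses to $(\lambda^{-1}\tilde A)^p$, so the hypothesis forces $\lambda^{-1}\tilde A$ itself to be nilpotent modulo $\pi$).
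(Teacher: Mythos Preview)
Your proposal is correct and follows essentially the same approach as the paper: both reduce the equation to the coefficient recursion coming from \eqref{Equ-reduced nabla-IV}, solve it forward (the paper writes the closed formulas $\vec a_i = \prod_{j=0}^{i-1}(jE'(\pi)-\lambda'^{-1}\tilde A)\vec a_0$ in the homogeneous case and $\vec a_i = \sum_{j=0}^{i-1}\prod_{k=j+1}^{i-1}(kE'(\pi)-\lambda'^{-1}\tilde A)\,\lambda'^{-1}\vec b_j$ in general), and appeal to the pseudo-nilpotency condition for well-definedness. Your $p$-fold iteration argument making the product $\prod_{j=0}^{p-1}(\lambda^{-1}\tilde A+jE'(\pi))$ appear explicitly is exactly the content the paper compresses into the phrase ``well-defined due to the nilpotent condition in Definition~\ref{Dfn-prismatic S module}(3)''; your case distinction between ramified and unramified $K$ is extra detail not spelled out in the paper but consistent with it.
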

\begin{proof}
  Write $\vec f(X) = \sum_{i=0}^{\infty}\vec a_iX^{[i]}$. Then we deduce from (\ref{Equ-reduced nabla-IV}) that 
  \[\underline e \sum_{i\geq 0}(\tilde A\vec a_i-i\lambda' E'(\pi)+\lambda' \vec a_{i+1})X^{[i]} = \underline e \vec g(X).\]
  
  When $\vec g(X) = 0$, it is easy to see that for any $i\geq 0$, $\vec a_i = \prod_{j=0}^{i-1}(jE'(\pi)-\frac{\tilde A}{\lambda'})\vec a_0$. This is well-defined due to the nilpotent condition in Definition \ref{Dfn-prismatic S module} (3) and implies the ``moreover'' part of result.

  In general, we write $\vec g(X) = \sum_{i\geq 0}\vec b_iX^{[i]}$. Put $\vec a_0 = 0$ and $\vec a_i = \sum_{j=0}^{i-1}\prod_{k=j+1}^{i-1}(kE'(\pi)-\frac{\tilde A}{\lambda'})\frac{\vec b_j}{\lambda'}$ for any $i\geq 1$. Then $\vec f(X) = \sum_{i\geq 0}\vec a_iX^{[i]}$ is well-defined satisfying $\nabla(\underline e\vec f(X)) = \underline e\vec g(X)$.
\end{proof}

\begin{rmk}\label{Rmk-Pseudo nilpotency is necessary}
 From the proof of Lemma \ref{Lem-Solution modulo u}, the ``nilpotency'' of $\tilde A$ is necessary to ensure the surjectivity of the morphism
  \[(\frakM\otimes_{\frakS,p_1}\calO_K\{X\}^{\wedge}_{\pd}/\pi)^{\nabla = 0} \rightarrow \frakM/(E(u_0),u_0).\]
\end{rmk}

\begin{lem}\label{Lem-Solution modulo E}
  For any $\vec g(X)\in \calO_K\{X\}^{\wedge}_{\pd}$, there exists an $\vec f(X)$ such that $\nabla(\underline e\vec f(X))=\underline e\vec g(X)$.
  Moreover, the morphism $\calO_K\{X\}^{\wedge}_{\pd}\rightarrow \calO_K$ sending $X$ to $0$ induces an isomorphism from $(\frakM\otimes_{\frakS,p_1}\calO_K\{X\}^{\wedge}_{\pd})^{\nabla = 0}$ to $\frakM/E(u_0)$.
\end{lem}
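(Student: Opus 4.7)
The approach is to lift the modulo-$\pi$ solvability of Lemma \ref{Lem-Solution modulo u} to the integral statement by $\pi$-adic successive approximation. The key observation is that $\calO_K\{X\}^{\wedge}_{\pd}$ is $p$-adically complete, and since $p$ equals a unit multiple of $\pi^e$ in $\calO_K$, it is equivalently $\pi$-adically complete; the operator $\nabla$ is continuous for this topology via the explicit formulas (\ref{Equ-reduced nabla-II})--(\ref{Equ-reduced nabla-III}).

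For the existence of $\vec f$, I would inductively build $\vec f_n$ with $\underline e \vec g \equiv \nabla(\underline e \vec f_n) \pmod{\pi^{n+1}}$, starting from the base case $n = 0$ supplied by Lemma \ref{Lem-Solution modulo u}. For the inductive step, write the residue as $\underline e \vec g - \nabla(\underline e \vec f_n) = \pi^{n+1}\underline e \vec g_{n+1}$, apply Lemma \ref{Lem-Solution modulo u} to $\vec g_{n+1}$ to produce $\vec h_{n+1}$ with $\nabla(\underline e \vec h_{n+1}) \equiv \underline e \vec g_{n+1} \pmod{\pi}$, and set $\vec f_{n+1} := \vec f_n + \pi^{n+1}\vec h_{n+1}$. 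The sequence converges $\pi$-adically, and its limit $\vec f$ satisfies $\nabla(\underline e \vec f) = \underline e \vec g$ by continuity. The ``moreover'' (kernel) statement follows from the same machinery: evaluation at $X = 0$ is well-defined on $\ker\nabla$; injectivity comes from applying Lemma \ref{Lem-Solution modulo u} modulo $\pi^n$ for every $n$, since any $\vec f$ in the kernel with $\vec f(0) = 0$ is then divisible by every $\pi^n$ and hence vanishes; surjectivity is obtained by running the same approximation with $\vec g = 0$ and prescribed initial data $\vec f_0(0) = \vec a_0 \in \frakM/E$ via the isomorphism supplied by the mod-$\pi$ version.

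The main obstacle is ensuring the residues can be divided by $\pi^{n+1}$ \emph{inside} $\calO_K\{X\}^{\wedge}_{\pd}$: a priori $\nabla$ lands in the larger ring $\frakM/E \otimes_{\calO_K} \calO_{\widehat K_{\cyc,\infty}}\{X\}^{\wedge}_{\pd}$, so each residue a priori lives there rather than in the smaller ring. Resolving this requires two ingredients. First, the faithful flatness of $\calO_K \to \calO_{\widehat K_{\cyc,\infty}}$ gives $\pi^{n+1}\calO_{\widehat K_{\cyc,\infty}}\{X\}^{\wedge}_{\pd} \cap \calO_K\{X\}^{\wedge}_{\pd} = \pi^{n+1}\calO_K\{X\}^{\wedge}_{\pd}$, so once one knows the residue lies in the smaller ring, the quotient by $\pi^{n+1}$ automatically stays there. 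Second, one must actually place the residue in the smaller ring, by choosing each $\vec f_n$ compatibly with the integral recursive formula implicit in the mod-$\pi$ proof, and crucially by invoking the pseudo-nilpotency condition in Definition \ref{Dfn-prismatic S module}(3) to guarantee that the resulting recursion for the coefficients $\vec a_i$ converges with values in $\calO_K$ (paralleling the role of this condition in Lemma \ref{Lem-Solution modulo u}; compare Remark \ref{Rmk-Pseudo nilpotency is necessary}).
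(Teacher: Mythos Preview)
Your approach is essentially the paper's: $\pi$-adic successive approximation bootstrapping from Lemma~\ref{Lem-Solution modulo u}, for both the surjectivity of $\nabla$ and the injectivity half of the evaluation-at-$0$ map $\alpha$.

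The one place your sketch differs is the surjectivity of $\alpha$. You propose to run the same approximation with $\vec g = 0$ and prescribed $\vec f_0(0) = \vec a_0$, but the paper is more explicit here and uses an ingredient you do not mention. It writes down $\vec f_{\vec a}(X) = \sum_{i\geq 0}\prod_{k=0}^{i-1}(kE'(\pi)-A_0)\,\vec a\,X^{[i]}$ using a lift $A_0 \in \rM_l(\frakS)$ of $\lambda^{-1}\tilde A$ modulo $(\xi,[\pi^\flat])$; this is precisely how condition~3(a) of Definition~\ref{Dfn-prismatic S module} enters, and it is what guarantees that $\vec f_{\vec a}$ has coefficients in $\calO_K$ rather than in $\calO_{\hat K_{\cyc,\infty}}$. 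One then checks $\nabla(\underline e\vec f_{\vec a})\equiv 0\pmod\pi$, corrects by $\pi\vec g_{\vec a}$ via the first part, and concludes by Nakayama from $\alpha(N)+\pi(\frakM/E)=\frakM/E$. Your third paragraph is aiming at exactly this rationality issue, but the proposed fix (``the integral recursive formula implicit in the mod-$\pi$ proof'') does not work as stated: the recursion in the proof of Lemma~\ref{Lem-Solution modulo u} involves $\lambda'^{-1}$, which is not in $\calO_K$, so without replacing $\tilde A/\lambda'$ by the lift $A_0$ the coefficients drift into the larger ring. The pseudo-nilpotency condition~3(b) controls convergence; it is condition~3(a) that controls rationality, and that is the piece missing from your outline.
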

\begin{proof}
  Assume $\vec g(X)=\sum_{i\geq 0}\vec b_iX^{[i]}\in \calO_K\{X\}^{\wedge}_{\pd}$.
  We shall show the existence of $\vec f(X)$ by constructing a sequence $\{\vec f_n(X)\}_{n\geq 0}$ in $\calO_K\{X\}^{\wedge}_{\pd}$ inductively such that for any $N\geq 0$,
  \[\nabla(\sum_{i=0}^N\underline e\pi^i\vec f_i(X))\equiv \underline e\vec g(X) \mod \pi^{N+1}.\]

  By Lemma \ref{Lem-Solution modulo u}, one can choose $\vec f_0(X)$ such that the above formula holds for $N=0$.
  Assume we have constructed desired $\vec f_0(X), \dots, \vec f_n(X)$.
  Define $\vec h_n(X)$ such that
  \[\underline e \vec h_n(X) =\pi^{-n-1}(\underline e\vec g(X)- \nabla(\sum_{i=0}^n\underline e\pi^i\vec f_i(X))).\]
  Then by Lemma \ref{Lem-Solution modulo u} again, there exists $\vec f_{n+1}(X)$, such that \[\nabla(\underline e\vec f_{n+1}(X))\equiv\underline e \vec h_n(X)\mod \pi.\]
  It is easy to check
  \[\nabla(\sum_{i=0}^{n+1}\underline e\pi^i\vec f_i(X))\equiv \underline e\vec g(X) \mod \pi^{n+2}\]
  as expected.

  It remains to show the morphism
  \[\alpha: N: = (\frakM\otimes_{\frakS,p_1}\calO_K\{X\}^{\wedge}_{\pd})^{\nabla = 0}\rightarrow \frakM/E(u_0)\]
  sending $X$ to $0$ is an isomorphism.

  For any $\underline e \vec a\in \frakM$, put $\vec f_{\vec a}(X) = \sum_{i\geq 0}\prod_{k=0}^{i-1}(kE'(\pi)-A_0)\vec a X^{[i]}$, where $A_0\in\rM_l(\frakS)$ coincides with $\lambda^{-1}\tilde A$ modulo $(\xi,[\pi^{\flat}])$.
  Then according to the proof of Lemma \ref{Lem-Solution modulo u}, we have $\nabla(\underline e \vec f_{\vec a}(X))\equiv 0\mod \pi$.
  By what we have showed above, there exists a $\vec g_{\vec a}(X)$ such that $-\pi\nabla(\underline e\vec g_{\vec a}(X)) = \nabla(\underline e\vec f_{\vec a}(X))$.
  Let $\vec h_{\vec a}(X) = \vec f_{\vec a}(X)+\pi\vec g_{\vec a}(X)$.
  We see $\underline e\vec h_{\vec a}(X)\in N$ satisfying $\vec h_{\vec a}(0) = \vec a+\pi\vec g_{\vec a}(0)$.
  As a consequence, $\alpha(N)+\pi \frakM = \frakM$, which implies the surjectivity of $\alpha$.

  To see the injectivity of $\alpha$, we need to show that
  $\underline e\vec f(X) \in N$ implies $\vec f(X)=0$.
  Indeed, it follows from the last criterion of Lemma \ref{Lem-Solution modulo u} that $\vec f(X)\equiv 0\mod \pi$.
  So there is some $\vec g(X)$ such that $\pi \vec g(X) = \vec f(X)$.
  According to the $\pi$-torsion freeness of $\frakM\otimes_{\frakS,p_1}\calO_K\{X\}^{\wedge}_{\pd}$,  $\underline e\vec g(X)\in N$ is also annihilated by $\nabla$.
  By iteration, one can deduce from the above argument that $\vec f(X)\equiv 0\mod \pi^n$ for all $n\geq 0$, which forces $\vec f(X) = 0$.
\end{proof}
\begin{cor}\label{Cor-Approximation}
  For any $x\in \frakM\otimes_{\frakS,p_1}\frakS^1$ and any $m\in \frakM$, there exists a $y\in \frakM\otimes_{\frakS,p_1}\frakS^1$ such that $\nabla(y)\equiv x \mod (E(u_1))$ and that $\Delta(y)\equiv m \mod (E(u_0))$, where $\Delta:\frakS^1\rightarrow\frakS$ is the ``multiplication'' map.
\end{cor}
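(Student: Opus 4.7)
The plan is to reduce Corollary \ref{Cor-Approximation} to the preceding Lemma \ref{Lem-Solution modulo E} by working systematically modulo $E$. First I would note that by \cite[Lemma 2.24]{BS-a} (already invoked in the paper), $E(u_0)$ and $E(u_1)$ generate the same ideal of $\frakS^1$, so reducing modulo $E(u_1)$ is equivalent to reducing modulo $E(u_0)$, and we obtain the isomorphism $\frakS^1/(E) \cong \calO_K\{X\}^{\wedge}_{\pd}$ from Lemma \ref{pd polynomial}. Moreover, the degeneracy $\Delta:\frakS^1\to \frakS$ sends both $u_0$ and $u_1$ to $u$, hence sends $X=(u_0-u_1)/E(u_0)$ to $0$ after reduction. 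Thus modulo $E$, the map $\Delta$ becomes the evaluation at $X=0$.

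With these identifications in hand, let $\bar{x}$ denote the image of $x$ in $\frakM\otimes_{\frakS,p_1}\calO_K\{X\}^{\wedge}_{\pd}$. Applying the first assertion of Lemma \ref{Lem-Solution modulo E} to $\bar x$ (expressed in coordinates as $\underline e \vec g(X)$), I would produce some $y_0\in \frakM\otimes_{\frakS,p_1}\calO_K\{X\}^{\wedge}_{\pd}$ with $\nabla(y_0)=\bar x$. This $y_0$ solves the first congruence but has no reason to satisfy the degeneracy condition: the element $y_0|_{X=0}\in\frakM/E$ may differ from the image $\bar m$ of $m$ in $\frakM/E$.

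To correct this, I would invoke the ``moreover'' part of Lemma \ref{Lem-Solution modulo E}, which says that evaluation at $X=0$ identifies the kernel of $\nabla$ on $\frakM\otimes_{\frakS,p_1}\calO_K\{X\}^{\wedge}_{\pd}$ with $\frakM/E$. Hence there exists a unique $z$ in this kernel with $z|_{X=0}=\bar m - y_0|_{X=0}$. Setting $\bar y:=y_0+z$ then yields an element in $\frakM\otimes_{\frakS,p_1}\calO_K\{X\}^{\wedge}_{\pd}$ which simultaneously satisfies $\nabla(\bar y)=\bar x$ and $\bar y|_{X=0}=\bar m$. Finally, I would pick any lift $y\in\frakM\otimes_{\frakS,p_1}\frakS^1$ of $\bar y$ along the surjection $\frakS^1\twoheadrightarrow \frakS^1/(E(u_1))$; by construction this $y$ satisfies $\nabla(y)\equiv x\mod (E(u_1))$ and $\Delta(y)\equiv m\mod (E(u_0))$.

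The argument is essentially bookkeeping once Lemma \ref{Lem-Solution modulo E} is in place; no serious obstacle is expected. The only point deserving care is verifying that the ``kernel correction'' step is legitimate, i.e.\ that the surjectivity/injectivity of the evaluation map on $\nabla$-flat sections (which already required the pseudo-nilpotency condition in Definition \ref{Dfn-prismatic S module} via Remark \ref{Rmk-Pseudo nilpotency is necessary}) gives us the freedom to prescribe the value at $X=0$ without disturbing the equation $\nabla(y)=x$.
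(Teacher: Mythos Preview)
Your proposal is correct and follows essentially the same route as the paper: first solve $\nabla(y_0)\equiv x$ modulo $E$ using the first part of Lemma \ref{Lem-Solution modulo E}, then adjust by a $\nabla$-flat element $z$ (supplied by the ``moreover'' part of the same lemma) to fix the value under $\Delta$, and set $y=y_0+z$. The only cosmetic difference is that you work entirely modulo $E$ and lift at the end, whereas the paper phrases the two steps directly in $\frakM\otimes_{\frakS,p_1}\frakS^1$; since all conditions are imposed modulo $E$, the two presentations are equivalent.
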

\begin{proof}
  By Lemma \ref{Lem-Solution modulo E}, one can find some $y_1$ such that $\nabla(y_1)\equiv x\mod (E(u_0))$.
  Then by Lemma \ref{Lem-Solution modulo E} again, one can find some $y_2$ such that $\nabla(y_2)\equiv 0\mod (E(u_0))$ and that $\Delta(y_2)\equiv m-\Delta(y_1) \mod (E(u_0))$.
  Then $y = y_1+y_2$ is a desired element.
\end{proof}
\begin{prop}\label{Prop-Key isomorphism}
  The multiplication map $\Delta:\frakS^1\rightarrow\frakS$ induces an isomorphism
  \[(\frakM\otimes_{\frakS,p_1}\frakS^1)^{\tau=1}\rightarrow \frakM\]
  of $\frakS$-modules, where the former is viewed as an $\frakS$-module via $p_0:\frakS\rightarrow \frakS^1$.
\end{prop}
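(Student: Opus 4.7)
The plan is to reduce the claim to bijectivity of the map
\[
\Delta\colon (\frakM\otimes_{\frakS,p_1}\frakS^1)^{\nabla=0}\longrightarrow \frakM,
\]
and then to establish both directions by induction on the $E(u_0)$-adic filtration, using $(p,E)$-adic completeness of $\frakM\otimes_{\frakS,p_1}\frakS^1$. First, since $\varphi^{-1}(\mu)u_0$ is a non-zero-divisor in $\tilde A_{\inf}^1$, for any $y\in\frakM\otimes_{\frakS,p_1}\frakS^1$ one has $\tau(y)=y$ if and only if $\nabla(y)=0$, so the displayed map above is the one whose bijectivity is claimed. The $\frakS$-linearity (with the $p_0$-structure on the source and the given structure on $\frakM$) is automatic since $\Delta\circ p_0=\id_{\frakS}$.

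For injectivity, suppose $\nabla(y)=0$ and $\Delta(y)=0$. I would prove by induction on $n\geq 1$ that $y\in E(u_0)^n(\frakM\otimes_{\frakS,p_1}\frakS^1)$, which forces $y=0$ by separation. The case $n=1$ is immediate from the ``moreover'' part of Lemma \ref{Lem-Solution modulo E}: the reduction $\bar y$ lies in $(\frakM\otimes_{\frakS,p_1}\calO_K\{X\}^{\wedge}_{\pd})^{\nabla=0}$ and is killed by $X\mapsto 0$, hence vanishes. For the inductive step, write $y=E(u_0)^n y'$; since $E(u_0)$ is a non-zero-divisor one has $\Delta(y')=0$, and expanding $\nabla(E(u_0)^n y')=0$ via the Leibniz rule together with Example \ref{Exam-differential of E} yields a shifted equation of the form $\nabla(\bar y')+n\lambda d_q(E)\bar y'=0$ on the reduction $\bar y'$ in $\frakM\otimes\calO_K\{X\}^{\wedge}_{\pd}$. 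Replaying the recursion in the proof of Lemma \ref{Lem-Solution modulo u} with this shifted operator, which remains controlled by the pseudo-nilpotency condition in Definition \ref{Dfn-prismatic S module}(3)(b), shows that any such solution is determined by its $X\mapsto 0$ value; combined with $\Delta(\bar y')=0$, this gives $\bar y'=0$, completing the step.

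For surjectivity, fix $m\in\frakM$. I would inductively build a Cauchy sequence $\{y_n\}$ with $\Delta(y_n)=m$, $\nabla(y_n)\in E(u_0)^n(\frakM\otimes_{\frakS,p_1}\tilde A_{\inf}^1)$, and $y_{n+1}-y_n\in E(u_0)^n\cdot\ker(\Delta)$. The initial term $y_0$ is produced by Corollary \ref{Cor-Approximation} with input $(x,m')=(0,m)$, followed by a correction in $E(u_0)\cdot\frakM$ (transported to the source via $p_0$) to make $\Delta(y_0)=m$ exact. Given $y_n$, write $\nabla(y_n)=E(u_0)^n z_n$; then I would use Lemma \ref{Lem-Solution modulo E}, applied to a shifted equation that absorbs the Leibniz term $nE(u_0)^{n-1}\xi d_q(E)\tau(\delta)$ coming from $\nabla(E(u_0)^n)$ (cf.\ Example \ref{Exam-differential of E}), to find $\delta\in\frakM\otimes\frakS^1$ with $\Delta(\delta)=0$ and $\nabla(y_n-E(u_0)^n\delta)\in E(u_0)^{n+1}(\frakM\otimes\tilde A_{\inf}^1)$. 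Setting $y_{n+1}=y_n-E(u_0)^n\delta$ and passing to the $E$-adic limit gives the desired preimage of $m$.

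The main obstacle I expect is the consistent bookkeeping of the shifts introduced by $\nabla(E(u_0)^n)\neq 0$: at the $n$-th stage of both arguments, one does not solve the clean equation $\nabla(\bar\delta)=\text{(target)}$ on the PD-algebra but rather a shifted variant $\nabla(\bar\delta)+n\lambda d_q(E)\bar\delta=\text{(target)}$. The key — and what makes the whole iteration close — is that Definition \ref{Dfn-prismatic S module}(3)(b), which demands that $\prod_{i=0}^{p-1}(\lambda^{-1}\tilde A+id_q(E))$ be $(\xi,[\pi^{\flat}])$-adically topologically nilpotent, is precisely tailored to guarantee that these shifted operators are well behaved at every index $n$, so that the analogue of the explicit recursion in the proof of Lemma \ref{Lem-Solution modulo u} is summable in the $(p,E)$-adic topology.
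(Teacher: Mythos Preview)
Your overall strategy---establish injectivity and surjectivity by an induction on the $E$-adic filtration, using Lemma~\ref{Lem-Solution modulo E} (and its mod-$\pi$ precursor Lemma~\ref{Lem-Solution modulo u}) at each step---is exactly the paper's, and your argument can be made to work. But you have chosen the wrong filtration parameter, and this is the source of all the ``shifts'' you worry about.

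The paper filters by powers of $E(u_1)$, not $E(u_0)$. Recall that $\tau$ acts only on the first component of $\frakS^1$, so $\tau(u_1)=u_1$ and hence $\nabla(E(u_1))=0$. Thus $E(u_1)$ is \emph{horizontal} for $\nabla$: if $y=E(u_1)^n y'$ then $\nabla(y)=E(u_1)^n\nabla(y')$ with no Leibniz term whatsoever. Since $E(u_0)$ and $E(u_1)$ generate the same ideal in $\frakS^1$, the $E(u_1)$-adic and $E(u_0)$-adic topologies coincide, so completeness and separatedness are unaffected. With this choice the induction is clean: for injectivity, if $\nabla(x)=0$ and $\Delta(x)=0$, then Lemma~\ref{Lem-Solution modulo E} gives $\bar x=0$, hence $x=E(u_1)x'$, and immediately $\nabla(x')=0$ and $\Delta(x')=0$, so iterate. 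For surjectivity the paper builds $x=\sum_{n\ge0}x_nE(u_1)^n$ and improves \emph{both} $\nabla$ and $\Delta$ one order at a time via Corollary~\ref{Cor-Approximation}; again no shifts appear.

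Your route through $E(u_0)$ does work in principle: one computes $\nabla(E(u_0)^n)=E(u_0)^n\cdot c_n$ with $c_n\in A_{\inf}$ (this follows from Example~\ref{Exam-differential of E} together with $\xi=\lambda E$), so the shifted equation modulo $E$ reads $v\nabla(\bar\delta)+c_n\bar\delta=\bar z$ with $v$ a unit. Modulo~$\pi$ this becomes $\nabla(\bar\delta)+n\lambda E'(\pi)\bar\delta=\bar z$, and the recursion of Lemma~\ref{Lem-Solution modulo u} with the coefficient shifted by $n$ still converges because $\prod_{i=0}^{p-1}(\lambda^{-1}\tilde A+id_q(E))$ is nilpotent in $\rM_r(k)$ and the shift by $n$ merely permutes the factors modulo~$p$. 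So the pseudo-nilpotency does absorb the shifts, as you say. But you then also need the shifted analogue of Lemma~\ref{Lem-Solution modulo E} (bootstrapping from mod~$\pi$ to mod~$E$), and you need to arrange $\Delta(\delta)=0$ exactly at each step, which requires an extra lifting argument. None of this is wrong, but all of it is avoidable by switching to $E(u_1)$.
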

\begin{proof}
  This can be checked by the same argument as in the proof of Lemma \ref{Lem-Solution modulo E}.
  More precisely, for any $m\in \frakM$, we shall construct a sequence $\{x_n\}_{n\geq 0}$ in $\frakM\otimes_{\frakS,p_1}\frakS^1$ such that for any $N\geq 0$,
\begin{equation*}
  \left\{
  \begin{array}{rcl}
  \nabla(\sum_{i=0}^N(x_iE(u_1)^i))\equiv 0 \mod (E(u_0))^{N+1}\\
  \Delta(\sum_{i=0}^N(x_iE(u_1)^i))\equiv m \mod (E(u_0))^{N+1}
  \end{array}.
  \right.
\end{equation*}
  Then $x = \sum_{n\geq 0}x_nE(u_1)^n$ is well-defined in $\frakM\otimes_{\frakS,p_0}\frakS^1$ by $(E(u_0))$-adic completeness, which is $\tau$-stable and goes to $m$ via $\Delta$.
  This shows the surjectivity of $\Delta$.

  The existence of $x_0$ follows from Lemma \ref{Lem-Solution modulo E}.
  If we have constructed $x_0, \dots, x_n$, then $y_n:=-\frac{\nabla(\sum_{i=0}^nx_iE(u_1)^i)}{E(u_1)^{n+1}}$ and $m_n:=\frac{m-\Delta(\sum_{i=0}^nx_iE(u_1)^i)}{E(u_1)^{n+1}}$ are both well-defined and belong to $\frakM\otimes_{\frakS,p_0}\frakS^1$ and $\frakM$, respectively.
  By Corollary \ref{Cor-Approximation}, one can choose $x_{n+1}$ such that
\begin{equation*}
  \left\{
  \begin{array}{rcl}
  \nabla(x_{n+1})\equiv y_n \mod (E(u_0))\\
  \Delta(x_{n+1})\equiv m_n \mod (E(u_0))
  \end{array}.
  \right.
\end{equation*}
  It is easy to check such an $x_{n+1}$ satisfies all desired conditions.

  It remains to show the injectivity of $\Delta$.
  Let $x\in (\frakM\otimes_{\frakS,p_1}\frakS^1)^{\nabla = 0}$ be annihilated by $\Delta$. By Lemma \ref{Lem-Solution modulo E}, $E(u_1)$ divides $x$. It is clear that $\frac{x}{E(u_1)}$ is also killed by $\nabla$. So by iteration, one shows that for any $n\geq 0$, $x\equiv 0\mod E(u_1)^n$, which forces $x= 0$.
\end{proof}
  Now, the inverse of the isomorphism given in Proposition \ref{Prop-Key isomorphism} induces a morphism of $\frakS$-modules
  \[\frakM\rightarrow (\frakM\otimes_{\frakS,p_1}\frakS^1)^{\tau=1}\hookrightarrow \frakM\otimes_{\frakS,p_1}\frakS^1.\]
  By scalar extension, this gives rise to an $\frakS^1$-linear morphism
\begin{equation}\label{Equ-Stratification}
  \epsilon: \frakM\otimes_{\frakS,p_0}\frakS^1\rightarrow \frakM\otimes_{\frakS,p_1}\frakS^1.
\end{equation}

Now, let $(A_{\inf}^{\bullet},\xi A_{\inf}^{\bullet})$ be the cosimplicial prism induced by the cover $(A_{\inf},(\xi))$ of the final object ${\rm Shv}((\calO_K)_{\Prism})$. For $i=0,1$, let $\tau_i:\frakS^1\to A_{\inf}^1$ be the analogue of the morphism $\tau:\frakS\to A_{\inf}$ on the $(i+1)$-th component.
\begin{lem}\label{Lem-Stratification is tau-equivariant}
  The $\frakS^1$-linear morphism $\epsilon$ is compatible with the $(\tau_0^{\Zp},\tau_1^{\Zp})$-action. In other words, for any $a,b\in \Zp$, the following diagram
  \[\xymatrix@C=0.45cm{
    \frakM\otimes_{\frakS,p_0}\frakS^1\ar[rr]^{(\tau_0^a,\tau_1^b)}\ar[d]^{\epsilon}&&
    \frakM\otimes_{\frakS,p_0}A_{\inf}^1\ar[d]^{\epsilon}\\
    \frakM\otimes_{\frakS,p_1}\frakS^1\ar[rr]^{(\tau_0^a,\tau_1^b)}&& \frakM\otimes_{\frakS,p_1}A_{\inf}^1
  }\]
  commutes.
\end{lem}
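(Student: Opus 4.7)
By construction (equation~(\ref{Equ-Stratification}) and Proposition \ref{Prop-Key isomorphism}), the stratification $\epsilon$ is the $\frakS^1$-linear extension of the section $\sigma\colon\frakM\to(\frakM\otimes_{\frakS,p_1}\frakS^1)^{\tau=1}$ of the multiplication map $\Delta$. Since the maps $\tau_i\colon\frakS^1\to A_{\inf}^1$ are compatible with the $\frakS^1$-module structures through which both sides of the diagram are tensored, I would first reduce, by $\frakS^1$-linearity, to verifying commutativity on pure tensors $m\otimes 1$ with $m\in\frakM$, for which $\epsilon(m\otimes 1)=\sigma(m)$.

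Next I would decompose $(\tau_0^a,\tau_1^b)$ as the composite of the commuting actions $(\tau_0^a,1)$ and $(1,\tau_1^b)$ and handle each separately. For the piece $(\tau_0^a,1)$: the ``$\tau$-invariance'' that characterizes $\sigma(m)$ is really a statement in the ambient ring $\tilde A_{\inf}^1$ (cf.~Lemma \ref{tau connection on M}), so after scalar extension the image of $\sigma(m)$ in $\frakM\otimes_{\frakS,p_1}A_{\inf}^1$ is automatically fixed by $\tau_0^a$. On the source side, $\frakM$ enters via $p_0$, and so $(\tau_0^a,1)(m\otimes 1)$ is transported to $\tau^a(m)\otimes 1\in M_{\inf}\otimes_{A_{\inf},q_0}A_{\inf}^1$ via the $\hat G$-action on $M_{\inf}$; the required identity $\epsilon_{A_{\inf}^1}(\tau^a(m)\otimes 1)=\sigma(m)$ would then follow from the uniqueness part of Proposition \ref{Prop-Key isomorphism} (both elements are $\tau_0$-fixed lifts of the same element under the extended $\Delta$), after identifying the $q_0$- and $q_1$-scalar extensions of $M_{\inf}$ by means of the semi-linear $\hat G$-action.

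The piece $(1,\tau_1^b)$ is handled symmetrically: now $\frakM$ sits via $p_1$ on the target, so $(1,\tau_1^b)$ acts on the $\frakM$-factor of the target by $\tau^b$ via the $\hat G$-action on $M_{\inf}$, while on the source (where $\frakM$ is via $p_0$) the $\frakM$-factor is untouched and only the $\frakS^1$-factor is twisted. The equality of the two results again reduces, via uniqueness in Proposition \ref{Prop-Key isomorphism}, to the defining $\tau$-invariance of $\sigma$ and the $\frakS$-semi-linearity of $\sigma$ with respect to $\tau$ (equation~(\ref{Equ-tau connection on M})).

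The principal obstacle is the bookkeeping: one must consistently track the two $\hat G$-structures on $M_{\inf}\otimes_{A_{\inf}}A_{\inf}^1$, namely the one inherited from the $\hat G$-action on $M_{\inf}$ and the one coming from the $\hat G\times\hat G$-action on $A_{\inf}^1$, and verify how they interact through the structure maps $p_0,p_1,q_0,q_1$. Once these identifications are laid out, the argument collapses to the two defining properties $\tau\,\sigma(m)=\sigma(m)$ and the $\tau$-semi-linearity of $\sigma$ over $\frakS$, both built into the construction in Proposition \ref{Prop-Key isomorphism}.
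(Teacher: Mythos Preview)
Your overall strategy---reduce to pure tensors $m\otimes 1$, exploit the defining $\tau$-invariance of $\sigma(m)$, and conclude via the uniqueness part of Proposition~\ref{Prop-Key isomorphism}---is the same as the paper's. However, there is a genuine bookkeeping error in how you match the actions $\tau_0,\tau_1$ with the face maps $p_0,p_1$. By Definition~\ref{Dfn-stratification} (and Lemma~\ref{pd polynomial}), $p_i$ is induced by $\{0\}\to\{1-i\}$, so $p_0(u)=u_1$ and $p_1(u)=u_0$; whereas $\tau_i$ acts on the $(i{+}1)$-th factor, so $\tau_0$ moves $u_0$ and $\tau_1$ moves $u_1$. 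Thus on the \emph{source} $\frakM\otimes_{\frakS,p_0}\frakS^1$ the module $\frakM$ is attached to $u_1$, so it is $\tau_1$ (not $\tau_0$) that hits $\frakM$: one has $(\tau_0^a,1)(m\otimes 1)=m\otimes 1$ and $(1,\tau_1^b)(m\otimes 1)=\tau^b(m)\otimes 1$. On the \emph{target} $\frakM\otimes_{\frakS,p_1}\frakS^1$ the situation is reversed: $\tau_0$ hits $\frakM$ and $\tau_1$ acts only on the $\frakS^1$-factor. Your proposal has these roles exactly swapped in both pieces; in particular the ``required identity $\epsilon_{A_{\inf}^1}(\tau^a(m)\otimes 1)=\sigma(m)$'' is simply false for $a\neq 0$ (since $\epsilon$ is an isomorphism it would force $\tau^a(m)\otimes 1=m\otimes 1$).

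Once the indexing is corrected, your decomposition still works but becomes slightly asymmetric: the piece $(\tau_0^a,1)$ is now trivial (both sides are $\sigma(m)$), and all the content sits in the $(1,\tau_1^b)$ piece, where you must show $(1,\tau_1^b)(\sigma(m))=\sigma(\tau^b(m))$. The paper handles this more directly, without splitting: since $\sigma(m)$ is $\tau_0$-fixed one has $(\tau_0^a,\tau_1^b)(\sigma(m))=(\tau_0^b,\tau_1^b)(\sigma(m))$, and because the diagonal action satisfies $\Delta\circ(\tau_0^c,\tau_1^c)=\tau^c\circ\Delta$, this element is a $\tau_0$-fixed $\Delta$-lift of $\tau^b(m)$; uniqueness in Proposition~\ref{Prop-Key isomorphism} finishes. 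This diagonal trick is what your ``uniqueness'' step would also need, but your proposal does not make it explicit.
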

\begin{proof}
  We use the same argument in \cite[Proposition 3.19]{MT}.
  By the construction of $\epsilon$, we only need to show $\epsilon(\tau_1^b(m)\otimes 1) = (\tau_0^a,\tau_1^b)(\epsilon(m\otimes 1))$.
  Since for any $c\in \Zp$ and $l\in \frakS^1$, $\Delta((\tau_0^c,\tau_1^c)(l)) = \tau^c(\Delta(l))$, we see that for any $x\in M\otimes_{\frakS,p_1}\frakS_1$, $\Delta((\tau_0^c,\tau_1^c)(x)) = \tau^c(\Delta(x))$.
  In particular, for $x = \epsilon(m\otimes 1)\in (\frakM\otimes_{\frakS,p_1}\frakS_1)^{\tau=1}$, we get
  $\Delta((\tau_0^a,\tau_1^b)(\epsilon(m\otimes 1)))  = \Delta((\tau_0^a,\tau_1^b)(x))
   = \Delta((\tau_0^b,\tau_1^b)(x))
   = \tau^b(\Delta(x))
   = \tau^b(m).$
  By construction of $\epsilon$, this amounts to $\epsilon(\tau_1^b(m)\otimes 1) = (\tau_0^a,\tau_1^b)(\epsilon(m\otimes 1))$ as desired.
\end{proof}
  Let $\tau_i (i=0,1,2)$ be the $\tau$-action on the $(i+1)$-th component of $\frakS^2\rightarrow A_{\inf}^2$.
  Define $\nabla_0 = \frac{\tau_0-1}{\varphi^{-1}(\mu)u_0}$ and $\nabla_1 = \frac{\tau_1-1}{\varphi^{-1}(\mu)u_1}$.
  Let $X_1$ and $X_2$ be the image of $\frac{u_0-u_1}{E(u_0)}$ and $\frac{u_0-u_2}{E(u_0)}$ in $\frakS^2/E$.
  Then we have that $\nabla_0(X_i) = \lambda'-\lambda' E'(\pi)X_i$ and $\tau_0(X_i) = \lambda' (\zeta_p-1)\pi+ (1-\lambda'\pi(\zeta_p-1)E'(\pi)) X_i$ for $i=1,2$, and that $\nabla_1(X_1) = -\lambda $, $\nabla_1(X_2) = 0$ and $\tau_1(X_1) = X_1-\lambda (\zeta_p-1)\pi$, $\tau_1(X_2) = X_2$.
  Clearly, the matrix of $\nabla_0$ and $\nabla_1$ on $\frakM\otimes_{\frakS,q_0}\frakS^2/E$ are given by $\tilde A$ and $0$, respectively.
  \begin{rmk}\label{Rmk-Nabla_1 as connection modulo pi}
      Similar to (the last paragraph of) Example \ref{Exam-nabla on pd algebra}, one can show that $\nabla_1$ behaves as a connection modulo $\pi$. To see this, it is enough to check that for any $m\geq 1$, $\nabla_1(X_1^{[m]}) \equiv X_1^{[m-1]}\nabla_1(X_1)\mod \pi$. However, we see that
      \begin{equation*}
          \begin{split}
              \nabla_1(X_1^{[m]}) & = \frac{1}{m!}\sum_{i=0}^{m-1}\tau_1(X_1)^iX_1^{m-i}\nabla_1(X_1)\\
              & = \frac{1}{m!}\frac{\tau_1(X_1)^m-X_1^m}{\tau_1(X_1)-X_1}\nabla_1(X_1)\\
              & = \frac{1}{m!}\frac{(X_1-\lambda (\zeta_p-1)\pi)^m-X_1^m}{-\lambda (\zeta_p-1)\pi}\nabla_1(X_1)\\
              & = \frac{1}{m!}\nabla_1(X_1)\sum_{i=0}^{m-1}\binom{m}{i}(-\lambda(\zeta_p-1)\pi)^{m-1-i}X_1^i\\
              & = \nabla_1(X_1)\sum_{i=0}^{m-2}(-\lambda \pi)^{m-1-i}\frac{(\zeta_p-1)^{m-1-i}}{(m-i)!}X_1^{[i]}+X_1^{[m-1]}\nabla_1(X_1)\\
              &\equiv X_1^{[m-1]}\nabla_1(X_1) \mod \pi.
          \end{split}
      \end{equation*}
      The last congruence follows from the fact that $\nu_p((\zeta_p-1)^k) = \frac{k}{p-1}\geq \nu_p((k+1)!)$ for any $k\geq 0$.
  \end{rmk}
\begin{prop}\label{Prop-Key injection}
  The morphism $\Delta: (\frakM\otimes_{\frakS,q_0}\frakS^2)^{\tau_0=\tau_1=1}\rightarrow \frakM$ induced by the multiplication $\Delta: \frakS^2\rightarrow \frakS$ is an injection.
\end{prop}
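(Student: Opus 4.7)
The plan is to adapt the iterative divisibility argument from the injectivity portion of Proposition~\ref{Prop-Key isomorphism}. Given $x\in(\frakM\otimes_{\frakS,q_0}\frakS^2)^{\tau_0=\tau_1=1}$ with $\Delta(x)=0$, I aim to show $x=0$. The observation driving the iteration is that $E(u_2)\in\frakS^2$ is fixed by both $\tau_0$ and $\tau_1$, since these act nontrivially only on $u_0$ and on $u_1$ respectively; this makes $E(u_2)$ ideally suited to the divisibility bootstrap.

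Assuming for the moment that $x\equiv 0\pmod{E(u_0)}$, I use that $E(u_0)$, $E(u_1)$, and $E(u_2)$ generate the same ideal in $\frakS^2$ up to units, together with the $E$-torsion freeness from Lemma~\ref{regular sequence}, to write $x=E(u_2)y$ uniquely. Since $\tau_i$ fixes $E(u_2)$ for $i=0,1$, the identity $\tau_i(E(u_2)y)=E(u_2)y$ forces $\tau_i(y)=y$; and $\Delta(E(u_2)y)=E(u)\Delta(y)=0$ forces $\Delta(y)=0$ by $E$-torsion freeness of $\frakM$. Hence $y$ satisfies the same hypotheses as $x$, and iterating together with $E$-adic completeness yields $x=0$.

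The heart of the argument is therefore the mod-$E$ claim. Let $\bar x$ denote the image of $x$ in $N:=\frakM/E\otimes_{\calO_K}\calO_K\{X_1,X_2\}^{\wedge}_{\pd}$. The hypotheses become $\nabla_0(\bar x)=\nabla_1(\bar x)=0$ together with the vanishing of $\bar x$ at $X_1=X_2=0$ (since $\Delta(X_i)=0$). I would first reduce modulo $\pi$: by Remark~\ref{Rmk-Nabla_1 as connection modulo pi}, modulo $\pi$ the operator $\nabla_1$ behaves as the pd-derivation $-\lambda\,\partial_{X_1}$, acting trivially on $\frakM/E$ and on $X_2$ but with $\nabla_1(X_1)=-\lambda$ a unit. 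Hence $\nabla_1(\bar x)\equiv 0\pmod\pi$ forces $\bar x$ to be independent of $X_1$ modulo $\pi$, say $\bar x\equiv \vec y(X_2)\pmod\pi$. On the subring $\frakM/E\otimes_{\calO_K}\calO_K\{X_2\}^{\wedge}_{\pd}$, the formulas for $\tau_0$ obtained from Lemma~\ref{pd polynomial} show that $\nabla_0$ restricts to precisely the single-variable operator of Lemma~\ref{Lem-Solution modulo u} (with $X$ replaced by $X_2$). Its ``moreover'' part, combined with $\vec y(0)=0$, gives $\vec y\equiv 0\pmod\pi$. A standard $\pi$-adic approximation, as in the proof of Lemma~\ref{Lem-Solution modulo E} (divide by $\pi$ using $\pi$-torsion freeness of $N$, iterate, and invoke $\pi$-adic completeness), then upgrades this to $\bar x=0$.

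The main obstacle lies in cleanly executing the mod-$\pi$ step: one must verify that the two invariance conditions decouple as claimed, and identify the restriction of $\nabla_0$ to $X_1$-independent elements with the one-variable operator of Lemma~\ref{Lem-Solution modulo u}. Both reduce to carefully unpacking the explicit face-map formulas from Lemma~\ref{pd polynomial} (by setting the appropriate variable to zero) and appealing to the pseudo-nilpotency of $\tilde A$ from Definition~\ref{Dfn-prismatic S module} to ensure convergence of the defining series.
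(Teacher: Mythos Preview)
Your proposal is correct and follows essentially the same strategy as the paper: first establish injectivity modulo $(E,\pi)$ by using that $\nabla_1$ acts as $-\lambda\,\partial_{X_1}$ modulo $\pi$ (Remark~\ref{Rmk-Nabla_1 as connection modulo pi}) to kill $X_1$-dependence and then invoke the ``moreover'' part of Lemma~\ref{Lem-Solution modulo u} in the $X_2$-variable, next upgrade to injectivity modulo $E$ by $\pi$-adic iteration, and finally bootstrap to the full statement by dividing by the $(\tau_0,\tau_1)$-invariant element $E(u_2)$ and iterating. The paper phrases the first step as proving that $\Delta$ is an isomorphism modulo $(E(u_0),u_0)$, but your injectivity version suffices and the arguments coincide.
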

\begin{proof}
  We proceed as in the proof of Proposition \ref{Prop-Key isomorphism}.
  At first, we claim that $\Delta$ induces an isomorphism modulo $(E(u_0),u_0)$; that is,
  \[N:=(\frakM\otimes_{\frakS,q_0}\frakS^2/(E(u_0),u_0))^{\nabla_0=\nabla_1=0}\rightarrow \frakM/(E(u),u)\]
  is an isomorphism.

  Let $\vec f(X_1,X_2) = \sum_{i,j\geq 0}\vec f_{ij}X_1^{[i]}X_2^{[j]}\in \calO_K\{X_1,X_2\}^{\wedge}_{\pd}/\pi$ such that $\underline e\vec f(X_1,X_2)\in N$.
  Since $\nabla_1(\underline e\vec f(X_1,X_2))=0$, we deduce from Remark \ref{Rmk-Nabla_1 as connection modulo pi} that
  \[\sum_{i,j\geq 0}\vec f_{ij}\lambda X_1^{[i-1]}X_2^{j} = 0.\]
  Thus, for any $i\geq 1$, $\vec f_{ij} = 0$.
  So $\vec f(X_1,X_2) = \vec f(0,X_2) = \sum_{j\geq 0}\vec f_{0j}X_2^{[j]}$.
  As $\nabla_0(\underline e\vec f(X_1,X_2)) = \nabla_0(\underline e\vec f(0,X_2))= 0$, by Lemma \ref{Lem-Solution modulo u}, the claim is true.

  Next, we claim that $\Delta$ induces an injection modulo $(E(u_2))$; that is, 
  \[(\frakM\otimes_{\frakS,q_0}\frakS^2/(E(u_2)))^{\nabla_0=\nabla_1=0}\rightarrow \frakM/(E(u))\]
  is an injection.
  
  Indeed, let $x\in (\frakM\otimes_{\frakS,q_0}\frakS^2/(E(u_2)))^{\nabla_0=\nabla_1=0}$ such that $\Delta(x) = 0$. Then the above argument shows that $x\equiv 0 \mod \pi$. So there is a $y\in \frakM\otimes_{\frakS,q_0}\frakS^2/(E(u_2))$ such that $x = \pi y$. Since $\nabla_0$ and $\nabla_1$ are $\calO_K$-linear, and both $\frakM\otimes_{\frakS,q_0}\frakS^2/(E(u_2))$ and $\frakM/(E(u))$ are $\pi$-torsion free, we see that $y$ satisfies the same condition as $x$, which implies that $y\equiv 0\mod \pi$. By iteration, we conclude that $x\equiv 0 \mod \pi^n$ for any $n\geq 1$ and hence that $x = 0$.
  
  Now, let $x \in (\frakM\otimes_{\frakS,q_0}\frakS^2)^{\tau_0=\tau_1=1}$ such that $\Delta(x) = 0$.
  By what we have proved above, $x\equiv 0\mod (E(u_2))$; that is, there is an $y\in \frakM\otimes_{\frakS,q_0}\frakS^2$ such that $E(u_2)y=x$.
  Since both $ \frakM\otimes_{\frakS,q_0}\frakS^2$ and $\frakM$ are $E$-torsion free, $y\in (\frakM\otimes_{\frakS,q_0}\frakS^2)^{\tau_0=\tau_1=1}$ and $\Delta(y)=0$. So $y\equiv 0\mod (E(u_2))$.
  By iteration, we show that for any $n\geq 1$, $x\equiv 0\mod E(u_2)^n$, which forces $x= 0$.
\end{proof}

\begin{prop}\label{Prop-Smallness condition gives stratification}
  The morphism $\epsilon$ constructed in  (\ref{Equ-Stratification}) is a stratification satisfying the cocycle condition.
\end{prop}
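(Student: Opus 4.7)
The plan is to combine the injectivity provided by an analogue of Proposition \ref{Prop-Key injection} with the universal characterization of $\epsilon$ via unique $\tau$-invariant lifts. First, the degeneracy condition $\Delta^*(\epsilon) = \id_{\frakM}$ holds automatically by Remark \ref{Rmk-Tian}, so it remains to prove the cocycle identity $p_{12}^*(\epsilon) \circ p_{01}^*(\epsilon) = p_{02}^*(\epsilon)$ as $\frakS^2$-linear maps $\frakM \otimes_{\frakS, q_0} \frakS^2 \to \frakM \otimes_{\frakS, q_2} \frakS^2$. By $\frakS^2$-linearity, it suffices to check they agree on $m \otimes_{q_0} 1$ for each $m \in \frakM$. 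Fix such an $m$ and let $\tilde m = \epsilon(m \otimes_{p_0} 1)$, the unique element of $(\frakM \otimes_{\frakS, p_1} \frakS^1)^{\tau = 1}$ with $\Delta(\tilde m) = m$. Set
\[
N := p_{02}^*(\epsilon)(m \otimes_{q_0} 1), \qquad N' := (p_{12}^*(\epsilon) \circ p_{01}^*(\epsilon))(m \otimes_{q_0} 1),
\]
both elements of $\frakM \otimes_{\frakS, q_2} \frakS^2$.

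I will show (after base change to $A_{\inf}^2$, where the $\tau_0, \tau_1$-actions are defined as in Lemma \ref{Lem-Stratification is tau-equivariant}) that $N$ and $N'$ both lie in the $\tau_0 = \tau_1 = 1$ invariants and both map to $m$ under the multiplication map $\Delta: \frakS^2 \to \frakS$. The computation $\Delta(N) = \Delta(N') = m$ uses the simplicial identities $\Delta \circ p_{ij} = \Delta$ combined with the defining property $\Delta(\tilde m) = m$: unwinding the definition of the pullbacks and the $\frakS^2$-linearity of the maps reduces both claims to $\Delta(\tilde m) = m$. For invariance, $N$ involves only the factors $u_0$ and $u_2$ (since $p_{02}$ misses $u_1$), so it is automatically $\tau_1$-invariant, while its $\tau_0$-invariance transfers from the $\tau$-invariance of $\tilde m$ by functoriality of the base change. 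Analogously, $N'$ is $\tau_0$-invariant because $p_{12}$ misses $u_0$ and $\tau_0$ commutes with the $\frakS^2$-linear map $p_{12}^*(\epsilon)$; whereas the $\tau_1$-invariance of $N'$ is the content of an extension of Lemma \ref{Lem-Stratification is tau-equivariant} stating that $p_{12}^*(\epsilon)$ takes values in the $\tau_1$-invariants, because $p_{12}$ sends the $\tau$-acted factor of $\frakS^1$ onto the $\tau_1$-acted factor of $\frakS^2$, and $\epsilon$ is by construction the unique $\tau$-invariant lift of the identity.

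Finally, the equality $N = N'$ follows from an analogue of Proposition \ref{Prop-Key injection} with $q_0$ replaced by $q_2$: the multiplication map $\Delta: (\frakM \otimes_{\frakS, q_2} \frakS^2)^{\tau_0 = \tau_1 = 1} \to \frakM$ is injective. This can be proved by the same argument as Proposition \ref{Prop-Key injection}, using the symmetry of $\frakS^2$ among its three factors together with Lemma \ref{Lem-Solution modulo E}. The main obstacle is verifying the $\tau_1$-invariance of $N'$, which requires extending Lemma \ref{Lem-Stratification is tau-equivariant} to arbitrary face maps $p_{ij}: \frakS^1 \to \frakS^2$ and confirming that the construction of $\epsilon$ as the inverse of $\Delta|_{\tau\text{-inv}}$ is genuinely compatible with all such base changes — a matter of careful bookkeeping of which $\tau_i$-action acts on which factor after each pullback.
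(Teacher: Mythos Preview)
Your approach is essentially the same as the paper's: both rely on Lemma \ref{Lem-Stratification is tau-equivariant} to propagate $\tau$-equivariance to the pulled-back maps $p_{ij}^*(\epsilon)$, then use the injectivity of $\Delta$ on the $(\tau_0=\tau_1=1)$-invariants (Proposition \ref{Prop-Key injection}) to conclude that both sides of the cocycle identity agree, since each restricts to the identity on $\frakM$ after $\Delta$. Your observation that one literally needs the $q_2$-variant of Proposition \ref{Prop-Key injection} (with the same proof, by symmetry of the factors) is in fact more precise than the paper's phrasing.

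Two small points. First, you omit showing that $\epsilon$ is an \emph{isomorphism}, which is part of the definition of a stratification; the paper dispatches this in one line by noting that the base change of $\epsilon$ along the involution of $\frakS^1$ swapping the two factors furnishes an inverse. Second, the ``extension'' of Lemma \ref{Lem-Stratification is tau-equivariant} you flag as an obstacle is not really extra work: that lemma already gives compatibility of $\epsilon$ with the full $(\tau_0^{\Zp},\tau_1^{\Zp})$-action, and pulling back along any face map $p_{ij}$ carries this to compatibility with the corresponding $\tau_i,\tau_j$ on $\frakS^2$ by plain functoriality. So your worry about ``careful bookkeeping'' is legitimate but does not require new ideas beyond what the lemma provides.
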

\begin{proof}
  The proof essentially appears in \cite[Proposition 3.18]{MT}.
  By construction, the base change of $\epsilon$ along the multiplication map $\Delta:\frakS^1\rightarrow \frakS$ is the identity morphism.

  By Lemma \ref{Lem-Stratification is tau-equivariant}, both sides are $\frakS^2$-linear which are compatible with $\tau_i$-actions for $i=0,1,2$.
  In particular, after restriction to $\frakM$, the both sides take values in $(\frakM\otimes_{\frakS,q_0}\frakS^2)^{\tau_0=\tau_1=1}$ and their compositions with the injection given in Proposition \ref{Prop-Key injection} are the identity maps.
  So we deduce $p^*_{01}(\epsilon)\circ p^*_{12}(\epsilon) = p^*_{02}(\epsilon)$.

  It remains to show that $\epsilon$ is an isomorphism. Indeed, its inverse is given by base changing it along the involution $\frakS^1\simeq \frakS^1$ swapping the two factors. We win!
\end{proof}
\begin{cor}\label{Cor-Hard direction}
    There exists an explicit functor 
    \[\bM:\Mod_{\Prism}(\frakS)\to\Vect((\calO_K),\calO_{\Prism})\]
    from the category of prismatic $\frakS$-modules to the category of prismatic crystals on $(\calO_K)_{\Prism}$.
\end{cor}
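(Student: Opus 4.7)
The plan is to combine Proposition \ref{Prop-Smallness condition gives stratification} with a prismatic analogue of Lemma \ref{MT}, reducing the construction of the functor $\bM$ to verifying naturality of the stratification $\epsilon$ of (\ref{Equ-Stratification}) with respect to morphisms of prismatic $\frakS$-modules. Since $(\frakS,(E))$ covers the final object of ${\rm Shv}((\calO_K)_{\Prism})$ by Lemma \ref{BK Cover}, the same descent argument used for Hodge--Tate crystals (cf.\ \cite[Proposition 2.7]{BS-b}) shows that giving a prismatic crystal $\bM \in \Vect((\calO_K)_{\Prism},\calO_{\Prism})$ is equivalent to giving a finite projective $\frakS$-module $M$ together with a stratification $\varepsilon : M \otimes_{\frakS,p_0} \frakS^1 \xrightarrow{\cong} M \otimes_{\frakS,p_1} \frakS^1$ satisfying the cocycle condition with respect to the \v{C}ech nerve $\frakS^{\bullet}$.

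First, I would define the assignment on objects by sending $\frakM \in \Mod_{\Prism}(\frakS)$ to the pair $(\frakM,\epsilon)$, where $\epsilon$ is the stratification constructed via Proposition \ref{Prop-Key isomorphism} and (\ref{Equ-Stratification}). Proposition \ref{Prop-Smallness condition gives stratification} already verifies both the cocycle condition and that $\epsilon$ is an isomorphism, so the descent lemma immediately produces a prismatic crystal $\bM \in \Vect((\calO_K)_{\Prism},\calO_{\Prism})$.

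Next, I would check functoriality. Given a morphism $f : \frakM \to \frakN$ in $\Mod_{\Prism}(\frakS)$, by definition $f$ is $\frakS$-linear and its $A_{\inf}$-linearization commutes with the $\hat G$-action, hence with $\tau$ and with $\nabla_{\frakM}$ and $\nabla_{\frakN}$. Consequently the induced map $f \otimes \id : \frakM \otimes_{\frakS,p_1} \frakS^1 \to \frakN \otimes_{\frakS,p_1} \frakS^1$ preserves $\tau$-invariants and is compatible with the multiplication $\Delta$. By the universal characterization in Proposition \ref{Prop-Key isomorphism}, this forces $f \otimes \id$ to intertwine $\epsilon_{\frakM}$ and $\epsilon_{\frakN}$, giving a morphism of stratified modules, and therefore a morphism of prismatic crystals. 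Composition and identity are visibly preserved.

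The main obstacle I anticipate is not in the formal descent step but in the naturality argument, because the stratification $\epsilon$ is defined only implicitly as the inverse of the isomorphism in Proposition \ref{Prop-Key isomorphism}. To make naturality rigorous, I would exploit the uniqueness statement of that proposition: any $\frakS$-linear lift of $m \in \frakM$ to $(\frakM \otimes_{\frakS,p_1}\frakS^1)^{\tau=1}$ is unique, so any $\tau$-equivariant $\frakS^1$-linear map $\frakM \otimes_{\frakS,p_1}\frakS^1 \to \frakN \otimes_{\frakS,p_1} \frakS^1$ reducing to $f$ under $\Delta$ agrees with $f \otimes \id$. Applying this to $(f\otimes\id) \circ \epsilon_{\frakM}$ and $\epsilon_{\frakN} \circ (f \otimes \id)$ yields the compatibility, completing the construction of the functor.
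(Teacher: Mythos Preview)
Your proposal is correct and follows essentially the same route as the paper: invoke Proposition \ref{Prop-Smallness condition gives stratification} to get a stratification satisfying the cocycle condition, then appeal to the descent equivalence \cite[Proposition 2.7]{BS-b} (the prismatic analogue of Lemma \ref{MT}) to obtain the crystal. The paper's proof stops there and does not spell out functoriality; your additional naturality argument via the uniqueness in Proposition \ref{Prop-Key isomorphism} is a reasonable way to fill that in, though strictly speaking the equivalence of categories in \cite[Proposition 2.7]{BS-b} already handles morphisms once one checks that a map of prismatic $\frakS$-modules induces a map of the associated stratified modules.
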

\begin{proof}
    For any prismatic $\frakS$-module $\frakM$, by Proposition \ref{Prop-Smallness condition gives stratification}, the morphism in (\ref{Equ-Stratification}) is a stratification on $M$ with respect to the cosimplicial ring $\frakS^{\bullet}$ satisfying the cocycle condition. Now, we can conclude by applying \cite[Proposition 2.7]{BS-b}.
\end{proof}

\subsection{Prismatic crystals induce prismatic $\frakS$-modules}
  Let $\bM\in\Vect((\calO_K)_{\Prism},\calO_{\Prism})$ be a prismatic crystal. Let $\frakM$ and $M_{\inf}$ be the evaluations of $\bM$ at $(\frakS,(E))$ and $(A_{\inf},(\xi))$, respectively. Then we have an isomorphism
  \[\frakM\otimes_{\frakS,\iota}A_{\inf}\cong M_{\inf}\]
  and the $\hat G$-action on $(A_{\inf},\xi)$ induces a $\hat G$-action on $M_{\inf}$ such that $\frakM\subset M_{\inf}^{\Gamma}$ (i.e. Item 1 of Definition \ref{Dfn-prismatic S module} holds true). Here we regard $\frakM$ as a sub-$\frakS$-module of $M_{\inf}$.
  We will see that such an $\frakM$ together with the $\hat G$-action on $M_{\inf}$ defines a prismatic $\frakS$-module once we show that Items 2 and 3 of Definition \ref{Dfn-prismatic S module} hold true. Let $\tau_0$ and $\tau_1$ be as in the previous subsection.

  Similar to Proposition Proposition \ref{Prop-Key isomorphism}, one can prove the following result:
\begin{prop}\label{Prop-Key isomorphism-II}
  The multiplication $\Delta:\frakS^1\rightarrow \frakS$ induces an isomorphism
  \[(\frakM\otimes_{\frakS,p_0}\frakS^1)^{\tau_0=1}\rightarrow \frakM\]
  of $\frakS$-modules.
\end{prop}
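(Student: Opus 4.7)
The plan is to mirror the structure of the proof of Proposition \ref{Prop-Key isomorphism}, but with one crucial substitution: the existence results of Lemmas \ref{Lem-Solution modulo u}, \ref{Lem-Solution modulo E} and Corollary \ref{Cor-Approximation}, which were fueled there by the pseudo-nilpotency of the $\tau$-connection on a prismatic $\frakS$-module, will be replaced here by the stratification $\varepsilon: \frakM\otimes_{\frakS,p_0}\frakS^1 \xrightarrow{\sim} \frakM\otimes_{\frakS,p_1}\frakS^1$ that is part of the given crystal structure on $\bM$, together with the cocycle condition and the identity $\Delta^*\varepsilon = \id_\frakM$. The $\tau_0$-action is to be understood via the embedding $\frakM\otimes_{\frakS,p_0}\frakS^1\hookrightarrow M_{\inf}\otimes_{A_{\inf}}\tilde A_{\inf}^1$ induced by $\iota$, so that $\tau_0$-fixedness is a well-defined condition on the source.

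For surjectivity, given $m \in \frakM$, I will inductively build a sequence $\{x_n\}_{n\geq 0}$ in $\frakM\otimes_{\frakS,p_0}\frakS^1$ so that for every $N \geq 0$ the partial sums $s_N := \sum_{i=0}^{N} x_i E(u_0)^i$ satisfy
\[
(\tau_0-1)(s_N) \equiv 0 \quad\text{and}\quad \Delta(s_N) \equiv m \mod E(u_0)^{N+1}.
\]
By $E(u_0)$-adic completeness of $\frakM\otimes_{\frakS,p_0}\frakS^1$, the resulting series $x = \sum_n x_n E(u_0)^n$ converges to a $\tau_0$-fixed element with $\Delta(x) = m$. The existence of $x_0$ and of each successive correction reduces to a solvability statement modulo $E(u_0)$, which I will extract directly from the stratification: the element $\varepsilon^{-1}(m\otimes 1)$ is a natural candidate for a first approximation, and the cocycle condition allows one to iteratively kill the obstruction to $\tau_0$-invariance order by order in $E(u_0)$.

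For injectivity, if $x\in (\frakM\otimes_{\frakS,p_0}\frakS^1)^{\tau_0=1}$ satisfies $\Delta(x)=0$, then a similar mod-$E(u_0)$ analysis powered by the stratification shows $E(u_0)\mid x$; the quotient $x/E(u_0)$ again lies in the $\tau_0$-fixed part and is killed by $\Delta$, and iteration combined with $E$-torsion-freeness of $\frakM\otimes_{\frakS,p_0}\frakS^1$ forces $x=0$, exactly as in the end of the proof of Proposition \ref{Prop-Key isomorphism}.

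The main obstacle will be making the mod-$E(u_0)$ solvability precise without any a priori pseudo-nilpotency hypothesis, since such a hypothesis is part of the conclusion being pursued rather than an available input. The substitute ingredient is the stratification $\varepsilon$, whose interaction with $\tau_0$ must be analyzed along the lines of the computations carried out in \eqref{Equ-connection on X} and \eqref{Equ-tau on X}; this is what will provide enough room to solve the $\tau_0$-invariance equations inductively and to run the injectivity argument.
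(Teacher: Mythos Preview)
Your proposal overlooks the key simplification that makes the paper's proof essentially one line. Recall that in this proposition the tensor product is along the face map $p_0:\frakS\to\frakS^1$ induced by $\{0\}\to\{1\}\subset\{0,1\}$; that is, the copy of $\frakM$ sits in the \emph{second} factor of $\frakS^1$. Since $\tau_0$ acts only on the first factor (the one carrying $u_0$), it acts \emph{trivially} on $\frakM$. Because $\frakM$ is finite free over $\frakS$, choosing a basis gives $\frakM\otimes_{\frakS,p_0}\frakS^1\cong(\frakS^1)^r$ with $\tau_0$ acting diagonally on the coefficients alone, and hence
\[
(\frakM\otimes_{\frakS,p_0}\frakS^1)^{\tau_0=1}\cong\big((\frakS^1)^{\tau_0=1}\big)^r.
\]
One is therefore reduced to the case $\frakM=\frakS$, which is exactly Proposition~\ref{Prop-Key isomorphism} applied to the trivial prismatic $\frakS$-module $\frakS$ (note that in that proposition $p_1$ identifies $\frakM=\frakS$ with the first factor, and the $\tau$ there is precisely $\tau_0$).

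In particular, the ``main obstacle'' you identify---that pseudo-nilpotency is not yet available---simply does not arise: there is no nontrivial $\tau$-connection on the $\frakM$-factor to control, because $\tau_0$ fixes it by design. The stratification $\varepsilon$ is not needed at all for this proposition; it only enters in the next step (Corollary~\ref{Cor-Key isomorphism-II}), where it transports the isomorphism across to $(\frakM\otimes_{\frakS,p_1}\frakS^1)^{\tau_0=1}$. Your inductive scheme and mod-$E(u_0)$ solvability analysis are therefore unnecessary here, and the part of your plan that would need to be made precise (how $\varepsilon$ replaces pseudo-nilpotency in the lifting step) is the very part you left unresolved.
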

\begin{proof}
    Since we regard $\frakM$ as a sub-$\frakS$-module of $\frakM\otimes_{\frakS,p_0}\frakS^1$ via the face map $p_0$ induced by  $\{0\}\to\{1\}\subset\{0,1\}$, we see that $\tau_0$ acts on $\frakM$ trivially. So we are reduced to the case where $\frakM = \frakS$. In this situation, the result follows from Proposition \ref{Prop-Key isomorphism} (for $\frakM=\frakS$).
\end{proof}

\begin{cor}\label{Cor-Key isomorphism-II}
  Let $\epsilon: \frakM\otimes_{\frakS,p_0}\frakS^1\rightarrow \frakM\otimes_{\frakS,p_1}\frakS^1$ be a stratification induced by $\bM$. Then the multiplication $\Delta$ induces an isomorphism
  \[(\frakM\otimes_{\frakS,p_1}\frakS^1)^{\tau_0=1}\rightarrow \frakM\]
  of $\frakS$-modules.
\end{cor}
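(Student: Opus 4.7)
The plan is to reduce the corollary to Proposition \ref{Prop-Key isomorphism-II} by transporting the $p_0$-side isomorphism to the $p_1$-side through $\epsilon$. The two key ingredients are: (i) the cocycle relation $\Delta^*(\epsilon)=\id_\frakM$, which amounts to the identity $\Delta\circ\epsilon=\Delta$ between maps $\frakM\otimes_{\frakS,p_0}\frakS^1\to\frakM$; and (ii) the $\tau_0$-equivariance of $\epsilon$. Given these, the composite
\[
(\frakM\otimes_{\frakS,p_0}\frakS^1)^{\tau_0=1}\xrightarrow{\;\epsilon\;}(\frakM\otimes_{\frakS,p_1}\frakS^1)^{\tau_0=1}\xrightarrow{\;\Delta\;}\frakM
\]
coincides with the $p_0$-side $\Delta$, which is an isomorphism by Proposition \ref{Prop-Key isomorphism-II}; since the first arrow is also an isomorphism, so is the second, as desired.

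To prove the $\tau_0$-equivariance in (ii), I would base-change $\epsilon$ along the faithfully flat map $\iota\colon\frakS\to A_{\inf}$ to get an $A_{\inf}^1$-linear isomorphism
\[
\epsilon_{\inf}\colon M_{\inf}\otimes_{A_{\inf},p_0}A_{\inf}^1\xrightarrow{\;\sim\;}M_{\inf}\otimes_{A_{\inf},p_1}A_{\inf}^1.
\]
By the crystal property applied to $\bM$, both sides are canonically isomorphic to $\bM(A_{\inf}^1,\xi A_{\inf}^1)$, and under these identifications $\epsilon_{\inf}$ becomes the identity. The Galois element $\tau_0$ acts on $(A_{\inf}^1,\xi A_{\inf}^1)$ as a prism automorphism through the first factor; by functoriality of $\bM$, this endows $\bM(A_{\inf}^1,\xi A_{\inf}^1)$ with a $\tau_0$-action, and the canonical identifications above transport it to the diagonal $\tau_0$-actions on the two tensor products. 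Consequently, $\epsilon_{\inf}$ commutes with $\tau_0$, and hence so does $\epsilon$ on the sub-$\frakS^1$-modules $\frakM\otimes_{\frakS,p_i}\frakS^1$.

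The main obstacle is this $\tau_0$-equivariance check. The awkwardness is that on the left-hand side of $\epsilon$ the $\tau_0$-action is trivial on the $\frakM$-factor (since $p_0$ at the $A_{\inf}$-level lands in the second factor of $A_{\inf}^1$, which $\tau_0$ fixes), while on the right-hand side $\tau_0$ also twists the $\frakM$-factor through the $\hat G$-action on $M_{\inf}$. Spelling out these two a priori different $\tau_0$-actions and showing they are precisely intertwined by $\epsilon$ requires a careful unwinding of how the crystal condition and the Galois action interact on the cover $(A_{\inf}^1,\xi A_{\inf}^1)$ of the final object of ${\rm Shv}((\calO_K)_{\Prism})$.
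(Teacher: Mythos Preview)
Your proposal is correct and follows essentially the same route as the paper: compose the known isomorphism of Proposition \ref{Prop-Key isomorphism-II} with (the inverse of) $\epsilon$, and use $\Delta^*(\epsilon)=\id_{\frakM}$ to identify the resulting composite with the $p_1$-side $\Delta$. The paper's proof is the one-line version of this; your additional paragraph on $\tau_0$-equivariance simply makes explicit what the paper leaves implicit, namely that since $\epsilon$ arises as the descent datum of the crystal $\bM$ and $\tau_0$ is a morphism of prisms on $A_{\inf}^1$, functoriality forces $\epsilon$ (after base change) to intertwine the two $\tau_0$-actions, so it restricts to the invariants.
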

\begin{proof}
  The inverse of $\epsilon$ composed with the isomorphism in Proposition \ref{Prop-Key isomorphism-II} implies that
  \[\Delta:(\frakM\otimes_{\frakS,p_1}\frakS^1)^{\tau_0=1}\rightarrow \frakM\]
  is an isomorphism as desired.
\end{proof}

 \begin{prop}\label{Prop-Condition 2 and 3}
     Fix a prismatic crystal $\bM\in\Vect((\calO_K)_{\Prism},\calO_{\Prism})$ and let $\frakM$ be as above. Then the conditions 2 and 3 of Definition \ref{Dfn-prismatic S module} hold true for $\frakM$.
 \end{prop}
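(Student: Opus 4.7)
The plan is to leverage the Hodge--Tate reduction. The prismatic crystal $\bM$ induces a Hodge--Tate crystal $\overline\bM$ on $(\calO_K)_{\Prism}$ whose evaluation at $(\frakS,(E))$ is $\overline\frakM = \frakM/E\frakM$. By Theorem \ref{Thm-HTC} this corresponds to a pair $(\overline\frakM,\phi)$ satisfying $\lim_{n}\prod_{i=0}^n(\phi+iE'(\pi))=0$, and by Theorem \ref{representation comes from crystal} the induced $\hat G$-action on $\overline\frakM\otimes_{\calO_K}\calO_{\hat K_{\cyc,\infty}}$ is given by the cocycle $U(\tau) = (1-E'(\pi)\pi\lambda(1-\zeta_p))^{-\phi/E'(\pi)}$. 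This cocycle is precisely the reduction modulo $\xi$ of the $\hat G$-action on $M_{\inf}$ attached to $\bM$, so Conditions (2), (3a), (3b) become readable from the explicit Hodge--Tate formula, provided (2) is additionally lifted from $A_{\inf}/\xi$ to $A_{\inf}$.

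For Condition (2), I would use that the stratification $\epsilon\colon \frakM\otimes_{\frakS,p_0}\frakS^1\xrightarrow{\sim}\frakM\otimes_{\frakS,p_1}\frakS^1$ attached to $\bM$ satisfies $\Delta^*(\epsilon)=\id$, so $\epsilon(m\otimes 1)-m\otimes 1$ lies in $\frakM\otimes_{\frakS,p_1}J$ where $J=\ker(\Delta\colon\frakS^1\to \frakS)$ is the $\delta$-ideal generated by $X=(u_0-u_1)/E(u_0)$. The $\tau$-action on $M_{\inf}$ is recovered by evaluating $\epsilon$ along the prism map $\frakS^1\to A_{\inf}$ sending $u_0\mapsto [\pi^\flat]$, $u_1\mapsto [\epsilon][\pi^\flat]$, under which $X\mapsto -\varphi^{-1}(\mu)\lambda [\pi^\flat]\in \varphi^{-1}(\mu)u A_{\inf}$. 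Since $\delta([\pi^\flat])=0$ and the expansion of $(1+\varphi^{-1}(\mu))^p=1+\mu$ yields $\delta(\varphi^{-1}(\mu))\in \varphi^{-1}(\mu)A_{\inf}$, the ideal $\varphi^{-1}(\mu)u A_{\inf}$ is $\delta$-stable, so all of $J$ maps into it. This gives $(\tau-1)(\frakM)\subset \varphi^{-1}(\mu)u M_{\inf}$.

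For Conditions (3a) and (3b), I would expand the cocycle to $U(\tau)-I=\sum_{n\ge 1}\frac{1}{n!}\prod_{i=0}^{n-1}(\phi+iE'(\pi))(\pi\lambda(1-\zeta_p))^n$ and divide by $\varphi^{-1}(\mu)u\equiv(\zeta_p-1)\pi\bmod\xi$, obtaining $\lambda^{-1}\tilde A\equiv -\phi\bmod(\xi,\pi(1-\zeta_p))$. A further reduction modulo $[\pi^\flat]$ gives $\lambda^{-1}\tilde A\equiv -\phi\bmod\pi$ in $M_r(k)$, proving (3a). Using $d_q(E)\equiv E'(\pi)\bmod\xi$, the matrix $\prod_{i=0}^{p-1}(\lambda^{-1}\tilde A+id_q(E))$ reduces modulo $(\xi,[\pi^\flat])$ to $\prod_{i=0}^{p-1}(-\phi+iE'(\pi))\in M_r(k)$. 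The Hodge--Tate nilpotency $\prod_{i=0}^n(\phi+iE'(\pi))\to 0$ in $M_r(\calO_K)$, combined with the facts that $k=\calO_K/\pi$ is a field and $\{iE'(\pi)\bmod\pi\}_i$ is periodic of period $p$, forces $\prod_{j=0}^{p-1}(\phi+jE'(\pi))\bmod\pi$ to be nilpotent in $M_r(k)$, and the same holds for the $-\phi$ version by a sign reindexing. Hence some power of the product lands in $(\xi,[\pi^\flat])M_r(A_{\inf})$, and successive powers land in higher powers of the ideal, yielding the $(\xi,[\pi^\flat])$-adic topological nilpotency of (3b).

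The main obstacle will be Condition (2): it is a statement in $A_{\inf}$, not merely modulo $\xi$, and so cannot be read off directly from the Hodge--Tate cocycle. The $\delta$-ideal argument above handles it cleanly, but it relies on the nontrivial fact that $\varphi^{-1}(\mu)u A_{\inf}$ is itself a $\delta$-ideal, which in turn requires $\delta([\pi^\flat])=0$ together with a direct computation of $\delta(\varphi^{-1}(\mu))$ via the binomial expansion of $(1+\varphi^{-1}(\mu))^p$.
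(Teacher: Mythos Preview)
Your argument is correct. For Condition~(3), your approach coincides with the paper's: both reduce to the Hodge--Tate crystal and invoke the computation behind Theorem~\ref{poly-nilpotency}, namely that $\lambda^{-1}\tilde A\equiv -\phi_M\bmod(\xi,[\pi^\flat])$ together with the convergence $\prod_{i=0}^n(\phi_M+iE'(\pi))\to 0$.

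For Condition~(2), however, you take a genuinely different route. The paper uses Corollary~\ref{Cor-Key isomorphism-II} to identify $\frakM$ with $(\frakM\otimes_{\frakS,p_1}\frakS^1)^{\tau_0=1}$, observes that $\tau_1$ acts trivially on the $\frakM$-factor, and thereby reduces the crystalline condition to the case $\frakM=\frakS$, which is Lemma~\ref{tau connection}. Your argument is more direct: you read off $(\tau-1)(m)$ by specializing the stratification $\epsilon$ along the prism map $\frakS^1\to A_{\inf}$ with $u_0\mapsto[\pi^\flat]$, $u_1\mapsto[\epsilon][\pi^\flat]$, and then show that the augmentation ideal $J=(X)_\delta\subset\frakS^1$ lands in $\varphi^{-1}(\mu)u\,A_{\inf}$ because the latter is a $\delta$-ideal (using $\delta([\pi^\flat])=0$ and $\delta(\varphi^{-1}(\mu))\in\varphi^{-1}(\mu)A_{\inf}$). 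This bypasses the key isomorphism entirely and makes the mechanism transparent: the crystalline condition is simply the statement that $J$ maps into $\varphi^{-1}(\mu)u\,A_{\inf}$ under the $(\iota,\tau\iota)$-specialization. The paper's approach, on the other hand, has the advantage of fitting into the same framework used to construct the inverse functor in \S5.1, so it highlights the symmetry between the two directions of Theorem~\ref{crystal as S module}.
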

 \begin{proof}
  To see $(\tau-1)\frakM\subset \varphi^{-1}(\mu)uM_{\inf}$, by Corollary \ref{Cor-Key isomorphism-II}, it is enough to show that 
  \[(\tau_1-1)(\frakM\otimes_{\frakS,p_1}\frakS^1)^{\tau_0=1}\subset \varphi^{-1}(\mu)u_1(\frakM\otimes_{\frakS,p_1}\widetilde \frakS^1)^{\tau_0=1},\]
  where $\widetilde \frakS^1 = \frakS\widehat \otimes_{\rW(k)}\Ainf\{\frac{u_0-u_1}{E(u_1)}\}^{\wedge}_{(p,E)}$ (and then pull back above inclusion along $\Delta$).
  To this end, by noting that $\tau_1-1$ acts on $\frakM$ trivially (as $p_1$ is induced by $\{0\}\to\{0\}\subset\{0,1\}$), we are reduced to the case for $\frakM = \frakS$. But this follows from Lemma \ref{tau connection} immediately.

  Now, the above argument allows us to define a ``$\tau$-connection'' 
  \[\nabla_{\frakM}:=\frac{\tau-1}{\varphi^{-1}(\mu)u}: \frakM\to M_{\inf}\]
  as required in Item 3 of Definition \ref{Dfn-prismatic S module}. It remains to check that the conditions (a) and (b) there are satisfied. However, since $\bM$ induces a Hodge--Tate crystal in $\Vect((\calO_K)_{\Prism},\overline \calO_{\Prism})$, the proof of Theorem \ref{poly-nilpotency} also works in this situation. We are done.
 \end{proof}

 \begin{cor}\label{Cor-Easy direction}
     There exists an explicit functor
     \[\bM^{-1}:\Vect((\calO_K)_{\Prism},\calO_{\Prism})\to \Mod_{\Prism}(\frakS)\]
     from the category of prismatic crystals on $(\calO_K)_{\Prism}$ to the category of prismatic $\frakS$-modules.
 \end{cor}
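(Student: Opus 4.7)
The plan is to bundle the construction carried out in Proposition \ref{Prop-Condition 2 and 3} into an honest functor. Given a prismatic crystal $\bM\in\Vect((\calO_K)_{\Prism},\calO_{\Prism})$, I would set $\frakM:=\bM(\frakS,(E))$ (a finite free $\frakS$-module by the crystal hypothesis) and $M_{\inf}:=\bM(\AAinf,(\xi))$. The crystal property applied to the morphism of prisms $\iota:(\frakS,(E))\to(\AAinf,(\xi))$ gives a canonical identification $\frakM\otimes_{\frakS,\iota}\AAinf\xrightarrow{\cong}M_{\inf}$, under which I will always view $\frakM$ as an $\frakS$-submodule of $M_{\inf}$.

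Next I would equip $M_{\inf}$ with a continuous semi-linear $\hat G$-action using functoriality: every $g\in\hat G$ acts on $(\AAinf,(\xi))$ as an automorphism of prisms over $\calO_K$, and the resulting automorphism of $\bM(\AAinf,(\xi))$ defines the action. Since $\Gamma\subset\hat G$ fixes the prism $(\frakS,(E))$ pointwise, the inclusion $\frakM\subset M_{\inf}$ lands in $M_{\inf}^\Gamma$, which verifies item (1) of Definition \ref{Dfn-prismatic S module}. Items (2) and (3)(a)(b) are precisely the content of Proposition \ref{Prop-Condition 2 and 3}: the $\tau$-stability modulo $\varphi^{-1}(\mu)u$ was deduced there via Corollary \ref{Cor-Key isomorphism-II}, and the pseudo-nilpotency of $\lambda^{-1}\tilde A$ was obtained by invoking the proof of Theorem \ref{poly-nilpotency}, which depended only on the induced Hodge--Tate crystal $\bM\otimes^{\rL}_{\calO_{\Prism}}\overline\calO_{\Prism}$ and the explicit description of stratifications in Proposition \ref{matrix of stratification}.

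It remains to check functoriality. A morphism of prismatic crystals $f:\bM\to\bN$ evaluates to compatible $\frakS$-linear (resp.\ $\AAinf$-linear) maps on $\frakM\to\frakN$ and $M_{\inf}\to N_{\inf}$; these intertwine the $\hat G$-actions since $f$ is a map of $\calO_{\Prism}$-module sheaves, so $f(\frakS,(E))$ is automatically a morphism of prismatic $\frakS$-modules in the sense of Definition \ref{Dfn-prismatic S module}. Thus the assignment $\bM\mapsto\frakM$ extends to a functor $\bM^{-1}:\Vect((\calO_K)_{\Prism},\calO_{\Prism})\to\Mod_{\Prism}(\frakS)$ as claimed.

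Since all the real work has been done in Propositions \ref{Prop-Key isomorphism-II}--\ref{Prop-Condition 2 and 3}, there is no genuine obstacle here; the corollary is essentially a bookkeeping statement packaging those results. The only place where one has to be mildly careful is to ensure that the $\hat G$-action on $M_{\inf}$ constructed via functoriality is continuous, which follows from the $(p,[\pi^\flat])$-adic continuity of the $\hat G$-action on $\AAinf$ together with finite generation of $M_{\inf}$ over $\AAinf$.
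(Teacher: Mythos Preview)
Your proposal is correct and follows the same approach as the paper. The paper's proof is a single sentence referring to Proposition \ref{Prop-Condition 2 and 3} together with the setup at the start of the subsection (evaluate $\bM$ at $(\frakS,(E))$ and $(A_{\inf},(\xi))$, use the crystal isomorphism, and let $\hat G$ act via functoriality); you have simply unpacked these steps and added the routine functoriality and continuity checks, which the paper leaves implicit.
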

 \begin{proof}
     This follows from Proposition \ref{Prop-Condition 2 and 3} together with arguments at the beginning of this subsection.
 \end{proof}
  Now, we are able to prove our main theorem: 
 \begin{proof}[Proof of Theorem \ref{crystal as S module}]
     The result follows from Corollary \ref{Cor-Hard direction} and Corollary \ref{Cor-Easy direction} by checking $\bM$ and $\bM^{-1}$ are quasi-inverses of each other (from their explicit constructions).
 \end{proof}
 
\section{Appendix}\label{Appendix}
In Appendix, we show that for a Hodge--Tate crystal $\bM\in\Vect((\calO_K)_{\Prism},\overline \calO_{\Prism})$,
\[\tau^{\geq 2}\rR\Gamma_{\Prism}(\bM) = 0.\]
We follow the notations adapted in Subsection \ref{SSec-compare cohomologies}. Put $\alpha = E'(\pi)$ and for any $s\geq 2$ and $1\leq i\leq s$, define $E_i\in\bN^s$ the generator of the $i$-th component of $\bN^s$ such that for any $i_1,\dots,i_s\geq 0$,
\[a_{i_1,\dots,i_s} = a_{i_1E_1+\dots i_sE_s}.\]

By (\ref{Equ-differential in explicit way-III}), for any $\vec f(\underline X)=\sum_{I\in\bN^s}a_{I}\underline X^{[I]}$,
\begin{equation}\label{Equ-differentials for s>2}
    \begin{split}
        &d^s(\underline e\vec f(\underline X))\\
        =&\underline e\sum_{j_1,\dots,j_s,l_1,\dots,l_s\geq 0}(1-\alpha X_1)^{-(j_1+\dots+j_s)-(l_1+\dots+l_s)-\frac{A}{\alpha}}a_{j_1+l_1,\dots,j_s+l_s}(-1)^{j_1+\dots+j_s}X_1^{[j_1]}\cdots X_1^{[j_s]}X_2^{[l_1]}\cdots X_{s+1}^{[l_s]}\\
        &+\underline e\sum_{k=1}^{s+1}(-1)^k\sum_{i_1,\dots,i_s\geq 0}a_{i_1,\dots,i_s}X_1^{[i_1]}\cdots X_{k-1}^{[i_{k-1}]}X_{k+1}^{[i_k]}\cdots X_{s+1}^{[i_s]}.
    \end{split}
\end{equation}
In particular, if $\underline e\vec f(\underline X)$ belongs to $\Ker(d^s)$, then
\begin{equation}\label{Equ-kernel d for s>2}
    \begin{split}
        &\sum_{j_1,\dots,j_s,l_1,\dots,l_s\geq 0}(1-\alpha X_1)^{-(j_1+\dots+j_s)-(l_1+\dots+l_s)-\frac{A}{\alpha}}a_{j_1+l_1,\dots,j_s+l_s}(-1)^{j_1+\dots+j_s}X_1^{[j_1]}\cdots X_1^{[j_s]}X_2^{[l_1]}\cdots X_{s+1}^{[l_s]}\\
        &+\sum_{k=1}^{s+1}(-1)^k\sum_{i_1,\dots,i_s\geq 0}a_{i_1,\dots,i_s}X_1^{[i_1]}\cdots X_{k-1}^{[i_{k-1}]}X_{k+1}^{[i_k]}\cdots X_{s+1}^{[i_s]} = 0.
    \end{split}
\end{equation}
\begin{lem}\label{Lem-kernel-constant-s>2}
  Keep notations as above. If $\underline e\vec f(\underline X)\in\Ker(d^s)$, then the coefficients $\{a_{0,l_2,\dots,l_s}\}_{l_2,\dots,l_s\geq 0}$ are uniquely determined by $\{a_{0,\dots,0,j_{2i+1},\dots,j_s}\}_{i\geq 1;j_{2i+1}\geq 1,j_{2i+2},\dots,j_s\geq 0}$.
  More precisely, for any $i\geq 1$, if $j_{2i},\dots,j_s\geq 1$,then
      $a_{0,\dots,0,j_{2i},\dots,j_s}=0$, and
  if $j_{2i}\geq 1$ and $j_{2i+1}\cdots j_s = 0$, then
  $a_{0,\dots,0,j_{2i},\dots,j_s}$ is a linear combination of finite elements in $\{a_{0,\dots,0,k_{2i+1},\dots,k_s}\}_{k_{2i+1}\geq 1,k_{2i+2},\dots,k_s\geq 0}$ with coefficients in $\bZ$, which are independent of $\vec f$.
\end{lem}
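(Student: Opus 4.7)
The plan is to derive the lemma by substituting $X_1 = 0$ into the kernel equation (\ref{Equ-kernel d for s>2}) and then extracting coefficients of monomials. Under this substitution the first sum of (\ref{Equ-kernel d for s>2}) collapses to $\sum_I a_I X_2^{[i_1]} \cdots X_{s+1}^{[i_s]}$ (only $j_1 = \cdots = j_s = 0$ survives), which exactly cancels the $k = 1$ term. Relabelling $Y_l := X_{l+1}$ for $1 \leq l \leq s$ and writing $b_{J'} := a_{0, J'}$ for $J' \in \bN^{s-1}$, and then extracting the coefficient of $Y_1^{[m_1]} \cdots Y_s^{[m_s]}$ for an arbitrary $\vec m = (m_1, \ldots, m_s) \in \bN^s$, one arrives at the fundamental relation
\begin{equation*}
\sum_{k \in Z(\vec m)} (-1)^{k+1} b_{\vec m \setminus m_k} = 0, \qquad Z(\vec m) := \{k : m_k = 0\},
\end{equation*}
where $\vec m \setminus m_k \in \bN^{s-1}$ is obtained from $\vec m$ by omitting the $k$-th coordinate.

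The key step is to apply this relation with the specific choice $\vec m = (0, J')$ for each nonzero $J' \in \bN^{s-1}$. If $J'$ has first nonzero coordinate at position $r$ (so $j_1' = \cdots = j_{r-1}' = 0$ and $j_r' \geq 1$), then $\vec m$ has leading zeros precisely at positions $1, \ldots, r$ and $m_{r+1} = j_r' \geq 1$; hence $Z(\vec m) = \{1, \ldots, r\} \cup \{l \geq r+2 : j_{l-1}' = 0\}$. For every $k \in \{1, \ldots, r\}$ one has $\vec m \setminus m_k = J'$, whereas for each extra $l \geq r+2$ with $j_{l-1}' = 0$ the vector $\vec m \setminus m_l$ has exactly $r$ leading zeros and first nonzero coordinate at position $r + 1$. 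The relation therefore reads
\begin{equation*}
\Bigl( \sum_{k=1}^{r} (-1)^{k+1} \Bigr) b_{J'} \;=\; -\sum_{\substack{l \geq r+2 \\ j_{l-1}' = 0}} (-1)^{l+1} b_{\vec m \setminus m_l}.
\end{equation*}

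The decisive observation is the parity identity $\sum_{k=1}^{r}(-1)^{k+1} = \mathbf{1}_{r \text{ odd}}$. When $r$ is odd, one solves for $b_{J'}$ as an explicit $\bZ$-linear combination with $\pm 1$ coefficients (manifestly independent of $\vec f$) of finitely many $b_{K'}$, each having first nonzero coordinate at the \emph{even} position $r + 1$; when the right-hand side is empty (i.e.\ $j_l' \geq 1$ for every $l \geq r$) this specializes to $b_{J'} = 0$. When $r$ is even, the coefficient of $b_{J'}$ vanishes and $b_{J'}$ itself is unconstrained by this particular $\vec m$, consistent with its role as a free parameter. Translating back via $l_{k+1} = j_k'$, the condition ``$J'$ has first nonzero at odd $r = 2i - 1$'' becomes ``$(l_1, \ldots, l_s)$ has $2i - 1$ leading zeros with $l_{2i} \geq 1$'', and ``$K'$ has first nonzero at the even position $2i$'' corresponds exactly to the free set $\{a_{0,\dots,0, j_{2i+1}, \dots, j_s}\}_{j_{2i+1} \geq 1}$; this yields both ``more precisely'' claims. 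The remaining all-zero case $J' = \vec 0$ is handled in the same way by $\vec m = \vec 0 \in \bN^s$, which forces $b_{\vec 0} = 0$ when $s$ is odd.

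The main obstacle I anticipate is pure bookkeeping: four indexings (the $X$'s, the $Y$'s, $J'$, and the $a$-notation of the lemma) must be reconciled with an unavoidable off-by-one between the parity of leading zeros in $J'$ and in $a$. The combinatorial core, $\sum_{k=1}^{r}(-1)^{k+1} = \mathbf{1}_{r\text{ odd}}$, is itself trivial; essentially the entire subtlety of the lemma is captured by this single sign computation together with the clean dichotomy it produces.
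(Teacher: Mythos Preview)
Your proof is correct and follows essentially the same route as the paper: both set $X_1=0$ in (\ref{Equ-kernel d for s>2}), obtain the family of relations that you call $\sum_{k\in Z(\vec m)}(-1)^{k+1}b_{\vec m\setminus m_k}=0$ (the paper writes this as (\ref{Equ-constant of kernel-II}) after further specializing $X_{t_1}=\cdots=X_{t_r}=0$), and then specialize $\vec m$ to have a prescribed block of leading zeros. Your explicit isolation of the parity identity $\sum_{k=1}^r(-1)^{k+1}=\mathbf{1}_{r\text{ odd}}$ makes transparent exactly why the odd/even dichotomy in the number of leading zeros governs which coefficients are determined versus free; in the paper this is implicit in the passage from $\sum_{k=2}^{2i}(-1)^k a_{\cdots}=0$ to $a_{\cdots}=0$.
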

\begin{proof}
  Let $X_1=0$ in (\ref{Equ-kernel d for s>2}), we get
  \begin{equation}\label{Equ-constant of kernel-I}
      \sum_{k=2}^{s+1}(-1)^k\sum_{l_1,\dots,l_{k-2},l_k,\dots,l_s\geq 0}a_{0,l_1,\dots,l_{k-2},l_k,\dots,l_s}X_2^{[l_1]}\cdots X_{k-1}^{[l_{k-2}]}X_{k+1}^{[l_k]}\cdots X_{s+1}^{[l_s]} = 0.
  \end{equation}
  For any $r\geq 1$ and $2\leq t_1<\dots<t_r\leq s+1$, let $X_{t_1}=\dots=X_{t_r}=0$ and consider the coefficients of $\prod_{2\leq j\leq s+1,j\neq t_1,\dots,t_r}X_j^{[l_{j-1}]}$ with all $l_{j-1}\geq 1$. Then we have
  \begin{equation}\label{Equ-constant of kernel-II}
      \sum_{k=1}^r(-1)^{t_k}a_{\sum_{1\leq q\leq t_k-2;q+1\neq t_1,\dots,t_{k-1}}l_qE_{q+1}+\sum_{t_k\leq q\leq s;q+1\neq t_{k+1},\dots,t_r}l_qE_q} = 0.
  \end{equation}

  Now assume $i\geq 1$. Let $r=2i-1$ and $t_1=2,t_2=3,\dots,t_{2i-1}=2i$ in (\ref{Equ-constant of kernel-II}). We have
  \[\sum_{k=2}^{2i}(-1)^ka_{\sum_{2i\leq q\leq s}l_qE_q} = 0.\]
  This implies that $a_{0,\dots,0,l_{2i},\dots,l_s} = 0$ as desired.

  Let $r\geq 2i$ and $t_1=2,t_2=3,\dots,t_{2i-1}=2i, 2i+1\neq t_{2i}<\dots<t_{2r}$. We have
  \[\sum_{k=2}^{2i}(-1)^ka_{\sum_{2i\leq q\leq s;q+1\neq t_{2i},\dots,t_r}l_qE_q}+\sum_{h=2i}^r(-1)^{t_h}a_{\sum_{2i\leq q\leq t_h-2;q+1\neq t_{2i},\dots,t_{h-1}}l_qE_{q+1}+\sum_{t_h\leq q\leq s;q+1\neq t_{h+1},\dots,t_r}l_qE_q}=0.\]
  This implies that
  \[a_{\sum_{2i\leq q\leq s;q+1\neq t_{2i},\dots,t_r}l_qE_q}=\sum_{h=2i}^r(-1)^{t_h+1}a_{\sum_{2i\leq q\leq t_h-2;q+1\neq t_{2i},\dots,t_{h-1}}l_qE_{q+1}+\sum_{t_h\leq q\leq s;q+1\neq t_{h+1},\dots,t_r}l_qE_q}.\]
  Note that all $a_I$'s appearing in the right hand side have subscripts whose first $2i$ terms are zero. Since $l_{2i}\neq 0$ and $r\geq 2i$, we see that $a_{0,\dots,0,l_{2i},\dots,l_s}$ is a linear combination of $\{a_{0,\dots,0,k_{2i+1},\dots,k_s}\}_{k_{2i+1}\geq 1,k_{2i+2},\dots,k_s\geq 0}$ with coefficients in $\bZ$ as desired.
\end{proof}
\begin{lem}\label{Lem-kernel-X1-s>2}
  Keep notations as above. If $\underline e\vec f(\underline X)\in\Ker(d^s)$, then the coefficients $\{a_{l_1,l_2,\dots,l_s}\}_{l_1\geq 1,l_2,\dots,l_s\geq 0}$ are uniquely determined by $\{a_{1,j_{2},\dots,j_s}\}_{j_2,\dots,j_s\geq 0}$.
  More precisely, for any $I\in\bN_{\geq 1}\times\bN^{s-1}$ and any $J\in\{1\}\times \bN^{s-1}$, there exist integers $z_{I,J}^{(s)}$, which are independent of $\vec f$, such that
  \[a_I=\sum_{J\in \{1\}\times \bN^{s-1},|J|\leq |I|}z_{I,J}^{(s)}\prod_{i=|J|}^{|I|-1}(A+i\alpha)a_J,\]
  where we set $\prod_{i=|J|}^{|I|-1}(A+i\alpha) = 1$ if $|J|=|I|$.
\end{lem}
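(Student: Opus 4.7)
The plan is to extract the coefficient of $X_1^{[1]}$ from (\ref{Equ-kernel d for s>2}) to obtain a recursion that reduces the first coordinate $i_1$ by one, and then to induct on $i_1$.

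First I would match the coefficient of the monomial $X_1^{[1]} X_2^{[m_1]}\cdots X_{s+1}^{[m_s]}$ in the two sums appearing in (\ref{Equ-kernel d for s>2}). Writing $M=(m_1,\dots,m_s)$: the first (stratification) sum contributes $(A+|M|\alpha)\,a_M$ from the degree-one term of $(1-\alpha X_1)^{-|M|-A/\alpha}$, together with $-\sum_{k=1}^{s}a_{M+E_k}$ from the terms where exactly one $X_1^{[j_k]}$ supplies the factor $X_1$. From the second (face-map) sum, the $k=1$ piece contains no $X_1$ and drops out, while each $k\geq 2$ contributes $(-1)^k a_{1,m_1,\dots,\widehat{m_{k-1}},\dots,m_s}$ provided the corresponding $m_{k-1}=0$ (so that the missing variable $X_k$ is compatible with the target monomial). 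Setting the total to zero and isolating $a_{M+E_1}$ gives the recursion
\[
a_{M+E_1}=(A+|M|\alpha)\,a_M-\sum_{k=2}^{s}a_{M+E_k}+\sum_{j:\,m_j=0}(-1)^{j+1}a_{1,m_1,\dots,\widehat{m_j},\dots,m_s}.
\]

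Next I would induct on the first coordinate $i_1$ of $I$. The base case $i_1=1$ is immediate, with $z_{I,I}^{(s)}=1$ and the empty product equal to $1$. For $i_1\geq 2$, apply the recursion with $M=I-E_1$: both $I-E_1$ and $I-E_1+E_k$ (for $k\geq 2$) have first coordinate $i_1-1$, so by induction each admits a representation of the claimed form. Multiplying the representation of $a_{I-E_1}$ by the factor $(A+(|I|-1)\alpha)$ extends its product $\prod_{i=|J|}^{|I|-2}(A+i\alpha)$ to $\prod_{i=|J|}^{|I|-1}(A+i\alpha)$, while the $a_{I-E_1+E_k}$ contributions already carry the product $\prod_{i=|J|}^{|I|-1}(A+i\alpha)$. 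The correction terms $\pm a_{1,m_1,\dots,\widehat{m_j},\dots,m_s}$ correspond to indices $J\in\{1\}\times\bN^{s-1}$ with $|J|=|M|+1=|I|$, for which the prescribed product $\prod_{i=|I|}^{|I|-1}(A+i\alpha)$ is empty. Collecting terms defines the integer coefficients $z_{I,J}^{(s)}$ by the recursion
\[
z_{I,J}^{(s)}=z_{I-E_1,J}^{(s)}-\sum_{k=2}^{s}z_{I-E_1+E_k,J}^{(s)}+(\text{contribution from correction terms}),
\]
which is visibly independent of $\vec{f}$.

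The main obstacle is bookkeeping: one must correctly absorb the ``boundary'' contributions coming from coordinates $m_j=0$ (which only appear in the second sum) into the formula, and verify that they land in the $|J|=|I|$ part where the product is empty. A secondary subtlety is checking that the $a_{I-E_1+E_k}$ terms, which have the same $|I|$ as $a_I$, do not cause the induction to loop: this is guaranteed because the induction is run on $i_1$ and these terms have strictly smaller first coordinate. Once this is in place, the final formula follows by a direct substitution.
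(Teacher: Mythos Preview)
Your proposal is correct and follows essentially the same strategy as the paper: extract the coefficient of $X_1^{[1]}$ from (\ref{Equ-kernel d for s>2}) to obtain a recursion lowering the first coordinate, then induct. The only difference is presentational: the paper first writes the $X_1^{[1]}$-identity (\ref{Equ-X1 of kernel}) and then splits into the case where all $l_i\geq 1$ (giving (\ref{Equ-X1 of kernel-I})) versus the case where some $l_i=0$ (handled by specialising the corresponding $X_{t_i}$ to $0$, giving (\ref{Equ-X1 of kernel-II})), and inducts on the full lexicographic order; you instead read off a single unified recursion directly and induct only on $i_1$, which is slightly cleaner since all terms on the right-hand side have first coordinate $\leq i_1-1$.
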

\begin{proof}
  In (\ref{Equ-kernel d for s>2}), considering the coefficients (including $X_2, \dots, X_{s+1}$) of $X_1^{[1]}$, we get
  \begin{equation}\label{Equ-X1 of kernel}
      \begin{split}
          &\sum_{L=(l_1,\dots,l_s)\in\bN^s}((A+|L|\alpha)a_L-\sum_{i=1}^sa_{L+E_i})X_2^{[l_1]}\cdots X_{s+1}^{[l_s]}\\
          =&\sum_{k=2}^{s+1}(-1)^{k-1}\sum_{l_1,\dots,l_{k-2},l_k\dots,l_s\geq 0}a_{1,l_1,\dots,l_{k-2},l_k,\dots,l_s}X_2^{[l_1]}\cdots X_{k-1}^{[l_{k-2}]}X_{k+1}^{[l_{k}]}\cdots X_{s+1}^{[l_s]}.
      \end{split}
  \end{equation}
  Comparing the coefficients of $X_2^{[l_1]}\cdots X_{s+1}^{[l_s]}$ for $l_1,\dots,l_s\geq 1$ in (\ref{Equ-X1 of kernel}), we see that
  \begin{equation}\label{Equ-X1 of kernel-I}
      a_{l_1+1,l_2,\dots,l_s} = (A+(l_1+\dots+l_s)\alpha)a_{l_1,\dots,l_2}-(a_{l_1,l_2+1,\dots,l_s}+\dots+a_{l_1,l_2,\dots,l_{s+1}}).
  \end{equation}
  For any $r\geq 1$ and $2\leq t_1<\dots<t_r\leq s+1$, comparing the coefficients of $\prod_{2\leq j\leq s+1;j\neq t_1,\dots,t_r}X_j^{[l_{j-1}]}$ with all $l_{j-1}\geq 1$, we get
  \begin{equation*}
      \begin{split}
          &(A+\sum_{1\leq q\leq s;1+q\neq t_1,\dots,t_r}l_q\alpha)a_{\sum_{1\leq q\leq s;1+q\neq t_1,\dots,t_r}l_qE_q}-(\sum_{j=1}^sa_{E_j+\sum_{1\leq q\leq s;1+q\neq t_1,\dots,t_r}l_qE_q})\\
          =&\sum_{k=1}^r(-1)^{t_k-1}a_{E_1+\sum_{1\leq q\leq t_k-2;q+1\neq t_1,\dots,t_{k-1}}l_qE_{q+1}+\sum_{t_k\leq q\leq s+1;1+q\neq t_{k+1},\dots,t_r}l_qE_q}.
      \end{split}
  \end{equation*}
  In other words, we have
  \begin{equation}\label{Equ-X1 of kernel-II}
      \begin{split}
          &a_{E_1+\sum_{1\leq q\leq s;1+q\neq t_1,\dots,t_r}l_qE_q}\\
          =&(A+\sum_{1\leq q\leq s;1+q\neq t_1,\dots,t_r}l_j\alpha)a_{\sum_{1\leq q\leq s;1+q\neq t_1,\dots,t_r}l_qE_q}-(\sum_{j=2}^sa_{E_j+\sum_{1\leq q\leq s;1+q\neq t_1,\dots,t_r}l_qE_q})\\
          &+\sum_{k=1}^r(-1)^{t_k}a_{E_1+\sum_{1\leq q\leq t_k-2;q+1\neq t_1,\dots,t_{k-1}}l_qE_{q+1}+\sum_{t_k\leq q\leq s;1+q\neq t_{k+1},\dots,t_r}l_qE_q}.
      \end{split}
  \end{equation}
  Now we complete the prove by induction on $I=(i_1,\dots,i_s)\in\bN_{\geq 1}\times \bN^{s-1}$, where $\bN_{\geq 1}\times \bN^{s-1}$ is endowed with the lexicographic order as in the proof of Theorem \ref{Thm-prismatic cohomology} for $s=2$ case.

  When $i_1=1$, there is nothing to prove. Now assume we have shown the result for all $a_I$'s with $I<N=(n_1,\dots,n_s)$. We may assume $n_1\geq 2$.

  If $n_2,\dots,n_s\geq 1$, by (\ref{Equ-X1 of kernel-I}), we have
  \[a_{N}=(A+(|N|-1)\alpha)a_{N-E_1}-(\sum_{i=2}^sa_{N-E_1+E_i}).\]
  This shows the result in this case by using the inductive hypothesis.

  If for some $r\geq 1$, the $0$ appears exactly $r$ times in $N$. We assume $2\leq h_1\leq \dots<h_r\leq s$ such that $n_{h_1} = \dots =n_{h_r} = 0$.
  Then put $t_i=h_i+1$ for any $1\leq i\leq r$ (in particular, $t_1\geq 3$) and put $l_1=n_1-1$ and $l_j = n_j$ for any $j\in\{1,\dots,s\}\setminus\{n_{h_1},\dots,n_{h_r}\}$ in (\ref{Equ-X1 of kernel-II}), we get
  \begin{equation*}
      \begin{split}
          a_N=&(A+(|N|-1)\alpha)a_{N-E_1}-(\sum_{j=2}^sa_{N-E_1+E_j})\\
          &-\sum_{k=1}^r(-1)^{h_k}a_{E_1+\sum_{1\leq q\leq h_k-1;q\neq h_1,\dots,h_{k-1}}n_qE_{q+1}-E_2+\sum_{h_k+1\leq q\leq s+1;q\neq h_{k+1},\dots,h_r}l_qE_q}.
      \end{split}
  \end{equation*}
  This combined with the inductive hypothesis gives the result in this case. So we complete the proof.
\end{proof}
  Note that in the $s=2$ case, $a_{1,0} = Aa_{0,0}$. So we want to generalize this to the $s\geq 2$ case.
\begin{lem}\label{Lem-kernel-const vs X1}
   Keep notations as above. Assume $\underline e\vec f(\underline X)\in\Ker(d^s)$, then for any $i\geq 1$, the coefficients $\{a_{1,0,\dots,0,j_{2i+1},\dots,j_s}\}_{j_{2i+1}\geq 1,j_{2i+2},\dots,j_s\geq 0}$ are uniquely determined by $\{a_{1,0,\dots,0,j_{2i+2},\dots,j_s}\}_{j_{2i+2}\geq 1,j_{2i+3},\dots,j_s\geq 0}$ and $\{a_{0,I}\}_{I\in\bN^{s-1}}$, where for $I=(i_2,\dots,i_s)\in\bN^{s-1}$, $a_{0,I}=a_{0,i_2,\dots,i_s}$.

   More precisely, put $I=(1,0,\dots,0,j_{2i+1},\dots,j_s)$. If $j_{2i+1},\dots,j_s\geq 1$, then
   \[a_{I}=(A+(|I|-1)\alpha)a_{I-E_1}-(\sum_{j=2}^sa_{I-E_1+E_j});\]
   and if $j_{2i+1}\geq 1$ and $j_{2i+2}\cdots j_s=0$, then
   for any $J\in \{1\}\times\{0\}^{2i}\times\bN_{\geq 1}\times\bN^{s-2i-2}$, there exists an integer $z_{I,J}^{(s)}$, which is independent of $\vec f$, such that
   \[a_I=\sum_{I\in\in \{1\}\times\{0\}^{2i}\times\bN_{\geq 1}\times\bN^{s-2i-2},|I|=|J|}z_{I,J}^{(s)}a_J+(A+(|I|-1)\alpha)a_{I-E_1}-(\sum_{j=2}^sa_{I-E_1+E_j}).\]
\end{lem}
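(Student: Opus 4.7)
The strategy is to apply equation (\ref{Equ-X1 of kernel-II}) from the proof of Lemma \ref{Lem-kernel-X1-s>2} to the multi-index $L := I - E_1$, choosing the parameters $t_1 < \cdots < t_r$ so that $\{t_1 - 1, \ldots, t_r - 1\}$ enumerates \emph{all} zero positions of $L$. Writing $Z := \{q \in \{1, \ldots, s\} : l_q = 0\} = \{Z_1 < \cdots < Z_r\}$, we take $t_k := Z_k + 1$. Note that $\{1, 2, \ldots, 2i\} \subseteq Z$, with equality exactly in Case 1; the extra entries of $Z$ form the set $S := \{q \in \{2i+2, \ldots, s\} : j_q = 0\}$. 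The remaining entries $l_q$ with $q \notin Z$ are precisely the $j_q$'s with $j_q \geq 1$. Plugging this choice into (\ref{Equ-X1 of kernel-II}) yields
\[
a_I = (A + (|I| - 1)\alpha)\, a_{I - E_1} - \sum_{j=2}^s a_{I - E_1 + E_j} + \sum_{k=1}^r (-1)^{t_k} a_{\mathrm{ind}_k},
\]
where $\mathrm{ind}_k$ denotes the multi-index appearing in the last sum of (\ref{Equ-X1 of kernel-II}).

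The key observation is that the first $2i$ terms of the correction sum telescope to zero. For $1 \leq k \leq 2i$ one has $Z_k = k$ and $t_k = k + 1$; the condition $q \leq t_k - 2 = k - 1$ combined with $q + 1 \notin \{2, \ldots, k\}$ forces $q$ into the empty set, so the first sum inside $\mathrm{ind}_k$ vanishes. The second sum, ranging over $q \geq k + 1$ with $q + 1 \notin \{k+2, \ldots, 2i+1\}$, reduces after removing the remaining initial zero positions $Z_{k+1}, \ldots, Z_{2i}$ to $\sum_{q \in \{2i+1, \ldots, s\} \setminus Z} l_q E_q = L$. Hence $\mathrm{ind}_k = E_1 + L = I$ for all $k \in \{1, \ldots, 2i\}$, and the signed contributions sum to $a_I \cdot \sum_{k=1}^{2i}(-1)^{k+1} = 0$. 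In Case 1 we have $r = 2i$, the correction vanishes entirely, and the claimed formula $a_I = (A + (|I|-1)\alpha)a_{I-E_1} - \sum_{j=2}^s a_{I-E_1+E_j}$ falls out.

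For Case 2, the surviving terms $k = 2i+1, \ldots, r$ satisfy $Z_k \in S$, hence $Z_k \geq 2i + 2$. A direct inspection yields
\[
\mathrm{ind}_k = E_1 + \sum_{q \in \{2i+1, \ldots, Z_k - 1\} \setminus S} j_q E_{q+1} + \sum_{q \in \{Z_k + 1, \ldots, s\} \setminus S} j_q E_q,
\]
which equals $1$ at position $1$, zero at positions $2, \ldots, 2i + 1$ (both sums only reach positions $\geq 2i + 2$), and equal to $j_{2i+1} \geq 1$ at position $2i + 2$ (from the first sum at $q = 2i + 1$, which belongs to the range since $Z_k - 1 \geq 2i + 1$ and $2i + 1 \notin S$). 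Moreover $|\mathrm{ind}_k| = |I|$, since (\ref{Equ-X1 of kernel-II}) preserves total degree. Thus each $\mathrm{ind}_k$ lies in $\{1\} \times \{0\}^{2i} \times \bN_{\geq 1} \times \bN^{s-2i-2}$, and setting $z^{(s)}_{I, J} := \sum_{2i+1 \leq k \leq r,\, \mathrm{ind}_k = J} (-1)^{t_k}$ gives the asserted decomposition with $z^{(s)}_{I, J} \in \bZ$ depending only on $I$ and $J$, not on $\vec f$. The main obstacle is the combinatorial bookkeeping required to pin down the supports of the $\mathrm{ind}_k$'s and verify the telescoping cancellation; the argument essentially extends the computation of Lemma \ref{Lem-kernel-X1-s>2} with a more elaborate choice of $t_k$'s tuned to the zero pattern of $I - E_1$.
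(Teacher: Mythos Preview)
Your proof is correct and follows essentially the same approach as the paper: both apply equation (\ref{Equ-X1 of kernel-II}) with $t_1=2,\dots,t_{2i}=2i+1$ together with the additional $t_k$'s corresponding to the remaining zero positions of $I-E_1$. The only difference is organizational---the paper first fixes $t_1,\dots,t_{2i-1}$ to derive the intermediate identity (\ref{Equ-kernel-const vs X1-I}) and then specializes $t_{2i}=2i+1$, whereas you choose all $t_k$'s at once and observe that the first $2i$ correction terms cancel in pairs; the resulting formulas coincide.
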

\begin{proof}
  Assume $i\geq 1$. Let $r\geq 2i$ and $t_1=2,t_2=3,\cdots, t_{2i-1}=2i$ in (\ref{Equ-X1 of kernel-II}). Then we get
  \begin{equation}\label{Equ-kernel-const vs X1-I}
      \begin{split}
          &\sum_{k=2i}^r(-1)^{t_k-1}a_{E_1+\sum_{2i\leq q\leq t_k-2;1+q\neq t_{2i},\dots,t_{k-1}}l_qE_{q+1}+\sum_{t_k\leq q\leq s;1+q\neq t_{k+1},\dots,t_r}l_qE_q}\\
          =&(A+\sum_{2i\leq q\leq s;1+q\neq t_{2i},\dots,t_r}\alpha)a_{\sum_{2i\leq q\leq s;1+q\neq t_{2i},\dots,t_r}l_qE_q}-\sum_{j=2}^sa_{E_j+\sum_{2i\leq q\leq s;1+q\neq t_{2i},\dots,t_r}l_qE_q}.
      \end{split}
  \end{equation}
  In (\ref{Equ-kernel-const vs X1-I}), assume $r=2i$ and $t_{2i}=2i+1$. Then we get
  \[a_{1,0,\dots,0,l_{2i+1},\dots,l_s} = (A+\sum_{q=2i+1}^sl_q\alpha)a_{0,\dots,0,l_{2i+1},\dots,l_s}-(\sum_{j=2}^sa_{E_j+\sum_{2i+1\leq q\leq s}l_qE_q})\]
  as desired.

  In (\ref{Equ-kernel-const vs X1-I}), assume moreover $r\geq 2i+1$ and $t_{2i}=2i+1, t_{2i+1}\neq 2i+2$. Then we get
  \begin{equation}
  \begin{split}
      &a_{E_1+l_{2i+1}E_{2i+1}+\sum_{2i+2\leq q\leq s;1+q\neq t_{2i+1},\dots,t_r}l_qE_q}\\
      =&\sum_{k=2i+1}^r(-1)^{t_k}a_{E_1+l_{2i+1}E_{2i+2}+\sum_{2i+2\leq q\leq t_k-2;1+q\neq t_{2i+1},\dots,t_{k-1}}l_qE_{q+1}+\sum_{t_k\leq q\leq s;1+q\neq t_{k+1},\dots,t_r}l_qE_q}\\
      &+(A+\sum_{2i+1\leq q\leq s;1+q\neq t_{2i+1},\dots,t_r}\alpha)a_{\sum_{2i+1\leq q\leq s;1+q\neq t_{2i+1},\dots,t_r}l_qE_q}\\
      &-\sum_{j=2}^sa_{E_j+\sum_{2i+1\leq q\leq s;1+q\neq t_{2i+1},\dots,t_r}l_qE_q}.
  \end{split}
  \end{equation}
  So the lemma follows from the fact that $l_{2i+1}\neq 0$.
\end{proof}
  In summary, we get the following proposition.
\begin{prop}\label{Prop-rigidty for kernel}
  Let $\Lambda_s$ be the subset of $\bN^s$ consisting of indices $J=(j_1,\dots,j_s)$ satisfying one of the following conditions:
  \begin{enumerate}
      \item there exists some $i\geq 1$ such that $j_1=1$, $j_2=\cdots=j_{2i-1}=0$ and $j_{2i}\geq 1$.

      \item there exists some $i\geq 1$ such that $j_1=\cdots=j_{2i}=0$ and $j_{2i+1}\geq 1$.
  \end{enumerate}
  Assume $\underline e\vec f(\underline X)\in\Ker(d^s)$ and $\vec f(\underline X)=\sum_{I\in\bN^s}a_I\underline X^{[I]}$, then for any $J\in\Lambda_s$, there exists an integer $z_{I,J}^{(s)}$, which is independent of $\vec f$, such that
  \[a_I=\sum_{J\in\Lambda_s,|J|\leq |I|}z_{I,J}^{(s)}\prod_{i=|J|}^{|I|-1}(A+i\alpha)a_J.\]
\end{prop}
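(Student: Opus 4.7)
The plan is to splice together the three preceding lemmas via a case analysis on the leading coordinate $i_1$ of $I$, carefully tracking how the prefactors $\prod_i(A+i\alpha)$ compose. The underlying mechanism is a clean telescoping: Lemma \ref{Lem-kernel-X1-s>2} produces the prefactor $\prod_{j=|K|}^{|I|-1}(A+j\alpha)$ and Lemma \ref{Lem-kernel-const vs X1} contributes an additional factor $(A+(|K|-1)\alpha)$, so their composition is $\prod_{j=|K|-1}^{|I|-1}(A+j\alpha)$, which is precisely the shape required for an index of degree $|K|-1$.

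First I would dispatch the case $i_1=0$ by a direct appeal to Lemma \ref{Lem-kernel-constant-s>2}: for $I=(0,l_2,\dots,l_s)$, choosing the smallest $i\geq 1$ with $l_{2i}\geq 1$, either $a_I=0$ (an empty-sum instance of the claim) or $a_I$ is a $\bZ$-linear combination of $\{a_{0,\dots,0,k_{2i+1},\dots,k_s}\}_{k_{2i+1}\geq 1}$, which are Type 2 elements of $\Lambda_s$; since these indices have the same total degree as $I$, the product factor is empty. When $i_1\geq 1$, I would first invoke Lemma \ref{Lem-kernel-X1-s>2} to write $a_I=\sum_{K\in\{1\}\times\bN^{s-1}}z_{I,K}^{(s)}\prod_{j=|K|}^{|I|-1}(A+j\alpha)\,a_K$. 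For each summand, either $K\in\Lambda_s$ is already Type 1 (first nonzero coordinate after $K_1=1$ at an even position), in which case the term contributes directly; or $K=(1,0,\dots,0,k_{2i+1},k_{2i+2},\dots)$ with $k_{2i+1}\geq 1$ for some $i\geq 1$, in which case Lemma \ref{Lem-kernel-const vs X1} applies and rewrites $a_K$ as a $\bZ$-combination of Type 1 $\Lambda_s$ coefficients $a_{K'}$ (with first nonzero coordinate after position $1$ at position $2(i+1)$) of the same degree $|K|$, together with $(A+(|K|-1)\alpha)a_{K-E_1}-\sum_{l=2}^s a_{K-E_1+E_l}$. The latter two have leading coordinate $0$ and so reduce to Type 2 $\Lambda_s$ contributions by the $i_1=0$ case already handled.

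The main obstacle is combinatorial rather than conceptual: verifying that every $I$ really is swept up by this three-step reduction and that the $(A+j\alpha)$ prefactors align to give exactly $\prod_{j=|J|}^{|I|-1}(A+j\alpha)$ for each final $J\in\Lambda_s$. For a Type 1 index $K'$ produced at the first reduction, $|K'|=|K|$ so the prefactor from Lemma \ref{Lem-kernel-X1-s>2} is already of the correct shape. For a Type 2 index $J$ arising via the $a_{K-E_1}$ contribution, the telescoping sketched above produces $\prod_{j=|K|-1}^{|I|-1}(A+j\alpha)$, matching $|J|=|K-E_1|=|K|-1$ after the subsequent Lemma \ref{Lem-kernel-constant-s>2} reduction (which contributes no further product factor). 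The case of $a_{K-E_1+E_l}$ is entirely analogous, with no additional product factor and degree preserved. No ideas beyond those already recorded in Lemmas \ref{Lem-kernel-constant-s>2}, \ref{Lem-kernel-X1-s>2}, \ref{Lem-kernel-const vs X1} are required; the remaining work is the coherent organization of the case analysis and the verification that the integers $z^{(s)}_{I,J}$ so produced depend only on $I,J$ and not on the particular $\vec f$.
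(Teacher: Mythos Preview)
Your proposal is correct and follows exactly the route the paper takes: its proof is the one-line statement ``The result follows from Lemma \ref{Lem-kernel-constant-s>2}, Lemma \ref{Lem-kernel-X1-s>2} and Lemma \ref{Lem-kernel-const vs X1} directly,'' and what you have written is a faithful unpacking of how those three lemmas compose, including the degree-bookkeeping that makes the prefactors $\prod_{j=|J|}^{|I|-1}(A+j\alpha)$ line up. Your telescoping observation (that the factor $(A+(|K|-1)\alpha)$ from Lemma \ref{Lem-kernel-const vs X1} concatenates with the product from Lemma \ref{Lem-kernel-X1-s>2}) is exactly the point, and your treatment of the $a_{K-E_1+E_l}$ terms is likewise correct since Lemma \ref{Lem-kernel-constant-s>2} preserves total degree.
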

\begin{proof}
  The result follows from Lemma \ref{Lem-kernel-constant-s>2}, Lemma \ref{Lem-kernel-X1-s>2} and Lemma \ref{Lem-kernel-const vs X1} directly.
\end{proof}

 Now we describe how to construct a
 \[\vec g(\underline X)=\sum_{I\in\bN^{s-1}}b_I\underline X^{[I]} \in\calO_K\{X_1,\dots,X_{s-1}\}^{\wedge}_{\pd}\]
 for an element $\underline e\vec f(\underline X)\in\Ker(d^s)$ such that $d^{s-1}(\underline e\vec g(\underline X)) = \underline e\vec f(\underline X)$. Write
 \[d^{s-1}(\underline e\vec g(\underline X)) = \sum_{I\in\bN^s}c_I\underline X^{[I]}.\]
 Then (\ref{Equ-differentials for s>2}) shows that
 \begin{equation}\label{Equ-image for s>2}
    \begin{split}
        &\sum_{i_1,\dots,i_s\geq 0}c_{i_1,\dots,i_s}X_1^{[i_1]}\cdots X_s^{[i_s]}\\
        =&\sum_{j_1,\dots,j_{s-1},l_1,\dots,l_{s-1}\geq 0}(1-\alpha X_1)^{-(j_1+\dots+j_{s-1})-(l_1+\dots+l_{s-1})-\frac{A}{\alpha}}b_{j_1+l_1,\dots,j_{s-1}+l_{s-1}}(-1)^{j_1+\dots+j_{s-1}}\\
        &\cdot X_1^{[j_1]}\cdots X_1^{[j_{s-1}]}X_2^{[l_1]}\cdots X_{s}^{[l_{s-1}]}
        +\sum_{k=1}^{s}(-1)^k\sum_{i_1,\dots,i_{s-1}\geq 0}b_{i_1,\dots,i_{s-1}}X_1^{[i_1]}\cdots X_{k-1}^{[i_{k-1}]}X_{k+1}^{[i_k]}\cdots X_{s}^{[i_{s-1}]}.
    \end{split}
\end{equation}
\begin{lem}\label{Lem-constant-image-s>2}
  Keep notations as above. Then for any $i\geq 0$, one can recover $\{b_{0,\dots,0,l_{2i},\dots,l_{s-1}}\}_{l_{2i}\geq 1,l_{2i+1},\dots,l_{s-1}\geq 0}$ from $\{c_{0,\dots,0,j_{2i+1},\dots,j_s}\}_{i\geq 1;j_{2i+1}\geq 1,j_{2i+2},\dots,j_s\geq 0}$ and $\{b_{0,\dots,0,j_{2i+1},\dots,j_{s-1}}\}_{j_{2i+1}\geq 1,j_{2i+2},\dots,j_{s-1}\geq 0}$.

  More precisely, for any $I\in\{0\}^{2i-1}\times\bN_{\geq 1}\times\bN^{s-2i-1}$ and $J\in\{0\}^{2i}\times\bN_{\geq 1}\times\bN^{s-2i-2}$, there exist integers $z_{I,J}^{(s)}$'s, which are independent of $\vec g$, such that
  \[b_I=\sum_{J\in\{0\}^{2i}\times\bN_{\geq 1}\times\bN^{s-2i-2},|J|=|I|}z_{I,J}^{(s)}b_J+c_{0,I},\]
  where $c_{0,I}=c_{0,i_1,\dots,i_{s-1}}$ if $I=(i_1,\dots,i_{s-1})$.
\end{lem}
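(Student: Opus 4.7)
The strategy is to follow the proof of Lemma~\ref{Lem-kernel-constant-s>2} verbatim, applied to the defining identity \eqref{Equ-image for s>2} in place of the kernel condition \eqref{Equ-kernel d for s>2}. The first move is to substitute $X_1 = 0$ in \eqref{Equ-image for s>2}. In the first sum on the right-hand side, the factor $(1 - \alpha X_1)^{-\cdots}$ becomes $1$ and only the terms with $j_1 = \cdots = j_{s-1} = 0$ survive, giving $\sum_{l_1, \ldots, l_{s-1}} b_{l_1, \ldots, l_{s-1}} X_2^{[l_1]} \cdots X_s^{[l_{s-1}]}$. The $k=1$ term in the second sum equals $-\sum b_{i_1, \ldots, i_{s-1}} X_2^{[i_1]} \cdots X_s^{[i_{s-1}]}$ and cancels this contribution exactly. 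For $k \geq 2$, the factor $X_1^{[i_1]}$ forces $i_1 = 0$ after the substitution, so what remains is the inhomogeneous identity
\begin{equation*}
\sum_{i_2, \ldots, i_s \geq 0} c_{0, i_2, \ldots, i_s} X_2^{[i_2]} \cdots X_s^{[i_s]} = \sum_{k=2}^{s} (-1)^k \sum_{i_2, \ldots, i_{s-1} \geq 0} b_{0, i_2, \ldots, i_{s-1}} X_2^{[i_2]} \cdots X_{k-1}^{[i_{k-1}]} X_{k+1}^{[i_k]} \cdots X_s^{[i_{s-1}]},
\end{equation*}
which now involves only the coefficients $\{c_{0, I}\}$ and $\{b_{0, J}\}$.

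The next step is the same combinatorial extraction used before: for each $r \geq 1$ and each $2 \leq t_1 < \cdots < t_r \leq s$, substitute $X_{t_1} = \cdots = X_{t_r} = 0$ in the above equation and compare coefficients of $\prod_{j \in \{2, \ldots, s\} \setminus \{t_1, \ldots, t_r\}} X_j^{[l_{j-1}]}$ with all $l_{j-1} \geq 1$. This yields the analog of \eqref{Equ-constant of kernel-II}, except that the right-hand side is no longer $0$ but a specific coefficient $c_{0, I}$. Concretely, taking $r = 2i - 1$ with $t_k = k + 1$ isolates $b_{0, \ldots, 0, l_{2i}, \ldots, l_{s-1}}$ in the case $l_{2i}, \ldots, l_{s-1} \geq 1$ and matches it with $\pm c_{0, \ldots, 0, l_{2i}, \ldots, l_{s-1}}$. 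Taking $r \geq 2i$ with $t_k = k+1$ for $k \leq 2i-1$ and $t_{2i} \neq 2i + 1$ expresses the mixed-case coefficients (where some among $l_{2i+1}, \ldots, l_{s-1}$ vanish) as $\bZ$-linear combinations of the $b_{0, \ldots, 0, k_{2i+1}, \ldots, k_{s-1}}$'s with $k_{2i+1} \geq 1$, plus a single explicit $c_{0, I}$ term; this is precisely the claim of the lemma.

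The main obstacle is purely bookkeeping: one must track the index shift between the $s$-tuple indexing of $c$ and the $(s-1)$-tuple indexing of $b$, and verify that the sign patterns and summation ranges line up with those in the earlier lemma. Since every manipulation is a direct transcription of the argument for the kernel case, I expect no genuine analytic difficulty beyond careful notation.
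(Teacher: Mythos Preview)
Your proposal is correct and follows essentially the same approach as the paper: both set $X_1=0$ in \eqref{Equ-image for s>2} to obtain the identity relating the $c_{0,\cdot}$'s to the $b_{0,\cdot}$'s, then substitute $X_2=\cdots=X_{2i}=0$ together with further variables $X_{t_j}=0$ and compare coefficients of monomials with all remaining exponents $\geq 1$. The only cosmetic difference is that the paper writes the substitution directly as $X_2=\cdots=X_{2i}=X_{t_1}=\cdots=X_{t_r}=0$ with $t_1\geq 2i+2$, whereas you phrase it as transcribing the $r=2i-1$ and $r\geq 2i$ cases from Lemma~\ref{Lem-kernel-constant-s>2}; these are the same substitutions, and your observation that the sign sum $\sum_{k=2}^{2i}(-1)^k=1$ indeed isolates a single $b_I$ with coefficient $+1$ (so your ``$\pm$'' is in fact $+$, matching the stated formula $b_I=\sum z_{I,J}^{(s)}b_J+c_{0,I}$).
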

\begin{proof}
  In (\ref{Equ-image for s>2}), let $X_1=0$. Then we get
  \begin{equation}\label{Equ-constant-image-s>2-I}
  \begin{split}
      &\sum_{l_1,\dots,l_{s-1}\geq 0}c_{0,l_1,\dots,l_{s-1}}X_2^{[l_1]}\cdots X_s^{[l_{s-1}]}\\
      =&\sum_{k=2}^s(-1)^k\sum_{l_1,\dots,l_{k-2},l_k,\dots,l_{s-1}\geq 0}b_{0,l_1,\dots,l_{k-2},l_k,\dots,l_{s-1}}X_2^{[l_1]}\cdots X_{k-1}^{[l_{k-2}]}X_{k+1}^{[l_k]}\cdots X_s^{[l_{s-1}]}.
  \end{split}
  \end{equation}
  Now assume $i\geq 1$. For any $r\geq 0$ and $2i+2\leq t_1<\dots<t_r\leq s$, let $X_2=\cdots=X_{2i}=X_{t_1}=\dots=X_{t_r}=0$ in (\ref{Equ-constant-image-s>2-I}) and consider the coefficients of $\prod_{2i+1\leq j\leq s;j\neq t_1,\dots,t_r}X_{j}^{[l_{j-1}]}$ with all $l_{j-1}\geq 1$. Then we get
  \begin{equation*}
      \begin{split}
          &c_{l_{2i}E_{2i+1}+\sum_{2i+1\leq q\leq s-1;q+1\neq t_1,\dots,t_r}l_qE_{q+1}}\\
          =& b_{l_{2i}E_{2i}+\sum_{2i+1\leq q\leq s-1;q+1\neq t_1,\dots,t_r}l_qE_q}\\
          &+\sum_{k=1}^{r}(-1)^{t_k}b_{l_{2i}E_{2i+1}+\sum_{2i+1\leq q\leq t_k-2;1+q\neq t_1,\dots,t_{k-1}}l_qE_{q+1}
          +\sum_{t_k\leq q\leq s-1;1+q\neq t_{k+1},\dots,t_r}l_qE_q}.
      \end{split}
  \end{equation*}
  Equivalently, we have
  \begin{equation*}
      \begin{split}
          &b_{l_{2i}E_{2i}+\sum_{2i+1\leq q\leq s-1;q+1\neq t_1,\dots,t_r}l_qE_q}\\
          =& c_{l_{2i}E_{2i+1}+\sum_{2i+1\leq q\leq s-1;q+1\neq t_1,\dots,t_r}l_qE_{q+1}}\\
          &-\sum_{k=1}^{r}(-1)^{t_k}b_{l_{2i}E_{2i+1}+\sum_{2i+1\leq q\leq t_k-2;1+q\neq t_1,\dots,t_{k-1}}l_qE_{q+1}
          +\sum_{t_k\leq q\leq s-1;1+q\neq t_{k+1},\dots,t_r}l_qE_q}.
      \end{split}
   \end{equation*}
   Since $l_{2i}\geq 1$, the lemma follows.
\end{proof}

\begin{lem}\label{Lem-X1-image-s>2}
  Keep notations as above. Then one can recover $\{b_{i_1,\dots,i_{s-1}}\}_{i_1\geq 1,i_2,\dots,i_{s-1}\geq 0}$ from $\{c_{1,j_2,\dots,j_s}\}_{j_2,\dots,j_s\geq 0}$ and $\{b_{1,i_2,\dots,i_{s-1}}\}_{i_2,\dots,i_{s-1}\geq 0}$.

  More precisely, for any $I,L\in\bN_{\geq 1}\times\bN^{s-2}$ and any $J\in\bN^{s-1}$, there exist integers $z_{I,J}^{(s)}$ and $w_{I,L}^{(s)}$, which are independent of $\vec g$, such that
  \[b_I=\sum_{J\in\bN^{s-1},|J|\leq|I|}z_{I,J}^{(s)}\prod_{i=|J|}^{|I|-1}(A+i\alpha)c_{1,J}+\sum_{L\in\bN_{\geq 1}\times \bN^{s-2},|J|\leq|I|}w_{I,J}^{(s)}\prod_{i=|L|}^{|I|-1}(A+i\alpha)b_L,\]
  where we set $\prod_{i=|J|}^{|I|-1} (A+i\alpha)= 1$ if $|J|=|I|$ and $c_{1,J}=c_{1,j_1,\dots,j_{s-1}}$ for $J=(j_1,\dots,j_{s-1})$.
\end{lem}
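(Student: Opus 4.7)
The plan is to mimic the strategy of Lemma \ref{Lem-kernel-X1-s>2}, with $(\ref{Equ-image for s>2})$ replacing $(\ref{Equ-kernel d for s>2})$. The key first step is to extract the coefficient of $X_1^{[1]}$ from both sides of $(\ref{Equ-image for s>2})$. On the left side, this is simply $\sum_{L\in\bN^{s-1}}c_{1,L}X_2^{[l_1]}\cdots X_s^{[l_{s-1}]}$. On the right side, the second (telescoping) sum only contributes for $k\geq 2$, since the $k=1$ term has no $X_1$ in its monomials; writing out those contributions yields terms of the form $(-1)^{k}b_{1,i_2,\dots,i_{s-1}}X_2^{[i_2]}\cdots X_{k-1}^{[i_{k-1}]}X_{k+1}^{[i_k]}\cdots X_s^{[i_{s-1}]}$. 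The contribution from the first sum on the right can be extracted by exactly the same expansion of $(1-\alpha X_1)^{-|J|-|L|-A/\alpha}$ used to obtain the right-hand side of $(\ref{Equ-X1 of kernel})$, except that $a_L$ is replaced by $b_L$.

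Putting these pieces together, I obtain the analogue of $(\ref{Equ-X1 of kernel-I})$: for any $L\in\bN^{s-1}$ with all entries $\geq 1$,
\[b_{L+E_1}=(A+|L|\alpha)b_L-\sum_{i=2}^{s-1}b_{L+E_i}-c_{1,L},\]
and the analogue of $(\ref{Equ-X1 of kernel-II})$ for multi-indices $L$ having some zero entries, with additional terms coming from the second sum indexed by tuples $2\leq t_1<\cdots<t_r\leq s$. Crucially, both recursions express $b_{L+E_1}$ as an $\calO_K$-linear combination of $(A+|L|\alpha)b_L$, other $b_{L+E_j}$ with $j\geq 2$, certain $b_{1,\dots}$ terms, and one inhomogeneous term $c_{1,L}$.

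With these recursions in hand, the second step is an induction on $I\in\bN_{\geq 1}\times\bN^{s-2}$ with respect to the lexicographic order, identical in shape to the one appearing in the proof of Lemma \ref{Lem-kernel-X1-s>2}. The base case $i_1=1$ is trivial because the $b_{1,i_2,\dots,i_{s-1}}$'s are assumed given. For $i_1\geq 2$, the recursion expresses $b_I$ in terms of $(A+(|I|-1)\alpha)b_{I-E_1}$, in terms of $b_{I-E_1+E_j}$ for $j\geq 2$ (all strictly smaller than $I$ in the lexicographic order), in terms of some base $b_{1,\dots}$ coefficients (when $I-E_1$ has a zero entry), and in terms of a single $c_{1,I-E_1}$ term. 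Iterating and collecting the factors $\prod_{i=|J|}^{|I|-1}(A+i\alpha)$ each time the recursion is applied to $b_{I-E_1}$ produces exactly the expression
\[b_I=\sum_{J\in\bN^{s-1},|J|\leq|I|}z_{I,J}^{(s)}\prod_{i=|J|}^{|I|-1}(A+i\alpha)c_{1,J}+\sum_{L\in\bN_{\geq 1}\times\bN^{s-2},|L|\leq|I|}w_{I,L}^{(s)}\prod_{i=|L|}^{|I|-1}(A+i\alpha)b_L,\]
with integer coefficients $z_{I,J}^{(s)}$ and $w_{I,L}^{(s)}$ that depend only on the combinatorial data of the recursion and not on $\vec g$.

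The main obstacle is not conceptual — the induction is parallel to that of Lemma \ref{Lem-kernel-X1-s>2} — but book-keeping. In particular, one must carefully check that the contributions from the second sum of $(\ref{Equ-image for s>2})$ produce only terms of the form $b_{1,*}$ (from the $k=2$ case, where all of $X_1$ is present and $X_2$ is missing) and lower terms in the lexicographic order, so that the induction closes. I expect the structure to be entirely analogous to the branching in $(\ref{Equ-X1 of kernel-II})$, at the cost of an extra inhomogeneous term $c_{1,L}$ propagated through the recursion.
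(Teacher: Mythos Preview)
Your proposal is correct and follows essentially the same route as the paper: extract the coefficient of $X_1^{[1]}$ in $(\ref{Equ-image for s>2})$ to obtain the analogue of $(\ref{Equ-X1 of kernel})$ (the paper's $(\ref{Equ-X1-image-s>2-I})$), derive the recursions $(\ref{Equ-X1 of image-I})$ and $(\ref{Equ-X1 of image-II})$ parallel to $(\ref{Equ-X1 of kernel-I})$ and $(\ref{Equ-X1 of kernel-II})$, and run the same lexicographic induction. One small clarification: in your final paragraph, the contributions from the second sum of $(\ref{Equ-image for s>2})$ to the $X_1^{[1]}$-coefficient are $b_{1,*}$ terms for \emph{every} $k\geq 2$, not just $k=2$; you stated this correctly earlier, so the parenthetical is merely misleading rather than wrong.
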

\begin{proof}
  Comparing the coefficients (including $X_2,\dots,X_s$) in (\ref{Equ-image for s>2}), we get
  \begin{equation}\label{Equ-X1-image-s>2-I}
      \begin{split}
          &\sum_{l_1,\dots,l_{s-1}\geq 0}c_{1,l_1,\dots,l_{s-1}}X_2^{[l_1]}\cdots X_s^{[l_{s-1}]}\\
          =&\sum_{l_1,\dots,l_{s-1}\geq 0}((A+\sum_{j=1}^{s-1}l_j\alpha)b_{l_1,\dots,l_{s-1}}-(b_{l_1+1,l_2,\dots,l_{s-1}}+\dots,b_{l_1,l_2,\dots,l_{s-1}+1}))X_2^{[l_1]}\cdots X_s^{[l_{s-1}]}\\
          &+\sum_{k=2}^s(-1)^k\sum_{l_1,\dots,l_{k-2},l_k,\dots,l_{s-1}}b_{1,l_1,\dots,l_{k-2},l_k,\dots,l_{s-1}}X_2^{[l_1]}\cdots X_{k-1}^{[l_{k-2}]}X_{k+1}^{[l_k]}\cdots X_s^{[l_{s-1}]}.
      \end{split}
  \end{equation}
  In (\ref{Equ-X1-image-s>2-I}), comparing the coefficients of $X_2^{[l_1]}\cdots X_s^{[l_{s-1}]}$ for $I=(l_1,\dots,l_{s-1})\in\bN^{s-1}_{\geq 1}$, we get
  \begin{equation}\label{Equ-X1 of image-I}
      b_{I+E_1} = -c_{1,I}+(A+|I|\alpha)b_I-(\sum_{j=2}^{s-1}b_{I+E_j}).
  \end{equation}
  For any $r\geq 1$ and $2\leq t_1<\dots<t_r\leq s$, comparing the coefficients of $\prod_{2\leq j\leq s;j\neq t_1,\dots,t_r}X_j^{[l_{j-1}]}$ with all $l_{j-1}\geq 1$, we get
  \begin{equation*}
      \begin{split}
          &c_{\sum_{1\leq q\leq s-1;1+q\neq t_1,\dots,t_r}E_1+l_qE_{q+1}}\\
          =&(A+\sum_{1\leq q\leq s-1;1+q\neq t_1,\dots,t_r}l_q\alpha)b_{\sum_{1\leq q\leq s-1;1+q\neq t_1,\dots,t_r}l_qE_q}-(\sum_{j=1}^{s-1}b_{E_j+\sum_{1\leq q\leq s-1;1+q\neq t_1,\dots,t_r}l_qE_q})\\
          &+\sum_{k=1}^r(-1)^{t_k}b_{E_1+\sum_{1\leq q\leq t_k-2;q+1\neq t_1,\dots,t_{k-1}}l_qE_{q+1}+\sum_{t_k\leq q\leq s;1+q\neq t_{k+1},\dots,t_r}l_qE_q}.
      \end{split}
  \end{equation*}
  In other words, we have
  \begin{equation}\label{Equ-X1 of image-II}
      \begin{split}
          &b_{E_1+\sum_{1\leq q\leq s-1;1+q\neq t_1,\dots,t_r}l_qE_q}\\
          =&-c_{\sum_{1\leq q\leq s-1;1+q\neq t_1,\dots,t_r}E_1+l_qE_{q+1}}+(A+\sum_{1\leq q\leq s-1;1+q\neq t_1,\dots,t_r}l_q\alpha)b_{\sum_{1\leq q\leq s-1;1+q\neq t_1,\dots,t_r}l_qE_q}\\ &-(\sum_{j=2}^{s-1}b_{E_j+\sum_{1\leq q\leq s-1;1+q\neq t_1,\dots,t_r}l_qE_q})\\
          &+\sum_{k=1}^r(-1)^{t_k}b_{E_1+\sum_{1\leq q\leq t_k-2;q+1\neq t_1,\dots,t_{k-1}}l_qE_{q+1}+\sum_{t_k\leq q\leq s-1;1+q\neq t_{k+1},\dots,t_r}l_qE_q}.
      \end{split}
  \end{equation}
  Now the lemma follows from a similar inductive argument as in the proof of Lemma \ref{Lem-kernel-X1-s>2} by using (\ref{Equ-X1 of image-I}) and (\ref{Equ-X1 of image-II}) instead of (\ref{Equ-X1 of kernel-I}) and (\ref{Equ-X1 of kernel-II}), respectively.
\end{proof}
\begin{lem}\label{Lem-image-const vs X1}
   Keep notations as above. For any $i\geq 1$, put
   \[I=(1,0\dots,0,l_{2i+1},\dots,l_{s-1})\in\bN^{s-1}\]
   such that $l_{2i+1}\geq 1$. Then one can recover $b_I$ from
   $\{b_{1,0,\dots,0,k_{2i+2},\dots,k_{s-1}}\}_{k_{2i+2}\geq 1,k_{2i+3},\dots,k_{s-1}\geq 0}$, $\{b_L\}_{L\in\{0\}\times\bN^{s-2}}$ and $\{c_{1,0,\dots,0,j_{2i+2},\dots,j_s}\}_{j_{2i+2}\geq 1,j_{2i+3},\dots,j_s\geq 0}.$

   More precisely, if $l_{2i+2},\dots,l_{s-1}\geq 1$, then
   \[b_I =-c_{1,I-E_1}+(A+(|I|-1)\alpha)b_{I-E_1}-(\sum_{j=2}^{s-1}b_{I-E_1+E_j})\]
   and if $l_{2i+2}\cdots l_{s-1}=0$, then for any $J\in\{1\}\times\{0\}^{2i}\times\bN_{\geq 1}\times\bN^{s-2i-3}$, there exist an integer $z_{I,J}^{(s)}$, which is independent of $\vec g$, such that
   \[b_I=\sum_{J\in\{1\}\times\{0\}^{2i}\times\bN_{\geq 1}\times\bN^{s-2i-3},|J|=|I|}{z_{I,J}^{(s)}}b_J-c_{1,I-E_1}+(A+(|I|-1)\alpha)b_{I-E_1}-(\sum_{j=2}^{s-1}b_{I-E_1+E_j}),\]
   where for $J=(j_1,\dots,j_{s-1})\in\bN^{s-1}$, $c_{1,J}=c_{1,j_1,\dots,j_{s-1}}$.
\end{lem}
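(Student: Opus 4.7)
The plan is to follow exactly the template used in the proof of Lemma \ref{Lem-kernel-const vs X1}, but starting from the inhomogeneous identity (\ref{Equ-X1 of image-II}) instead of its $\Ker$-side counterpart (\ref{Equ-X1 of kernel-II}). The only new ingredient is the extra $-c$-term in (\ref{Equ-X1 of image-II}); once one tracks it through the bookkeeping, the index-theoretic cancellation is identical.

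First, I would specialize the free data in (\ref{Equ-X1 of image-II}) by taking $t_1=2,\,t_2=3,\,\dots,\,t_{2i-1}=2i$ and $r\geq 2i$. With this choice, the excluded $q$-values for the LHS index of $b$ are exactly $\{1,2,\dots,2i-1\}$, which forces coordinates $2,3,\dots,2i$ of the argument of the LHS $b$ to be zero — matching the shape of $I$. A coordinate-chase identical to the one in the proof of Lemma \ref{Lem-kernel-const vs X1} shows that, for every $k\in\{1,\dots,2i-1\}$, the $b$-index in the $k$-th summand on the right-hand side of (\ref{Equ-X1 of image-II}) coincides with the $b$-index on the left; since the alternating sum of signs $\sum_{k=1}^{2i-1}(-1)^{t_k}=\sum_{k=1}^{2i-1}(-1)^{k+1}=+1$ (because $2i-1$ is odd), these $2i-1$ summands together contribute exactly one copy of the LHS. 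Cancelling then yields the $b$-analogue
\begin{equation*}
\sum_{k=2i}^{r}(-1)^{t_k-1}\,b_{E_1+\sum_{2i\leq q;\,q+1\neq t_{2i},\dots,t_{k-1}}l_qE_{q+1}+\sum_{t_k\leq q\leq s-1;\,q+1\neq t_{k+1},\dots,t_r}l_qE_q}
= -c_{E_1+\sum_{2i\leq q;\,q+1\neq t_{2i},\dots,t_r}l_qE_{q+1}}+(A+\textstyle\sum_{2i\leq q;\,q+1\neq t_{2i},\dots,t_r}l_q\alpha)\,b_{\sum_{2i\leq q;\,q+1\neq t_{2i},\dots,t_r}l_qE_q}-\sum_{j=2}^{s-1}b_{E_j+\sum_{2i\leq q;\,q+1\neq t_{2i},\dots,t_r}l_qE_q},
\end{equation*}
which is the image-side counterpart of (\ref{Equ-kernel-const vs X1-I}) with an extra $-c$-term.

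For the first case ($l_{2i+2},\dots,l_{s-1}\geq 1$) I take $r=2i$ and $t_{2i}=2i+1$: then the left-hand sum has a single term with sign $(-1)^{2i}=+1$ and index equal to $I$, while the indices of $b_{\sum l_qE_q}$ and $b_{E_j+\sum l_qE_q}$ become $I-E_1$ and $I-E_1+E_j$ respectively, and the $c$-index becomes $(1,I-E_1)$. Reading the identity off gives exactly $b_I=-c_{1,I-E_1}+(A+(|I|-1)\alpha)b_{I-E_1}-\sum_{j=2}^{s-1}b_{I-E_1+E_j}$, as claimed. For the second case (some of $l_{2i+2},\dots,l_{s-1}$ vanish) I take $r\geq 2i+1$ with $t_{2i}=2i+1$ and choose $t_{2i+1},\dots,t_r$ so that the excluded positions cover precisely the vanishing coordinates of $I-E_1+E_j$. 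The left-hand sum then consists of the $k=2i$ summand, which still equals $b_I$, together with further summands for $k\geq 2i+1$ whose indices lie in $\{1\}\times\{0\}^{2i}\times\bN_{\geq 1}\times\bN^{s-2i-3}$; rearranging gives the claimed $\bZ$-linear combination.

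The main (and essentially only) obstacle is the careful verification that the $2i-1$ cancellation on the right works in the presence of the $-c$-term — i.e.\ that the $-c$-term is indeed independent of $k$ in the $k$-sum and therefore does not participate in the cancellation, and that the surviving sign $(-1)^{2i}=+1$ in the $k=2i$ summand delivers $+b_I$ rather than $-b_I$. Once this is confirmed, everything else is index bookkeeping over $\bN^{s-1}$ and $\bN^s$, entirely parallel to Lemma \ref{Lem-kernel-const vs X1}; in particular the independence of the integers $z_{I,J}^{(s)}$ from $\vec g$ is automatic because the whole manipulation uses only (\ref{Equ-X1 of image-II}), whose coefficients depend only on $s$, $A$ and $\alpha$.
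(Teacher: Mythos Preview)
Your proposal is correct and follows essentially the same approach as the paper: specialize (\ref{Equ-X1 of image-II}) with $t_1=2,\dots,t_{2i-1}=2i$, observe that the summands for $k=1,\dots,2i-1$ all carry the LHS index and telescope to a single copy of the LHS (this cancellation is left implicit in the paper, which simply states the resulting identity (\ref{Equ-image-const vs X1-I})), then take $r=2i$, $t_{2i}=2i+1$ for the first case and $r\geq 2i+1$, $t_{2i}=2i+1$, $t_{2i+1}\neq 2i+2$ for the second. Your worry about the $-c$-term ``participating in the cancellation'' is unfounded: in (\ref{Equ-X1 of image-II}) the $c$-term sits outside the $k$-sum, so it is simply carried along unchanged.
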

\begin{proof}
  Assume $i\geq 1$. In (\ref{Equ-X1 of image-II}), let $r\geq 2i$ and $t_1=2,\dots,t_{2i-1}=2i$. Then we get
  \begin{equation}\label{Equ-image-const vs X1-I}
      \begin{split}
          &\sum_{k=2i}^r(-1)^{t_k}b_{E_1+\sum_{2i\leq q\leq t_k-2;q+1\neq t_{2i},\dots,t_{k-1}}l_qE_{q+1}+\sum_{t_k\leq q\leq s-1;1+q\neq t_{k+1},\dots,t_r}l_qE_q}\\
          =&c_{\sum_{2i\leq q\leq s-1;1+q\neq t_{2i},\dots,t_r}E_1+l_qE_{q+1}}-(A+\sum_{2i\leq q\leq s-1;1+q\neq t_{2i},\dots,t_r}l_q\alpha)b_{\sum_{2i\leq q\leq s-1;1+q\neq t_{2i},\dots,t_r}l_qE_q}\\ &+(\sum_{j=2}^{s-1}b_{E_j+\sum_{1\leq q\leq s-1;1+q\neq t_1,\dots,t_r}l_qE_q}).
      \end{split}
  \end{equation}
  Let $r=2i$ and $t_{2i}=2i+1$ in (\ref{Equ-image-const vs X1-I}) and put $I=(1,0,\dots,0,l_{2i+1},\dots,l_{s-1})$. Then we get
  \[
    b_I=-c_{1,I-E_1}+(A+(|I|-1)\alpha)b_{I-E_1}-(\sum_{j=2}^{s-1}b_{I-E_1+E_j}),
  \]
  which implies the result in this case.

  Let $r\geq 2i+1$ and $t_{2i}=2i+1$, $t_{2i+1}\neq 2i+2$ in (\ref{Equ-image-const vs X1-I}). We get
  \begin{equation}\label{Equ-image-const vs X1-II}
      \begin{split}
          &b_{E_1+l_{2i+1}E_{2i+1}+\sum_{2i+2\leq q\leq s-1;1+q\neq t_{2i+1},\dots,t_r}l_qE_q}\\
          =&\sum_{k=2i+1}^r(-1)^{t_k}b_{E_1+l_{2i+1}E_{2i+2}+\sum_{2i+2\leq q\leq t_k-2;q+1\neq t_{2i+1},\dots,t_{k-1}}l_qE_{q+1}+\sum_{t_k\leq q\leq s-1;1+q\neq t_{k+1},\dots,t_r}l_qE_q}\\
          &-c_{\sum_{2i+1\leq q\leq s-1;1+q\neq t_{2i+1},\dots,t_r}E_1+l_qE_{q+1}}+(A+\sum_{2i+1\leq q\leq s-1;1+q\neq t_{2i+1},\dots,t_r}l_q\alpha)b_{\sum_{2i+1\leq q\leq s-1;1+q\neq t_{2i+1},\dots,t_r}l_qE_q}\\ &-(\sum_{j=2}^{s-1}b_{E_j+\sum_{2i+1\leq q\leq s-1;1+q\neq t_{2i+1},\dots,t_r}l_qE_q}).
      \end{split}
  \end{equation}
  So the result holds in this case as $l_{2i+1}\neq 0$.
\end{proof}

Now, we are prepared to prove $\tau^{\geq 2}\rR\Gamma_{\Prism}(\bM) = 0$.
For any $\underline e\vec f(\underline X)\in\Ker(d^s)$, write
\[\vec f(\underline X) = \sum_{I\in\bN^s}a_I\underline X^{[I]}.\]
 We define $\vec g(\underline X)=\sum_{I\in\bN^{s-1}}b_I\underline X^{[I]}\in\calO_K\{X_1,\dots,X_{s-1}\}^{\wedge}_{\pd}$ as follows:

 For any $i\geq 1$ and any $I\in\{0\}^{2i}\times\bN_{\geq 1}\times\bN^{s-2i-2}$, we put $b_I=0$.

 For any $i\geq 1$ and any $I\in\{0\}^{2i-1}\times\bN_{\geq 1}\times\bN^{s-2i-1}$, we put $b_I=a_{0,I}$.
 Then by Lemma \ref{Lem-constant-image-s>2}, for any $L\in\{0\}\times\bN^{s-2}$, $b_L$ is determined in the unique way. Moreover, since $\lim_{n\to+\infty}\prod_{i=0}^{n-1}(A+i\alpha) = 0$, we see that $\lim_{|L|\to+\infty}b_L=0$.

 For any $i\geq 0$ and any $I\in \{1\}\times\{0\}^{2i}\times\bN_{\geq 1}\times\bN^{s-2i-3}$, we put $b_I=0$.

 For any $i\geq 0$ and any $I = (1,0,\dots,0,l_{2i+1},\dots,l_{s-1})\in\bN^{s-1}$ with $l_{2i+1}\geq 1$, we define $b_I$ as follows:

 If $l_{2i+2},\dots,l_{s-1}\geq 1$, then put
 \[b_I= -a_{1,I-E_1}+(A+(|I|-1)\alpha)b_{I-E_1}-(\sum_{j=2}^{s-1}b_{I-E_1+E_j}).\]

 If $l_{2i+2}\cdots l_{s-1}=0$, let $z_{I,J}^{(s)}$'s be the integers defined in Lemma \ref{Lem-image-const vs X1}, then put
 \begin{equation*}
     \begin{split}
         b_I&=\sum_{J\in\{1\}\times\{0\}^{2i}\times\bN_{\geq 1}\times\bN^{s-2i-3},|J|=|I|}{z_{I,J}^{(s)}}b_J-a_{1,I-E_1}+(A+(|I|-1)\alpha)b_{I-E_1}-(\sum_{j=2}^{s-1}b_{I-E_1+E_j})\\
         &=-a_{1,I-E_1}+(A+(|I|-1)\alpha)b_{I-E_1}-(\sum_{j=2}^{s-1}b_{I-E_1+E_j}).
     \end{split}
 \end{equation*}
 Clearly, in this case, we still have $\lim_{|I|\to+\infty}b_I=0$.

 Finally, for any $I\in \bN_{\geq 2}\times\bN^{s-2}$, let $z_{I,J}^{(s)}$'s and $w_{I,J}^{(s)}$'s be the integers defined in Lemma \ref{Lem-X1-image-s>2}. Then we put
 \[b_I=\sum_{J\in\bN^{s-1},|J|\leq|I|}z_{I,J}^{(s)}\prod_{i=|J|}^{|I|-1}(A+i\alpha)c_{1,J}+\sum_{L\in\bN_{\geq 1}\times \bN^{s-2},|J|\leq|I|}w_{I,J}^{(s)}\prod_{i=|L|}^{|I|-1}(A+i\alpha)b_L.\]
 Again, since $\lim_{n\to+\infty}\prod_{i=0}^{n-1}(A+i\alpha) = 0$, we see that $\lim_{|I|\to+\infty}b_I=0$ in this case.
 So for any $I\in\bN^{s-1}$, $\lim_{|I|\to+\infty}b_I=0$, which implies that $\vec g(\underline X)$ is well-defined.

 It remains to show $d^{s-1}(\underline e\vec g(\underline X)) = \underline e\vec f(\underline X)$. Write
 \[d^{s-1}(\underline e\vec g(\underline X)) = \underline e\sum_{I\in\bN^s}c_I\underline X^{[I]}.\]
 Then by construction of $\vec g(\underline X)$ and Lemma \ref{Lem-constant-image-s>2}, Lemma \ref{Lem-X1-image-s>2} and Lemma \ref{Lem-image-const vs X1}, we see that
 \[c_I=a_I\]
 for any $I\in\Lambda_s$. Since $\Ima(d^{s-1})\subset\Ker(d^s)$, it follows from Proposition \ref{Prop-rigidty for kernel} that
 \[d^{s-1}(\underline e\vec g(\underline X)) = \underline e\vec f(\underline X).\]
 So we show that $\Ker(d^s)\subset\Ima(d^{s-1})$ for any $s\geq 2$, which implies $\rH^s_{\Prism}(\bM)=0$ as desired.


\begin{thebibliography}{99}

 \bibitem[ALB19]{ALB} J. Ansch\"{u}tz and A.-C. Le Bras: {\it Prismatic Dieudonn\'e theory}, arxiv:1907.10525v3 (2019).

 
 \bibitem[BC16]{BC} L. Berger and P. Colmez: {\it Th\'eorie de Sen et vecteurs localement analytiques}, Ann. Sci. \'Ecole Norm.Sup. 49 (2016), no. 4, 947--970, (2016).
 
 \bibitem[BL22a]{BL-a} B. Bhatt and J. Lurie: {\it Absolute prismatic cohomology}, arXiv:2201.06120, (2022).
 
 \bibitem[BL22b]{BL-b} Bhatt and J. Lurie: {\it The prismatization of $p$-adic formal schemes}, arXiv:2201.06124, (2022).

 \bibitem[BMS18]{BMS-a} B. Bhatt, M. Morrow and P. Scholze: {\it Integral $p$-adic Hodge theory}, Publ. math. de l'IH\'ES 128, Issue 1, pp. 219-395, (2018).
		
 \bibitem[BMS19]{BMS-b} B. Bhatt, M. Morrow and P. Scholze: {\it Topological Hochschild homology and integral $p$-adic Hodge theory}, Publ. math. de l'IH\'ES 129, Issue 1, pp. 199-310, (2019).

 \bibitem[BS15]{BS} B. Bhatt and P. Scholze: {\it The pro-\'etale topology of schemes}, Ast\'erisque 369 (2015), 99–201.

 \bibitem[BS22]{BS-a} B. Bhatt and P. Scholze: {\it Prisms and prismatic cohomology}, Ann. of Math. (2) 196 (3), pp. 1135-1275, (2022).

 \bibitem[BS21]{BS-b} B. Bhatt and P. Scholze: {\it Prismatic F-crystals and crystalline Galois representations}, arxiv:2106.14735v1, (2021).

			
 \bibitem[DL21]{DL} H. Du and T. Liu: {\it A prismatic approach to $(\varphi,\hat G)$-modules and $F$-crystals}, arXiv:2107.12240v1, (2021).

 \bibitem[EG19]{EG} M. Emerton and T. Gee: {\it Moduli stack of \'etale $(\varphi,\Gamma)$-modules and the existence of crystalline lifts}, arXiv:1908.07185v2, (2020).
			
 \bibitem[Gao19]{Gao} H. Gao: {\it Breuil-Kisin modules and integral $p$-adic Hodge theory}, arXiv:1905.08555v5, (2019).
 
 \bibitem[Gao22]{Gao-b} H. Gao: {\it Hodge--Tate prismatic crystals and Sen theory}, arXiv:2201.10136, (2022).
 
 \bibitem[GLS13]{GLS} T. Gee, T. Liu and D. Savitt: {\it The Buzzard-Diamond-Jarvis conjecture for unitray groups}, JAMS. Vol 27, Number 2, pp. 389-435, (2013).
 
  \bibitem[KL16]{KL} K.S. Kedlaya and R. Liu: {\it Relative $p$-adic Hodge theory, II: Imperfect period rings}, 1602.06899v3 (2016).

\bibitem[Ko20]{Ko} T. Koshikawa: {Logarithmic prismatic cohomology I}, arXiv:2007.14037, (2020).

 \bibitem[Liu08]{Liu-a} T. Liu: {\it On lattices in semi-stable representations: a proof of a conjecture of Breuil}, Compositio Mathematica, Vol 144, Issue 1, pp. 61-88, (2008).

 \bibitem[MT20]{MT} M. Morrow and T. Tsuji: {\it Generalised representations as $q$-connections in integral $p$-adic Hodge theory}, arXiv:2010.04059v2, (2020).
			
 \bibitem[MW21]{MW} Y. Min and Y. Wang: {\it Relative $(\varphi,\Gamma)$-modules and prismatic $F$-crystals},	arXiv:2110.06076, (2021).

 \bibitem[MW22]{MW-b} Y. Min and Y. Wang: {\it $p$-adic Simpson correspondence via prismatic crystals}, arXiv:2201.08030, (2022).

 \bibitem[Sch17]{Sch-a} P. Scholze: {\it \'Etale cohomology of diamonds}, arXiv:1709.07343v2, (2017).

 \bibitem[Sen80]{Sen} S. Sen {\it: Continuous cohomology and $p$-adic Galois representations} Inventiones math. 62, 89-116 (1980).

 \bibitem[SP21]{SP} Stacks project authors: {\it The Stacks project},(2021).

 \bibitem[SW20]{SW} P. Scholze and J. Weinstein: {\it Berkeley lectures on $p$-adic geometry}, Princeton University Press, Princeton, NJ, (2020).
			
 \bibitem[Tian21]{Tian} Y. Tian: {\it Finiteness and duality for the cohomology of prismatic crystals}, arXiv:2109.00801v1, (2021).
			
 \bibitem[Wu21]{Wu} Z. Wu: {\it Galois representations, $(\varphi,\Gamma)$-modules and prismatic $F$-crystals}, Doc. Math. 26, 1771-1798, (2021).

 \bibitem[Wang17]{Wang} X. Wang: {\it Weight elimination in two dimensions when $p = 2$}, arXiv:1711.09035, (2017). To appear in Math Res. Lett.
\end{thebibliography}
\end{document}